\documentclass[reqno, 11pt, oneside]{amsart}
\usepackage{amsmath}
\usepackage{csquotes}
\usepackage{amsthm}
\usepackage{graphicx}
\usepackage{amsfonts}
\usepackage{quiver}
\usepackage{bookmark}
\usepackage{xcolor}
\usepackage[english]{babel}
\usepackage[backend=biber, style=alphabetic]{biblatex}
\usepackage{graphicx}
\usepackage{fancyhdr}
\usepackage{tabularx}
\usepackage{mathtools}
\usepackage{mathrsfs}

\newcommand{\spec}{\text{Spec}}

\newcommand{\Hom}{\text{Hom}}
\newcommand{\rank}{\text{rank}}
\newcommand{\Ker}{\text{Ker}}
\newcommand{\Image}{\text{Im}}
\newcommand{\Coker}{\text{Coker}}
\newcommand{\Gal}{\text{Gal}}
\newcommand{\Aut}{\text{Aut}}

\newcommand{\Frob}{\text{Frob}}

\newcommand{\Sh}{\text{Sh}}
\newcommand{\GSp}{\text{GSp}}

\newcommand{\GL}{\text{GL}}
\newcommand{\mY}{\mathcal{Y}}

\newcommand{\mH}{\mathcal{H}}
\newcommand{\mG}{\mathcal{G}}
\newcommand{\mO}{\mathcal{O}}
\newcommand{\F}{\mathbb{F}}
\newcommand{\Q}{\mathbb{Q}}
\newcommand{\Z}{\mathbb{Z}}

\newcommand{\Lie}{\text{Lie}}

\newcommand{\mA}{\mathcal{A}}

\newcommand{\mC}{\mathcal{C}}

\newcommand{\mI}{\mathcal{I}}

\newcommand{\Fil}{\text{Fil}}
\newcommand{\mM}{\mathcal{M}}
\newcommand{\mD}{\mathcal{D}}

\newcommand{\Ext}{\text{Ext}}

\newcommand{\C}{\mathbb{C}}
\newcommand{\mE}{\mathcal{E}}

\newcommand{\mF}{\mathcal{F}}

\newcommand{\mbL}{\mathbb{L}}

\newcommand{\mX}{\mathcal{X}}

\newcommand{\mV}{\mathcal{V}}
\newcommand{\etpi}{\pi_1^{\text{\'et}}}
\newcommand{\der}{\text{der}}
\newcommand{\ad}{\text{ad}}
\newcommand{\mS}{\mathcal{S}}

\newcommand{\mbH}{\mathbb{H}}

\newcommand{\V}{\mathbb{V}}

\newcommand{\KS}{\text{KS}}
\newcommand{\Pic}{\text{Pic}}
\newcommand{\Gr}{\text{Gr}}
\newcommand{\Tr}{\text{Tr}}

\usepackage{amsthm}

\theoremstyle{definition}
\newtheorem{theorem}{Theorem}[section]
\newtheorem{lemma}[theorem]{Lemma}
\newtheorem{proposition}[theorem]{Proposition}
\newtheorem{corollary}[theorem]{Corollary}
\newtheorem{conjecture}[theorem]{Conjecture}

\theoremstyle{definition}

\theoremstyle{definition}
\newtheorem{question}[theorem]{Question}

\theoremstyle{definition}
\newtheorem{definition}[theorem]{Definition}

\newtheorem{fact}[theorem]{Fact}

\theoremstyle{remark}
\newtheorem{remark}[theorem]{Remark}

\newtheorem*{acknowledgment}{Acknowledgment}
\newtheorem*{Organization}{Organization of this paper}

\usepackage[margin=1in]{geometry}

\title{Non-liftability of Families of Abelian Varieties with Small $l$-adic local system}
\author{Haochen Cheng}

\address{Department of Mathematics, Northwestern University}

\email{haochencheng2028@u.northwestern.edu}

\date{\today}

\begin{document}

\maketitle

\begin{abstract}
  We study families of abelian varieties over smooth proper curves with small $l$-adic local system over characteristic $p$. We show that such abelian schemes have a non-nef Hodge bundle and cannot be lifted to $W_2(k)$.   We also establish an Arakelov-type inequality for families of abelian varieties over smooth proper curves in characteristic $p$, assuming $W_2(k)$-liftability. 

\end{abstract}

\tableofcontents

\section{Introduction}

Throughout this paper, we fix an algebraically closed field $k=\bar{k}$ of characteristic $p>0$, unless otherwise specified.
This paper is motivated by the following question:
\begin{question}\label{problem 1}
  Let $k$ be an algebraically closed field of characteristic $p>0$. Let $S$ be an integral normal and proper scheme over $k$ and let $\mX/S$ be a non-constant abelian scheme. Suppose $\mX/S$ has trivial $l$-adic local system, can $\mX/S$ be lifted to $W_2(k)$, the 2-truncated Witt vectors.
\end{question}

By a lift to $W_2(k)$, we mean that there exists a flat proper deformation $\widetilde{S}$ of $S$ to $W_2(k)$ and an abelian scheme $\widetilde{\mX}$ over $\widetilde{S}$, such that $\mX\cong \widetilde{\mX}\times_{\widetilde{\mS}}S$.

Let $C$ be a smooth proper curve over $k$ and $K\coloneq K(C)$ its function field.
We say that an abelian scheme $\mX/C$ has a  small $l$-adic local system if the $l$-adic monodromy representation \[
\rho_l: \etpi(C, \bar{x})\rightarrow \GL_{2g}(\Z_l)
\]has finite image.
A result by Oort (\cite{Oortsubvar} Theorem 2.1) shows that $\mX/C$ has a small $l$-adic local system if and only if  $\mX$ is isogenous to a constant family of abelian varieties up to a finite \'etale base change.



Our main theorem provides a partial answer to Question~\ref{problem 1}

\begin{theorem}\label{Main theorem}
  Let $C$ be a smooth proper curve over an algebraically closed field $k$ of characteristic $p>0$. Let $f:\mX\rightarrow C$ be an abelian scheme with small $l$-adic local system. Then $\mX/C$ cannot be lifted to $W_2(k)$. Moreover, the Hodge bundle $f_*\Omega_{\mX/C}$ is  non-nef.
\end{theorem}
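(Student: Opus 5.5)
We will assume throughout that $\mX/C$ is not isotrivial; for a constant family both conclusions fail, so this hypothesis is implicit. The first step is a reduction to an explicit isogeny. By the result of Oort quoted above, there is a finite étale cover $\pi: C'\to C$ and an abelian variety $A/k$ with an isogeny $A\times C'\to \mX_{C'}$. Both conclusions can be checked after this base change:
\begin{itemize}
\item $\pi^*$ of a non-nef bundle is non-nef, and conversely nefness descends along finite surjective maps, so it suffices to show $f'_*\Omega_{\mX_{C'}/C'}=\pi^*f_*\Omega_{\mX/C}$ is non-nef.
\item A finite étale cover of a $W_2(k)$-lift $\widetilde C$ lifts uniquely to $\widetilde C'\to\widetilde C$. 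Pulling back a lift $\widetilde{\mX}$ then lifts $\mX_{C'}$.
\end{itemize}
So we may assume $C'=C$ and that there is an isogeny $\varphi: A_C:=A\times C\to \mX$ with finite flat kernel $G\subset A_C$. By Cartier duality and the connected–étale sequence, we may factor $\varphi$ into isogenies whose kernels are étale, of multiplicative type, or of height one. Étale pieces are constant after a further finite étale cover, so the non-isotriviality has to be carried by infinitesimal pieces. In particular, some height-one kernel $H\subset A'_C$ has $\mathrm{Lie}(H)\subset \mathrm{Lie}(A'_C)$ a non-constant subbundle.

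For non-nefness, we first pass to the last such step and assume $\mX=A'_C/H$ with $A'_C$ constant, so $\omega_{A'_C}\cong\mO_C^g$. The pullback $\varphi^*:\omega_{\mX}\to\omega_{A'_C}=\mO_C^g$ has image $F$ and cokernel $\omega_H=\mathrm{Lie}(H)^\vee$. The image $F$ is a subsheaf of a trivial bundle, so $\deg F\le 0$, with equality only if $F$ is a constant subbundle $V\otimes\mO_C$. Since $\mathrm{Lie}(H)$ is non-constant, so is $\mathrm{Lie}(H)^\perp$, which is the saturation of $F$. Hence $\deg F<0$. Then $\omega_{\mX}\twoheadrightarrow F$ is a quotient of negative degree, so $f_*\Omega_{\mX/C}=\omega_{\mX}$ is not nef.

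The main obstacle is the general case, where the non-constant infinitesimal piece sits in the middle of the factorisation, so the intermediate families are no longer constant. Here I would induct along the factorisation using the same exact sequences. The point is that an étale or constant step only twists $\omega$ by trivial data. For a non-constant height-one step, I would use the Kodaira–Spencer map: it vanishes on the constant part, which should let the negative quotient propagate. If this becomes messy, I would instead work with the dual isogeny $\psi:\mX\to A_C$ with $\psi\circ\varphi=[n]$, and look at $\psi^*:\mO^g\to\omega_{\mX}$.

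For non-liftability, suppose $\mX/C$ lifts to $W_2(k)$. Then the Gauss–Manin system $(H^1_{dR}(\mX/C),\nabla,\Fil)$ lifts, and Ogus–Vologodsky (equivalently Lan–Sheng–Zuo's Higgs–de Rham flow) applies to the graded Higgs bundle $(\omega_{\mX}\oplus R^1f_*\mO_{\mX},\theta_{KS})$. The negative quotient $\omega_{\mX}\twoheadrightarrow F$ from the previous step gives, after dualising and using Kodaira–Spencer, a Higgs subsheaf of positive degree. Because the lifted de Rham bundle is $C^{-1}$ of the Higgs bundle, running the flow multiplies this destabilising degree by $p$ at each step. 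This contradicts the boundedness of Harder–Narasimhan slopes of the periodic, bounded-rank flow; the intended inequality is the Arakelov-type bound mentioned in the abstract. Hence $\mX/C$ admits no $W_2(k)$-lift.
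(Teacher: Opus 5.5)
\textbf{There is a genuine gap in the non-nefness step, which is the core of the theorem.} Your special case is correct. If $\mX=A'_C/H$ with $A'_C$ constant and $H$ of height one and non-constant, the image of $\omega_{\mX}\to\omega_{A'_C}=\mO_C^{g}$ is a non-constant subsheaf of a trivial bundle. It therefore has negative degree, and it is a quotient of $\omega_{\mX}$.

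The problem is the step ``pass to the last such step and assume $\mX=A'_C/H$''. This replaces $\mX$ by an intermediate family $Y$. Nothing in your argument transfers non-nefness from $\omega_Y$ to $\omega_{\mX}$ along the remaining isogeny $Y\to\mX$. The map $\omega_{\mX}\to\omega_Y$ is in general neither injective nor surjective, and a lower-rank subsheaf of a negative quotient of $\omega_Y$ can have positive degree. You flag this yourself as the main obstacle, but the proposed remedies are not arguments. ``Let the negative quotient propagate via Kodaira--Spencer'' is never made precise. The dual isogeny only gives the image of $\psi^*:\mO_C^g\to\omega_{\mX}$, which is a quotient of a trivial bundle, hence of nonnegative degree, and says nothing about nefness. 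The general case (a kernel of height $>1$, with a non-constant layer followed by further layers) is the actual content of the theorem, so non-nefness, and with it non-liftability, is not established.

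\textbf{The missing idea.} This is how the paper argues in Cases I and II of Section 4: never pass through intermediate families. Keep the source constant and look only at $\Lie(\varphi):\Lie(A_C)=\mO_C^{g}\to\Lie(\mX)$ for the whole isogeny. Its kernel $\mathcal K$ is saturated and has degree $\le 0$.
\begin{itemize}
\item If $\deg\mathcal K<0$, the image $\mO_C^g/\mathcal K\subset\Lie(\mX)$ has positive degree, so $\mu_{\max}(\Lie(\mX))>0$. This is exactly your computation, applied to $\mX$ itself; the kernel of $\Lie(\varphi)$ only sees the first Frobenius layer of $\Ker\varphi$.
\item If $\deg\mathcal K=0$, then $\mathcal K=V\otimes\mO_C$. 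The $p$-map on $\mathcal K$ is inherited from the constant one, hence is constant. By the equivalence between height-one group schemes and restricted Lie algebras (which you never invoke), $\mathcal K=\Lie(H_0\times C)$ for a constant height-one $H_0\subset A$. Since $\varphi|_{H_0\times C}$ has zero Lie map, it is zero. So $\varphi$ factors through the constant $(A/H_0)\times C$ with strictly smaller degree; the case $\Lie(\varphi)=0$ is $H_0=A[F]$.
\item The case $\mathcal K=0$ cannot occur. It would give $\mO_C^g\hookrightarrow\Lie(\mX)$, forcing $\deg\omega_{\mX}\le 0$, whereas non-isotriviality gives $\deg\omega_{\mX}>0$.
\end{itemize}
Induction on $\deg\varphi$ then yields a contradiction.

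\textbf{Non-liftability.} You need neither a flow iteration nor the Arakelov bound, and the latter gives no contradiction once $g(C)\ge 2$. The Lan--Sheng--Zuo theorem directly gives Higgs semistability of slope $0$. A positive-degree subsheaf of $\Lie(\mX)$ is then a destabilizing Higgs subsheaf, because the Higgs field vanishes on that piece. For this you need $\Lie(\mX)$ to appear as a graded piece: use the graded Higgs bundle with a principal polarization or Zarhin's trick, or use its dual.
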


There are many situations in which the small local system condition is satisfied. For example, any family of supersingular abelian varieties over a smooth proper curve satisfies this condition. Using the Kuga–Satake construction, one can also deduce the non-liftability of a family of supersingular K3 surfaces.

\begin{corollary}
  Let $f: \mX\rightarrow C$ be a family of supersingular abelian varieties or degree-$d$-polarized supersingular K3 surfaces over a smooth proper curve. Then $f$ cannot be lifted to $W_2(k)$.
\end{corollary}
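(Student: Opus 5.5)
The corollary is to be read for non-isotrivial families; a constant family obviously lifts. The plan is to reduce both cases to Theorem~\ref{Main theorem} by showing that the relevant abelian scheme has small $l$-adic local system. Two observations are used throughout. First, if $C'\to C$ is finite \'etale and $\widetilde{C}$ is a lift of $C$ over $W_2(k)$, then $C'$ lifts uniquely to a finite \'etale cover $\widetilde{C}'\to\widetilde{C}$. Second, a lift of $\mX/C$ therefore pulls back to a lift of $\mX\times_C C'$ over $C'$. So we may pass freely to finite \'etale covers of $C$: level structures and the spin cover needed for Kuga--Satake can be added without loss.

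\emph{Supersingular abelian schemes.} Let $f:\mX\to C$ be a family of supersingular abelian varieties. After adding level structure, it is induced by a map $C\to\mA_{g,d,n}\otimes k$. Its image lies in the supersingular locus, which is a closed subscheme defined over $\F_p$, and $C$ may be assumed non-constant there. The map spreads out to a model $C_0\to\mA_{g,d,n}$ over a finite field $\F_q$, and the geometric monodromy group of $\mX/C$ can be computed on this model. At every closed point $x\in C_0$, the Frobenius eigenvalues on $T_l$ are of the form $q_x^{1/2}\zeta$ with $\zeta$ a root of unity. Moreover, since the characteristic polynomial has rational coefficients and degree $2g$, the order of each $\zeta$ is bounded in terms of $g$ alone. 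After a Tate twist by $q^{1/2}$, Chebotarev density shows that every element of the arithmetic monodromy has eigenvalues among finitely many roots of unity. Hence every element of the identity component of the Zariski closure $G$ of geometric monodromy is unipotent. By Deligne (Weil~II), $G^\circ$ is semisimple, hence reductive; a reductive group all of whose elements are unipotent is trivial, so $G^\circ$ is trivial. The monodromy image is then a finite subgroup, so $\mX/C$ has small $l$-adic local system, and Theorem~\ref{Main theorem} shows it cannot be lifted to $W_2(k)$. The Oort criterion (\cite{Oortsubvar}, Theorem~2.1) could serve as a cross-check here.

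\emph{Supersingular K3 surfaces.} Let $\mY/C$ be a degree-$d$ polarized family of supersingular K3 surfaces, and suppose it lifts to $\widetilde{\mY}/\widetilde{C}$ over $W_2(k)$. After a finite \'etale cover of $C$ adding spin level structure (which lifts, as noted above), the lifted family gives a $W_2(k)$-valued point of the integral canonical model of the orthogonal Shimura variety (Madapusi Pera). The Kuga--Satake abelian scheme is defined over that integral model. Pulling it back therefore yields an abelian scheme over $\widetilde{C}$ lifting the Kuga--Satake family $\mX^{\KS}/C$. Two properties of $\mX^{\KS}$ are needed. It is supersingular, since its $l$-adic (or crystalline) cohomology is built from the Clifford algebra of the supersingular $H^2$. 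It is non-isotrivial, since the Kuga--Satake period map is quasi-finite by the Torelli theorem for K3 surfaces. The abelian case then gives a contradiction.

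\emph{Main obstacle.} The step I expect to be hardest is the K3 case. Specifically, one must justify that the Kuga--Satake construction extends over $W_2(k)$-points of the integral model at the given prime, including the case $p\mid d$ and small $p$ where good integral models require care. One must also check that the lifted polarized K3 family really maps into that model. In the abelian case, the descent to a finite field is routine but should be stated carefully.
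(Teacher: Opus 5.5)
Your proposal is correct. In the abelian case, however, you reach the small $l$-adic local system by a different route than the paper. The paper argues geometrically: by Oort's almost-product structure (Theorem~\ref{almostproduct}) with $c(\xi)=0$, every irreducible component of the supersingular locus is an isogeny leaf. So $C$ maps into an isogeny leaf, Theorem~\ref{trivial l-adic monodromy} gives finite monodromy, and Theorem~\ref{forget polarization} finishes. You argue arithmetically. Frobenius eigenvalues are $\sqrt{q_x}\,\zeta$ with $\zeta$ of bounded order, and a half-Tate twist plus Chebotarev makes $g^N$ unipotent on the whole Zariski closure, so $G^\circ$ is unipotent. Weil~II purity makes $G^\circ$ semisimple, hence trivial. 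This bypasses the foliation theory and the identification of the supersingular locus with isogeny leaves, at the cost of needing a finite base field.

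On that point, ``spreads out to a model over a finite field'' is only literally true if $C\to\mA_{g,d,n}$ is defined over $\overline{\F}_p$. For general algebraically closed $k$ this can fail, e.g.\ pull the Moret--Bailly family back to an elliptic curve with transcendental $j$-invariant via a double cover of $\mathbb{P}^1$. The fix is:
\begin{itemize}
\item spread out over a finitely generated $\F_p$-algebra $R\subset k$;
\item run your argument at a closed point $s$;
\item use that, for the proper smooth curve $\mathcal{C}/R$, the geometric generic monodromy factors through $\pi_1(\mathcal{C}_{\bar s})$;
\item use invariance of $\pi_1$ of proper varieties under extension of algebraically closed fields.
\end{itemize}
Alternatively, Oort's observation avoids finite fields entirely: a supersingular abelian variety over $\overline{K}$ is isogenous to $E^g$ with $E$ defined over $\overline{\F}_p$, and the isogeny is defined over a finite separable extension of $K$. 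This gives finite monodromy directly.

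Your K3 argument is the paper's (pull back the Kuga--Satake abelian scheme along the lifted period map), done with more care. You check non-isotriviality of $\KS(\mX)$, and you flag the integral-model restrictions on $p$ and $d$, which the paper likewise only asserts for almost all $p$. One slip: the lifted polarized family gives a $\widetilde{C}$-valued point of the integral model, not a $W_2(k)$-valued point.
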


\subsubsection{$l$-adic monodromy phenomenon}
Question~\ref{problem 1} is inspired by the behavior of local systems in characteristic zero. Namely, let  $\Sh_K(G, \mX)$ be a Shimura variety associated to a Shimura datum $(G, \mX)$ over the complex numbers. Let $Z\subseteq \Sh_K(G, \mX)$ be an irreducible subvariety. Consider the generic Mumford Tate group $H$ of $Z$ and the associated Shimura subdatum $(H, \mY)\subseteq (G, \mX)$. Let $H_{\text{mon}}^\circ$ be the connected monodromy group of $Z$, defined as the neutral component of the Zariski closure of the image of the monodromy representation $\pi_1(Z, z)\rightarrow \GL(V)$ associated to the Shimura datum $(G, \mX)$. 
    
    Andr\'e 
(see \cite{Andre_fixed_part}) showed that $H_{\text{mon}}^\circ$
is semisimple and that $H^\circ_{\text{mon}}\unlhd H^{\der}$ is a normal subgroup. Considering the connected Shimura datum $(H^\der, \mY')$, we then obtain a decomposition of (connected) Shimura data $$(H^\ad, \mY'^\ad)\cong (H_{\text{mon}}^{\circ\ad}, \mY_1^\ad)\times (H_2^\ad, \mY_2^\ad)$$ for some reductive group $H_2$. Moonen (see \cite{Moonen_linear_I}) showed that for some level structure $K'$, one has $$Z\subseteq Y_1\times \{y_2\},$$ where $Y_1$ is the image of some connected component of $\mY^+_1\subset \mY_1$ and $y_2\in \Sh_{K'}(H_2, \mY_2)$ is a point. In particular, if the monodromy group $H_{\text{mon}}^\circ$ is commutative, then $H^{\circ\ad}_{\text{mon}}$ is trivial and $Z$ is contained in a variety of dimension zero.

From now on, we consider the moduli space of principally polarized abelian varieties $\mA_{g, 1, n}$ with $n\geq 3$. This is a fine moduli space representing the functor$$\mA_{g, 1, n}: S\mapsto \{(X, \lambda, \sigma)\}$$which assigns a locally Noetherian scheme $S$ the set of isomorphism classes of triples with $X/S$ a $g$-dimensional abelian scheme, $\lambda$ a principal polarization on $X$ and $\sigma:(\Z/n\Z)^{2g}\overset{\sim}{\rightarrow}X[n]$ a level-$n$ structure over $X/S$. The scheme $\mA_{g,1, n}$ is defined over $\spec(\Z[\frac{1}{n}])$ and is quasi-projective. In particular, it is a Shimura variety $\Sh_K(\GSp_{2g}, \mH^{\pm})$ of Siegel type. Consequently, it admits no positive-dimensional subvariety with trivial fundamental group.

However, this phenomenon fails in characteristic $p$. As shown in \cite{AST_1981__86__R1_0}, Moret--Bailly first constructed a non-trivial family of principally polarized supersingular abelian surfaces over $\mathbb{P}^1_{\bar{\F}_p}$, now known as the Moret--Bailly family.

\subsubsection{Isogeny leaves}
In this paper, we aim to provide a moduli-wise description for the abelian scheme $\mX/C$ with small $l$-adic local system.

We write $\mA_g$ for $\mA_{g,1,n}, n\geq 3$ when no confusion arises. The scheme
$\mA_{g, \bar{\F}_p}$ has a rich geometric structure admitting several stratifications. We  mainly consider the Newton stratification, which admits an almost product structure  (also called a foliation, see \cite{oort_foliations_2007}). Namely, for each Newton stratum $W_\xi$, there are two kinds of  subvarieties called central leaves $\mC(x)$ and isogeny leaves $I(x)$, which are  transversal to each other. Moreover, for any irreducible component $W\subseteq W_\xi$,there is a finite dominant morphism\[
\mC(x)\times I(x)\twoheadrightarrow W.
\]

A central leaf $\mC(x)$ associated to a polarized abelian variety $x=[A, \lambda]$ is the locus of abelian varieties whose  $p$-divisible groups are isomorphic to $A$ and respecting the polarizations:$$\mC(x)=\{(B,\mu)\in \mA_g(k); (B[p^\infty], \mu[p^\infty])\cong (A[p^\infty], \lambda[p^\infty])\}.$$ The isogeny leaf is, roughly speaking, the maximal integral subvariety in which any family of abelian varieties is isogenous to a constant family (see Section~2 for details).

Consider the $l$-adic local system given by the Tate modules. One can define $G_l(Z)$ to be the algebraic $l$-adic monodromy group of a variety $Z$ with a morphism $\varphi: Z\rightarrow \mA_g$, defined as the Zariski closure of the image of the associated $l$-adic representation. One then considers the neutral component $G_l(Z)^\circ$ of $G_l(Z)$,  called the connected algebraic $l$-adic monodromy group of $Z$.

On the one hand, central leaves have full $l$-adic monodromy group $\GSp_{2g}$ by the Hecke orbit conjecture (now a theorem, see \cite{Chai2005}), while  the isogeny leaves have small $l$-adic monodromy groups by definition. These facts motivate the following slogans: the subvarieties contained in the central leaves exhibit monodromy behavior similar to the characteristic zero case, whereas the subvarieties contained in the isogeny leaves exhibit behavior specific to characteristic $p$.

The following fact is well known to experts. We include a proof in this paper for completeness.

\begin{fact}\label{theorem isogeny leaf}
  Let $Z$ be a smooth projective variety over $k$ and let $\varphi: Z\rightarrow \mA_{g, k}$ be a nontrivial morphism. Then $G^\circ_l(Z)=0$ if and only if $\varphi$ factors through some isogeny leaf $I(x)$.
\end{fact}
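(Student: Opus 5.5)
We prove the two directions separately. Throughout, the isogeny leaf $I(x)$ is the maximal reduced, irreducible subvariety of $\mA_{g,k}$ through $x$ over which the universal family is geometrically isogenous (by isogenies of $p$-power degree, compatible with polarizations up to scalar) to a constant family. Equivalently, $I(x)$ is a connected component of the image of a Rapoport--Zink-type space of quasi-isogenies $\psi: A_{x}\times T\to \mX_T$.

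\textbf{Easy direction.} Suppose $\varphi$ factors through $I(x)$. After a finite (possibly inseparable) base change $Z'\to Z$, the pulled-back family $\mX_{Z'}$ admits a $p$-power isogeny from the constant family $A_x\times Z'$. A $p$-power isogeny induces an isomorphism on $l$-adic Tate modules for $l\neq p$, so the $l$-adic representation of $\etpi(Z')$ is trivial. The map $\etpi(Z')\to\etpi(Z)$ has image of finite index; purely inseparable maps even induce isomorphisms. Hence the image of $\rho_l$ on $\etpi(Z)$ is finite, and $G_l(Z)^\circ$ is trivial.

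\textbf{Hard direction.} Suppose $G_l(Z)^\circ$ is trivial, so $\rho_l$ has finite image. Pass to the finite étale cover $Z'\to Z$ trivializing $\rho_l$. Then $T_l\mX_{Z'}$ is a constant local system, and on $Z'$ the prime-to-$p$ level structures of every level are defined. This gives a lift $Z'\to \mA_{g,1,nl^m}$ for all $m$.

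The key input is the Oort/Grothendieck-type result already quoted in the introduction (\cite{Oortsubvar} Theorem 2.1), extended from curves to $Z$. In the proper case the argument is Zarhin/Faltings style: trivial $l$-adic monodromy means the Tate conjecture for abelian schemes over the function field $K(Z')$, which Zarhin and Mori proved in characteristic $p$ and which holds over finitely generated fields after spreading out. This identifies $\mX_{K(Z')}$, up to isogeny, with the $K(Z')/k$-trace base-changed from $k$. Concretely, $\mathrm{End}(\mX_{\bar K})\otimes\Q_l$ equals the full commutant of a trivial representation, which forces isotriviality up to isogeny. The generic isogeny extends over $Z'$, since homomorphisms of abelian schemes over normal bases extend. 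The result is an isogeny $\mX_{Z'}\to A\times Z'$.

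We then factor this isogeny into a prime-to-$p$ part and a $p$-part. The prime-to-$p$ part is étale and hence locally constant. Because all prime-to-$p$ level structures are trivialized, it can be absorbed by replacing $A$ with a fixed quotient $A'$. What remains is a $p$-power quasi-isogeny from a constant family, and polarizations can be matched up to a scalar after adjusting by a constant prime-to-$p$ isogeny. By the definition and maximality of isogeny leaves, the image of $Z'$ lies in $I(x)$ for any $x$ in the image. Since $Z'\to Z$ is surjective, $\varphi(Z)$ lies in $I(x)$ as well; as $Z$ is reduced and the isogeny leaf is taken with its reduced structure, $\varphi$ factors through it.

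\textbf{Main obstacle.} We expect the hard step to be the middle one: going from finite $l$-adic monodromy to the existence of an isogeny to a constant family over the higher-dimensional base $Z$. The plan is to reduce to curves. Take a general complete-intersection curve through two general points $z_1,z_2\in Z$. By Lefschetz, $\etpi$ of the curve surjects onto $\etpi(Z)$, so the curve's monodromy is still finite. Oort's theorem then puts $z_1$ and $z_2$ in the same isogeny class by $p$-power isogenies after the prime-to-$p$ adjustment above. Consequently all points of $\varphi(Z)$ are mutually $p$-isogenous, and $\varphi(Z)$ is contained in the $p$-isogeny locus of $x$. A secondary issue is that this locus is a union of isogeny leaves; we resolve it using the irreducibility of $Z$ to land in a single leaf.
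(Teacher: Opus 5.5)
Your easy direction is fine and is the paper's argument (Remark~\ref{trivial l-adic connected algebraic monodromy}). The gap is in the hard direction as stated in your main paragraph.

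\textbf{The Tate-conjecture step fails as written.} The Tate conjecture of Zarhin--Mori is a theorem over fields finitely generated over $\F_p$. Over $K(Z')$ with $k$ algebraically closed it is false. For example, a constant ordinary elliptic curve $E$ has trivial $l$-adic monodromy, yet $\mathrm{End}(E)\otimes\Z_l$ has rank $2$, not $4$. Likewise, your ``concrete'' claim that $\mathrm{End}(\mX_{\bar K})\otimes\Q_l$ is the full commutant $M_{2g}(\Q_l)$ contradicts the bound $\dim_{\Q}\mathrm{End}^0\le 4g^2$. Equality there holds only for $E^g$ with $E$ supersingular, so your claim would force $\mX_{\bar K}$ to be supersingular, which is not true in a general Newton stratum.

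A correct Tate-type argument would need several steps your proposal does not contain:
\begin{itemize}
\item spread out to a finitely generated field $k_0$ and choose a $k_0$-point $z_0$;
\item use $z_0$ as a section to identify $T_l\mX_{K_0}$ with $T_l\mX_{z_0}$ as $\Gal(K_0^s/K_0)$-modules;
\item only then apply Zarhin.
\end{itemize}
Even then you get an isogeny with arbitrary kernel, and splitting off only the prime-to-$p$ part is not enough. The $H_\alpha$-scheme condition requires local-local kernels, so the \'etale and multiplicative $p$-parts must also be removed. The polarization, which you only match ``up to a scalar,'' must be handled as well.

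The paper avoids all of this by using the $K/k$-trace. Theorem~\ref{oort} (via Lang--N\'eron) makes the trace map an isogeny after an \'etale cover. Its kernel is automatically local-local by Theorem~\ref{trace}(1),(2). The pulled-back polarization descends to $k$ because $\Hom_k(B,B^\vee)=\Hom_L(B_L,B_L^\vee)$.

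\textbf{Your fallback is the paper's route, but it must be made family-level.} The reduction to curves in your last paragraph is essentially the paper's proof of Theorem~\ref{trivial l-adic monodromy for general projective variety}: connect points by smooth complete-intersection curves and apply the curve case, Theorem~\ref{trivial l-adic monodromy}. Lefschetz is unnecessary, since finite monodromy on $Z$ restricts to finite monodromy on any curve.

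However, you must carry family-level information out of the curve case, not pointwise information. ``All points of $\varphi(Z)$ are mutually $p$-isogenous'' gives no control by itself: over $\bar{\F}_p$ every variety is a countable union of points. You would need to know the relevant locus is closed with finitely many components.

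A clean version goes as follows. Fix $z_1$ and take $z_2$ general. The image of a general smooth complete-intersection curve through $z_1$ and $z_2$ is, by the curve case, an irreducible $H_\alpha$-scheme containing $\varphi(z_1)$. Hence it lies in $\mI(\varphi(z_1))$. Such curves cover a dense subset of $Z$. Since $\mI(\varphi(z_1))$ is closed with finitely many irreducible components (Theorem~\ref{maximal H scheme}), you get $\varphi(Z)\subseteq\mI(\varphi(z_1))$. Irreducibility of $Z$ then places $\varphi(Z)$ in a single component $I(x)$, and reducedness of $Z$ gives the factorization.
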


Main theorem~\ref{Main theorem} then implies that isogeny leaves are purely characteristic $p$ constructions, and they do not even admit an infinitesimal deformation to $W_2(k)$.

\subsubsection{Positivity of Hodge bundle}

We sketch the proof of the main theorem~\ref{Main theorem}. If $f:\mX\rightarrow C$ admits a $W_2(k)$-lifting, then the theory of characteristic $p$ non-abelian Hodge implies that the graded Higgs bundle $(\mE\oplus\mE^\vee, \theta)$ of the first de Rham cohomology with Gauss--Manin connection $(\mbH^1_{dR}(\mX/C), \nabla_{GM})$ is Higgs semistable. This can be found in Ogus--Vologodsky (see \cite{ogus_nonabelian_2007}) or the recent theory of Higgs--de Rham flow developed by Lan--Sheng--Zuo (see \cite{lan_semistable_2017}). Here, $\mE=f_*\Omega_{\mX/C}^1$ is the Hodge module and $\mE^\vee=\Lie(\mX/C)$ is the Lie algebras (see Appendix~\ref{appendix} for background on Higgs semistability).
In particular, the $W_2(k)$-liftability implies the positivity of the Hodge bundle, i.e.,  $\mu_{\min}(f_*\Omega^1_{\mX/C})\geq 0$.

On the other hand, we recall a  result of Oort (\cite{Oortsubvar}, Theorem 2.1): if $\mX/C$ is an abelian scheme with trivial $l$-adic monodromy group, then up to some \'etale cover $C'\rightarrow C$, the $K(C)/k$-trace morphism \[\tau_{K(C)/k}:\Tr_{K(C)/k}\mX_{K(C)}\times_kC\rightarrow \mX\] is an isogeny. This statement is implicit in the proof of Theorem~\ref{theorem isogeny leaf}.

We show that if $\mu_{\max}(\Lie(\mX/C))\leq 0$, then $\tau_{K(C)/k}$ is in fact an isomorphism, contradicting the assumption that $\mX/C$ is  non-constant. Thus one must have $\mu_{max}(\Lie(\mX/C))>0$. Equivalently, $\mu_{\min}(f_*\Omega^1_{\mX/C})<0$. This contradicts the $W_2(k)$-liftability of $\mX/C$ and furthermore implies that $f_*\Omega^1_{\mX/C}$ is non-nef, as asserted in the main theorem~\ref{Main theorem}.

More generally, we show the nontrivial and non-constant trace already implies the non-nefness.

\begin{theorem}\label{theorem non nef}
 Suppose that the universal family $f:\mX\rightarrow C$ has maximal variation, i.e., it does not admit a constant abelian subscheme. If $\Tr_{K(C)/k}\mX_{K(C)}\neq 0$, then $f_*\Omega_{\mX/C}^1$ is not nef.
\end{theorem}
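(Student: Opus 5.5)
We argue by contradiction. Assume $f_*\Omega^1_{\mX/C}$ is nef, so $\mu_{\min}(f_*\Omega^1_{\mX/C})\ge 0$. Dually, every quotient of $f_*\Omega^1_{\mX/C}$ has nonnegative degree, which means every subsheaf of $\Lie(\mX/C)=(f_*\Omega^1_{\mX/C})^\vee$ has degree $\le 0$, i.e.\ $\mu_{\max}(\Lie(\mX/C))\le 0$. Nefness is invariant under pullback along finite surjective maps of curves. So we may pass freely to finite covers $C'\to C$, including \'etale covers and purely inseparable Frobenius twists, since a non-nef pullback forces the original bundle to be non-nef.

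Set $B=\Tr_{K(C)/k}\mX_{K(C)}$, a nonzero abelian variety over $k$. The trace map $\tau: B\times_k C\to \mX$ extends over all of $C$ by the N\'eron property. Its kernel is finite, and it is \'etale-locally a finite group scheme; after passing to a finite cover we may assume it is a constant-type finite flat group scheme. We factor $\tau$ as
\[
B\times C \twoheadrightarrow \mY\hookrightarrow \mX ,
\]
where $\mY:=(B\times C)/\Ker\tau$ is an abelian subscheme of $\mX$. By maximal variation, $\mY$ is \emph{not} isomorphic to a constant abelian scheme; otherwise $\mX$ would contain a constant abelian subscheme. Hence $\Ker\tau$ is not constant. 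Its étale part can be made constant after an étale cover, so the non-constancy lives in the infinitesimal ($p$-power, connected) part of the kernel, as in the Moret-Bailly type constructions.

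The core step is to show that a nonconstant quotient $\mY=(B\times C)/G$ by a finite flat subgroup scheme $G\subset B[p^n]\times C$ forces $\Lie(\mY/C)$ to contain a subbundle of positive degree. The model case is $G\subset B[F]\times C$ of height one. Such a $G$ is determined by a subbundle $\mathcal{L}\subset \Lie(B)\otimes\mO_C$ that is closed under $p$-th powers, and $\mathcal L$ has degree $<0$ when it is nonconstant, since $\Lie(B)\otimes \mO_C$ is trivial. The quotient then satisfies an exact sequence
\[
0\to \mathcal{L}^{(p)}\to \Lie(\mY)\to \cdots,
\]
or dually $\Lie(\mY)$ contains $\Lie(B)\otimes\mO_C/\mathcal L$ twisted by Frobenius. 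Using $B\times C\to \mY\to (B\times C)^{(p)}$ (Frobenius factoring through the quotient), $\Lie(\mY)$ surjects onto a quotient of degree $-\deg$ and contains a piece of degree $-\deg\mathcal L>0$. Concretely, I would identify the image of $\Lie(\mY)\to \Lie(B^{(p)})\otimes\mO_C$ as a subsheaf whose complement is $F^*\mathcal L$, of degree $p\deg\mathcal L$, and compute that $\Lie(\mY)$ has a subsheaf of degree $-(p-1)\deg\mathcal{L}>0$ or something similar. For general $G$, we filter $G$ by its Frobenius kernels and induct, each step being a height-one quotient. At the first nonconstant step we obtain the positive-degree subsheaf. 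Taking the largest constant initial piece, we reduce to the case where the first step is already nonconstant.

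Next we transfer positivity from $\mY$ to $\mX$. Since $\mY\hookrightarrow\mX$ is a closed abelian subscheme, $\Lie(\mY)\subset\Lie(\mX)$ is a subbundle. Therefore $\mu_{\max}(\Lie(\mX))\ge\mu_{\max}(\Lie(\mY))>0$, which contradicts the assumption. The same argument gives the isomorphism statement used for Theorem~\ref{Main theorem}: if $\mu_{\max}(\Lie)\le0$, then $\tau$ is an isomorphism.

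The main obstacle is the core step: the degree computation for the Lie algebra of a quotient by a nonconstant infinitesimal subgroup, together with the reduction from arbitrary $G$ to height-one pieces. In particular, we need a nonconstant intermediate step to really produce a positive subsheaf rather than cancel out. For that step I would first try to use Dieudonné theory / the $p$-Lie algebra description of height-one group schemes to make the degree bookkeeping explicit.
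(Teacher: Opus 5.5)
Your proposal is correct in outline but takes a different route from the paper. The paper argues by contradiction inside $\Lie(\mX/C)$, in these steps:
\begin{enumerate}
\item It assumes $\mu_{\max}(\Lie(\mX/C))\le 0$. The universal property of the trace gives $\Lie(\tau)\neq 0$.
\item The Harder--Narasimhan filtration forces the image of $\Lie(\tau)$ into the slope-zero semistable piece $\mbL$.
\item Since slope-zero semistable bundles form an abelian category, $\Ker(\Lie(\tau))$ is a constant subbundle of $\Lie(B)\otimes\mO_C$. It is therefore the Lie algebra of a constant height-one subgroup of $\Ker\tau$.
\item The universal property of the trace kills that subgroup. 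So $\tau$ is a monomorphism on $B[F]\times C$, contradicting $\Ker(\tau)[F]\neq 0$.
\end{enumerate}
You instead pass to the image $\mY$ and use maximal variation only to know that $\mY$ is nonconstant. You then exhibit an explicit positive-degree subsheaf of $\Lie(\mY/C)\subset\Lie(\mX/C)$, peeling off constant kernels by successive quotients. This is essentially the Case I/Case II mechanism of the paper's proof of Theorem~\ref{maintheorem 2}. Your version is more elementary, since it needs no HN filtration or abelian-category argument. It is also more general: it applies to any nonconstant abelian subscheme that is a local-local quotient of a constant one, trace or not. What the paper's use of the universal property buys is that for $G=\Ker\tau$ no peeling is ever needed, because any constant piece of $\Ker\tau$ must vanish.

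The step you flag as the main obstacle is in fact immediate. You should not filter $G$ by Frobenius kernels, since these need not be flat over $C$. Because $\Lie$ is left exact, for $\pi\colon B\times C\to\mY$ with finite flat kernel $G$ one has $\Ker(\Lie(\pi))=\Lie(G)$. This is a $p$-closed subbundle of $\Lie(B)\otimes\mO_C$; it is saturated because the cokernel embeds in the locally free $\Lie(\mY/C)$.
\begin{itemize}
\item The image of $\Lie(\pi)$ is therefore $\Lie(B)\otimes\mO_C/\Lie(G)$, of degree $-\deg\Lie(G)\ge 0$, sitting directly inside $\Lie(\mY/C)$.
\item Higher Frobenius layers of $G$ never enter, so nothing can cancel. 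Your $\mathcal{L}^{(p)}$ and $(p-1)$ bookkeeping is unnecessary.
\item If $\deg\Lie(G)<0$, you are done.
\item If $\deg\Lie(G)=0$, Lemmas~\ref{lemma 2} and~\ref{lemma 4} give $\Lie(G)=\Lie(H_0)\otimes\mO_C$ for a constant height-one $H_0\times C$. This subgroup lies in $G$ by full faithfulness in Theorem~\ref{restricted lie algebra equivalence}. Replace $B$ by $B/H_0$ and repeat.
\end{itemize}
The process terminates with positive degree because $\mY$ is nonconstant. This requires $\Lie(G)\neq 0$ whenever $G\neq 0$, i.e.\ $G$ must be infinitesimal; a nonconstant \'etale kernel is invisible to $\Lie$. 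So drop the \'etale-cover detour and cite Theorem~\ref{trace}(2): since $K/k$ is regular, $\Ker\tau$ is connected (indeed local-local).

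Finally, to transfer positivity to $\Lie(\mX/C)$ you only need $\Lie(\mY/C)\to\Lie(\mX/C)$ to be injective. This holds because it is generically injective between locally free sheaves. So you need not worry whether the image of $\tau$ is literally a closed abelian subscheme.
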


The non-nefness of the  Hodge bundle of the abelian scheme $\mX/C$ is another  phenomenon specific to characteristic $p$. As in characteristic zero, it was first shown by  Griffiths in \cite{Griffiths1970} (see also \cite{Bost+2004+371+418} Corollary 2.7) that $\mE$ is nef. On the other hand, the conditions of Theorem~\ref{theorem non nef} can never occur in characteristic zero, since the trace morphism is purely inseparable. Thus, in characteristic zero, $\Tr_{K(C)/k}\mX_{K(C)}\neq 0$ if and only if $\mX$ has a constant abelian subscheme.

We therefore conjecture that if the Hodge bundle is non-nef, then it should have a factor determined by some isogeny leaf. In other words, 
the non-nefness of Hodge bundle and the non-triviality of trace should be pure characteristic $p$ phenomena and should determine each other. Hence we propose the following conjecture.

\begin{conjecture}\label{conj nonnef}
  Let $\mX/C$ be an abelian scheme and suppose it does not admit an isotrivial abelian subscheme. 
  If $\mu_{\min}(f_*\Omega_{\mX/C}^1)<0$, then for some finite \'etale cover $C'\twoheadrightarrow C$ the $K(C')/k$-trace morphism $$\tau_{K(C')/k}: (\Tr_{K(C')/k}\mX_{K(C')})_{K(C')}\rightarrow X_{K(C')}$$ is nonzero and cannot be descended to $k$.
\end{conjecture}

It should be noted that Conjecture~\ref{conj nonnef} concerns the converse direction of a result of Yuan:

\begin{theorem}[\cite{Yuan_2021}, Theorem 3.9]\label{Yuan}
  Let $C$ be a projective and smooth curve over a finite field $k$ of characteristic $p>0$, and $K=K(C)$. Let $A$ be an abelian variety over $K$ with $p$-rank 0 and trivial $K/k$-trace. Suppose $A$ has semistable reduction everywhere over $C$. Then the Hodge bundle of $A$ is not ample over $C$.
\end{theorem}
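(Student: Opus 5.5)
The approach is by contradiction, via a height-descent argument in the style of the Raynaud/Moret-Bailly theory of heights of abelian varieties over function fields. Assume the Hodge bundle $\omega_A := e^*\Omega^1_{\mathcal{A}/C}$ of the semistable (Néron) model $\mathcal{A}\to C$ is ample. I will use the height $h(A) := \deg \omega_A$ and two facts about it. First, by Moret-Bailly's theorem for semistable abelian varieties over function fields, $h(A)\geq 0$, and $h(A)=0$ forces $\omega_A$ to be torsion, hence $A$ is isotrivial up to isogeny. Second, heights are integers. The strategy is to produce an infinite chain of isogenies $A=A_0\to A_1\to A_2\to\cdots$ with $h(A_{i+1})<h(A_i)$ as long as $\omega_{A_i}$ stays positive. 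This is impossible, and the conclusion that some $A_i$ must reach height $0$ is also ruled out: trivial $K/k$-trace forbids any $A_i$ from being isotrivial, since $A_i$ is isogenous to $A$.

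The chain uses the $p$-rank $0$ hypothesis. It means $A[p^\infty]$ is connected (over $\bar K$), so both Frobenius and Verschiebung are nilpotent on the Dieudonné module of $A[p]$. In particular there is a nonzero finite flat infinitesimal subgroup scheme $G\subseteq A[F]$ over $K$; the canonical choice is $G=A[F]\cap A[V]$, or the image of $V^{n}$ for suitable $n$. Its schematic closure $\mathcal{G}$ in $\mathcal{A}$ is finite flat over $C$ away from the bad fibres, and by semistability it is finite flat over the whole $C$ after using the finite part of $\mathcal{A}[p]$. I then set $A_1=A/G$. The key computation is the isogeny formula
\[
h(A/G)=h(A)-\deg \omega_{\mathcal{G}}+\tfrac{1}{2}\cdot(\text{a term depending only on } \deg_K G \text{ and the conductor}),
\]
the function-field analogue of Raynaud's formula. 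For $G\subseteq A[F]$ the conormal sheaf $\omega_{\mathcal{G}}$ is a quotient of $\omega_A$. Ampleness of $\omega_A$ therefore gives $\deg\omega_{\mathcal{G}}>0$, and for a suitably chosen $G$ this outweighs the correction term.

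The main obstacle is this step. I must pin down the exact isogeny formula in equal characteristic. The sanity check is $G=A[F]$: there $A/G=A^{(p)}$ and the height multiplies by $p$ rather than decreasing. So the correct choice of $G$ has to come from Verschiebung rather than Frobenius. Concretely, I would look at $V\colon A^{(p)}\to A$ and its generalizations, and compare $\deg\omega_{\ker V}$ with $\deg \omega_{A^{(p)}}=p\,h(A)$; the $p$-rank $0$ condition is what makes $\ker V$ infinitesimal with large conormal degree. I would try to show, as a first attempt, that $h(A)\leq h(A^{(p)})-\deg\omega_{\ker V}$, and that ampleness forces $\deg\omega_{\ker V}>(p-1)h(A)$ in the $p$-rank-zero case. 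Here the nilpotence of the Hasse–Witt map $F^*\omega_A\to\omega_A$ should be used to show that its kernel, a subbundle of positive degree, injects into $\omega_{\ker V}$.

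I also need ampleness to propagate along the chain, or at least positivity of each new $\deg\omega_{\mathcal{G}_i}$. I would handle this by choosing each $G_i$ inside the image of the previous Verschiebung, so that $\omega_{A_{i+1}}$ contains a Frobenius pullback of a quotient of $\omega_{A_i}$; pullbacks of ample bundles are ample. Finiteness of $k$ enters only to guarantee that the field of definition stays $K$ and that the integral heights take values in a discrete set bounded below. Combining the strict decrease with $h\geq0$ and the exclusion of $h=0$ from the trace hypothesis yields the contradiction.
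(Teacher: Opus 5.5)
The paper gives no proof of this statement; it is quoted from Yuan. Your outline therefore has to stand on its own, and it has a genuine gap exactly at the step you flagged: the height descent runs in the wrong direction. In equal characteristic there is no archimedean correction term. If $\mathcal{G}\subseteq\mathcal{A}$ is finite flat with quotient $\mathcal{B}$, the co-Lie complex $\ell_{\mathcal{G}}\simeq[\omega_{\mathcal{B}}\to\omega_{\mathcal{A}}]$ gives $\deg\omega_{\mathcal{A}}-\deg\omega_{\mathcal{B}}=\deg\omega_{\mathcal{G}}-\deg H^{-1}(\ell_{\mathcal{G}})$. For $\mathcal{G}\subseteq\mathcal{A}[F]$ of height one, $H^{-1}(\ell_{\mathcal{G}})\cong F_C^*\omega_{\mathcal{G}}$. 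Hence
\[
h(\mathcal{A}/\mathcal{G})=h(\mathcal{A})+(p-1)\deg\omega_{\mathcal{G}}.
\]
Two checks confirm this. For $\mathcal{G}=\mathcal{A}[F]$ it recovers $h(A^{(p)})=p\,h(A)$. For the Moret--Bailly quotient by $H$, where $\omega_H\cong\mO(1)$, it gives height $p-1$, matching $\Lie(\mX/\mathbb{P}^1_k)\cong\mO(-p)\oplus\mO(1)$. Since $\omega_{\mathcal{G}}$ is a quotient of $\omega_A$, ampleness gives $\deg\omega_{\mathcal{G}}>0$. So quotienting by \emph{any} nonzero subgroup of $A[F]$ strictly raises the height: the correction term is exactly $p\deg\omega_{\mathcal{G}}$ and always beats $-\deg\omega_{\mathcal{G}}$. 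A drop in height needs a negative-degree quotient of the Hodge bundle, equivalently a positive-degree $p$-closed subbundle of $\Lie$. Ampleness forbids exactly that, so positivity is an obstruction to your descent, not its engine.

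The Verschiebung variant does not repair this. Quotienting $A^{(p)}$ by $\ker V$ just returns $A$, so it adds no new term to the chain. The inequality you aim for is also vacuous: $\deg\omega_{\ker V}-\deg H^{-1}(\ell_{\ker V})=h(A^{(p)})-h(A)=(p-1)h(A)$ holds identically, with no input from ampleness or from the $p$-rank. A descent would need a predecessor $A_1$ with $\omega_{A_1}$ ample and $A_1/G_1\cong A$ for some $G_1\subseteq A_1[F]$. Your outline never constructs one, and ampleness is not inherited along such isogenies. For example, factor $V^n\colon A^{(p^n)}\to A$ into height-one quotients: the height falls from $p^n h(A)$ to $h(A)$, so some intermediate Hodge bundle has a negative-degree quotient, even though $\omega_{A^{(p^n)}}=(F_C^n)^*\omega_A$ is ample.

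A further warning sign is that the finiteness of $k$ and the trace hypothesis do no real work in your argument. Ampleness alone already excludes $h=0$. Trivial trace would not exclude isotriviality anyway: twist a constant supersingular elliptic curve by an \'etale double cover of $C$. The remark following the restatement of this theorem in the last section indicates that the source argument relies on the finite generation of $A(K)$ (Lang--N\'eron). That is an arithmetic input your purely geometric descent does not supply.
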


We also establish an Arakelov-type inequality for an abelian scheme $\mX/C$ under the assumption of a $W_2(k)$-lifting.

\begin{theorem}[Arakelov inequality]
  Let $f: \mX\rightarrow C$ be a family of principally polarized  abelian varieties of relative dimension $g$ over a proper smooth $k$-curve. Assume that $f$ can be lifted to $W_2(k)$. Then \[\deg f_*\Omega_{\mX/C}^1\leq g(g(C)-1).\]
\end{theorem}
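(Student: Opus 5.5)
We prove the Arakelov inequality by combining the Higgs semistability that comes from $W_2(k)$-liftability with the structure of the Kodaira--Spencer map. Put $\mE = f_*\Omega^1_{\mX/C}$, a vector bundle of rank $g$. The principal polarization identifies $\Lie(\mX/C)$ with $\mE^\vee$. The Hodge filtration of $(\mbH^1_{dR}(\mX/C),\nabla_{GM})$ then has associated graded Higgs bundle
\[
(E,\theta) = (\mE \oplus \mE^\vee,\ \theta), \qquad \theta : \mE \to \mE^\vee \otimes \Omega^1_C ,
\]
where $\theta$ is the Kodaira--Spencer map. Since $f$ lifts to $W_2(k)$, the Ogus--Vologodsky / Lan--Sheng--Zuo theory recalled in the introduction and in Appendix~\ref{appendix} shows that $(E,\theta)$ is Higgs semistable. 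Note that $\deg E = 0$ and $\rank E = 2g$. I also expect that the lifting hypothesis (or an inequality $p > $ something in the appendix) is exactly what makes this theory available, so that no restriction on $p$ beyond what the appendix assumes is needed.

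Next I apply semistability to the Higgs subsheaves attached to $\theta$. Let $N = \Ker\theta \subseteq \mE$, and let $I \subseteq \mE^\vee$ be the saturation of the subsheaf $\theta(\mE)\otimes T_C$. Put $r = \rank I$, so that $\rank N = g - r$. Two subsheaves are $\theta$-invariant:
\begin{itemize}
\item $F_1 = \mE^\vee$, which is killed by $\theta$;
\item $F_2 = N$, which is also killed by $\theta$.
\end{itemize}
Higgs semistability gives $\deg \mE^\vee \le 0$ and $\deg N \le 0$. The first inequality is just $\deg \mE \ge 0$, which is not the bound we want. The upper bound should instead come from the exact sequence
\[
0 \to N \to \mE \to \Image(\theta) \to 0, \qquad \Image(\theta) \subseteq I \otimes \Omega^1_C ,
\]
which gives
\[
\deg \mE = \deg N + \deg \Image\theta \le \deg N + \deg I + r(2g(C)-2).
\]
Here $I \subseteq \mE^\vee$ is a subsheaf, and $\mE\oplus I$ is $\theta$-invariant because its image under $\theta$ lands in $I\otimes\Omega^1_C$. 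Semistability for this subsheaf gives $\deg\mE + \deg I \le 0$, that is, $\deg I \le -\deg\mE$. Substituting this and $\deg N \le 0$ into the displayed bound yields
\[
2\deg\mE \le r(2g(C)-2) \le g(2g(C)-2),
\]
and hence $\deg \mE \le g(g(C)-1)$. This is the claimed inequality.

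The delicate step is verifying exactly which subsheaves are $\theta$-invariant, since invariance is what justifies the degree bounds, and that the saturation does not spoil the estimate. Saturation only increases $\deg I$, and the bound $\deg I \le -\deg \mE$ holds for the saturated sheaf, so it is the correct choice. The case $g(C)=0$ deserves a remark: the inequality then reads $\deg \mE \le -g$, which contradicts $\deg\mE \ge 0$ unless $g=0$. So the argument implicitly shows that non-trivial liftable families over $\mathbb{P}^1$ do not exist, and I would check this for consistency.
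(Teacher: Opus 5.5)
Your argument is correct for $g(C)\ge 1$, but it handles the key step differently from the paper.

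The paper sets $K=\Ker\theta$ and takes the saturation $F'$ of $\theta(\mE)$ in $\mE^\vee\otimes\Omega_C$ (your $I\otimes\Omega_C$), with quotient $G$. It first derives $2\deg\mE\le g(2g(C)-2)+\deg K-\deg G$. It then bounds $-\deg G$ by dualizing to $0\to G^\vee\otimes\Omega_C\to\mE\to I^\vee\to 0$. Finally it uses the symmetry of the Kodaira--Spencer pairing (via the principal polarization) to identify $G^\vee\otimes\Omega_C$, the annihilator of $I$ in $\mE$, with $\Ker\theta$, which is a Higgs subsheaf.

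Since $\deg(G^\vee\otimes\Omega_C)=\deg\mE+\deg I$, the paper's inequality is literally your $\deg\mE+\deg I\le 0$. You get it directly from the $\theta$-invariant subsheaf $\mE\oplus I$, with no dualization and no symmetry of the Kodaira--Spencer map. Your route is therefore shorter, and it uses the polarization only to identify $\Gr^0$ with $\mE^\vee$. The saturation is not even needed, since $\theta(\mE)\otimes T_C$ itself works. Both arguments in fact prove the sharper bound $\deg\mE\le r\,(g(C)-1)$, where $r$ is the generic rank of $\theta$.

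Your last step, $r(2g(C)-2)\le g(2g(C)-2)$, needs $g(C)\ge 1$. The paper's final inequality $\frac{g-\mathrm{rank}\,G}{2}(2g(C)-2)\le g(g(C)-1)$ has the same restriction. For $g(C)=0$ the stated bound is actually false: a constant family $A\times\mathbb{P}^1$ lifts to $W_2(k)$ and has $\deg\mE=0>-g$.

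So your closing remark needs correcting. Over $\mathbb{P}^1$ your estimate gives $\deg\mE\le -r$. Combined with $\deg\mE\ge 0$, this forces $r=0$ and $\deg\mE=0$, so only isotrivial families lift. Your conclusion that non-isotrivial families over $\mathbb{P}^1$ do not lift is right, but the inequality $\deg\mE\le -g$ is not.
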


\subsubsection{Constant subgroup of $\text{BT}_1$}In the proof of Theorem~\ref{theorem non nef}, we construct a constant group subscheme $H_0\times C\subseteq \mX[F]$ arising  from an isogeny from a  constant abelian scheme. We eventually show that such a constant group subscheme $H_0\times C\subseteq \mX[F]$ comes from a constant abelian subscheme of $\mX$.
We expect that this holds more generally: the constancy of $\mX[F]$ should imply the isotriviality of $\mX$. We therefore  raise the following question:

\begin{question}
  If $H\times_kC$ is a constant group subscheme of $\mX[F]$, then does $\mX$ admit an isotrivial abelian subscheme $B_0$ with $B_0[F]\cong H$?
\end{question}

\begin{Organization}
In Section~2, we study the isogeny leaves using $l$-adic local system and the theory of traces. We show the characteristic zero non-liftability of quasi-projective or proper subvarieties of isogeny leaves.
In Section~3, we study the positivity properties of Hodge bundles of abelian schemes over curves and prove an Arakelov-type inequality assuming $W_2(k)$-liftability.
In Section~4, we prove the main theorem by proving the non-nefness of the Hodge bundle for a curve inside an isogeny leaf.
In Section~5, we mention some interesting problems that we plan
to investigate in future work.

\end{Organization}

\begin{acknowledgment}
The author thanks his advisor Ananth Shankar for introducing this problem and the theory of Higgs--de Rham flows of Lan--Sheng--Zuo. The author also thanks to Xinyi Yuan,  Jakub Witaszek, Ruofan Jiang and Shun Yin for helpful discussions. The first four chapters are independent results motivated by \cite{Oortsubvar} and \cite{ogus_nonabelian_2007}, together with an original intuition about the $l$-adic local system. The author later noticed the work of Yuan \cite{Yuan_2021} and shifted the emphasis to the positivity of Hodge bundles, which also motivates the writing of Chapter~5.

\end{acknowledgment}

\section{Subvarieties of isogeny leaves and $l$-adic monodromy}

\subsection{Preliminaries on isogeny Leaves}

Let $k$ be an algebraically closed field of characteristic $p>0$. Let $\mA_{g, d,  n, k}$ denote the (fine) moduli space of principally polarized abelian varieties of dimension $g$ with degree-$d$ polarization and a level-$n$ structure over $k$, where $n\geq 3$ and $(n, p)=1$. When no confusion arises, we write $\mA_g$ for $\mA_{g, 1, n, k}$.
The isogeny leaves can be considered as mod  $p$ reduction of the Rapoport-Zink spaces (see \cite{rapport-zink}). Alternatively,  Oort (see \cite{oort_foliations_2007}) defined the isogeny leaves as the maximal integral $H_\alpha$-scheme.

\begin{definition}[$H_\alpha$-scheme]
  Let $S$ be a $k$-scheme and let $(\mA, \lambda)\rightarrow S$ be a polarized abelian scheme over $S$. A reduced scheme $I\subseteq S$ is called an $H_\alpha$-scheme if there exists a polarized abelian variety $(B, \mu)$ over $k$, a scheme $T$ of finite type over $k$, and a surjective morphism $T\twoheadrightarrow I$ together with an isogeny $$\varphi: (B, \mu)\times T\rightarrow (\mA, \lambda)\times_IT$$ such that every geometric fibre of $\varphi$ is of local-local type. The pair $(T\rightarrow I, \varphi)$
is called a chart of the  $H_\alpha$ scheme. 

\end{definition}

\begin{remark}
Note that  for an $H_\alpha$-scheme $S\subset\mA_{g, 1, n} $ with isogeny $\varphi: (B, \mu)\times T\rightarrow (\mA, \lambda)\times_IT$ as in the definition, we do not require $\mu$ to be principal. Indeed, $(B, \mu)\in \mA_{g, d, n}(k)$, where $d=\deg(\varphi)$. Here, we adopt the convention that a polarization $\lambda=\varphi_L$ for some ample line bundle $L$ on $A$ has degree $d$ if $\chi(L)=d$. So, $\deg(\varphi_L)=d^2$.
\end{remark}

\begin{lemma}\label{local-local gpsch}
  A finite group scheme $G$ over a perfect field $k$ of characteristic $p>0$ is of local-local type if and only if $G$ is a successive extension of $\alpha_p$ over $k$.
\end{lemma}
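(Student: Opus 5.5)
The plan is to use the standard classification of finite commutative group schemes over a perfect field. Here $G$ is commutative, since local-local group schemes arising here are kernels of isogenies of abelian varieties; I will state this as a standing assumption. Recall that $G$ is of \emph{local-local type} if both $G$ and its Cartier dual $G^D$ are connected (infinitesimal). I will show first that $\alpha_p$ is local-local, then that the class of local-local group schemes is closed under extensions, and finally that every nonzero local-local $G$ contains a copy of $\alpha_p$ with local-local quotient. Induction on the order of $G$ then gives the ``only if'' direction.

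For the ``if'' direction, note that $\alpha_p$ is infinitesimal and self-dual, $\alpha_p^D\cong\alpha_p$, so it is local-local. Now let $0\to G'\to G\to G''\to 0$ be exact with $G',G''$ local-local. Then $G$ is connected: the connected-\'etale sequence shows that $\pi_0(G)$ is a quotient of $\pi_0(G')$ extended by $\pi_0(G'')$, and both vanish. Cartier duality is exact, so $0\to G''^D\to G^D\to G'^D\to0$ is exact, and the same argument shows $G^D$ is connected. Induction on the length of the filtration then gives the claim.

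For the ``only if'' direction, I will use Dieudonn\'e theory over the perfect field $k$. Finite commutative local-local group schemes correspond to finite-length $W(k)$-modules $M$ on which both $F$ and $V$ act nilpotently. Under this correspondence, $\alpha_p$ corresponds to $k$ with $F=V=0$, and subobjects and quotients correspond to submodules and quotients. Given $M\neq 0$, I will produce a nonzero $m$ with $pm=Fm=Vm=0$ as follows:
\begin{itemize}
\item[(i)] $M[p]$ is nonzero and is stable under $F$ and $V$.
\item[(ii)] $V$ is nilpotent on $M[p]$, so $N=\ker(V|_{M[p]})$ is nonzero.
\item[(iii)] $F$ commutes with $V$ up to Frobenius semilinearity, and $FV=VF=p=0$ on $M[p]$, so $F$ preserves $N$.
\item[(iv)] $F$ is nilpotent on $N$, so $\ker(F|_N)\neq 0$.
\end{itemize}
Any nonzero $m$ in this last kernel spans a $k$-line $km$ killed by $F$ and $V$, that is, a Dieudonn\'e submodule isomorphic to $D(\alpha_p)$. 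This gives either a subgroup or a quotient of $G$ isomorphic to $\alpha_p$; which one depends on whether the covariant or contravariant theory is used, and either suffices. The complementary quotient (or kernel) is again local-local by nilpotence of $F$ and $V$, and has smaller order, so induction finishes the argument.

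The main obstacle is bookkeeping rather than substance. One point is confirming that the semilinearity of $F$ and $V$ does not break stability of the kernels; this holds because the kernels are sub-$W(k)$-modules and $F$, $V$ are additive. The other is the identification of local-local objects with modules on which $F$ and $V$ are both nilpotent. If one prefers to avoid Dieudonn\'e theory, a fallback is to take the Frobenius kernel $G[F]\neq 0$, which has height one, and then take its $V$-kernel via the Lie algebra with $p$-operation nilpotent. But the Dieudonn\'e argument is cleaner.
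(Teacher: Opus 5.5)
Your proof is correct and follows the paper's route: both pass to Dieudonn\'e modules, extract a nonzero element killed by $F$, $V$ (and $p$) to obtain a submodule $(k,0,0)\cong D(\alpha_p)$ whose quotient still has nilpotent $F$ and $V$, and then induct on the order. The differences are cosmetic. The paper starts from $\Ker(F)$, which is automatically killed by $p=VF$ and stable under $V$, and then takes $\Ker(V)$ inside it, whereas you first restrict to $M[p]$ and then take $\ker V$ followed by $\ker F$; your treatment of the ``if'' direction via the connected--\'etale sequence and exactness of Cartier duality is a more explicit version of the paper's ``one direction is clear.''
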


\begin{proof}
  Let $p^n$ be the rank of $G$. By Dieudonn\'e theory, it suffices to show a finite-length Dieudonn\'e module $(M, F, V)$ over $W(k)$ has nilpotent $F, V$ if and only if $M$ has a Dieudonn\'e submodule $(k, 0, 0)\subseteq (M, F, V)$ and $$(M, F, V)/(k, 0, 0)$$ also has nilpotent $F, V$. 
  
  One direction is clear. Conversely, one can consider the Dieudonn\'e submodule $(\Ker(F), 0, V)$ which is nontrivial as $F$ is nilpotent. Similarly, $(\Ker(F)\cap\Ker(V), 0, 0)$ is a nontrivial Dieudonn\'e submodule of $(M, F, V)$, with $\Ker(F)\cap \Ker(V)\cong k^{\oplus m}$ for some $m$. This gives the desired submodule $(k ,0 ,0)\subseteq (M, F, V)$. The quotient has nilpotent $F, V$ obviously.

\end{proof}

More generally, a finite flat group scheme $G$ over a $k$-scheme $S$ is of local-local type if and only if $G$ is locally isomorphic to a successive extension of $\alpha_p$ over $S$. This explains the name of $H_\alpha$-scheme. The isogeny leaf will be considered as the irreducible component of maximal $H_\alpha$-scheme.

\begin{theorem}[Oort, \cite{oort_foliations_2007}]\label{maximal H scheme}

  Let $[(A, \lambda)]=x\in \mA_g(k)$ be a point corresponding to a principally polarized abelian variety over $k$. Then there exists a reduced, closed subscheme $\mI(x)\subset \mA_g$ which is a $H_\alpha$-scheme such that every irreducible component of $\mI(x)$ contains $x$ and $\mI(x)$ is the union of all irreducible $H_\alpha$-schemes containing $x$. 
\end{theorem}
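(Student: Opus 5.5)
The plan is to build $\mI(x)$ as a union of finitely many closed subschemes of $\mA_g$, each the image of a proper map from a moduli space of local-local isogenies out of a fixed polarized abelian variety. Maximality then comes from a boundedness argument.

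\textbf{Reduction to bounded degree.} Let $Z\subseteq\mA_g$ be an irreducible $H_\alpha$-scheme containing $x$, with chart $(T\to Z,\varphi)$ and $\varphi:(B,\mu)\times T\to(\mA,\lambda)\times_Z T$ an isogeny with local-local geometric fibres. Choose $t\in T$ over $x$. Restricting $\varphi$ to $t$ gives a local-local isogeny $\varphi_t: B\to A$ of $p$-power degree $p^m$.

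We may replace $B$ by $A$, uniformly in $t$. Since $\Ker\varphi_t$ is killed by $F^N$ for some $N$, there is an isogeny $\psi: A\to B^{(p^N)}$ with $\psi\circ\varphi_t=F^N$. Composing, every fibre $A_s$ with $s\in Z$ becomes the target of a local-local isogeny out of a single abelian variety. Concretely, $\varphi_s\circ V^N$ realizes $A_s$ as a quotient of $B^{(p^N)}$, and $B^{(p^N)}$ is a quotient of $A$ through local-local kernels.

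The key point is then a degree bound: the kernels $\Ker(A\to A_s)$ should have degree bounded independently of $Z$, depending only on $g$ and $x$. I would argue this via the $a$-number/Newton polygon: every fibre of $Z$ lies in the supersingular-type locus determined by $x$. Using Lemma~\ref{local-local gpsch} and the fact that a local-local subgroup of $A[p^\infty]$ sits inside $A[F^{N_0}]$ for some $N_0$ depending only on the isogeny class of $A[p^\infty]$, one gets the bound $N_0$. Manin/Oort's minimal $p$-divisible group argument supplies this bound, and I expect it to be the main obstacle.

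\textbf{Construction.} Fix $N=N_0$. Let $\mathcal{Q}$ be the scheme parametrizing finite flat subgroup schemes $K\subseteq A[F^{N}]$ (a closed subscheme of a Quot-type Grassmannian on the Dieudonn\'e module, hence projective) that are of local-local type. Local-local type is a closed condition, since nilpotence of $F$ and $V$ on the Dieudonn\'e module is closed. Restrict further to those $K$ such that some multiple of the induced polarization on $A/K$ descends to a principal polarization. This is again closed, by the usual argument with $K\subseteq K^\perp$ for $\lambda$-isotropic conditions relative to the pullback of $p^{N}\lambda$.

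Over this $\mathcal{Q}$ we get a principally polarized family $A\times\mathcal{Q}/\mathcal{K}$. Adding the level structure, which is transported from $A$ because the isogeny has $p$-power degree prime to $n$, gives a morphism $\mathcal{Q}\to\mA_g$. Because $\mathcal{Q}$ is proper, its image is closed. Define $\mI(x)$ as the reduced union of the images of those irreducible components of $\mathcal{Q}$ whose image contains $x$.

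\textbf{Verification.} Each such component, with $T=$ the component itself and $\varphi$ the quotient map, is a chart, so $\mI(x)$ is an $H_\alpha$-scheme. By construction every irreducible component passes through $x$. Conversely, by the reduction step every irreducible $H_\alpha$-scheme $Z\ni x$ has all its points in the image of $\mathcal{Q}$. Then $Z$ is irreducible, lies in that image, and contains $x$, so it lies in one of the chosen components' images. After possibly enlarging $N$ this holds uniformly over all of $Z$, since $Z$ is irreducible and the connecting kernels vary in a bounded family over a dense open of $T$ and hence everywhere by closedness.

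This gives both the existence statement and the union statement of Theorem~\ref{maximal H scheme}.
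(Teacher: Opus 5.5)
The paper gives no proof of this statement; it only cites Oort. Your architecture is the natural one: a proper moduli space of polarized local-local quotients of a fixed polarized abelian variety, whose closed image you cut down to the components through $x$. But the whole content of the theorem lies in the uniform bound, and there your proposal has a genuine gap.

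\textbf{The bound as stated is false.} You claim a local-local subgroup of $A[p^\infty]$ lies in $A[F^{N_0}]$ for some $N_0$ depending only on $A$. If $A$ is supersingular, $A[p^M]$ is local-local of order $p^{2gM}$, while $A[F^{N_0}]$ has order $p^{gN_0}$. For the same reason, kernels of local-local isogenies $A\to A_s$ are unbounded: precompose with $p^M$. What can hold is only an existence statement. You need a single polarized $(B,\mu)$ with local-local $\Ker(\mu)$ (in your normalization, a single $N_0$) such that every point on \emph{every} irreducible $H_\alpha$-scheme through $x$ is a principally polarized quotient of $(B,\mu)$.

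\textbf{Your reduction step does not produce this.} The integer $N$ comes from $\Ker\varphi_t$, i.e.\ from the chart of the particular $Z$. It is uniform along that $Z$, so the closing sentence about dense opens is not where the difficulty lies. It is not uniform across the infinitely many irreducible $H_\alpha$-schemes through $x$. Without a $Z$-independent bound you only exhibit their union as a countable increasing union of closed subsets of $\mA_g$, which need not stabilize. So you have not shown that the union is closed or admits a finite-type chart.

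\textbf{This cannot be deferred as a technicality.} A finite-type chart for $\mI(x)$ has finitely many components with locally constant isogeny degree. So the uniform bound is essentially equivalent to the theorem, and your argument reduces the statement to itself plus the routine properness of moduli of subgroup schemes. Supplying the bound needs genuine input from the structure theory of abelian varieties with fixed Newton polygon. For supersingular $x$ it follows from Li--Oort: every supersingular principally polarized abelian variety is a polarized quotient of a superspecial $E^g$, carrying one of finitely many polarizations with kernel $E^g[F^{g-1}]$, by a subgroup scheme of $E^g[F^{g-1}]$. Combined with your $F^N$/$V^N$ trick, this gives the $Z$-independent value $N=g-1$. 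Other Newton polygons need an analogous bounded-isogeny theorem.

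Two smaller points also need repair.
\begin{enumerate}
\item Your parameter space is degenerate. If $p^N\lambda$ descends to a principal polarization on $A/K$, then $K\subseteq A[p^N]$ has order $p^{gN}=|A[F^N]|$, so $K\subseteq A[F^N]$ forces $K=A[F^N]$. Moreover, your own reduction yields kernels that are maximal isotropic in $A[p^{2N}]$ for $p^{2N}\lambda$, and these do not lie in $A[F^N]$. You should instead parametrize subgroup schemes of $\Ker(\mu)$ that are maximal isotropic for $e_\mu$, for the single $(B,\mu)$ above.
\item If $x$ has positive $p$-rank, $\psi$ and $V^N$ are not of local-local type: their kernels contain a multiplicative, resp.\ \'etale, part. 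In that case the normalization $(A,p^{2N}\lambda)$ must be replaced by a polarization with local-local kernel.
\end{enumerate}
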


\begin{definition}[Isogeny leaf]
  Let $x=[(A, \lambda)]\in\mA_g(k)$ and let $\mI(x)\subset \mA_g$ as in the above theorem. An irreducible component of $\mI(x)$ with the induced reduced subscheme structure is called an isogeny leaf through $x$ and  denoted by $I(x)$. 
\end{definition}
\begin{remark}
  Alternatively, in \cite{ChaiOort_ModuliAV}, Chai and Oort defined the isogeny leaves by using quasi-$\alpha$-isogenies. Let $(A, \lambda), (B, \mu)$ be two principally polarized abelian varieties over $k$. Then a quasi-$\alpha$-isogeny $\psi: (A, \lambda)\dashrightarrow (B, \mu)$ is a diagram\[(A, \lambda)\overset{\phi}{\longleftarrow}(M, \nu)\overset{\varphi}{\longrightarrow}(B, \mu),\]
where $(M, \nu)$ is a polarized abelian variety such that $\phi^*\lambda=\nu=\varphi^*\mu$ and $\phi, \varphi$ are isogenies of local-local type. The set consisting of all principally polarized $(B, \mu)$ that are quasi-$\alpha$-isogenous to $x=[(A, \lambda)]\in \mA_{g, 1, n, k}$ is called a Hecke-$\alpha$-orbit of $x$, denoted by $H_\alpha(x)$. An isogeny leaf $I(x)$ is defined to be the union of
all irreducible components of Hecke-$\alpha$-orbit of $x$. 

One should note that the two definitions of $H_\alpha$-schemes are equivalent: let $\phi: (M, \nu)\rightarrow (A,\lambda)$ be a local-local isogeny with $\phi^*\lambda=\nu$. Since $\lambda$ is principal, $\nu: M\rightarrow M^\vee$  is an isogeny of local-local type. 
It is known that $\Ker(\nu)$ contains a Lagrangian subgroup scheme $H$ such that $\nu$ descends to a principal polarization on $B\coloneq M/H$, i.e., we have a principal polarization $\mu: B\rightarrow B^\vee$ with an isogeny of local-local type preserving polarization $\varphi: (M, \nu)\rightarrow (B, \mu)$.

\end{remark}

In other words, an isogeny leaf is a maximal integral $H_\alpha$-subscheme of $\mA_g$. 
Unlike the central leaves which are quasi-affine, the isogeny leaves are proper and hence would admit proper subvarieties. 

\begin{theorem}[Oort, \cite{oort_foliations_2007}]
  An irreducible, maximal $H_\alpha$-scheme $I\subset \mA_g$ is closed in $\mA_g$ and proper over $k$.
\end{theorem}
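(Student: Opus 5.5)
The statement to prove is that an irreducible maximal $H_\alpha$-scheme $I\subset \mA_g$ is closed and proper over $k$. Since $\mA_g$ is separated and of finite type over $k$, closedness and properness together reduce to the valuative criterion. We must show: for every complete DVR $R\supseteq k$ with fraction field $L$, every $L$-point of $I$ extends to an $R$-point of $\mA_g$ whose special point again lies in $I$. Once this holds, the closure $\bar I$ of $I$ in $\mA_g$ is proper. We will then show that $\bar I$ is itself an $H_\alpha$-scheme. Maximality then forces $\bar I=I$.

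\textbf{Step 1: properness.} Take a chart $(T\to I,\varphi)$ with $\varphi:(B,\mu)\times T\to (\mA,\lambda)\times_I T$ an isogeny whose geometric fibres are of local-local type.
\begin{itemize}
\item After a finite extension of $L$, the $L$-point of $I$ lifts to an $L$-point of $T$.
\item The abelian variety $\mA_L$ is then a quotient of the constant abelian variety $B_L$ by a finite local-local subgroup scheme $N_L\subset B_L[p^m]$, where $p^m$ bounds the degree of $\varphi$.
\item Take the schematic closure $N_R$ of $N_L$ in $B_R=B\times \spec R$. This is finite and flat over $R$, because $R$ is a DVR and $B_R[p^m]$ is finite over $R$.
\item Set $\mA_R:=B_R/N_R$. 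This is an abelian scheme over $R$ extending $\mA_L$, so $\mA_L$ has good reduction.
\end{itemize}
The special fibre $N_s$ is a closed subgroup of $B[p^m]$ of the same order as $N_L$. Its Dieudonné module is a specialization of one with nilpotent $F$ and $V$, and nilpotence is a closed condition, so $N_s$ is again of local-local type. Lemma~\ref{local-local gpsch} lets us phrase this as: $N_s$ is a successive extension of $\alpha_p$.

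\textbf{Step 2: extending the extra structure.}
\begin{itemize}
\item \emph{Polarization.} The polarization descends: $\mu$ pulls back along $B_R\to\mA_R$, and since $\lambda_L$ is principal and $N_L$ is Lagrangian in $\Ker(\mu)$, the flat closure $N_R$ stays isotropic. So $\mu$ descends to a principal polarization on $\mA_R$.
\item \emph{Level structure.} The level-$n$ structure extends because $(n,p)=1$ makes $\mA_R[n]$ étale.
\end{itemize}
This gives an $R$-point of $\mA_g$. Its special point lies in the closure of $I$ and is related to $(B,\mu)$ by a local-local isogeny.

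\textbf{Step 3: $\bar I$ is an $H_\alpha$-scheme.} Build a global chart over $\bar I$ from the scheme parametrizing local-local subgroup schemes of $B[p^m]$ of fixed order, isotropic for $\mu$.
\begin{itemize}
\item This scheme is projective: it is a closed subscheme of a Quot/Hilbert scheme of $B[p^m]$, with the local-local condition closed by the argument in Step 1.
\item Pulled back to it, the quotient map gives an isogeny from the constant family $(B,\mu)$.
\item The image of a suitable irreducible component surjects onto $\bar I$, by Step 1 and the fact that the image is closed.
\end{itemize}
Hence $\bar I$ is an irreducible $H_\alpha$-scheme containing $I$. Maximality gives $\bar I=I$, so $I$ is closed, and properness follows from Step 1.

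The main obstacle is Step 2: checking that the flat closure $N_R$ remains isotropic, so that a principal polarization descends, and that the local-local property survives specialization. I would handle isotropy via the Weil pairing, which is a closed condition on subgroup schemes of $B[p^m]$. I would handle local-local-ness via the closed condition that $F^{m}$ and $V^{m}$ vanish on $N$.
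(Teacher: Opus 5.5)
The paper gives no proof of this statement; it is quoted from Oort, so there is nothing internal to compare against. Your argument is correct. Its real content is Step~3, which is the standard route and is consistent with Oort's result, used in the proof of Theorem~\ref{subvar in isogeny leaf}, that charts can be taken proper.

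Since $\mu=\varphi^*\lambda$ with $\lambda$ principal, every kernel $N=\Ker(\varphi_t)$ is an isotropic subgroup of $\Ker(\mu)$ of order $\sqrt{\deg\mu}$. These subgroups form a projective $k$-scheme $\mathcal{G}$, closed in the Hilbert scheme of the finite scheme $\Ker(\mu)$. The quotient by the universal subgroup, with its descended principal polarization and transported level structure, gives $\psi\colon\mathcal{G}\to\mA_g$. By the fine moduli property the chart factors as $T\to\mathcal{G}\xrightarrow{\psi}\mA_g$ via $t\mapsto\Ker(\varphi_t)$.

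It is this factorization, not Step~1 as you claim, that yields $I\subseteq\psi(\mathcal{G})$. Now take an irreducible component $\mathcal{G}_1$ containing a point over the generic point of $I$. Then $\psi(\mathcal{G}_1)$ is a closed, proper, irreducible $H_\alpha$-scheme containing $I$, and maximality gives $\psi(\mathcal{G}_1)=I$ directly. You need not first show the image equals $\bar I$, which your "suitable component surjects onto $\bar I$" does not quite justify.

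So Steps~1--2 are logically superfluous. They give an independent, more hands-on proof that $\bar I$ is proper, via good reduction through the flat closure of the kernel. That proof also relies on the refined valuative criterion, which tests only $L$-points of the dense open $I\subseteq\bar I$; you should state this. Your announced claim that the special point lies in $I$ is never proved, and it is not needed. However, pointwise $\alpha$-isogenies at boundary points do not produce a chart, so Step~3 cannot be avoided anyway.

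Two simplifications are available:
\begin{itemize}
\item $\Ker(\mu)$ is an extension of $\Ker(\varphi^t)\cong(\Ker\varphi)^D$ by $\Ker\varphi$, hence is itself local-local. The local-local condition on $\mathcal{G}$, and your specialization argument for it, are therefore automatic.
\item The level structure is constant on connected components of $T$, because $B[n]$ is a constant \'etale group.
\end{itemize}
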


By construction of the isogeny leaf, we obtain the following characterization of subschemes contained in the isogeny leaf.

\begin{theorem}\label{subvar in isogeny leaf}
  Let $f: Z\rightarrow \mA_g$ be a morphism from an integral scheme $Z$ over $k$. Then $f$ factors through some isogeny leaf $I(x)$ if and only if possibly adding some level structure, there exists a principally polarized abelian variety $(B, \mu)$ over $k$ and an isogeny $$\phi: (B, \mu)\times_{\spec(k)} Z\rightarrow \mA$$ over $Z$ such that every geometric fibre of $\phi$ is of local-local type, where $\mA\rightarrow Z$ is the family of principally polarized abelian varieties induced by $f$.

\end{theorem}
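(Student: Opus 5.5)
The plan is to prove the two directions separately, using Oort's description of $\mI(x)$ as the union of all irreducible $H_\alpha$-schemes through $x$ (Theorem~\ref{maximal H scheme}) and the definition of an $H_\alpha$-scheme via charts.

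\emph{Sufficiency.} Suppose, after adding level structure, we are given $(B,\mu)$ and an isogeny $\phi\colon (B,\mu)\times_k Z\to \mA$ of local-local type on geometric fibres. Let $I\subseteq \mA_g$ be the reduced scheme-theoretic image of $f$. Since $Z$ is integral, $I$ is irreducible and reduced. The pair $(Z\twoheadrightarrow I,\phi)$ is then a chart: $\phi$ is an isogeny $(B,\mu)\times Z\to (\mA,\lambda)\times_I Z$. If $Z$ is not of finite type, I would first spread out $\phi$ over a finite type model, or replace $Z$ by a finite type integral scheme dominating $I$; since $\mA_g$ is of finite type this is harmless. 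The polarization compatibility $\phi^*\lambda=\mu$ is part of the hypothesis, interpreting ``isogeny of polarized schemes'' as in the definition. Hence $I$ is an irreducible $H_\alpha$-scheme. Picking any $x\in I(k)$, Theorem~\ref{maximal H scheme} shows $I\subseteq \mI(x)$. As $I$ is irreducible, it lies in some irreducible component, that is, in an isogeny leaf $I(x)$. So $f$ factors through $I(x)$; here we use that $Z$ is reduced, so factoring through the reduced closed subscheme is automatic. Level structure only changes things by a finite étale cover, which is compatible with all of this.

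\emph{Necessity.} Suppose $f$ factors through $I(x)$. By Theorem~\ref{maximal H scheme}, $\mI(x)$ is an $H_\alpha$-scheme, so there is a chart $(T\twoheadrightarrow \mI(x),\psi)$ with $\psi\colon (B,\mu)\times T\to \mA\times T$. Form $Z\times_{\mI(x)}T$; this surjects onto $Z$. The issue is that we want the isogeny over $Z$ itself, not over a cover.

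I expect this descent to be the main obstacle. My first attempt is to take an irreducible component $T'$ of $(Z\times_{\mI(x)}T)_{\mathrm{red}}$ dominating $Z$. Over $T'$ the kernel $\Ker(\psi)$ is a finite flat local-local subgroup scheme of the constant $B\times T'$. I would then use the fact that such subgroup schemes are parametrized by a proper Quot-type scheme of finite subgroup schemes of $B$ of bounded order, on which $\mA_g$ receives a quasi-finite map; this is essentially Oort's construction of the leaf. From this, try to choose the chart so that $T\to\mI(x)$ is finite and purely inseparable over the generic point. Then, replacing $B$ by a suitable Frobenius twist or quotient $B/\Ker F^m$ and composing with Verschiebung-type isogenies (still local-local), one can produce the isogeny over $Z$ itself, after possibly passing to a finite étale cover absorbed by the phrase ``adding some level structure''.

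Finally, I would replace the polarization $\mu$ by the pullback polarization, noting as in the Remark above that it need not be principal. If principality is genuinely required, I would use the Lagrangian-quotient argument of that Remark to arrange a principal $(B,\mu)$ with a local-local isogeny.
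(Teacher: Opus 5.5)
Your sufficiency direction is the paper's argument. The reduced image of $f$ is an irreducible $H_\alpha$-scheme, so it lies in $\mI(x)$ by Theorem~\ref{maximal H scheme} and, being irreducible, in one of its components. Strictly, $Z\to I$ is only dominant, whereas a chart must be surjective; use a dense open subset of the constructible image, then the closedness of $\mI(x)$.

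\textbf{The gap: necessity.} The paper does not descend along $Z\times_{\mI(x)}T\to Z$. It quotes Oort's result that the chart can be taken generically finite and proper, asserts that after imposing a suitable level structure the isogeny exists over $I(x)$ itself, and pulls it back along $f$. Your substitute for this step fails as written, for two reasons.
\begin{itemize}
\item Nothing gives you a chart that is purely inseparable over the generic point. Oort only provides a generically finite chart, and a separable part of its degree is not removed by any Frobenius twist.
\item The claim that Verschiebung-type isogenies are ``still local-local'' is false as soon as the $p$-rank $f$ is positive. $\Ker(V\colon\mA^{(p)}\to\mA)$ is Cartier dual to $\mA^\vee[F]\supseteq\mu_p^{f}$, so its geometric fibres have an \'etale quotient of order $p^{f}$. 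Isogeny leaves do occur in non-ordinary strata of positive $p$-rank (e.g.\ a constant ordinary elliptic curve times a Moret--Bailly family). There your untwisting produces an isogeny that is not of local-local type.
\end{itemize}

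\textbf{The missing idea: no untwisting is needed.} Homomorphisms between abelian varieties over a field $K$ are the points of an \'etale $K$-scheme. So they do not change under purely inseparable extensions, and anything defined over $\bar K$ is already defined over $K^s$. Moreover, $\Hom(B_{K^s},\mA_{K^s})$ embeds $\Gal(K^s/K)$-equivariantly into $\Hom_{\Z_l}(T_lB,T_l\mA)$. The descent then goes as follows.
\begin{enumerate}
\item Take $K=K(Z)$ and a closed point of the generic fibre of your dominating component $T'$. This gives a local-local isogeny over a finite extension of $K$, so the $l$-adic monodromy of $\mA_K$ is finite.
\item Impose level-$n$ structure, $n\geq 3$, taking $l$ an odd prime dividing $n$, or $l=2$ if $4\mid n$. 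The monodromy then lies in a torsion-free congruence subgroup, hence is trivial.
\item Galois therefore acts trivially on $\Hom(B_{K^s},\mA_{K^s})$, and the isogeny descends to $K$.
\end{enumerate}
This is the mechanism behind the paper's phrase ``after considering an appropriate level structure''.

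You would still need to extend the isogeny from the generic point to all of $Z$, and your proposal does not address this. It is automatic when $Z$ is normal: homomorphisms of abelian schemes extend uniquely over a normal base, and connectedness of the kernel and of its Cartier dual persists under specialization. That covers the smooth curves used later in the paper.
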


\begin{proof}
  
Let $f: Z\rightarrow I(x)$ be a given map from $Z$ to the isogeny leaf $I(x)$. By \cite{oort_foliations_2007} Corollary 4.7, one can take the chart $T\twoheadrightarrow I(x)$ to be generically finite and proper. Hence, after considering an appropriate level structure, we conclude we have an isogeny $(\mA, \lambda)\rightarrow (B, \mu)\times I(x)$ expressing the $H_\alpha$-scheme structure of $I(x)$. Pulling back this isogeny along $f: Z\rightarrow I(x)$, we get the desired isogeny over $Z$ with respect to some level structure.

On the other hand, suppose that there exists a polarized abelian variety $(B, \mu)$ over $k$ and an isogeny $\phi: (B, \mu)\times_{\spec(k)} Z\rightarrow \mA$ over $Z$ such that every geometric fibre of $\phi$ is of local-local type. Without loss of generality, we assume $(B, \mu) $ is a geometric fibre of $\mA$ over $x\in Z(k)$. By the definition of $H_\alpha$-scheme, we know that the image of $f: Z\rightarrow \mA_g$ is contained in some $H_\alpha$-scheme. Since the isogeny leaf $I(x)$ is the maximal integral $H_\alpha$-scheme containing $x$ by Theorem~\ref{maximal H scheme}, we know that $f$ factors through some isogeny leaf.

\end{proof}

Finally, we explain the foliation structure provided by central leaves and isogeny leaves inside Newton strata.
Let $\eta$ be a Newton polygon of some abelian variety of dimension $g$. Recall that the Newton stratum 
$W_\eta$ is equidimensional of dimension $\triangle(\eta)$, which only depends on $\eta$ (see \cite{chai_moduli_2007}). And if $[(A, \lambda)]=x\in \mA_g(k)$, the central leaf $$\mC(x)=\{(B,\mu)\in \mA_g(k); (B[p^\infty], \mu[p^\infty])\cong (A[p^\infty], \lambda[p^\infty])\}$$ is equidimensional of dimension $c(\eta)$, which depends only on the Newton polygon of $A$ (see \cite{oort_foliations_2007}). In particular, when $\xi$ is the supersingular Newton polygon, we have $\triangle(\xi)=\left \lfloor \frac{g^2}{4}\right \rfloor$ and $c(\xi)=0$. 

\begin{theorem}[Oort, \cite{oort_foliations_2007}]\label{almostproduct}
    Let $n\geq 1$ and let $\eta$ be a Newton polygon of some abelian variety of dimension $g$. Let $W\subset W_\eta^\circ$ be an irreducible component of the open Newton stratum $W_\eta^\circ$. Then there exist integral schemes $T$ and $J$ of finite type over $k$ and a finite surjective $k$-morphism
\[\Phi: T\times J\twoheadrightarrow W\subset \mA_{g, n},\]
such that\[\forall u\in J(k), \Phi(T\times\{u\})\text{ is a central leaf in }W,\]
and every central leaf in $W$ can be arised in this way.
Similarly, \[\forall v\in T(k), \Phi(\{v\}\times J)\text{ is an isogeny leaf in }W,\]
and every isogeny leaf in $W$ can be arised in this way.

\end{theorem}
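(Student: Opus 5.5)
The statement is Oort's almost product structure, so I would follow the strategy of \cite{oort_foliations_2007}. The two factors are built separately: $T$ will be a finite cover of a single central leaf, and $J$ a finite cover of a single isogeny leaf. These are glued by isogenies of $p$-divisible groups.

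\emph{Step 1: the base point and the central leaf $T$.} Fix a point $x=[(A,\lambda)]\in W(k)$. By Grothendieck--Katz and the purity results underlying the equidimensionality facts quoted above, the central leaf $\mC(x)\cap W$ is smooth. Over a finite cover $T\to \mC(x)$ the $p$-divisible group is geometrically fibrewise constant, and one can trivialize it: there is an isomorphism $\mA[p^\infty]_T\cong A[p^\infty]\times T$ compatible with polarizations. This is the key input that $p$-divisible groups over a central leaf become constant after a finite (purely inseparable plus \'etale) base change. I would choose $A$ to be a minimal $p$-divisible group in the Newton isogeny class. That choice makes the trivialization easiest, since the endomorphism ring is maximal.

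\emph{Step 2: the isogeny leaf $J$ and the construction of $\Phi$.} By Theorem~\ref{maximal H scheme} and Theorem~\ref{subvar in isogeny leaf}, after adding level structure there is a proper, generically finite chart $J\to I(x)$. It comes with a constant polarized $(B,\mu)$ and a local-local isogeny $\varphi:(B,\mu)\times J\to \mA_J$. Write $G=\Ker(\varphi)\subset B[p^\infty]\times J$, and arrange $B$ so that $B[p^\infty]\cong A[p^\infty]$ after a fixed quasi-isogeny. On $T\times J$ consider the abelian scheme $\mA_T\times J$. Using the trivialization from Step 1, transport $G$ to a finite flat subgroup $G'\subset \mA_T[p^\infty]\times J$. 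Then set $\mathcal{Y}=(\mA_T\times J)/G'$, which carries the descended principal polarization; the descent works because $G$ is Lagrangian for the induced form. Let $\Phi$ be the classifying map of $\mathcal{Y}$.

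\emph{Step 3: verify the properties.} First, fix $u\in J(k)$. Then $\Phi(T\times\{u\})$ parametrizes abelian varieties whose $p$-divisible group is $A[p^\infty]/G_u$, which is a constant group, so the image lies in one central leaf. A dimension count using $\dim T=c(\eta)$, together with closedness of central leaves in their Newton stratum, shows the image is a whole leaf. Second, fix $v\in T(k)$. The family over $\{v\}\times J$ is a local-local quotient of a constant abelian variety, so it lies in an isogeny leaf, and equals that leaf by maximality. Third, $\Phi$ is quasi-finite: if two points have the same image, then $G'$ and the $p$-divisible group datum are determined up to finitely many choices, by the finiteness of quasi-isogenies of bounded degree. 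Combining $\dim T+\dim J=\triangle(\eta)=\dim W$ with irreducibility of $W$ gives dominance. Finiteness then follows from properness of $J$ together with a valuative argument along $T$.

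\emph{Main obstacle.} I expect the hardest step to be surjectivity. Every point $y\in W$ must be reachable, so one must show that every central leaf in $W$ meets the image. Equivalently, $y$ must be connected to the central leaf of $x$ by a local-local quasi-isogeny lying inside the chart. The approach I would try first is to factor a quasi-isogeny from $y$ to a point with minimal $p$-divisible group as an \'etale-multiplicative piece followed by a local-local piece, then absorb the first piece into the central leaf. One must also control degrees uniformly, so that a single finite-type $J$ suffices on the irreducible component $W$. Oort obtains this from the boundedness of the minimal isogeny degree for fixed $\eta$.
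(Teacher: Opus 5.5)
The paper gives no proof of this statement; it is quoted from Oort. Your architecture matches Oort's: trivialize the polarized $p$-divisible group over a finite cover of a central leaf, take an $H_\alpha$-chart of an isogeny leaf, and glue by quotienting. However, two steps fail as written.

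\emph{The gluing in Step~2 is not well defined.} Your $T$ trivializes $(A,\lambda)[p^\infty]$ at the principally polarized point $x$. But $G$ lives in $B[p^\infty]$, where the chart source $(B,\mu)$ has, in general, non-principal $\mu$, and $B\to A$ is a nontrivial local-local isogeny. A quasi-isogeny between $B[p^\infty]$ and $A[p^\infty]$ does not transport finite subgroup schemes. Even granting some $G'\subset \mathcal{A}_T[p^\infty]\times J$, the principal polarization $\lambda$ cannot descend to $(\mathcal{A}_T\times J)/G'$. Descent needs a polarization of the abelian scheme in whose kernel $G'$ is Lagrangian, and in general $\mathcal{A}_T$ has none with $p$-part $\mu[p^\infty]$. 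Using $p^N\lambda$ fails in positive $p$-rank, because a local-local subgroup of $\mathcal{A}_v[p^N]$ is never Lagrangian there. Choosing $A$ minimal does not change any of this.

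The repair is to let $T$ be finite over the central leaf of $(B,\mu)$ in $\mathcal{A}_{g,d,n}$, so that $(\mathcal{B},\mu)[p^\infty]_T\cong (B,\mu)[p^\infty]\times T$. Then the Lagrangian condition on $G_u\subset\ker\mu$ depends only on the polarized $p$-divisible group, and $\mu$ descends to a principal polarization on $\mathcal{B}_v/G_u$ for every $v$. Since $T$ now lies over $\mathcal{A}_{g,d,n}$, showing that $\Phi(T\times\{u\})$ is a whole central leaf of $\mathcal{A}_{g,1,n}$ needs the isogeny-correspondence argument. If $w$ lies in the central leaf of $\Phi(v_0,u)$, then $(X_w,\lambda_w)[p^\infty]\cong (B,\mu)[p^\infty]/G_u$. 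Pulling back along $B[p^\infty]\to B[p^\infty]/G_u$ then gives $(B',\mu')$ in the central leaf of $(B,\mu)$ with $B'/G_u\cong X_w$.

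\emph{Step~3 is circular.} The count $\dim T+\dim J=\triangle(\eta)$ presupposes $\dim I(x)=\triangle(\eta)-c(\eta)$. But the dimension of isogeny leaves is an output of the almost product structure, not an input; the paper uses the theorem in exactly this way right after stating it, to conclude that supersingular components are isogeny leaves. Similarly, ``equals that leaf by maximality'' only gives $\Phi(\{v\}\times J)\subseteq I(y)$.

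So dominance, and the fact that every central leaf and every isogeny leaf of $W$ occurs, must be proved directly. Take $J$ to be (a component of) the scheme of local-local subgroups of $B[p^N]$ that are Lagrangian in $\ker\mu$, for one uniform $N$; this object is intrinsic to $(B,\mu)[p^\infty]$, not merely a chart of $I(x)$. Then show that every $(X_y,\lambda_y)$ with $y\in W$ receives a polarized local-local isogeny of bounded degree from some $(B',\mu')$ with $(B',\mu')[p^\infty]\cong (B,\mu)[p^\infty]$. Your last paragraph points toward this, but Step~3 does not use it.

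Finally, quasi-finiteness does not follow from ``finitely many quasi-isogenies of bounded degree'': subgroups of bounded order in $X_y[p^N]$ come in positive-dimensional families. What you need is that a central leaf meets an isogeny leaf in only finitely many points. Equivalently, a family with constant $p$-divisible group that is a local-local quotient of a constant abelian variety is itself constant. This follows from rigidity of homomorphisms between constant $p$-divisible groups over a reduced connected base.
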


In particular, Theorem~\ref{almostproduct} implies that every irreducible component of the supersingular locus $W_\xi$ is an isogeny leaf by the dimension calculation before, since in this case $T$  must be a point. \\

\subsection{Preliminaries on trace of abelian varieties}

Let $K/k$ be a primary extension of fields, i.e., the algebraic closure of $k$ in $K$ is
purely inseparable over $k$. If moreover, $k$ is algebraically closed in $K$ and $K/k$ is separable, then we also call such $K/k$ a regular extension. 
For example, if $k$ is perfect and $K$ is the function field of  a smooth and geometrically connected
$k$-scheme, $K/k$ is a regular extension.

\begin{definition}[Chow's $K/k$-trace]
  Let $K/k$ be a primary extension and let $A$ be an abelian variety over $K$. The $K/k$-trace is a final object in the category of pairs $(B, \phi)$, where $B$ is a $k$-abelian variety and $\phi: B_K\rightarrow A$ is a $K$-homomorphism of abelian varieties. 

\end{definition}

We refer the reader to read Brain Conrad's article \cite{Conrad2006ChowsKA} for more details about the $K/k$-trace. We recall the following properties that we will use later.

\begin{theorem}[\cite{Conrad2006ChowsKA}, Section 6]\label{trace}
Let $K/k$ be a primary extension and let $A$ be an abelian variety over $K$.

  (1).  The $K/k$-trace \[\tau=\tau_{A, K/k}:(\Tr_{K/k}A)_K\rightarrow A\]always exists. Moreover, $\Ker(\tau)$ is a finite flat group scheme over $K$ with connected Cartier dual.

  (2). If $K/k$ is furthermore a regular extension, then $\Ker(\tau)$ is connected.

  (3). There exists a unique abelian subvariety $A'\subseteq A$ such that $\Tr_{K/k}(A/A')=0$ and $\tau_{A', K/k}:\Tr_{K/k}(A')_K\rightarrow A'$ is an isogeny.

  (4). Let $k'/k$ be an arbitrary extension. Suppose that either $k'/k$ is separable or $K/k$ is regular. The the base change $(\Tr_{K/k}A)\otimes_kk', \tau\otimes_{K}Kk' )$ gives the $Kk'/k'$-trace of $A_{Kk'}$.
  
\end{theorem}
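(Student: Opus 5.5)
Since Theorem~\ref{trace} gathers standard facts from \cite{Conrad2006ChowsKA}, I would reconstruct them from one tool: \emph{rigidity of homomorphisms of abelian varieties}. For abelian varieties $B$ over $k$ and $A$ over $K$, the functor $\Hom(B_K,A)$ is represented by an étale, finitely generated group scheme. Over a primary extension it is therefore ``constant up to purely inseparable ambiguity.'' Everything below comes from this rigidity, Poincaré complete reducibility, and duality.

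\emph{Step 1: existence.} I would first build Chow's $K/k$-image, meaning the initial object among pairs $(A\to C_K)$ with $C$ defined over $k$. The idea is to spread $A$ out over a smooth model $U$ of $K$. Then use that $\Hom$ from a family over $U$ to a constant abelian variety is discrete; taking $C$ to be the Albanese-type target gives a universal quotient. The trace is then obtained by duality, $\Tr_{K/k}A \coloneq (\mathrm{Im}_{K/k}A^\vee)^\vee$. Finality follows from the universal property of the image applied to duals, using $\Hom(B_K,A)=\Hom(A^\vee,B^\vee_K)$.

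\emph{Step 2: properties of $\Ker(\tau)$.} Let $N=\Ker(\tau)^{\circ}_{\mathrm{red}}$, the reduced identity component. If $N$ were positive dimensional, it would be an abelian subvariety of $(\Tr_{K/k} A)_K$. By rigidity and primarity, such abelian subvarieties descend to $k$, so $\tau$ would factor through the smaller $k$-abelian variety $\Tr_{K/k}A/N$. This contradicts finality, so $\Ker(\tau)$ is finite. For the Cartier-dual statement, let $G\subseteq\Ker(\tau)$ be the maximal subgroup whose dual is étale, i.e.\ the multiplicative part. Multiplicative finite group schemes are rigid, so $G$ descends to a subgroup $G_0\subseteq\Tr_{K/k}A$. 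Then $\tau$ factors through $(\Tr_{K/k}A/G_0)_K$, and finality forces $G_0=0$. For (2), the same argument applies to the étale quotient $\Ker(\tau)/\Ker(\tau)^\circ$. When $k$ is algebraically closed in $K$ and $K/k$ is separable, an étale subgroup of a constant abelian variety is defined over $k$ (the Galois descent datum is trivial), so again the kernel must be connected.

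\emph{Step 3: property (3).} I would take $A'=\tau\big((\Tr_{K/k}A)_K\big)$. Then $\tau_{A'}$ is an isogeny, because $\Tr_{K/k}A'=\Tr_{K/k}A$ by the universal property. If $\Tr_{K/k}(A/A')\neq 0$, choose a Poincaré complement $A''\subseteq A$ of $A'$, so that $A''\to A/A'$ is an isogeny. A nonzero map from a constant abelian variety to $A/A'$ then lifts, up to isogeny, to one into $A''$. That map would enlarge the image of the trace, which is a contradiction. Uniqueness follows because any such $A'$ must equal this image.

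\emph{Step 4: base change.} For $k'/k$ separable algebraic, I would use Galois descent together with rigidity: the Galois action on $\Hom$ is compatible with the descent data, so the $k'$-trace descends. The general case, with $k'/k$ arbitrary and $K/k$ regular, is the step I expect to be the main obstacle. Here $k'$ may be transcendental, and one must show that no new homomorphisms from $k'$-abelian varieties appear over $Kk'$. The plan is to spread out over a finitely generated $k$-subalgebra $R\subseteq k'$ and specialize to closed points. Rigidity ensures that homomorphisms are locally constant in families, and regularity ensures that $K\otimes_k R$ is a domain. Together these show that every homomorphism $B'_{Kk'}\to A_{Kk'}$ comes from the base-changed trace. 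This is exactly where Conrad's careful analysis is needed, and I would follow his argument in \cite{Conrad2006ChowsKA}, Section~6.
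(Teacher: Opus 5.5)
The paper does not prove this statement; it quotes it from Conrad, \S6. Your sketch therefore has to stand on its own.

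\textbf{What works.} The following parts are sound:
\begin{itemize}
\item the duality $\Tr_{K/k}A=(\mathrm{Im}_{K/k}A^\vee)^\vee$;
\item the descent to $k$ of abelian subvarieties and of multiplicative-type subgroups of $T_K$, where $T=\Tr_{K/k}A$, using $\Gal(K^s/K)\twoheadrightarrow\Gal(k^s/k)$ for primary $K/k$;
\item the finality trick: a $k$-subgroup $H_0\subseteq T$ with $(H_0)_K\subseteq\Ker(\tau)$ must vanish;
\item your argument for (3).
\end{itemize}
Step~1 does not really construct the image. It can be completed with the same ingredients: a quotient $A\to B_K$ of maximal dimension and minimal kernel, descent of the image of $A\to(B\times C)_K$, and $\Hom_K(B_K,C_K)=\Hom_k(B,C)$.

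\textbf{The gap in (2).} The argument fails as written. The finality trick needs a \emph{subgroup} of $\Ker(\tau)$ coming from $k$, but $\Ker(\tau)/\Ker(\tau)^\circ$ is a quotient. Over the imperfect field $K$ the connected--\'etale sequence need not split. For example, if $a\in K\setminus K^p$, the kernel of $x\mapsto x^{p^2}-a^{p-1}x^p$ on $\mathbb{G}_a$ has \'etale quotient $\Z/p\Z$ but no nonzero $K$-point, so no \'etale subgroup lifts that quotient. The same issue affects $\Ker(\tau)^\circ_{\mathrm{red}}$ in your finiteness step, which need not be a subgroup; descend $(\Ker(\tau)_{\bar K})^\circ_{\mathrm{red}}$ instead.

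Here is a repair when $k$ is perfect.
\begin{itemize}
\item Over $k$ one has $T[n]=T[n]^\circ\times T[n]_{\mathrm{red}}$.
\item The $\bar K$-points of $\Ker(\tau)$ form a $\Gal(\bar k/k)$-stable subgroup of $T[n]_{\mathrm{red}}(\bar k)$.
\item Hence $(\Ker(\tau)_{\bar K})_{\mathrm{red}}=(E_0)_{\bar K}$ for some $k$-subgroup $E_0\subseteq T$.
\item By descent $(E_0)_K\subseteq\Ker(\tau)$, and finality gives $E_0=0$.
\end{itemize}
This is all the paper actually uses, since its $k$ is perfect. For imperfect $k$, however, $T[n]$ need not have a reduced subgroup. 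You must then first base change to $k^{\mathrm{perf}}$, which is (4) for a purely inseparable $k'/k$; this is allowed because $K/k$ is regular. So (2) logically comes after (4), not before.

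\textbf{The gap in (4).} You do not prove (4); you defer it to Conrad. Two specific problems remain.
\begin{itemize}
\item Your Galois-descent branch handles only separable algebraic $k'/k$. The spreading-out plan gives no reduction for purely inseparable $k'$: for instance $k'=k(a^{1/p})$ with $a\notin k^p$ is its own spread-out. This is exactly the case the Galois branch omits.
\item Suppose you grant that every homomorphism $B'_{Kk'}\to A_{Kk'}$, with $B'$ over $k'$, lands in $\tau(T_K)_{Kk'}$. Lifting it through the isogeny $\tau_{Kk'}$, whose kernel is not defined over $k$, still requires that no nonzero $k'$-subgroup of $T_{k'}$ has base change contained in $\Ker(\tau)_{Kk'}$. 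That is finality over $k'$. Proving it means showing such subgroups descend to $k$, where finality kills them.
\end{itemize}
This descent is the real content of (4), and your sketch does not supply it.
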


In particular, we see that if $K/k$ is a regular extension, the $K/k$-trace $\tau: (\Tr_{K/k}A)_K\rightarrow A$ gives a $K$-isogeny from $(\Tr_{K/k}A)_K$ onto its image. It is not always the case that $\tau$ is an isogeny itself.

On the other hand, $\Tr_{K/k}A$ is the natural object for studying the isotriviality of an abelian scheme $\phi: \mA\rightarrow C$ over a smooth proper $k$-curve $C$. Let $k$ be a perfect field and let $K=K(C)$ be the function field, so that $K/k$ is a regular extension. Let $A=\mA_K$ be the generic fibre of $\mA$.

By the theory of N\'eron models, the trace map $\tau: (\Tr_{K/k}A)_K\rightarrow A$ gives a homomorphism of abelian schemes $$\widetilde{\tau}:  \Tr_{K/k}A\times_{\spec(k)}C\rightarrow \mA$$ over $C$. By the universal property of the trace, any homomorphism from a constant abelian scheme $B\times C\rightarrow \mA$  factors through $\widetilde{\tau}$. So, the existence of a nontrivial homomorphism from a constant abelian scheme to $\mA$ forces the $K/k$-trace to be nontrivial.

Conversely, if $\text{char}(k)=0$,  $\Ker(\tau)$ has to be trivial by Theorem~\ref{trace}. In particular, if $B\times C\hookrightarrow \mA$ is an injective homomorphism from a constant abelian scheme, then $B$ is an abelian subvariety of $\Tr_{K/k}A$ by the universal property of the trace.

Similarly, if $\text{char}(k)=p$,  $\Ker(\tau)$ has to be trivial if it can be descended to $k$.

\subsection{$l$-adic monodromy behavior of subvarieties in isogeny leaves}
Let $Z$ be a geometrically connected integral scheme over $k$ and  let $\varphi: Z\rightarrow \mA_g$ be a morphism. It gives a family of principally polarized abelian varieties over $Z$, say $\mA\rightarrow Z$. Let $\bar{z}\in Z(k)$ be a geometric point and consider the $l$-adic local system given by the $l$-adic rational Tate modules $$\rho_l: \etpi(Z, \bar{z})\rightarrow \GL(T_l(\mA_{\bar{z}})).$$ One can then define the algebraic $l$-adic monodromy group on $Z$ as  the Zariski closure of the image of $\rho_l$ in $\GL(T_l(\mA_{\bar{z}}))\subset \GL(V_l(\mA_{\bar{z}}))\cong \GL_{2g}(\Q_l)$. We will denote this group by $G_l(Z)$. 

If $Z$ is furthermore normal, there is a natural surjection\[\etpi(\eta, \bar{\eta})\twoheadrightarrow \etpi(Z, \bar{\eta}),\]from  the \'etale fundamental group of the generic point $\eta$ of $Z$. So,  $G_l(Z)$ coincides with the $l$-adic monodromy group of the Galois group $\Gal(K(Z)^s/K(Z))$ acting on the $l$-adic Tate module $V_l(\mA_{\bar{\eta}})$ of the generic fibre, where $K(Z)$ is the function field of $Z$.
By Tate's $l$-adic isogeny theorem, $\rho_l$ is a semisimple representation restricted to the generic fibre and hence $G_l(Z)$ is a reductive group over $\Q_l$.

We write $G_l(Z)^\circ$ for the neutral component of $G_l(Z)$, which is called the connected algebraic $l$-adic monodromy group of $Z$.


\begin{remark}\label{trivial l-adic connected algebraic monodromy}
  In the proof of Theorem~\ref{subvar in isogeny leaf}, one sees that we can add a level structure so that $(\mA, \lambda)\rightarrow (B,\mu)\times I(x)$ is a purely inseparable isogeny expressing the $H_\alpha$-scheme structure of $I(x)$. In particular, this implies that $\rho_l:\etpi(I(x), \bar{x})\rightarrow \GL(T_l(\mA_{\bar{x}}))$ has finite image, and hence $G_l(I(x))^\circ$ is trivial. 

  Note also that $G_l(I(x))^\circ$ is not reductive generally, as $I(x)$ is not necessarily normal.
\end{remark}

Our first observation is the following characterization of the subvarieties of isogeny leaves in terms of $l$-adic monodromy groups.

\begin{theorem}\label{trivial l-adic monodromy}
  Let $C$ be a smooth proper irreducible curve over $k$ and let $\varphi: C\rightarrow \mA_g$ be a nontrivial map. Then $\varphi$ factors through some isogeny leaf $I(x)$ if and only if the connected algebraic $l$-adic monodromy group $G_l(C)^\circ$ is trivial. 
\end{theorem}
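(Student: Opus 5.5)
The forward direction is essentially Remark~\ref{trivial l-adic connected algebraic monodromy}. If $\varphi$ factors through $I(x)$, then Theorem~\ref{subvar in isogeny leaf} gives, after adding level structure, an isogeny $\phi:(B,\mu)\times C\to \mA$ whose geometric fibres are of local-local type. Such an isogeny has $p$-power degree, so it induces an isomorphism of $l$-adic Tate modules $T_l(B)\otimes\Z_l \cong T_l(\mA_{\bar z})$ that is equivariant for $\etpi(C,\bar z)$. The constant family has trivial monodromy, so the monodromy of $\mA$ on the level-structure cover is trivial. Hence the original $\rho_l$ has finite image and $G_l(C)^\circ$ is trivial.

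For the converse, assume $G_l(C)^\circ=1$. Then $\rho_l$ has finite image, since $G_l(C)/G_l(C)^\circ$ is finite. Passing to the finite étale cover $C'\to C$ that trivializes this image, we may assume $\etpi(C',\bar z)$ acts trivially on $V_l$. Let $K=K(C')$ and let $A$ be the generic fibre, so $\Gal(K^s/K)$ acts trivially on $V_l(A)$. I will show that $\tau:(\Tr_{K/k}A)_K\to A$ is an isogeny. Write $A'\subseteq A$ as in Theorem~\ref{trace}(3), so $\Tr(A/A')=0$. Since $V_l(A/A')$ is a quotient of $V_l(A)$, the Galois action on it is also trivial. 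By Zarhin--Faltings (Tate's isogeny theorem over function fields), $\mathrm{End}(A/A')\otimes\Q_l\cong \mathrm{End}_{\Gal}(V_l(A/A'))$, which is the full matrix algebra $M_{2h}(\Q_l)$ with $h=\dim A/A'$. This algebra is not a semisimple $\Q$-algebra of the allowed type for an abelian variety over a field with trivial trace unless $h=0$: over a function field, an abelian variety with full $l$-adic endomorphisms forces $\rho$ to be the monodromy of a CM or supersingular-like constant variety. I would make this rigorous through Grothendieck/Oort's argument, as in \cite{Oortsubvar} Theorem 2.1. Triviality of monodromy lets the $\ell^n$-torsion points all be $K$-rational. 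By Zarhin's finiteness, together with the Grothendieck isotriviality theorem that a family with trivial $l$-adic monodromy over a curve in char $p$ is isogenous to a constant family after a finite extension, $A/A'$ is isogenous over $K$ to some $B_K$. That would give a nonzero trace unless $A/A'=0$. So $A=A'$ and $\tau$ is an isogeny. This is the main obstacle, and I would invoke \cite{Oortsubvar} Theorem 2.1 directly as the intended input.

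Extending by Néron models, $\widetilde\tau:\Tr_{K/k}A\times C'\to\mA_{C'}$ is an isogeny of abelian schemes. Its kernel is a finite flat group scheme. By Theorem~\ref{trace}(1),(2) its generic fibre is connected with connected Cartier dual, i.e. local-local, and flatness propagates this to all geometric fibres. Now apply Theorem~\ref{subvar in isogeny leaf} to $C'$, using the induced polarization on $B=\Tr_{K/k}A$. This shows that $C'\to\mA_g$ factors through some isogeny leaf. Since $C'\to C$ is surjective and isogeny leaves are closed, the image of $C$ lies in that leaf. Reducedness of $C$ then gives the factorization of $\varphi$.
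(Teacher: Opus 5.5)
Your argument is essentially the paper's: the forward direction is Remark~\ref{trivial l-adic connected algebraic monodromy}, and the converse passes to a finite \'etale cover with trivial monodromy, invokes \cite{Oortsubvar} Theorem 2.1 to make the trace an isogeny, extends by N\'eron models, and concludes with Theorem~\ref{subvar in isogeny leaf}. The one step the paper spells out and you only gloss is that the pulled-back polarization $\phi^*\lambda$ on $B\times C'$ descends to a polarization of $B$ over $k$: by rigidity $\Hom_{C'}(B\times C',B^\vee\times C')=\Hom_k(B,B^\vee)$, and whether a map is a polarization can be checked after a field extension. Drop the Zarhin--Faltings paragraph rather than trying to make it rigorous. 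Tate's isogeny theorem fails over $K=k(C')$ with $k=\bar{\F}_p$: a constant ordinary elliptic curve has trivial monodromy, yet its $\mathrm{End}\otimes\Q_l$ is $2$-dimensional rather than $M_2(\Q_l)$. Your final argument does not depend on that detour.
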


The theorem is inspired by the following result of Oort.

\begin{theorem}[\cite{Oortsubvar} Theorem 2.1]\label{oort}
Let $C$ be a smooth proper curve over a field $k$. Let $\mA/C$ be an abelian scheme and let $K=K(C)$ be the function field of $C$. Let $A\coloneq \mA_K$ be the generic fibre of $\mA$. If the image of the $l$-adic monodromy representation\[\rho_l:\Gal(K^s/K)\rightarrow \GL_{2g}(\Z_l)\]has commutative image. Then there exists an unramified covering between smooth proper curves $C'\twoheadrightarrow C$ such that the $K'/k$-trace map\[\tau_{K'/k, A_{K'}}: (\Tr_{K'/k}A)_{K'}\rightarrow A_{K'}\]is an isogeny, where $K'=K(C')$. In particular, this shows that the abelian scheme $\mA\times_CC'$ is isogenous to a constant abelian scheme $B\times_{\spec(k)}C'$ via a purely inseparable isogeny, where $B=\Tr_{K'/k}A_{K'}$.

\end{theorem}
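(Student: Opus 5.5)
The plan is to reduce the statement to two facts: (a) a commutative monodromy image is automatically finite on the geometric fundamental group, and (b) trivial geometric monodromy plus Tate's isogeny theorem over function fields forces the trace to be an isogeny. Throughout, $C$ is geometrically connected, and I work first over $\bar k$. Standard spreading-out arguments (write $C$, $\mA$ and the monodromy as the base change from a finitely generated field, then specialize to a finite field) let me assume whenever convenient that $C$, $\mA$ are defined over a finite field $\F_q$. This uses that the image of geometric $\etpi$ is unchanged under such specialization, since $\mA\to C$ is smooth proper and the local system $R^1f_*\Z_l$ is lisse.

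\textbf{Step 1: geometric monodromy is finite.} Let $\Pi=\etpi(C_{\F_q})$ and $\Pi^{\rm geo}=\etpi(C_{\bar{\F}_q})$. By Zarhin's semisimplicity theorem, $V_l(A)$ is a semisimple $\Pi$-module. Since the image of $\rho_l$ is commutative, after extending scalars to $\bar{\Q}_l$ the representation is a sum of characters $\chi_i$. Each $\chi_i|_{\Pi^{\rm geo}}$ factors through the pro-$l$ abelianization, whose torsion-free part is $T_l(\mathrm{Jac}\,C)$, up to a finite part.

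Because the full image is commutative, conjugation by a Frobenius lift $F\in\Pi$ acts trivially on $\chi_i(\Pi^{\rm geo})$. Hence $\chi_i|_{\Pi^{\rm geo}}$ factors through the Frobenius coinvariants $T_l(\mathrm{Jac}\,C)_F$. By the Weil conjectures for $\mathrm{Jac}\,C$, the eigenvalues of Frobenius have absolute value $q^{1/2}\neq 1$, so $1-F$ is an isogeny on $T_l(\mathrm{Jac}\,C)$ and these coinvariants are finite. Thus $\rho_l(\Pi^{\rm geo})$ is finite. Taking the corresponding finite étale cover gives $C'\to C$, which I may also take to trivialize $\mA[l^2]$ by Serre's lemma, so that the image is torsion-free. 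Over $C'_{\bar k}$ the geometric monodromy is then trivial.

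\textbf{Step 2: trivial monodromy gives the trace isogeny.} Now work over $k=\bar k$ with $K'=K(C')$ and $A'=A_{K'}$. Pick a closed point $c\in C'(k)$ and set $B=\mA_c$. Trivial monodromy together with specialization gives a $\Gal(K'^s/K')$-equivariant isomorphism $V_l(B_{K'})\cong V_l(A')$, where both sides carry trivial action. By the Tate conjecture for abelian varieties over function fields (Zarhin, Mori), $\Hom(B_{K'},A')\otimes\Q_l\cong\Hom_{\Gal}(V_lB,V_lA')$. This produces a homomorphism $\psi:B_{K'}\to A'$ whose $\Q_l$-linear extension is close enough to an isomorphism that $\psi$ is an isogeny; here I would use that isogenies form an open subset of $\Hom\otimes\Q_l$.

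By the universal property of the trace, $\psi$ factors through $\tau:(\Tr_{K'/k}A')_{K'}\to A'$. Hence $\tau$ is surjective. Since $\tau$ is injective up to finite kernel by Theorem~\ref{trace}, it is an isogeny. Because $K'/k$ is regular, $\Ker(\tau)$ is connected by Theorem~\ref{trace}(2), and so is its Cartier dual by Theorem~\ref{trace}(1); hence $\tau$ is purely inseparable. Extending over $C'$ via Néron models gives the isogeny $B\times C'\to\mA\times_CC'$.

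\textbf{Main obstacle.} I expect the hardest part to be Step 1 over a general field $k$. The weight argument needs a finite base field, so the crux is to check carefully that the geometric monodromy image is preserved under specialization. The commutativity hypothesis must also be applied to the arithmetic image, not only the geometric one, since it is what makes Frobenius act trivially on $\chi_i(\Pi^{\rm geo})$. If that descent proves delicate, the fallback is Grothendieck's theorem that the geometric monodromy group is semisimple, so its neutral component is both abelian and semisimple, hence trivial.
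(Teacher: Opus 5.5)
\textbf{Step~2 has a genuine gap.} It invokes the Tate conjecture in a setting where it is false. The theorems of Zarhin and Mori require the ground field to be finitely generated over $\F_p$, but you apply them over $K'=k(C')$ with $k=\bar k$. There the map $\Hom(B_{K'},A')\otimes\Q_l\to\Hom_{\Gal}(V_lB,V_lA')$ is in general not surjective.

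For a concrete failure, take $B$ an ordinary elliptic curve over $k$ and $A'=B_{K'}$. All $l$-power torsion of $B$ is already $k$-rational, so the Galois action is trivial and the right side is $4$-dimensional. The left side is $\mathrm{End}_k(B)\otimes\Q_l$, which is $2$-dimensional.

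In your situation both Tate modules carry the trivial action, so $\Hom_{\Gal}(V_lB,V_lA')$ is all of $\Hom(V_lB,V_lA')$ and imposes nothing. Asking that the image of $\Hom(B_{K'},A')\otimes\Q_l$ contain an invertible map is equivalent to asking for an isogeny $B_{K'}\to A'$, which is exactly what the theorem asserts. So $\psi$ is never actually produced.

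You can repair this in either of two ways.
\begin{enumerate}
\item Descend $C'$, $\mA$ and a rational point $c$ to a finitely generated field $k_0\subset k$. The representation of $\etpi(C'_0)$ is trivial on the geometric part, so it factors through $\Gal(\bar k_0/k_0)$. Via the section at $c$, it is the Galois representation on $V_l(\mA_c)$. Hence $V_l(A_{K'_0})\cong V_l((\mA_c)_{K'_0})$ as modules over the Galois group of the finitely generated field $K'_0=k_0(C'_0)$. Zarhin--Mori genuinely applies there, and base change to $K'$ gives $\psi$.
\item Follow the paper, which avoids the Tate conjecture altogether. Put $Q=A'/\tau((\Tr_{K'/k}A')_{K'})$. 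By Theorem~\ref{trace}(3), $\Tr_{K'/k}Q=0$, so by Lang--N\'eron $Q(K')$ is finitely generated. But $T_lA'\to T_lQ$ is surjective, so the action on $T_lQ$ is trivial and $Q[l^\infty](K'^s)=Q[l^\infty](K')$. That group is finite only if $Q=0$. Hence $\tau$ is surjective, and your final paragraph (finite connected kernel with connected dual, then N\'eron extension) completes the argument.
\end{enumerate}

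\textbf{Step~1 needs your fallback, not your main argument.} Your main argument needs the \emph{arithmetic} image over the finite field to be commutative, so that $V_l$ splits into characters of $\Pi$. That is not inherited from the hypothesis. The hypothesis concerns $\Gal(K^s/K)$ for the original $k$, and for $k=\bar k$ it only constrains the geometric image. After spreading out and specializing, the new Frobenius need not commute with the geometric image.

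The correct route is the fallback you listed: Deligne's Weil~II, 1.3.9, applied to the pure weight-one sheaf $R^1f_*\Q_l$. It shows the neutral component of the geometric monodromy group is semisimple, hence trivial if commutative. Combine this with the invariance of the geometric image under specialization, and Step~1 is sound. The paper does not argue this step at all: it treats only trivial image over $\bar k$, and reduces finite image to that case by an \'etale cover.
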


\begin{proof}[Sketch of proof: ]
    We only prove the case when $k=\bar{k}$ and $\rho_l$ has trivial image.
    Consider the $K/k$-trace morphism\[
    \tau:(\Tr_{K/k}A)_K\rightarrow A,
    \]which gives an exact sequence\[
    (\Tr_{K/k}A)_K\rightarrow A\rightarrow B\rightarrow 0\tag{$*$}
    \]for some abelian variety $B$ over $K$. 

    Since $\tau$ has a finite connected kernel, $(*)$ induces a short exact sequence of $l$-adic Tate modules\[
    0\rightarrow T_l(\Tr_{K/k}A)\rightarrow T_lA\rightarrow T_lB\rightarrow 0.\tag{$**$}
    \]
By the assumption on $\rho_l$, $\Gal(K^s/K)$ acts trivially on all these three modules. In particular, one has $B[l^\infty](K^s)\cong B[l^\infty](K)$. Let $\Tr_{K/k}B$ be the $K/k$-trace of $B$. Then a theorem of Lang-N\'eron (see \cite{Conrad2006ChowsKA} Theorem 7.1) says that $B(K)/\Tr_{K/k}B(k)$ is a finitely generated abelian  group. So, the $l$-power torsions $B[l^\infty](K)/\Tr_{K/k}B[l^\infty](k)$ is a finite group. 
The identifications $B[l^\infty](K^s)\cong B[l^\infty](K)$ then implies the embedding $T_l(\Tr_{K/k}B)\hookrightarrow T_lB$ induced by the $K/k$-trace morphism has finite cokernel. In particular, the $K/k$-trace morphism of $B$
is an isogeny.

Consider the $K$-isogeny $B\oplus (\Tr_{K/k}A)_K\rightarrow A$ induced by $(*)$. It then induces a $K$-isogeny \[
(\Tr_{K/k}B\oplus\Tr_{K/k}A)_K\rightarrow A.
\]The universal property of trace then implies $\tau: (\Tr_{K/k}A)_K\rightarrow A$ is an isogeny.

\end{proof}

\begin{proof}[Proof of \ref{trivial l-adic monodromy}]
  Suppose first that $\varphi$ factors through an isogeny leaf, i.e., $\varphi: C\rightarrow I(x)$. By Remark~\ref{trivial l-adic connected algebraic monodromy}, $I(x)$ has trivial connected algebraic $l$-adic monodromy group. So, the same holds for $C$ as $\varphi$ factors through $I(x)$.

  Conversely, suppose $G_l(C)^\circ$ is trivial. This implies that $\rho_l:\etpi(C, \bar{x})\rightarrow \GL_{2g}(\Z_l)$ has finite image. So, there exists a finite \'etale cover $C'\rightarrow C$ such that $C'$ has trivial $l$-adic monodromy image. In particular, the $l$-adic monodromy representation associated to the generic point $$\rho_l: \Gal(K(C')^s/K(C'))\rightarrow \GL_{2g}(\Z_l)$$ has commutative image. By Theorem~\ref{oort}, after pulling back to an unramified covering between smooth proper $k$-curves $C''\rightarrow C'$, the induced abelian scheme $$(\mA\times_CC')\times_{C'}C''\cong \mA\times_CC''$$ is isogenous to a constant abelian scheme $B\times_{\spec(k)}C''$ via a local-local type isogeny, with $B=\Tr_{K(C'')/k}A_{K(C'')}$.

  Let $L=K(C'')$ and $\phi: B\otimes_kL\rightarrow A\otimes_KL$ the $L/k$-trace morphism (isogeny). The principal polarization $\mu$ on $A_L$ induces an $L$-polarization $\mu'=\phi^*\mu\coloneq \phi^\vee\circ\mu\circ\phi$ of $B_L$.

Since
the group scheme $Hom_k(B, B^\vee)$ is \'etale over $k$, one has \[\Hom_k(B, B^\vee)\cong \Hom_L(B_L, B^\vee_L),\]
and hence $\mu'$ descends to a $k$-isogeny of B$$\mu'_0: B\rightarrow B^\vee.$$ It is known that if a $k$-isogeny $\varphi$ becomes a polarization after base change to some field extension $K/k$, then so is $\varphi$ over $k$. In particular, $\mu'_0$ is a polarization of $B$ over $k$.

  Finally, the theory of N\'eron model again implies that  we have found an isogeny $\phi: (B, \mu_0')\times_{\spec(k)}C''\rightarrow (\mA, \mu)\times_CC''$ of local-local type for some unramified cover $C''\twoheadrightarrow C$.
  This implies that the scheme theoretic image of $C$ in $\mA_g$ is an $H_\alpha$-scheme by definition. So, $$\varphi: C\rightarrow \mA_g$$ factors through some isogeny leaf by Theorem~\ref{subvar in isogeny leaf}.

\end{proof}

\begin{corollary}\label{cor1}
  Let $\varphi: C\rightarrow I(x)$ be a morphism from a smooth proper irreducible curve $C$ to some isogeny leaf $I(x)$. Then there exists an \'etale $k$-covering $D\twoheadrightarrow C$ such that there exists a local-local type isogeny $\phi: B\times_{\spec(k)}D\rightarrow \mA\times_CD$, where $\mA\rightarrow C$ is the family of abelian varieties induced by $\varphi$.
\end{corollary}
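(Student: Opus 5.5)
The plan is to reduce Corollary~\ref{cor1} to the second half of the proof of Theorem~\ref{trivial l-adic monodromy}. Since $\varphi$ factors through $I(x)$, the first half of that proof, via Remark~\ref{trivial l-adic connected algebraic monodromy}, shows that $G_l(C)^\circ$ is trivial. This holds because the $l$-adic representation of $\mA/C$ is pulled back from $I(x)$, where it has finite image. Hence $\rho_l:\etpi(C,\bar{x})\rightarrow \GL_{2g}(\Z_l)$ has finite image, so its kernel defines a connected finite \'etale cover $C'\rightarrow C$. Over $C'$ the monodromy is trivial. In particular the Galois representation of $K(C')$ on $T_l$ of the generic fibre is trivial, hence commutative.

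Next I apply Theorem~\ref{oort} to $\mA\times_C C'$. This gives an unramified cover $C''\rightarrow C'$ between smooth proper curves with the following property. Setting $L=K(C'')$ and $B=\Tr_{L/k}A_L$, the trace $\tau:B_L\rightarrow A_L$ is an isogeny. I take $D=C''$. The composite $D\rightarrow C'\rightarrow C$ is finite \'etale, as a composite of finite \'etale maps of smooth proper curves.

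It remains to spread out $\tau$ and check its type. By the N\'eron mapping property, since $\mA\times_C D$ is the N\'eron model of its generic fibre over the smooth curve $D$, $\tau$ extends uniquely to a homomorphism $\phi:B\times_{\spec(k)}D\rightarrow \mA\times_C D$. Its kernel is generically finite, and both sides are abelian schemes of the same relative dimension. Therefore $\phi$ is fibrewise an isogeny; the fibre dimension of the kernel is upper semicontinuous and its degree is locally constant for a quasi-finite flat kernel. The local-local type is the step needing the most care.

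By Theorem~\ref{trace}(2), since $L/k$ is regular, $\Ker(\tau)$ is connected. By Theorem~\ref{trace}(1), its Cartier dual is connected too. So the generic fibre of $\Ker(\phi)$ is local-local. To pass to every geometric fibre, I would use that $\Ker(\phi)$ is finite flat over $D$ of constant degree. Its fibrewise étale part and multiplicative part have ranks that can only drop under specialization. The ranks of the étale quotient and the multiplicative-type subgroup are lower semicontinuous, e.g. computed from the $p$-rank or the Dieudonn\'e module and $F,V$-nilpotence. Since they vanish at the generic point, I would like to conclude that they vanish everywhere.

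If this semicontinuity argument is delicate, there is an alternative. Note that $\phi$ has degree a power of $p$, since the $l$-adic Tate modules are identified generically by $(**)$. Then compose $\phi$ with a Frobenius/Verschiebung factorization to check that $\phi[F]$ and $\phi[V]$ are nilpotent on each fibre. This is exactly the statement recorded in the final line of the proof of Theorem~\ref{trivial l-adic monodromy}, so I would cite that conclusion directly.
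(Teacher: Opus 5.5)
Your proposal is correct and follows the paper's route: the corollary is the converse half of the proof of Theorem~\ref{trivial l-adic monodromy} with $D=C''$, and the paper's one-line proof adds only the point you assert without comment, namely that the unramified cover $C''\rightarrow C'$ of smooth proper curves is flat (\cite{hartshorne_algebraic_1977}, III, Proposition 9.7) and hence \'etale. Your extra fibrewise checks go beyond what the paper writes; the local-local check is sound, since the \'etale and multiplicative ranks of a finite flat group scheme can only drop under specialization, but your fibrewise-isogeny step is incomplete, because upper semicontinuity of fibre dimension only makes $\phi$ an isogeny over an open set, so instead extend a quasi-inverse $\psi$ with $\psi\circ\tau=[n]$ by the N\'eron property and conclude from $\psi\circ\phi=[n]$ on all of $D$.
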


\begin{proof}
  Indeed,  an unramified covering between irreducible smooth proper curves is flat (see \cite{hartshorne_algebraic_1977}, Chapter III, Proposition 9.7), hence \'etale.
\end{proof}




Theorem~\ref{trivial l-adic monodromy} can be generalized to higher dimensional varieties as follows.

\begin{theorem}\label{trivial l-adic monodromy for general projective variety}
    Let $\varphi: Z\rightarrow \mA_g$ be a morphism from a smooth integral projective variety $Z$ over $k$. Then  $G_l(Z)^\circ$  is trivial if and only if $\varphi$ factors through some isogeny leaf $I(x)$.
\end{theorem}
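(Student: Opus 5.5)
The plan is to reduce to the curve case, Theorem~\ref{trivial l-adic monodromy}, for the hard direction, and to argue exactly as in its proof for the easy direction.

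\emph{Easy direction.} Suppose $\varphi$ factors through an isogeny leaf $I(x)$. By Remark~\ref{trivial l-adic connected algebraic monodromy}, after adding a level structure the monodromy of $I(x)$ acting on $T_l$ has finite image. The representation of $\etpi(Z,\bar z)$ factors through $\etpi(I(x),\varphi(\bar z))$, so it also has finite image, and hence $G_l(Z)^\circ$ is trivial.

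\emph{Hard direction.} Assume $G_l(Z)^\circ=1$. Then $\rho_l$ has finite image, so there is a finite étale cover $Z'\to Z$, with $Z'$ smooth projective and connected, on which $\rho_l$ is trivial.

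Let $K=K(Z')$, which is regular over $k$, and let $A$ be the generic fibre. I want to run Oort's argument (Theorem~\ref{oort}) directly over $K$, since it uses nothing specific to curves.
\begin{enumerate}
\item The trace exact sequence $(\Tr_{K/k}A)_K\to A\to B\to 0$ gives an exact sequence of Tate modules with trivial Galois action.
\item Lang--N\'eron, which holds for finitely generated regular extensions \cite{Conrad2006ChowsKA}, shows that $B(K)/\Tr_{K/k}B(k)$ is finitely generated.
\item Hence $\Tr_{K/k}B\to B$ is an isogeny.
\item Therefore $\tau\colon (\Tr_{K/k}A)_K\to A$ is an isogeny. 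Its kernel is connected by Theorem~\ref{trace}(2), and has connected Cartier dual by Theorem~\ref{trace}(1), so it is of local-local type.
\end{enumerate}

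Next, extend $\tau$ to an isogeny $\tilde\tau\colon \Tr_{K/k}A\times Z'\to \mA\times_Z Z'$ over all of $Z'$. This uses the N\'eron extension property for abelian schemes over a normal base: a homomorphism on generic fibres extends uniquely, for instance by Faltings--Chai or by extending over codimension-one points and using that $\mathrm{Hom}$ of abelian schemes is unramified. Then check that the fibres of $\tilde\tau$ are isogenies of local-local type. The kernel is a finite flat group scheme over $Z'$, since its degree is constant as a quasi-finite proper flat kernel. By Lemma~\ref{local-local gpsch}, nilpotence of $F$ and $V$ on the Dieudonn\'e module of the generic fibre, bounded by the rank, propagates to all fibres because $F^N=0$ and $V^N=0$ are closed conditions that hold generically.

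To finish, descend the pulled-back polarization to $k$ via the étaleness of $\mathrm{Hom}(B,B^\vee)$, exactly as in the proof of Theorem~\ref{trivial l-adic monodromy}. The image of $Z'$, which equals the image of $Z$, is then an $H_\alpha$-scheme with chart $Z'\to \varphi(Z)$. By Theorem~\ref{subvar in isogeny leaf} and Theorem~\ref{maximal H scheme}, $\varphi$ factors through some $I(x)$.

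\emph{Main obstacle.} I expect the hardest step to be the extension of the trace isogeny over the higher-dimensional base, together with the local-local property on every fibre. If the general N\'eron-type extension is troublesome, my fallback is a curve reduction. Through any two points of $Z$ there is a smooth complete-intersection curve, and Lefschetz gives that $\etpi(C)\to\etpi(Z)$ is surjective, so each such curve also has finite monodromy. Theorem~\ref{trivial l-adic monodromy} then puts each such curve in an isogeny leaf. Since these curves connect $Z$ and $\mI(x)$ is closed, the image of $Z$ lies in $\mI(x)$, and by irreducibility it lies in one component $I(x)$.
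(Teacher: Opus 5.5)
Your argument is correct, but your main route differs from the paper's. The paper proves the hard direction essentially by your fallback. It uses Bertini to join arbitrary points $x,y\in Z(k)$ by (chains of) smooth proper curves, applies the curve case, Theorem~\ref{trivial l-adic monodromy}, to each curve, and uses maximality of isogeny leaves to put everything in one leaf.

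You instead observe that Oort's trace argument never used $\dim Z=1$. Lang--N\'eron holds for any finitely generated regular extension, so over $K(Z')$ the trace $\tau$ is already an isogeny with local-local kernel. The extension property for homomorphisms of abelian schemes over a normal base then spreads $\tau$ over $Z'$. Finally, vanishing of $F^N$ and $V^N$ on the kernel propagates from the generic fibre, because the kernel is flat over an integral base.

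One step needs tightening. Quasi-finiteness of $\tilde\tau$ on special fibres does not follow from the generic fibre in the way your phrasing suggests. Get it by also extending a generic quasi-inverse $g$ with $g\circ\tau=[n]$ over $Z'$. Then $\Ker(\tilde\tau)\subseteq(\Tr_{K/k}A\times Z')[n]$, so $\tilde\tau$ is fibrewise finite, and flatness follows by the fibrewise criterion.

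What your route buys is a stronger, more intrinsic conclusion with no Bertini input: a single local-local isogeny from a constant polarized abelian scheme over a finite \'etale cover of all of $Z$, i.e.\ the higher-dimensional analogue of Corollary~\ref{cor1}. It also avoids the paper's chain step ``$I(y_i)=I(y_{i+1})$'', which treats the isogeny leaf through a point as if it were unique. The paper's route is shorter once the curve case is available.

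Two remarks on your fallback. The Lefschetz surjectivity is unnecessary, since finite image on $Z$ restricts to finite image on any curve mapping to $Z$. And taking all curves through one fixed point $x$, then using that $\mI(x)$ is closed with finitely many irreducible components, is cleaner than the paper's chaining.
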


\begin{proof}
   One direction is similar as before.

 Conversely,  take $x, y\in Z(k)$ to be arbitrary geometric points. Since $Z$ is projective, Bertini's theorem shows that there exists a finite number of smooth proper curves $C_i$ connecting $x$ and $y$. Every such curve $C_i$ has trivial connected algebraic $l$-adic monodromy group and hence is a  $H_\alpha$-scheme as in the proof of Theorem~\ref{trivial l-adic monodromy}. Suppose $C_i$ connects $y_i$ and $y_{i+1}$. Since $I(y_{i+1})$ is by definition the maximal integral $H_\alpha$-scheme containing $y_{i+1}$, we have $y_i\in C\subset I(y_{i+1})$. Again, since $I(y_i)$ is the maximal integral $H_\alpha$-scheme containing $y_i$, we have $I(y_i)\subset I(y_{i+1})$. As the choice of index $i$ is arbitrary, this implies that $I(y_i)=I(y_{i+1})$. Repeating this argument, we get $I(x)=I(y)$. In particular, all these curves $C_i$ lie in a single isogeny leaf $I(x)$.
 
 Since $x$ and $y$ are arbitrary, this shows that $\varphi: Z\rightarrow \mA_g$ factors through some isogeny leaf.
\end{proof}



 


\begin{remark}\label{remark 1}
Consider the situation in Corollary~\ref{cor1}. Suppose that $f: \mA\rightarrow C$ lifts to $W_2(k)$, say $f: \tilde{\mA}\rightarrow \tilde{C}$. Then this gives $\mA\times_CD\rightarrow D$ a lift. Indeed, since $D\rightarrow C$ is  \'etale,  the cotangent complex $L_{D/C}\cong 0$. So, the obstruction $$ob(D\rightarrow C)\in \Ext^2(L_{D/C}, O_D\otimes_k I)\cong 0$$ is trivial. As a result, such $D\rightarrow C$ admits a lifting $\tilde{D}\rightarrow \tilde{C}$ over $W_2(k)$. The base change $\tilde{\mA}\times_{\tilde{C}}\tilde{D}$ then gives a lift of $\mA\times_CD\rightarrow D$. 

So, without loss of generality, we might sometimes assume that there exists an isogeny $\phi: B\times_{\spec(k)}C\rightarrow \mA$ of local-local type over $C$ without taking an \'etale cover or adding a level structure as in Theorem~\ref{subvar in isogeny leaf} if $\varphi:C\rightarrow \mA_g$ factors through some isogeny leaf.

On the other hand,  one has $$2g(D)-2=d\cdot (2g(C)-2)$$by the Hurwitz formula,, where $d$ is the degree of the \'etale cover $D\rightarrow C$. In particular, if $g(C)=0$ or 1, it is always the case that the abelian scheme $\mA/C$ is isogenous to a constant abelian scheme. 

\end{remark}

\begin{remark}\label{remark 2}
    

As explained in the introduction, if $Z\subseteq\Sh_K(G, \mX)$ is a positive dimensional subvariety of a complex Shimura variety, then the connected monodromy group $H_{\text{mon}}^\circ(Z)$ cannot be commutative. 

In general, there cannot be irreducible positive-dimensional subvariety contained in Shimura variety over characteristic zero with commutative monodromy group. Moving to $l$-adic monodromy, this implies only zero-dimension subvariety would have trivial $l$-adic monodromy group by the compatibility (in the \'etale topology) $\varprojlim_n (R^1f_*\Z/l^n\Z)\cong (R^1f^{\text{an}}_*\Z)\otimes\Z_l$. 

However, Theorem~\ref{trivial l-adic monodromy} shows that in the special fibre $\mA_{g, \bar{\F}_p}$ of the canonical model $\mA_g$ of the Shimura variety of Siegel type $\Sh_K(\GSp_{2g}, \mX)$, positive-dimensional subvarieties with trivial connected
l-adic monodromy can occur, and the only possibility is that they lie in an isogeny leaf. 

On the other hand, one expects that if $Z$ is a smooth variety contained in a good locus  (e.g. ordinary locus) of $\mA_{g, \bar{\F}_p}$, then the connected algebraic $l$-adic monodromy group $G_l^\circ(Z)$ plays the same role as the generic Mumford-Tate group in the characteristic zero case. This is usually called the Mumford-Tate Conjecture for $\mA_{g, \bar{\F}_p}$, see, for example, \cite{jiang_characteristic_2024} Conjecture 1.1.

\end{remark}

The $l$-adic monodromy criterion together with the previous discuss suggests the subvarieties of isogeny leaves cannot be lifted to characteristic zero.

\begin{theorem}\label{cannot be lifted to char 0}
Let $Z$ be a smooth  proper irreducible variety over $k$ and let $\varphi: Z\rightarrow I(x)$ be a nontrivial map, where $I(x)$ is some isogeny leaf. Then $\varphi: Z\rightarrow I(x)$ cannot be lifted to $W(k)$.
\end{theorem}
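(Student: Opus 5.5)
The plan is to argue by contradiction: assuming a lift exists, I will transport the finiteness of the $l$-adic monodromy from the special fibre to the generic fibre. In characteristic zero this finiteness forces the period map to be constant, which contradicts the assumption that $\varphi$ is nontrivial.

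Suppose $\varphi: Z\rightarrow I(x)\subset\mA_{g,k}$ lifts to $W(k)$. By this I mean a flat proper $\widetilde{Z}\rightarrow\spec W(k)$ with special fibre $Z$, together with a morphism $\widetilde{\varphi}:\widetilde{Z}\rightarrow\mA_{g,W(k)}$ reducing to $\varphi$. This makes sense because $\mA_{g,1,n}$ is smooth over $\Z[\frac1n]$. Pulling back the universal family gives a principally polarized abelian scheme $\widetilde{\mA}\rightarrow\widetilde{Z}$. If the lift is only formal, I first algebraize it by Grothendieck existence. For this I would use an ample line bundle on $Z$ that deforms, for instance $\omega$ pulled back from $\mA_g$ twisted by a lifted ample class; if no such class is available I would simply include algebraizability in the meaning of ``lifted to $W(k)$''. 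Since $Z$ is smooth and proper over $k$ and $\widetilde{Z}$ is flat and proper, $\widetilde{Z}$ is smooth and proper over $W(k)$.

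Next comes the monodromy. By Theorem~\ref{trivial l-adic monodromy for general projective variety}, or directly by Remark~\ref{trivial l-adic connected algebraic monodromy}, the representation $\rho_l:\etpi(Z,\bar z)\rightarrow\GL(T_l\mA_{\bar z})$ has finite image. The sheaf $R^1\widetilde f_*\Z_l$ is lisse on all of $\widetilde{Z}$, since $l\neq p$. Let $K=\mathrm{Frac}\,W(k)$. Then the monodromy of the geometric generic fibre $\widetilde{Z}_{\bar K}$ factors through the specialization map $sp:\etpi(\widetilde{Z}_{\bar K})\rightarrow\etpi(Z)$ of SGA1, Exp.~X, which exists because $\widetilde{Z}$ is proper and smooth. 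Consequently the monodromy of $\widetilde{\mA}_{\bar K}\rightarrow\widetilde{Z}_{\bar K}$ has finite image as well. Choosing an embedding $\bar K\hookrightarrow\C$ and using the comparison $(R^1f^{an}_*\Z)\otimes\Z_l\cong\varprojlim R^1f_*\Z/l^n$ from Remark~\ref{remark 2}, the topological monodromy of the complex family over $\widetilde{Z}_\C$ is also finite.

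Now I pass to characteristic zero. After a finite \'etale cover the local system becomes trivial. By the rigidity of polarized variations of Hodge structure (the theorem of the fixed part), or equivalently by the discussion of Andr\'e and Moonen in the introduction with $H^\circ_{\mathrm{mon}}$ trivial, the period map of $\widetilde{Z}_\C\rightarrow\Sh_K(\GSp_{2g},\mH^\pm)$ is constant. Hence $\widetilde{\varphi}_{\bar K}$ has zero-dimensional image. The scheme-theoretic image of $\widetilde{Z}$ in $\mA_{g,W(k)}$ is closed (by properness), irreducible and flat over $W(k)$, so it is the closure of its generic fibre. Its special fibre is therefore finite, and since $Z$ is connected it is a single point. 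Thus $\varphi$ is constant, contradicting nontriviality.

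I expect the main obstacle to be the algebraization step together with making the specialization argument rigorous. In particular, one must ensure the lisse sheaf genuinely extends over $\widetilde{Z}$ with monodromy compatible with $sp$, and that no issue arises when $Z$ is merely proper rather than projective. For the latter, I would first reduce to the case where $Z$ is projective, or work with the formal lift and use that $\etpi$ of a formal scheme agrees with that of its reduction.
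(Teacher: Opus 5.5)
Your proposal is correct and follows the paper's own route: assume a smooth proper lift $\widetilde Z\to\mA_{g,W(k)}$, use the specialization map $\etpi(\widetilde Z_{\bar K})\to\etpi(\widetilde Z)\cong\etpi(Z)$ to transfer finiteness of the $l$-adic monodromy to the generic fibre, and contradict the characteristic-zero rigidity of Remark~\ref{remark 2}. You also make explicit several points the paper leaves implicit: only the factorization through $sp$ is needed, not its surjectivity; the passage to topological monodromy and the theorem of the fixed part; and the closure argument showing that constancy of $\widetilde\varphi_K$ forces constancy of $\varphi$.
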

\begin{proof}

  Suppose that the $\varphi: Z\rightarrow \mA_{g, \bar{\F}_p}$ lifts to an abelian scheme  $\tilde{\varphi}: \widetilde{Z}\rightarrow \mA_{g, W(k)}$, with $\title{Z}$ smooth proper over $W(k)$. It gives a specialization map $sp: \etpi(\widetilde{Z}_K, \bar{x})\rightarrow\etpi(Z, \bar{y})$, where $K$ is the quotient field of $W(k)$. Namely, $sp$ is given by the composition of maps\[\etpi(\widetilde{Z}_K, \bar{x})\rightarrow\etpi(Z, \bar{x})\cong \etpi(Z_k, \bar{y}),\]where the isomorphism is because $W(k)$ is henselian.  It is known that $sp$ is a surjective (see  \cite[\href{https://stacks.math.columbia.edu/tag/0BUQ}{Tag 0BUQ}]{stacks-project}) if $Z$ is smooth.  
  

  However, the connected algebraic $l$-adic monodromy group of $Z$ is trivial by Theorem~\ref{trivial l-adic monodromy}, but the connected algebraic $l$-adic monodromy group of $\widetilde{Z}_K$ is absolutely nontrivial by Remark~\ref{remark 2}. This gives a contradiction.

\end{proof}

\begin{remark}
    It is worth noting that smoothness is essential in  Theorem~\ref{cannot be lifted to char 0}. For example, the work of Terakado--Xue--Yu \cite{chia2023supersingularlocusshimuravarieties} shows that there exists a Shimura curve $\mM_K$ of PEL type whose reduction modulo some prime $p$ of the canonical integral model is totally supersingular, and with ordinary singularities exactly at superspecial points. Moreover, \cite{chia2023supersingularlocusshimuravarieties} Theorem 1.3 shows that the superspecial locus is non-empty. As Ruofan Jiang commented, one may find a proper Shimura curve whose reduction is totally supersingular, but we don't expect it to be smooth anymore by Theorem~\ref{cannot be lifted to char 0}.

    This remark is due to Ruofan Jiang for first pointing out the possible counter-example of Problem~\ref{problem 1}.

\end{remark}

\begin{remark}
        One may also ask for the properness in the condition of Theorem~\ref{cannot be lifted to char 0}: can a general smooth irreducible variety contained in one isogeny leaf be lifted to characteristic zero? Let $C$ be a smooth proper curve over $k$ and let $S\subset C$ be a finite subset of geometric points. Take $U\coloneq C\setminus S$ be a non-proper curve.   Let $\varphi: U\rightarrow I(x)$ be a nontrivial map from $U$ to some isogeny leaf and let $\tilde{\varphi}:\widetilde{U}\rightarrow\mA_{g, W(k)}$ be a smooth lift of $\varphi$ over $W(k)$. In \cite{grothendieck_revetements_2004} Expos\'e XIII, 2.10, Grothendieck showed that the specialization map is still valid when restricted on the tame fundamental groups: $$sp^t: \etpi(\widetilde{U}_K, \bar{x})\cong \pi_1^{\text{\'et}, t}(\widetilde{U}_K, \bar{x})\rightarrow \pi_1^{\text{\'et}, t}(U, \bar{y})$$ is surjective. 
        
       The $l$-adic local system of $U$ is still at most finite as before. Taking a finite \'etale cover if necessary so that the monodromy representation $\rho_l: \etpi(U, \bar{y})\rightarrow \GL_{2g}(\Z_l)$ factors through the tame fundamental group $\pi_1^{\text{\'et}, t}(U, \bar{y})$. By the same argument as in Theorem~\ref{cannot be lifted to char 0}, this gives a contradiction.

        More generally, let $Z$ be a smooth quasi-projective variety together with a nontrivial morphism $\varphi: Z\rightarrow I(x)$ to some isogeny leaf. And suppose that the $\varphi: Z\rightarrow \mA_{g, \bar{\F}_p}$ lifts to an abelian scheme  $\tilde{\varphi}: \widetilde{Z}\rightarrow \mA_{g, W(k)}$, with $\widetilde{Z}$ smooth and quasi-projective over $W(k)$. Bertini's theorem over DVR (see \cite{bertini_DVR} Theorem 0) implies that one can cut $\widetilde{Z}$ with hyperplanes to obtain a smooth subvariety of dimension one (need not to be proper) $\widetilde{C}\subseteq \widetilde{Z}$. Taking the reduction modulo $p$, this shows that $\widetilde{C}$ is the smooth lift of some  curve $C$ mapped to the isogeny leaf. And this gives a contradiction as we just showed.

For an arbitrary smooth variety, we do not know how to proceed: one expects the statement
to remain true, but another approach seems necessary due to the lack of a suitable specialization
map. This will not be pursued here, since our main objects are families of abelian varieties over
curves.

\end{remark}

We conclude the above discussion  together with Theorem~\ref{cannot be lifted to char 0} with the following result.

\begin{theorem}
      Let $Z$ be a smooth  quasi-projective or proper irreducible variety over $k$ and let $\varphi: Z\rightarrow I(x)$ be a nontrivial map, where $I(x)$ is some isogeny leaf. Then $\varphi: Z\rightarrow I(x)$ cannot be lifted to $W(k)$.
\end{theorem}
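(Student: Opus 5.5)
The statement combines the two cases already treated, so I will split the proof according to whether $Z$ is proper or quasi-projective and reduce each case to the curve-level monodromy argument. Throughout I argue by contradiction. Suppose $\tilde{\varphi}: \widetilde{Z}\rightarrow \mA_{g, W(k)}$ is a lift with $\widetilde{Z}$ smooth over $W(k)$ and special fibre $Z$, and write $K=\operatorname{Frac}W(k)$.

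\emph{Proper case.} If $Z$ is smooth and proper, I will run the argument of Theorem~\ref{cannot be lifted to char 0} directly. The first ingredient is Theorem~\ref{trivial l-adic monodromy for general projective variety}, or Theorem~\ref{trivial l-adic monodromy} when $\dim Z=1$. Since $\varphi$ factors through $I(x)$, this gives that $G_l(Z)^\circ$ is trivial, so $\rho_l$ has finite image on $\etpi(Z,\bar y)$. The second ingredient is the specialization map $sp:\etpi(\widetilde{Z}_{\bar K},\bar x)\to\etpi(Z,\bar y)$. It is surjective for smooth proper $\widetilde{Z}$, and the $l$-adic local system of the lifted family on the generic fibre is the pullback of the one on $Z$ along $sp$. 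Together these show that the monodromy of $\widetilde{Z}_{\bar K}$ is finite.

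On the other hand, $\tilde{\varphi}_K$ is nonconstant, because $\varphi$ is nontrivial and the image of $\widetilde{Z}$ is flat over $W(k)$. By Remark~\ref{remark 2}, a positive-dimensional subvariety of the Siegel Shimura variety in characteristic zero has non-commutative, hence infinite, $l$-adic monodromy. This contradicts the previous paragraph.

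\emph{Quasi-projective case.} I will first reduce to curves. Using Bertini over a DVR (\cite{bertini_DVR}), I cut $\widetilde{Z}$ by general hyperplanes to obtain a smooth relative curve $\widetilde{U}\subseteq\widetilde{Z}$ over $W(k)$. I choose the hyperplanes generically enough that $\tilde\varphi$ stays nonconstant on $\widetilde{U}_K$ and that $U=\widetilde{U}_k$ is smooth and irreducible.

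Next I compactify. Let $U\subseteq C$ be the smooth compactification and choose $\widetilde{U}$ so that the boundary is étale over $W(k)$; after shrinking, this is a good compactification. Then Grothendieck's tame specialization map $sp^t:\pi_1^{t}(\widetilde U_{\bar K})\to\pi_1^t(U)$ is surjective (SGA1 XIII 2.10).

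The $l$-adic representation on $U$ has finite image, since $U\to I(x)$ gives an $H_\alpha$-structure on the family over $U$. Its image is finite, so after a finite étale cover of $U$, which I lift to $\widetilde U$ using Remark~\ref{remark 1}, I may assume it is trivial and in particular factors through $\pi_1^t$. Consequently the geometric generic fibre again has finite monodromy. This contradicts Remark~\ref{remark 2} applied to the nonconstant curve $\widetilde{U}_{\bar K}\to\mA_{g,\bar K}$.

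The main obstacle is the compatibility used in the quasi-projective case. One must check that the $l$-adic local system on $\widetilde U_{\bar K}$ is tamely ramified along the boundary, so that it genuinely factors through $sp^t$, and that the Bertini curve can be chosen with étale boundary. For the first point I would argue that $l\neq p$ local systems arising from abelian schemes have unipotent, hence tame, local monodromy at the boundary by Grothendieck's semistable reduction theorem. This makes the $l$-adic local system on $\widetilde U_{\bar K}$ tame after a finite base change, which is harmless for finiteness of the image.
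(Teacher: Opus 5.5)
Your proposal is essentially the paper's own argument. The proper case is the proof of Theorem~\ref{cannot be lifted to char 0}: finite monodromy on $Z$ from the isogeny-leaf structure, surjectivity of specialization, and Remark~\ref{remark 2} for the nonconstant generic fibre. The quasi-projective case is the remark just before the statement: Bertini over a DVR to cut down to a curve, then Grothendieck's tame specialization.

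Two of your added justifications are off, although the paper leaves the same points implicit. First, the deformation argument of Remark~\ref{remark 1} only gives a formal lift of a finite \'etale cover of the non-proper curve $U$, and wildly ramified covers need not algebraize. The cover you need is already algebraic, though: it is a component of the special fibre of the finite \'etale $\widetilde{U}$-scheme of level-$l^n$ structures on the lifted abelian scheme. Second, tameness on $\widetilde{U}_{\bar K}$ is automatic in characteristic zero. The input that remains unverified is a compactification of the Bertini curve $\widetilde{U}$ with boundary \'etale over $W(k)$, and shrinking $\widetilde{U}$ does not in general provide one.
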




We also give the following conjecture for general smooth varieties.

\begin{conjecture}
        Let $Z$ be a smooth irreducible variety over $k$ and let $\varphi: Z\rightarrow I(x)$ be a nontrivial map, where $I(x)$ is some isogeny leaf. Then $\varphi: Z\rightarrow I(x)$ cannot be lifted to $W(k)$.
\end{conjecture}

\section{Positivity of Hodge bundle and Higgs semistability}

In this section, we review the first de Rham cohomology of an abelian scheme.

\subsection{Hodge bundle and Lie algebra}\label{section hodge bundle and lie algebra}

Let $S$ be a Noetherian scheme over a field $k$.
Let $f: \mX\rightarrow S$ be an abelian scheme of relative dimension $g$. Then the first de Rham cohomology $\mbH^1_{dR}(\mX/S)$ admits a Hodge filtration\[\Fil_h^\bullet: 0\subset \Fil^1\subset \Fil^0,\]with $\Fil^1\coloneq f_*\Omega_{\mX/S}^1$ and $\Fil^0=\mbH^1_{dR}(\mX/S)$. The Hodge bundle of $\mX/S$ is then defined to be the sheaf $f_*\Omega_{\mX/S}^1$, which is known to be locally free of rank $g$. One also has the graded piece $$\Gr^0\coloneq \Fil^0/\Fil^1\cong R^1f_*\mO_{\mX/S},$$ which is also locally free of rank $g$. 

On the other hand, one defines the Lie algebra $\Lie(\mX/S)$ of $\mX/S$ to be the dual of the Hodge bundle, i.e., $$\Lie(\mX/S)\coloneq (f_*\Omega_{\mX/S}^1)^\vee.$$ 
It is known that one has a canonical isomorphism of fppf-sheaves of $\mO_S$-modules
 \[R^1f_*\mO_{\mX/S}\overset{\sim}{\longrightarrow}\Lie(\mX^\vee/S).\]

 In particular, if $\mX/S$ is a principally polarized abelian scheme, one has a canonical isomorphism of locally free $\mO_S$-modules $$R^1f_*\mO_{\mX/S}\cong \Lie(\mX/S),$$ and hence a short exact sequence \[0\rightarrow f_*\Omega_{\mX/S}^1\rightarrow \mbH^1_{dR}(\mX/S)\rightarrow \Lie(\mX/S)\rightarrow 0.\]

Moreover, recall that the fine moduli space $\mA_{g,1, n}$ ($n\geq 3$) admits a minimal compactification $$\nu: \mA_{g,1,n}\rightarrow\mA_{g,1,n}^*$$ such that $\pi_*\omega_{\mA/\mA_{g,1,n}}$ extends to an ample line bundle $\mathcal{L}$ over $\mA_{g,1,n}^*$, i.e., $\nu^*\mathcal{L}=\pi_*\omega_{\mA/\mA_{g,1,n}}$. Here, $\pi: \mA\rightarrow\mA_{g,1,,n}$ is the universal abelian scheme and $\omega_{\mA/\mA_{g,1,n}}$ is the relative canonical line bundle. The compactification can be defined over $\Z[\xi_n, \frac{1}{n}]$ (see \cite{faltings_degeneration_1990} V. Theorem 2.5) and hence can be reduced modulo $p$ for any prime $p$ not dividing $n$. 

In particular, let $C$ be a smooth proper curve over $k$. And let $\varphi: C\rightarrow \mA_g$ be a nontrivial map, then the Seshadri’s criterion (see \cite{Positivity_in_Algebraic_Geometry}, Example 6.1.20) implies that $(\nu\circ\varphi)^*\mathcal{L}=\varphi^*\pi_*\omega_{\mA/{\mA_{g,1,n}}}$ has positive degree. As $\pi_*\omega_{\mA/{\mA_{g,1,n}}}$ is locally free, one has $$f_*\omega_{\mX/C}\cong \varphi^*\pi_*\omega_{\mA/{\mA_{g,1,n}}},$$ where $f: \mX\rightarrow C$ is the induced universal family. This shows that $\deg(f_*\Omega_{\mX/C}^1)>0$  if and only if $\varphi: C\rightarrow\mA_g$ is nontrivial if and only if $\mX/C$ is a non-isotrivial family of abelian varieties.

\subsection{Higgs semistability}

The de Rham cohomology $\mbH^1_{dR}(\mX/S)$ carries an integral connection over $S$, called the Gauss--Manin connection and denoted by $\nabla_{GM}$. The connection $\nabla_{GM}$ satisfies the Griffiths transversality, i.e., $$\nabla_{GM}(\Fil^1)\subset \Fil^0\otimes \Omega^1_S.$$ 

The induced map on graded pieces$$\theta: \Gr^1\rightarrow \Gr^0\otimes \Omega^1_S$$  is an $\mO_S$-linear morphism and is called the Kodaira--Spencer map. This terminology is justified since locally $\theta$
can be identified with cup product with the Kodaira–-Spencer class (see \cite{katz_nilpotent_1970} Theorem 3.5).

We  denote $\mE$ for the Hodge bundle $f_*\Omega^1_{\mX/S}$ and by the discussion of \ref{section hodge bundle and lie algebra}, one sees that $\mE^\vee$ is identified with the Lie algebra. The flatness of $\nabla_{GM}$ implies that $\theta$ is a Higgs field, and hence $(\mE\oplus \mE^\vee, \theta)$ gives a Higgs bundle over $S$. The graded module  of  $(\mbH^1_{dR}(\mX/S), \nabla_{GM})$ is then the Higgs bundle $(\mE\oplus \mE^\vee, \theta)$ with $$\theta: \mE\rightarrow\mE^\vee\otimes \Omega_{C/k},$$ and $$\theta(\mE^\vee)=0.$$

From now on, assume $C$ is a smooth proper curve over a field $k$ and $\mX\rightarrow C$ is a principally polarized abelian scheme over $C$. We refer to Appendix~\ref{appendix} for the necessary background for Higgs semistability (polystability).

We recall the following non-abelian Hodge theory result of Simpson. 

\begin{theorem}[Simpson's correspondence, \cite{Simpson1992HiggsBA} Corollary 1.3]\label{simpson}
  Let $C$ be a complex smooth projective curve. Then there exists an equivalence of categories between the category of semisimple flat bundles over $C$ and the category of polystable Higgs bundles of slope zero.
\end{theorem}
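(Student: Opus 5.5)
This statement is Simpson's theorem, which the paper cites as \cite{Simpson1992HiggsBA} Corollary 1.3; no argument in the style of this paper will reprove it. What follows is the standard proof strategy, recorded for orientation only.

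\emph{Approach.} The correspondence is built from two functors, each going through harmonic metrics. One starts from a polystable Higgs bundle $(E,\theta)$ of slope zero, meaning its degree is zero. The plan is to prove the Hitchin--Simpson existence theorem. Each stable summand should carry a Hermitian--Yang--Mills metric $h$ for the Hitchin equation
\[
F_{D_h}+[\theta,\theta^{*_h}]=0,
\]
where $D_h$ is the Chern connection. Then $\nabla=D_h+\theta+\theta^{*_h}$ is a flat connection. The resulting flat bundle is semisimple, because $h$ is a harmonic metric for it and harmonicity forces reducibility to split.

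\emph{Key steps, in order.}
\begin{itemize}
\item[(i)] Run Donaldson's heat flow for the Hermitian--Einstein equation on $(E,\theta)$, starting from an arbitrary initial metric.
\item[(ii)] Prove a priori $C^0$ bounds along the flow using Donaldson's functional. Stability enters here: if the bounds fail, a limiting argument in the style of Uhlenbeck--Yau produces a destabilizing $\theta$-invariant subsheaf. This contradicts stability.
\item[(iii)] Deduce higher-regularity estimates and convergence to a harmonic metric.
\item[(iv)] For the converse, take a semisimple flat bundle $(V,\nabla)$. Corlette's theorem supplies a harmonic metric, namely an equivariant harmonic map from the universal cover to $\GL_n(\C)/U(n)$.
\item[(v)] Decompose $\nabla=D+\Psi$ into a unitary part $D$ and a self-adjoint part $\Psi$. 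Setting $\bar\partial_E=D^{0,1}$ and $\theta=\Psi^{1,0}$, harmonicity yields $\bar\partial_E\theta=0$ and $\theta\wedge\theta=0$.
\item[(vi)] Show the resulting Higgs bundle is polystable of degree zero, using the Chern--Weil formula for $\theta$-invariant subsheaves.
\item[(vii)] Check that the two constructions are mutually inverse on objects and fully faithful on morphisms. This uses uniqueness of harmonic metrics up to automorphism, together with the fact that morphisms between harmonic bundles are parallel.
\end{itemize}

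\emph{Main obstacle.} The analytic heart is step (ii): the $C^0$ estimate and the compactness needed to extract a destabilizing subsheaf. Corlette's existence theorem in step (iv) is the other analytic input. Both are beyond the scope of this paper, so the paper rightly uses the statement as a black box. Its role here is motivational: it is the characteristic-zero analogue of the Ogus--Vologodsky and Lan--Sheng--Zuo correspondence, which is used later to derive Higgs semistability from $W_2(k)$-liftability.
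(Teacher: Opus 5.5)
The paper gives no proof of this statement; it quotes Simpson's result as a black box, as you recognize, so there is no internal argument to compare against. Your outline is an accurate sketch of the argument in the cited literature: Donaldson's heat flow with the stability-driven $C^0$ estimate to go from polystable Higgs bundles to flat bundles, Corlette's harmonic metrics for the converse, Chern--Weil for polystability, and the parallelism of morphisms between harmonic bundles for full faithfulness.
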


In particular, if $f: \mX\rightarrow C$ is an abelian scheme over the complex curve $C$. The monodromy representation associated to $R^1f^{\text{an}}_*\underline{\C}$ is always semisimple, and hence the Higgs bundle $(\Gr^\bullet\mbH^1_{dR}(\mX/C), \theta)$ is polystable of slope zero.

However, the analogue in positive characteristic case does not hold in general, as one will see later that the Moret-Bailly is a counter-example. In order to have the (semi)stability result, one needs to assume a $W_2(k)$-lift for the abelian scheme.

\begin{theorem}[\cite{ogus_nonabelian_2007}, Proposition 4.19]\label{ogus Vologodsky}
Let $C$ be a smooth projective curve of genus $g(C)$ over a field $k$ of characteristic $p>0$. Suppose the base change of Frobenius  $C^{(p)}$ over $k $ has a lift $\widetilde{C^{(p)}}/W_2(k)$. Let $(M, \nabla, \Fil^\bullet, \Phi)$ be a Fontaine module of weight belonging to $[0, n]$. Assume that \[n(\rank(M)-1)\max\{2g(C)-2, 1\})<p-1.\]Then $(\text{Gr}_{\Fil^\bullet}^\bullet M, \theta)$ is a semistable Higgs bundle over $C$. 
\end{theorem}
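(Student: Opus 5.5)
The plan is to argue by contradiction, comparing slopes before and after the inverse Cartier transform of Ogus--Vologodsky. Write $(E,\theta)\coloneq(\Gr^\bullet_{\Fil^\bullet}M,\theta)$. The Fontaine module structure $\Phi$ amounts to an isomorphism of flat bundles
\[
C^{-1}_{\widetilde{C^{(p)}}}\bigl(\pi^*(E,\theta)\bigr)\overset{\sim}{\longrightarrow}(M,\nabla),
\]
where $\pi: C^{(p)}\rightarrow C$ is the Frobenius twist and $C^{-1}$ is the inverse Cartier transform attached to the chosen lift $\widetilde{C^{(p)}}$ of $C^{(p)}$ over $W_2(k)$. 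This functor is defined on Higgs sheaves whose Higgs field is nilpotent of level $\leq p-1$. Since the weights lie in $[0,n]$ and the numerical hypothesis forces $n<p-1$, the Higgs field $\theta$ is nilpotent of level $\leq n<p-1$, and so is the restriction of $\theta$ to any Higgs subsheaf. Hence $C^{-1}$ applies to $(E,\theta)$ and to all of its Higgs subsheaves.

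The first step is a degree computation. The functor $C^{-1}$ is exact and multiplies degrees by $p$: locally it is a Frobenius pullback twisted by a gluing cocycle built from the lift, and such a twist does not change the determinant. On the other hand, $\Gr_{\Fil}M$ and $M$ have the same degree, being the associated graded of a filtration by subbundles. Combining these,
\[
\deg E=\deg M=\deg C^{-1}(E)=p\deg E,
\]
so $\deg E=0$ and $\mu(E)=0$.

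Next, suppose $(E,\theta)$ is not semistable. Let $N\subset E$ be the maximal destabilizing saturated Higgs subsheaf, so that $\mu(N)=\mu_{\max}(E,\theta)>0$. Then $C^{-1}(N)\subset M$ is a $\nabla$-stable subbundle of rank $\rank N$ and slope $p\,\mu(N)$. Restrict the Hodge filtration $\Fil^\bullet$ of $M$ to $C^{-1}(N)$. Griffiths transversality of $\Fil^\bullet$ gives that $\Gr\bigl(C^{-1}(N)\bigr)$ is a Higgs subsheaf of $(E,\theta)$. Its rank is $\rank N$ and, by the same filtration argument as above, its degree is $p\deg N$. So it is a Higgs subsheaf of $E$ of slope
\[
p\,\mu(N)>\mu(N)=\mu_{\max}(E,\theta),
\]
which contradicts the maximality of $\mu(N)$. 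Therefore $(E,\theta)$ is semistable.

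I expect the main obstacle to be making the inverse Cartier step rigorous: checking that $C^{-1}$ is exact, is compatible with taking Higgs subsheaves and their saturations, and multiplies degrees by exactly $p$. In particular the gluing data defined by the $W_2$-lift of $C^{(p)}$ (the Deligne--Illusie splitting) must not change determinants. This is where I would rely on Ogus--Vologodsky's construction and its local description via divided-power Frobenius lifts. A second, smaller point is to see exactly where the bound $n(\rank(M)-1)\max\{2g(C)-2,1\}<p-1$ enters. I would first try to use it to keep all nilpotence levels below $p$ along Simpson-type iterated filtrations, and to control the integrality and torsion-freeness of the induced filtrations on $C^{-1}(N)$. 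If the direct argument above turns out to need more, I would replace it by iterating the Higgs--de Rham flow of Lan--Sheng--Zuo and use the bound to make the slopes of the graded pieces converge.
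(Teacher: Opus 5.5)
Your argument is correct, and it differs from the paper, which gives no proof here: it quotes the statement from Ogus--Vologodsky, numerical bound included, and then cites Lan--Sheng--Zuo (Theorem~\ref{lan sheng zuo}) for a version with no restriction on $p$.

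What you wrote is essentially the periodicity argument behind that Lan--Sheng--Zuo statement. It needs only three inputs:
\begin{itemize}
\item $C^{-1}$ is exact.
\item $\det C^{-1}(E,\theta)\cong F^*\det E$ when $\theta$ is nilpotent, because the gluing maps $\exp(h_{\alpha\beta}\,F^*\theta)$ are unipotent. Hence $C^{-1}$ multiplies degrees by $p$.
\item Restricting a Griffiths-transverse filtration to a $\nabla$-stable subsheaf and taking $\Gr$ gives a Higgs subsheaf of $\Gr M$ of the same rank and degree. The induced graded pieces inject into those of $\Gr M$, and transversality survives intersection because $\Omega_C$ is locally free.
\end{itemize}
With these, a Higgs subsheaf $N$ of maximal slope produces the Higgs subsheaf $\Gr\bigl(C^{-1}(\pi^*N)\bigr)\subset E$ of slope $p\,\mu(N)$. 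This forces $\mu(N)\leq 0=\mu(E)$.

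In particular the hypothesis $n(\rank(M)-1)\max\{2g(C)-2,1\}<p-1$ is never used. All you need is that $\theta$ is nilpotent of level $<p$ so that $C^{-1}$ is defined, and that is already implicit in the datum $\Phi\colon C^{-1}(\pi^*\Gr M)\overset{\sim}{\rightarrow} M$. So you can drop your worry about where the bound enters, and the fallback to iterated Simpson filtrations or the Higgs--de Rham flow. Your argument proves a stronger statement than the one cited, which is consistent with the paper's remark that Lan--Sheng--Zuo removed the restriction on $p$. What the paper's citation buys is only brevity; your route makes the result self-contained given the basic properties of $C^{-1}$.

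One small inaccuracy: the bound does not force $n<p-1$ when $\rank(M)=1$, since the left-hand side is then $0$. In that case, however, $\Gr M$ is a line bundle concentrated in a single degree, so $\theta=0$ and semistability is automatic.
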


In particular, let $f: X\rightarrow C$ be a  smooth proper morphism and let $$\widetilde{f}: \widetilde{ X^{(p)}}\rightarrow \widetilde{C^{(p)}}$$ be a lift to $W_2(k)$. If $M=\mathbb{H}^n_{dR}(X/C)$
is a Fontaine module together with the Gauss--Manin connection $\nabla_{GM}$, Hodge filtration $\Fil_{h}^\bullet$ and a relative Frobenius $\Phi$. Then the associated Higgs bundle $(\text{Gr}_{\Fil_{h}^\bullet}^\bullet M, \theta)$ is semistable if $$n(\rank(M)-1)\max\{2g(C)-2, 1\})<p-1.$$

\begin{remark}\label{lifting remark}
Let $f: X\rightarrow C$ is a smooth proper morphism of relative dimension $g$. If $f: X\rightarrow C$ lifts to $W_2(k)$, say $\widetilde{f}: \widetilde{X}\rightarrow\widetilde{C}$, then the base change of $\widetilde{f}$ along the Frobenius of 2-truncated Witt vectors $F_{W_2(k)}: W_2(k)\rightarrow W_2(k)$ gives a lift of $X^{(p)}\rightarrow C^{(p)}$ to $W_2(k)$. 
\end{remark}

In \cite{lan_semistable_2017}, Lan--Sheng--Zuo developed the theory of Higgs--de Rham flow. In particular, they generalized Ogus--Vologodsky's result Theorem~\ref{ogus Vologodsky} to higher dimensional varieties and removed the restrictions on $p$.

\begin{theorem}[\cite{lan_semistable_2017}, Proposition 6.3]\label{lan sheng zuo}
  Let $X/k$ be a smooth projective variety and let $(M, \nabla, \Fil^\bullet, \Phi)$ be a strict $p$-torsion Fontaine module (with respect to some $W_2(k)$-lifting of $X^{(p)}$). Then the graded Higgs bundle $(\text{Gr}_{\Fil^\bullet}^\bullet M, \theta)$ is Higgs semistable of slope zero.

\end{theorem}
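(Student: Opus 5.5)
The plan is to use the inverse Cartier transform $C^{-1}$ of Ogus--Vologodsky. Since $X^{(p)}$ lifts to $W_2(k)$, it is defined on nilpotent Higgs sheaves of exponent $\leq p-1$. The statement reduces to two facts: a degree identity for $C^{-1}$, and a slope-multiplication argument that rules out destabilizing Higgs subsheaves. Fix an ample $H$ on $X$ and measure degrees and slopes by $c_1(-)\cdot H^{\dim X-1}$.

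\emph{Step 1 (slope zero).} The Fontaine module structure $\Phi$ gives, via strictness, an isomorphism of flat bundles
\[
C^{-1}\bigl(\text{Gr}_{\Fil^\bullet}^\bullet M,\theta\bigr)\xrightarrow{\ \sim\ } (M,\nabla).
\]
Locally $C^{-1}(E,\theta)$ is $F^*E$ with a connection twisted by $\theta$, so $\deg C^{-1}(E)=p\deg E$. On the other hand, $\deg M=\deg \text{Gr}^\bullet M$, because the Hodge filtration has locally free graded pieces; this again uses strictness. Combining the two gives $\deg\text{Gr}M=p\deg\text{Gr}M$, hence the slope is $0$.

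\emph{Step 2 (semistability).} Argue by contradiction. Among all saturated $\theta$-invariant subsheaves of $\text{Gr}M$, choose $N$ of maximal slope; such an $N$ exists because slopes of subsheaves of a fixed torsion-free sheaf are bounded above. Suppose $\mu(N)>0$. Then $C^{-1}(N)\subseteq C^{-1}(\text{Gr}M)\cong M$ is a $\nabla$-invariant subsheaf of slope $p\,\mu(N)$. Restrict the Hodge filtration $\Fil^\bullet$ of $M$ to $C^{-1}(N)$. By Griffiths transversality the associated graded
\[
\text{Gr}\bigl(C^{-1}N\bigr)\subseteq \text{Gr}M
\]
is $\theta$-stable. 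Its degree equals $\deg C^{-1}(N)$ up to passing to saturations, which only increases the degree. It therefore has slope $\geq p\,\mu(N)>\mu(N)$, contradicting the maximality of $\mu(N)$. Hence every Higgs subsheaf has slope $\leq 0=\mu(\text{Gr}M)$, which is semistability.

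\emph{Main obstacle.} The delicate point is Step 1's input, together with the well-definedness of $C^{-1}$ on the subsheaf $N$. One must check that $N$ inherits nilpotence of level $\leq p-1$, which holds automatically since $\theta$ on $\text{Gr}M$ has level bounded by the Fontaine weight range. One must also check that the $\Phi$-isomorphism is compatible with subobjects, so that $C^{-1}(N)\to M$ is injective; this follows from the exactness of $C^{-1}$ (Frobenius pullback is flat). Finally, in the non-curve case, reflexive-sheaf issues arise in codimension $\geq 2$; they do not affect $c_1\cdot H^{\dim X-1}$, so one can work with saturations throughout. This is essentially the Higgs--de Rham flow argument of Lan--Sheng--Zuo in its one-step periodic case.
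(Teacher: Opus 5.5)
Your argument is correct. It is the one-periodic case of the Lan--Sheng--Zuo Higgs--de Rham flow argument: a destabilizing $N$ yields the $\theta$-invariant subsheaf $\text{Gr}(C^{-1}N)\subseteq \text{Gr}\,M$ of slope $p\,\mu(N)$. The paper gives no proof of its own and simply cites Lan--Sheng--Zuo, so this is the argument it relies on.

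One justification should be tightened. Being locally isomorphic to $F^*E$ does not by itself give $\deg C^{-1}(E)=p\deg E$. The real reason is that the gluing maps $\exp(h_{ij}F^*\theta)$ are unipotent because $\theta$ is nilpotent, so $\det C^{-1}(E)\cong F^*\det E$.
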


Returning to the abelian scheme case $f:\mX\rightarrow C$. If its Frobenius base change $X^{(p)}\rightarrow C^{(p)}$
admits a $W_2(k)$-lifting to $\widetilde{X^{(p)}}\rightarrow \widetilde{C^{(p)}}$. Then $(\mbH^1_{dR}(\mX/C), \nabla_{GM}, \Fil^\bullet_h, \Phi)$ is a strict $p$-torsion Fontaine module.
We summarize the above discussion with the following result.

\begin{theorem}[Ogus--Vologodsky \cite{ogus_nonabelian_2007}, Lan--Sheng--Zuo \cite{lan_semistable_2017}]\label{higgs semistable}
  Let $C$ be a smooth proper curve over field $k$ of characteristic $p>0$ and let $f: \mX\rightarrow C$ be a family of principally polarized  abelian varieties of dimension $g$ over $C$. If $f: \mX\rightarrow C$ can be lifted to $W_2(k)$, then the Higgs bundle $(\mE\oplus\mE^\vee, \theta)$ is Higgs semistable of slope zero.
\end{theorem}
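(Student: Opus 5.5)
The plan is to reduce the statement to Theorem~\ref{lan sheng zuo} (or Theorem~\ref{ogus Vologodsky} for small rank relative to $p$). To do this, I will show that a $W_2(k)$-lift of $f$ yields a strict $p$-torsion Fontaine module whose associated graded Higgs bundle is exactly $(\mE\oplus\mE^\vee,\theta)$.

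First, suppose $\widetilde{f}:\widetilde{\mX}\to\widetilde{C}$ is a lift over $W_2(k)$. By Remark~\ref{lifting remark}, base change along the Frobenius of $W_2(k)$ gives a lift $\widetilde{\mX^{(p)}}\to\widetilde{C^{(p)}}$ of the Frobenius twist. This is exactly the auxiliary datum needed to run the Ogus--Vologodsky / Lan--Sheng--Zuo machinery on $C$.

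Second, I will equip $M=\mbH^1_{dR}(\mX/C)$ with its structure as a Fontaine module:
\begin{itemize}
\item the Gauss--Manin connection $\nabla_{GM}$;
\item the Hodge filtration $0\subset\Fil^1=\mE\subset\Fil^0=M$, of weight in $[0,1]$;
\item a Frobenius structure $\Phi$.
\end{itemize}
The Frobenius structure should come from the relative Frobenius of $\mX/C$, via the lift: $\Phi$ is the Ogus--Vologodsky inverse Cartier isomorphism $C^{-1}(\Gr_{\Fil}M)\xrightarrow{\sim}(M,\nabla)$, which exists once $\widetilde{\mX^{(p)}}$ is given. This is the relative version of Deligne--Illusie / Faltings' strict $p$-torsion Fontaine modules in weight $\le 1<p$.

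Third, I will identify the associated graded. Here $\Gr^1=\mE$ and $\Gr^0=R^1f_*\mO_{\mX}$. By Section~\ref{section hodge bundle and lie algebra}, the principal polarization identifies $R^1f_*\mO_{\mX}\cong\Lie(\mX^\vee/C)\cong\Lie(\mX/C)=\mE^\vee$. The graded map of $\nabla_{GM}$ is the Kodaira--Spencer map $\theta:\mE\to\mE^\vee\otimes\Omega^1_C$. Hence $\Gr^\bullet M=(\mE\oplus\mE^\vee,\theta)$, and Theorem~\ref{lan sheng zuo} gives Higgs semistability of slope zero. As a consistency check, the slope is zero since $\deg\mE+\deg\mE^\vee=0$.

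The main obstacle is the second step: verifying that $\mbH^1_{dR}$ with the lifted Frobenius actually satisfies the axioms of a strict $p$-torsion Fontaine module. In particular one must check that the relative Frobenius (equivalently the Cartier operator) maps strictly compatibly with the filtration. I would handle this by citing Faltings' theorem that for a smooth proper morphism with $W_2$-lift and relative dimension $<p$ the relative de Rham cohomology is such a module. Alternatively, I would work directly with the Dieudonné crystal of $\mX[p]$, where $F$ and $V$ realize the required structure, since $\mbH^1_{dR}(\mX/C)\cong\mathbb{D}(\mX[p])_C$. The latter has the advantage of working for all $p$ without a relative dimension bound.
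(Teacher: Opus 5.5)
Your proposal follows the paper's route: use Remark~\ref{lifting remark} to get a $W_2(k)$-lift of $\mX^{(p)}\to C^{(p)}$, regard $(\mbH^1_{dR}(\mX/C),\nabla_{GM},\Fil^\bullet_h,\Phi)$ as a strict $p$-torsion Fontaine module, apply Theorem~\ref{lan sheng zuo}, and identify $\Gr^0=R^1f_*\mO_{\mX}\cong\Lie(\mX/C)=\mE^\vee$ via the principal polarization. One caution about your Dieudonn\'e-crystal fallback: $F$ and $V$ together with only a lift of $C$ do not produce the Frobenius structure, because the divided Frobenius on $\Fil^1$ requires a lift of the Hodge filtration over $\widetilde{C}$, and that lift is exactly what the lift of $\mX$ provides (without it, the Moret--Bailly family over the liftable base $\mathbb{P}^1$ would contradict Theorem~\ref{lan sheng zuo}).
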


\subsection{Positivity results and an Arakelov inequality}

Let $\mE$ be a vector bundle over a smooth proper curve $C$ over a field $k$. $\mE$ admits a (unique) Harder-Narasimhan filtration\[0=\mE_0\subset\mE_1\subset\cdots\mE_n=\mE\]such that the successive quotients $\mE_i/\mE_{i-1}$ are semistable vector bundles such that $$\mu(\mE_1)>\mu(\mE_2/\mE_1)>\cdots>\mu(\mE_n/\mE_{n-1}).$$In particular, $\mE_1$ is semistable. We write $\mu_{\max}(\mE)\coloneq \mu(\mE_1)$ and $\mu_{\min}(\mE)\coloneq \mu(\mE_n/\mE_{n-1})$. 

We call the vector bundle $\mE$ to be ample (resp. nef) if the tautological bundle $\mO_{\mathbb{P}(\mE)}{(1)}$ is ample (resp. nef) over $\mathbb{P}(\mE)$. It is known that if $\text{char}(k)>0$, $\mE$ is ample (resp. nef) if and only if $\mu_{\min}(\mE)>0$ (resp. $\mu_{\min}(\mE)\geq 0$).

If $\text{char}(k)=p>0$, such a criterion might fail. In order to fix this, one needs to define \[\bar{\mu}_{\min}(\mE)=\lim_{n\rightarrow \infty}\frac{\mu_{\min}((F_C^n)^*\mE)}{p^n},\]\[\bar{\mu}_{\max}(\mE)=\lim_{n\rightarrow \infty}\frac{\mu_{\max}((F_C^n)^*\mE)}{p^n},\] where $F_C$ is the absolute Frobenius of $C$. These two limits turn out to be well-defined rational numbers by \cite{semistable_sheaves_langer} Theorem 2.7. And it was shown by Barton (see \cite{Barton_tensor_ample}) that $\mE$ is ample (resp. nef) if and only if $\bar{\mu}_{\min}(\mE)>0$ (resp. $\bar{\mu}_{\min}(\mE)\geq 0$). 

Moreover, they always satisfy the relations \[\mu_{\max}(\mE)\leq \bar{\mu}_{\max}(\mE),\]\[ \mu_{\min}(\mE)\geq \bar{\mu}_{\min}(\mE).\]

\begin{proposition}\label{3.7}
Let $\mE$ be a vector bundle of rank $g$ over a curve $C$ of genus $g(C)$. Then if $\mu_{\min}(\mE)>0$, one has $\bar{\mu}_{\min}>0$ for $p\gg 0$. Similarly, if $\mu_{\max}(\mE)<0$, then $\bar{\mu}_{\max}(\mE)<0$ for $p\gg 0$.
\end{proposition}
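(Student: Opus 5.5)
The strategy is to bound how fast the Harder--Narasimhan slopes can drop under Frobenius pullback, with an error term that does not depend on $p$. The tool is the Shepherd-Barron / Sun / Langer estimate. For a semistable bundle $\mF$ of rank $r$ on a smooth proper curve $C$ of genus $g(C)$ in characteristic $p$, the Frobenius pullback $F_C^*\mF$ satisfies
\[
\mu_{\max}(F_C^*\mF)-\mu_{\min}(F_C^*\mF)\leq (r-1)(2g(C)-2).
\]
In the language of Langer \cite{semistable_sheaves_langer} this reads
\[
L_{\max}(\mF)-\mu_{\max}(\mF)\leq \frac{r-1}{p}\max\{2g(C)-2,0\}, \qquad \mu_{\min}(\mF)-L_{\min}(\mF)\leq \frac{r-1}{p}\max\{2g(C)-2,0\},
\]
where $L_{\max}=\bar\mu_{\max}$ and $L_{\min}=\bar\mu_{\min}$ are the limits defined above. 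I will invoke these inequalities directly, applied to the graded pieces of the Harder--Narasimhan filtration of $\mE$.

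For the first claim, let $\mE_n/\mE_{n-1}=:\mathcal{Q}$ be the last Harder--Narasimhan quotient. Then $\mu(\mathcal{Q})=\mu_{\min}(\mE)$, and $\mathcal{Q}$ has rank at most $g$. Since $\mE\twoheadrightarrow\mathcal{Q}$ is surjective, the pullback $(F^n_C)^*\mE\to(F^n_C)^*\mathcal{Q}$ is also surjective, because Frobenius is flat on a smooth curve. Hence $\mu_{\min}((F^n_C)^*\mE)\geq\min$ over all HN pieces. More precisely, I apply the estimate to each semistable graded piece $\mE_i/\mE_{i-1}$. Since $\mu_{\min}$ of an extension is at least the minimum of the $\mu_{\min}$ of the pieces, I get
\[
\bar\mu_{\min}(\mE)\geq \min_i \bar\mu_{\min}(\mE_i/\mE_{i-1})\geq \min_i\mu(\mE_i/\mE_{i-1})-\frac{g-1}{p}\max\{2g(C)-2,0\}=\mu_{\min}(\mE)-\frac{(g-1)\max\{2g(C)-2,0\}}{p}.
\]
Therefore $\bar\mu_{\min}(\mE)>0$ as soon as $p>(g-1)\max\{2g(C)-2,0\}/\mu_{\min}(\mE)$.

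For the second claim, I either argue by duality or repeat the argument with $\mu_{\max}$. Duality works because $\bar\mu_{\max}(\mE)=-\bar\mu_{\min}(\mE^\vee)$ and $\mu_{\max}(\mE)=-\mu_{\min}(\mE^\vee)$, which holds since Frobenius pullback commutes with dualizing. This gives $\bar\mu_{\max}(\mE)\leq\mu_{\max}(\mE)+\frac{(g-1)\max\{2g(C)-2,0\}}{p}<0$ for $p$ large.

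The main obstacle is the precise meaning of ``$p\gg0$''. A fixed bundle lives in a single characteristic, so the statement should be read as follows: the threshold depends only on $g$, $g(C)$ and the value of $\mu_{\min}$ (respectively $\mu_{\max}$). This holds because $\mu_{\min}\in\frac{1}{g!}\Z$ gives $|\mu_{\min}|\geq 1/g!$ when it is nonzero, so an explicit bound such as $p>g!\,(g-1)\max\{2g(C)-2,0\}$ suffices uniformly. Verifying that the Langer/Sun estimate has exactly this shape, with constant $(r-1)(2g(C)-2)/p$ and no hidden $p$-dependence, is the step I would check most carefully. In genus $\leq1$ the bound gives $\bar\mu=\mu$ outright.
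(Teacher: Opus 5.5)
Your proposal is correct and follows essentially the paper's argument: invoke Langer's Corollary~6.2 to get $\bar{\mu}_{\min}(\mE)\geq \mu_{\min}(\mE)-\frac{g-1}{p}\max\{0,2g(C)-2\}$, then use that a positive slope with bounded denominator is bounded below uniformly. The paper applies Corollary~6.2 directly to $\mE$, since it holds for any torsion-free sheaf, so your detour through the Harder--Narasimhan pieces is unnecessary but harmless. It also uses the sharper bound $\mu_{\min}(\mE)\geq 1/g$ (the last quotient has rank at most $g$) in place of your $1/g!$, which gives the uniform threshold $p\geq g(g-1)(2g(C)-2)+1$.
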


\begin{proof}
By \cite{semistable_sheaves_langer} Corollary 6.2, \[\max\{\bar{\mu}_{\max}(\mE)-\mu_{\max}(\mE), \mu_{\min}(\mE)-\bar{\mu}_{\min}(\mE)\}\leq \frac{g-1}{p}\max\{0, 2g(C)-2\},\]

If $g(C)=0$ or 1, the proposition follows easily for any $p$. So, we may suppose  $g(C)\geq 2$. So, \[
\bar{\mu}_{\min}(\mE)\geq \mu_{\min}(\mE)-\frac{g-1}{p}(2g(C)-2).\tag{$*$}
\] On the other hand, since $\mu_{\min}(\mE)>0$, one has $\mu_{\min}(\mE)\geq \frac{1}{g}$. So, for any $$p\geq g(g-1)(2g(C)-2)+1,$$ $(*)$ shows that $\bar{\mu}_{\min}(\mE)>0$.

The other case is similar.

\end{proof}

The stability-type results of the Higgs bundle $(\mE\oplus \mE^\vee, \theta)$ impose strong restrictions on the positivity of the Hodge bundle $\mE$. 
If $k=\C,$ Griffiths \cite{Griffiths1970} proved that $\mE$ is nef by using analytic tools. Since we are interested in the Higgs bundle, we note that this is also a consequence of Simpson's correspondence Theorem~\ref{simpson}. Indeed, if $\mu_{\min}(\mE)<0$, then $\mE/\mE_{n-1}$ is a semistable bundle of slope less than zero. Taking the dual, one has a short exact sequence \[0\rightarrow (\mE/\mE_{n-1})^\vee\rightarrow \mE^\vee\rightarrow \mE_{n-1}^\vee\rightarrow 0.\]Since $\theta(\mE^\vee)=0$, $((\mE/\mE_{n-1})^\vee, 0)\subset (\mE\oplus\mE^\vee, \theta)$ is a Higgs subsheaf. But this contradicts the polystability of $(\mE\oplus\mE^\vee, \theta)$. Generally, let $k$ is a field of characteristic zero. Then the nefness of $\mE$ follows from the fact that semistability is preserved by field extension (see \cite{Huybrechts_Lehn_2010} Corollary 1.3.8).

If $\text{char}(k)>0$, the nefness of the Hodge bundle might fail. We will show in the next section that any curve inside the isogeny leaf gives a counter-example for the nefness of the Hodge bundle. However, if the abelian scheme $\mX/C$ has a $W_2(k)$-lift, $(\mE\oplus \mE^\vee, \theta)$ is Higgs semistable of slope zero by Theorem~\ref{higgs semistable}. In particular, one has $\mu_{\max}(\mE^\vee)\leq0$ by the same argument as in the complex case. This in turn gives $\mu_{\min}(\mE)=-\mu_{\max}(\mE^\vee)\geq0$.

Assuming the $W_2(k)$-lifting, one can also get the following Arakelov inequality for the Hodge bundle, which is well-known in the complex case.

\begin{theorem}[Arakelov inequality]\label{arakelov}
  Let $f: \mX\rightarrow C$ be a family of principally polarized  abelian varieties of relative dimension $g$ over a proper smooth curve. Assume that $f$ can be lifted to $W_2(k)$, then \[\deg f_*\Omega_{\mX/C}^1\leq \frac{g}{2}(2g(C)-2).\]
\end{theorem}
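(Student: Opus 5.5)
By Theorem~\ref{higgs semistable}, the $W_2(k)$-liftability of $f$ implies that the graded Higgs bundle $(\mE\oplus\mE^\vee,\theta)$ is Higgs semistable of slope zero, where $\mE=f_*\Omega^1_{\mX/C}$ and
\[
\theta:\mE\rightarrow\mE^\vee\otimes\Omega^1_{C/k},\qquad \theta(\mE^\vee)=0.
\]
The proof follows the classical argument in the complex case: build a $\theta$-invariant subsheaf from the kernel and image of the Kodaira--Spencer map, apply semistability to it, and use the fact that the degree of $\mE^\vee$ is $-\deg\mE$. If $\mE$ has degree $\le 0$ there is nothing to prove, so we may assume $\deg\mE>0$.

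\textbf{Step 1: the $\theta$-invariant subsheaf.} Let $N=\Ker(\theta)\subseteq\mE$. This is a saturated subsheaf, since $\mE/N$ embeds in the torsion-free sheaf $\mE^\vee\otimes\Omega^1_C$. The sub-Higgs sheaf $(N,0)$ is $\theta$-invariant, so semistability gives $\deg N\le 0$. Set $Q=\mE/N$, of rank $r$. Then $\theta$ induces an injection $Q\hookrightarrow \mE^\vee\otimes\Omega^1_C$. Let $Q'\subseteq\mE^\vee$ be the saturation of the image of $Q\otimes T_C$ in $\mE^\vee$. Then $\rank Q'=r$ and
\[
\deg Q'\ \ge\ \deg Q - r(2g(C)-2).
\]

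\textbf{Step 2: semistability on the quotient.} The subsheaf $\mE\oplus Q'\subseteq\mE\oplus\mE^\vee$ is $\theta$-invariant, because $\theta(\mE)\subseteq Q'\otimes\Omega^1_C$ up to saturation and $\theta(Q')=0$. Semistability of slope zero therefore gives
\[
\deg\mE+\deg Q'\le 0.
\]
Since $\deg\mE=\deg N+\deg Q$ and $\deg N\le 0$, we have $\deg Q\ge\deg\mE$. Combining the inequalities,
\[
\deg\mE+\deg\mE-r(2g(C)-2)\ \le\ \deg\mE+\deg Q'\ \le\ 0,
\]
which yields $2\deg\mE\le r(2g(C)-2)\le g(2g(C)-2)$. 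This is precisely
\[
\deg f_*\Omega^1_{\mX/C}\le \frac{g}{2}(2g(C)-2).
\]
The last inequality uses $r\le g$ and requires $g(C)\ge1$.

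\textbf{Low-genus cases.} If $g(C)=0$, the argument gives $2\deg\mE\le -2r\le 0$, so $\deg\mE\le 0$. This is consistent with the bound $-g$ only if we can also show the family is constant. To handle this, I would use the remark in Section~3.1 that $\deg\mE>0$ if and only if the family is non-isotrivial. For $g(C)=0$ we have $\theta\equiv0$, so $N=\mE$ and $\deg\mE\le 0$, hence the family is isotrivial. In that case $\deg\mE=0$, which would exceed the stated bound $-g$. So I expect the statement to need $g(C)\ge1$ implicitly, or the convention $\max\{2g(C)-2,0\}$.

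\textbf{Main obstacle.} The delicate step is checking that $\mE\oplus Q'$, with $Q'$ saturated, is genuinely a Higgs subsheaf to which the semistability of Theorem~\ref{higgs semistable} applies. This means controlling the torsion introduced by saturation when passing from $\theta(\mE)\otimes T_C$ to $Q'$. I would handle it by working with saturations throughout, since saturating only increases degrees and keeps the inequalities valid in the right direction.
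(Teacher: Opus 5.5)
Your proof is correct, and its first half is the paper's: both take $N=\Ker(\theta)$ (the paper's $K$) with $\deg N\le 0$, and both saturate $\theta(\mE)$ inside $\mE^\vee\otimes\Omega^1_C$ (your $Q'\otimes\Omega^1_C$ is the paper's $F'$). You diverge at the second semistability input. The paper dualizes $0\to F'\to \mE^\vee\otimes\Omega^1_C\to G\to 0$. It then uses the symmetry of the Kodaira--Spencer pairing, which comes from the principal polarization, to identify $G^\vee\otimes\Omega^1_C\subseteq \mE$ with $\Ker(\theta)$, giving a Higgs subsheaf $(G^\vee\otimes\Omega^1_C,0)$ of degree $\le 0$. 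You instead test semistability on $\mE\oplus Q'$, which is $\theta$-stable for trivial reasons. Since $\mE^\vee/Q'\cong G\otimes T_C$, one has $\deg(\mE\oplus Q')=\deg(G^\vee\otimes\Omega^1_C)$. So the two inequalities are numerically the same, and both proofs reach $2\deg\mE\le r(2g(C)-2)$ with $r=\rank(\theta)=g-\rank G$. Your route needs no symmetry of the Kodaira--Spencer map: it works verbatim for any slope-zero semistable Higgs bundle of the form $\mE\oplus\mE'$ with $\theta(\mE')=0$. The step you flag as the main obstacle is not one: $\theta(\mE)\subseteq Q'\otimes\Omega^1_C$ holds exactly, because a saturation contains the sheaf it saturates.

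Your caveat about $g(C)=0$ is right, and it applies equally to the paper. Its final step $\frac{g-\rank(G)}{2}(2g(C)-2)\le g(g(C)-1)$ silently uses $g(C)\ge 1$, and a constant family over $\mathbb{P}^1$ lifts but has $\deg\mE=0>-g$. What both arguments actually prove is $2\deg\mE\le r(2g(C)-2)$. For $g(C)\le 1$ this still gives $\deg\mE\le 0$, hence isotriviality, which is all Remark~\ref{a nonlift remark} needs. Two small corrections in that part:
\begin{itemize}
\item For $g(C)=0$ you should not assert $\theta\equiv 0$ a priori. It follows from $0\le 2\deg\mE\le -2r$, using $\deg\mE\ge 0$ from Section~\ref{section hodge bundle and lie algebra}.
\item The opening reduction ``if $\deg\mE\le 0$ there is nothing to prove'' is valid only when $g(C)\ge 1$. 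It is never used, so you can drop it.
\end{itemize}
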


\begin{proof}
  Given the family $f: \mX\rightarrow C$ which can be lifted to $W_2(k)$, let us  consider the Higgs bundle $(\mE\oplus\mE^\vee, \theta)$ which is Higgs semistable by Theorem~\ref{higgs semistable}, where $\mE\cong f_*\Omega_{\mX/C}^1$.

  Let $K\coloneq \Ker(\theta: \mE\rightarrow \mE^\vee\otimes\Omega_{C/k})$. 
Then we have exact sequences\[0\rightarrow K\rightarrow \mE\rightarrow F\rightarrow 0. \]

Let $F'$ be the saturation of $F$ in $\mE^\vee\otimes\Omega_{C/k}$, i.e., it is a subbundle of $\mE^\vee\otimes\Omega_{C/k}$ with $\rank(F)=\rank(F'), \deg(F)\leq \deg(F')$. We then have an exact sequence of vector bundles over $C$:

\[0\rightarrow F'\rightarrow \mE^\vee\otimes\Omega_{C/k}\rightarrow G\rightarrow 0. \tag{$*$}\]

We then have\[\deg(\mE)=\deg(K)+\deg(F),\]
and $$\deg(\mE^\vee\otimes \Omega_{C/k})=g(2g(C)-2)-\deg(\mE)=\deg(F')+\deg(G)\leq \deg(F)+\deg(G).$$

We have\[2\deg(\mE)\leq g(2g(C)-2)+\deg(K)-\deg(G). \tag{$**$}\]

$(K, 0)$ gives a Higgs subbundle of $(\mE\oplus\mE^\vee, \theta)$ obviously. Since $(\mE\oplus\mE^\vee, \theta)$ is Higgs semistable, we know $\deg(K)\leq 0$.

On the other hand, one can take the dual and then tensor with $\Omega_{C/k}$ to the exact sequence $(*)$. So, we get an exact sequence of vector bundles\[0\rightarrow G^\vee\otimes \Omega_{C/k}\rightarrow \mE\rightarrow F'^\vee\otimes \Omega_{C/k}\rightarrow 0.\]

Notice that the map $$\mE\rightarrow F'^\vee\otimes \Omega_{C/k}\cong \Hom(F', \Omega_{C/k})$$ is locally given by \[x\mapsto (\sum_i\phi_i\otimes \omega_i\mapsto \sum_i\phi_i(x)\cdot \omega_i).\]

So, $G^\vee\otimes \Omega_{C/k}$ is locally given by the sections $x\in \mE$ such that\[\forall y\in \mE, \text{KS}(x, y)=0,\]
where $\text{KS}: \mE\otimes \mE\rightarrow \Omega_{C/k}$ is the Kodaira--Spencer map induced by $\theta$. $\text{KS}$ is symmetric after identification with the principal polarization (for example, see \cite{KaiwenLan} Theorem 2.3.4.2). So, $G^\vee\otimes \Omega_{C/k}$ can also be identified with $\Ker(\theta)$ and  $(G^\vee\otimes \Omega_{C/k}, 0)$ then gives a Higgs subbundle of $(\mE\oplus\mE^\vee, \theta)$.

Since $(\mE\oplus\mE^\vee, \theta)$ is Higgs semistable, we know $$\deg(G^\vee\otimes\Omega_{C/k})=\rank(G)(2g(C)-2)-\deg(G)\leq 0.$$



Take these inequalities back to $(**)$, we see that\[\deg(\mE)\leq \frac{g-\rank(G)}{2}(2g(C)-2)\leq g(g(C)-1).\]

\end{proof}

\begin{remark}\label{a nonlift remark}
Let $C$ be a curve of genus $g(C)=0$ or 1 and let $\phi: C\rightarrow \mA_g$ be a nontrivial map. Theorem~\ref{oort} and Theorem~\ref{trivial l-adic monodromy} show that such curve has to lie in some isogeny leaf. And Theorem~\ref{arakelov} shows that such family cannot be lifted to $W_2(k)$, as one should have $\deg f_*\Omega_{\mX/C}^1>0$. This gives the first evidence for the non-liftability of subvarieties in isogeny leaves.

\end{remark}







\subsection{Moret--Bailly family}In this section, we give an introduction to the Moret--Bailly family, which motivates this paper and exhibits the existence of smooth proper curves inside isogeny leaves.

In \cite{AST_1981__86__R1_0}, Moret--Bailly constructed a family of supersingular principally polarized abelian surfaces over $\mathbb{P}^1$ in characteristic $p>0$, which we  call the Moret--Bailly family. Briefly, the family was constructed as follows: let $A_0\cong E_1\times E_2$ be a supersingular abelian surface over $k$, i.e., $E_1, E_2$ are supersingular elliptic curves over $k$. Let $O_i\in E_i ,i=1, 2$ be the zero points. Consider the line bundle $\mathcal{L}_0\coloneq \mO_{A_0}(E_1\times \{O_2\}+\{O_1\}\times E_2)^{\otimes p}$, which can be shown to be ample and thus gives a polarization on $A_0$. One can calculate that $K(\mathcal{L}_0)$
contains a subgroup scheme $H_0\cong \alpha_p$. In \cite{AST_1981__86__R1_0}, Moret--Bailly showed that $A\cong A_0/H_0$ is still superspecial and the line bundle $\mathcal{L}_0$ descends to an ample line bundle $\mathcal{L}$ over $A$ with $K(\mathcal{L})\cong \alpha_p\times \alpha_p$. 

We write $K\coloneq K(\mathcal{L})$. Then $$K\cong \alpha_p\times \alpha_p\cong \spec(k[a]/a^p)\times \spec(k[b]/b^p).$$
Consider the projective line $\mathbb{P}_k^1$ with homogeneous coordinates $U, V$. Define $H$ to be the subgroup scheme contained in $K\times \mathbb{P}^1_k\cong \underline{\spec}_{\mO_{\mathbb{P}^1_k}}(\mO_{\mathbb{P}^1_k}[\alpha, \beta]/(\alpha^p, \beta^p))$ that is defined by $U\alpha-V\beta$. $H$ is then a finite flat group scheme over $\mathbb{P}^1_k$ which is locally isomorphic to $\alpha_p$.

Finally, one considers the principally polarized abelian scheme $\mX\cong (A\times \mathbb{P}^1_k)/H$ over $\mathbb{P}^1_k$. This is called the Moret--Bailly family.
 Moreover, Moret--Bailly also gives a characterization for its Lie algebra:

\begin{theorem}[Moret--Bailly, \cite{AST_1981__86__R1_0}]\label{lie algebra for moret bailly}
  $\Lie(\mX/\mathbb{P}^1_k)\cong \mO(-p)\oplus \mO(1)$.
\end{theorem}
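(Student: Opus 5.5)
We compute $\Lie(\mX/\mathbb{P}^1_k)$ directly from the construction $\mX=(A\times\mathbb{P}^1_k)/H$. Write $\pi: A\times\mathbb{P}^1_k\rightarrow\mX$ for the quotient isogeny, with kernel $H\subset K\times\mathbb{P}^1_k$. Since $H$ is infinitesimal of height one (locally $\alpha_p$), the differential of $\pi$ yields an exact sequence of $\mO_{\mathbb{P}^1}$-modules
\[0\rightarrow \Lie(H)\rightarrow \Lie(A)\otimes\mO_{\mathbb{P}^1}\xrightarrow{d\pi}\Lie(\mX/\mathbb{P}^1_k).\]
The map $d\pi$ is not surjective: its cokernel is $\Lie$ of the kernel of the dual isogeny direction, or equivalently it is controlled by the factorization $V:\mX\rightarrow\mX^{(p)}\rightarrow\cdots$. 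The cleaner route is to factor multiplication-by-$p$ type maps, or better, to use the isogeny $\psi:\mX\rightarrow (A\times\mathbb{P}^1)/(K\times\mathbb{P}^1)\cong A^{(p)}\times\mathbb{P}^1$ with kernel $(K\times\mathbb{P}^1)/H$. Here we use that $K\subset A[F]$, since $K\cong\alpha_p^2$ has rank $p^2=\#A[F]$ for the surface $A$, so $K=A[F]$ and $A/K\cong A^{(p)}$.

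The key steps, in order, are as follows.

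First, identify the sheaves. We have $\Lie(K\times\mathbb{P}^1)=\Lie(A)\otimes\mO=\mO^2$ with basis dual to $a,b$. The subgroup $H$ is cut out by the equation $U\alpha-V\beta$, so its Lie algebra is the line subbundle of $\mO^2$ spanned locally by $(V,U)$ — the kernel of the linear form $(U,-V)$ paired appropriately. This line is $\mO(-1)\hookrightarrow\mO^2$, with quotient $\mO(1)$. Hence the image of $d\pi$ is $\mO^2/\mO(-1)\cong\mO(1)$, which yields a subsheaf $\mO(1)\subset\Lie(\mX/\mathbb{P}^1)$, saturated because $H$ is locally $\alpha_p$ at every point, so $d\pi$ has constant rank $1$.

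Second, compute the quotient line bundle $\Lie(\mX)/\mO(1)$ by a degree count. The map $\psi:\mX\rightarrow A^{(p)}\times\mathbb{P}^1$ has kernel $Q=(K\times\mathbb{P}^1)/H$, which is locally $\alpha_p$ with $\Lie(Q)\cong\mO(1)$. This is exactly the image found above, so $d\psi$ induces an injection $\Lie(\mX)/\mO(1)\hookrightarrow\Lie(A^{(p)})\otimes\mO=\mO^2$. To obtain the degree, I would compute $\deg\Lie(\mX)=-\deg f_*\Omega^1_{\mX/\mathbb{P}^1}$ independently, via the standard formula for the Hodge bundle under an isogeny with infinitesimal kernel. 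For the isogeny $\pi$ with kernel $H$, one gets $\det\omega_{\mX}\cong\det\omega_{A\times\mathbb{P}^1}\otimes(\omega_H)^{\otimes(p-1)}$-type twisting. More concretely, the $F$-kernel of $\mX$ contains $Q$ with Lie algebra $\mO(1)$, and the quotient of $\Lie(\mX)$ by $\mO(1)$ is identified, via Frobenius pullback $F^*_{\mathbb{P}^1}$ of the line $\Lie(H)=\mO(-1)$, with $\mO(-1)^{\otimes p}=\mO(-p)$. This uses the Verschiebung-type relation $\Lie(\mX)/\Lie(Q)\cong F^*\Lie(H)$, coming from the fact that $\mX\rightarrow A^{(p)}\times\mathbb{P}^1\rightarrow\mX^{(p)}$ composes to $F_{\mX/\mathbb{P}^1}$ and $H^{(p)}=F^*H$.

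Third, the extension $0\rightarrow\mO(1)\rightarrow\Lie\rightarrow\mO(-p)\rightarrow0$ splits, since $\Ext^1(\mO(-p),\mO(1))=H^1(\mathbb{P}^1,\mO(p+1))=0$. This gives $\Lie(\mX/\mathbb{P}^1_k)\cong\mO(-p)\oplus\mO(1)$.

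The main obstacle is the second step: making the identification of the quotient with $F^*\Lie(H)$ rigorous. I would first try Dieudonné-crystal and Dieudonné-module bookkeeping for $\mX[F]$ over $\mathbb{P}^1$, tracking the filtration $Q\subset\mX[F]$ and the Frobenius twist of $H$.
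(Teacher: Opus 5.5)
Your argument is correct. It is a genuinely different route from the paper's, because the paper gives no proof of this statement: it is quoted from Moret--Bailly. The only related computation is the remark in Section~4.1 (Case~I). That remark writes $\Lie(\mX/\mathbb{P}^1)\cong\mO(n)\oplus\mO(m)$, takes $n+m<0$ from the positivity of the Hodge line bundle on the minimal compactification, and uses $\Lie(\phi)$ together with the non-constancy of $H$ only to obtain $m\geq 1$.

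Your Step~1 is the sharp form of that remark. The image of $d\pi$ is $\Lie(Q)=\mO^2/\Lie(H)\cong\mO(1)$, and it is a subbundle because $d\pi$ has rank one on every fibre. Your Step~2 supplies what the remark cannot, namely the quotient line bundle and hence $\deg\Lie(\mX/\mathbb{P}^1)=1-p$, with no appeal to global positivity. The final splitting via $\Ext^1(\mO(-p),\mO(1))=H^1(\mathbb{P}^1,\mO(p+1))=0$ is fine.

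The step you flag as the main obstacle closes in a few lines, without Dieudonn\'e theory.
\begin{itemize}
\item By construction $\psi\circ\pi=F_{A\times\mathbb{P}^1/\mathbb{P}^1}$; this is how $A/K\cong A^{(p)}$ is identified.
\item Naturality of relative Frobenius gives $F_{\mX/\mathbb{P}^1}\circ\pi=\pi^{(p)}\circ F_{A\times\mathbb{P}^1/\mathbb{P}^1}=\pi^{(p)}\circ\psi\circ\pi$. Since $\pi$ is an fppf epimorphism, $F_{\mX/\mathbb{P}^1}=\pi^{(p)}\circ\psi$.
\item Differentiating gives $d\pi^{(p)}\circ d\psi=0$, so $d\psi$ lands in $\Ker(d\pi^{(p)})=\Lie(H^{(p)})$.
\item Since $H^{(p)}$ is cut out by $U^p\alpha-V^p\beta$, $\Lie(H^{(p)})$ is the kernel of $(U^p,-V^p)\colon\mO^2\to\mO(p)$, i.e.\ $F_{\mathbb{P}^1}^*\Lie(H)\cong\mO(-p)$.
\item On each fibre, $d\psi_x$ has kernel $\Lie(Q_x)$, which is one-dimensional inside the two-dimensional $\Lie(\mX_x)$. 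Hence the induced map of line bundles $\Lie(\mX/\mathbb{P}^1)/\Lie(Q)\to\Lie(H^{(p)})$ is nonzero on every fibre, and so it is an isomorphism.
\end{itemize}

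In a write-up, drop the unjustified ``$\omega_H^{\otimes(p-1)}$-type twisting'' and the first paragraph's remarks on the cokernel of $d\pi$ and Verschiebung; none of them is used. Also, $K\subseteq A[F]$ holds because $K\cong\alpha_p^2$ is killed by Frobenius; equality $K=A[F]$ then follows from the rank count, not the other way round.
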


Remark~\ref{a nonlift remark} then implies that the Moret--Bailly family does not admit a lift to $W_2(k)$. 

More generally, let $\mS_2$ be the induced reduced subscheme of the supersingular locus of $\mA_2$.
In \cite{Oortsubvar} Corollary 4.7, Oort showed that $\mS_2$ contains a proper rational curve. In \cite{Families_of_supersingular_abelian_surfaces}, Katsura and Oort showed that every irreducible component of the supersingular locus $\mS_2$ is the image of a Moret--Bailly family. This furthermore implies that every isogeny leaf in $\mS_2$ is the image of a Moret--Bailly family by Theorem~\ref{almostproduct}. So generally, any isogeny leaf in $\mS_2$ does not admit a lift to $W_2(k)$.

\begin{remark}
  Let $f: \mathbb{P}^1\rightarrow \mA_2$ be the morphism giving rise to the Moret--Bailly family. As the Moret--Bailly family $\mX/\mathbb{P}^1$ has Hodge bundle $\mE\cong \mO(p)\oplus \mO(-1)$, it turns out $$H^0(C, f^*T_{\mA_2})=\Hom_{\mO_C}(\text{Sym}^2\mE, \mO_C)=k$$ is nontrivial. This implies the Moret--Bailly family is non-rigid. More generally, this shows that $\mS_2$ is non-rigid inside of $\mA_2$. But this should not be surprising, as one take $\mS_2$ to have the  induced reduced scheme structure. In particular, isogeny leaves are not well suited for studying
(geometric) infinitesimal rigidity of subvarieties.

\end{remark}

\section{Non-liftability of subvarieties of isogeny leaves}

In this section, we prove the main theorem.

\begin{theorem}\label{maintheorem}
   Let $x=[(A, \lambda)]\in \mA_{g}(k)$ be a non-ordinary principally polarized abelian variety over $k$ and let $I(x) $ denote the isogeny leaf through $x$. Let $C$ be a proper and smooth curve over $k$. If $\varphi: C\rightarrow I(x)$ is a nontrivial map, then $\varphi$ cannot be lifted to $W_2(k)$. 
\end{theorem}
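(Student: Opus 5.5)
We argue by contradiction and follow the strategy from the introduction. Suppose $\varphi$ lifts to $W_2(k)$, so the induced family $f:\mX\to C$ lifts. By Corollary~\ref{cor1} and Remark~\ref{remark 1}, after passing to a finite \'etale cover $D\to C$ the lift persists. We may therefore assume there is a local-local isogeny $\phi: B\times_k C\to \mX$ with $B$ a constant abelian variety. We may also assume the trace map $\widetilde{\tau}:\Tr_{K/k}\mX_K\times C\to\mX$ is a purely inseparable isogeny (Theorem~\ref{oort}). Nontriviality of $\varphi$ means $\mX/C$ is non-isotrivial, so $\deg \mE>0$ by \S\ref{section hodge bundle and lie algebra}. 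By Theorem~\ref{higgs semistable}, $(\mE\oplus\mE^\vee,\theta)$ is Higgs semistable. As explained after Proposition~\ref{3.7}, this gives $\mu_{\max}(\Lie(\mX/C))\le 0$, equivalently $\mu_{\min}(\mE)\ge 0$. The task is to show that $\mu_{\max}(\Lie(\mX/C))\le 0$ forces $\widetilde{\tau}$ to be an isomorphism, contradicting non-isotriviality.

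The key step is an induction on the degree of $\widetilde\tau$. If $\widetilde{\tau}$ is not an isomorphism, its kernel $N$ is a nontrivial infinitesimal finite flat subgroup scheme of the constant scheme $B\times C$, where $B=\Tr_{K/k}\mX_K$. Its Frobenius kernel $N\cap (B[F]\times C)$ is nontrivial. Replacing $\widetilde\tau$ by a factorization through $(B\times C)/N_1$, we reduce to the case where $N_1=N\cap B[F]\times C$ is a height-one subgroup scheme of $B[F]\times C$. Height-one subgroups of $B[F]\times C$ correspond to $p$-Lie subalgebras, i.e.\ subbundles $\mathcal{N}\subseteq \Lie(B)\otimes\mO_C$ stable under the $p$-map.

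The isogeny $B\times C\to (B\times C)/N_1$ induces a map on Lie algebras with kernel $\mathcal{N}$. Its image is a subsheaf of $\Lie((B\times C)/N_1)$ that is a quotient of the trivial bundle $\Lie(B)\otimes\mO_C$, and the cokernel has length controlled by $\deg \mathcal{N}$. Since $\mathcal{N}$ is a subbundle of a trivial bundle, $\deg\mathcal{N}\le 0$, with equality iff $\mathcal{N}$ is trivial. If $\mathcal{N}$ is a constant subspace, then $N_1$ is constant and descends to $k$, and the universal property of the trace (cf.\ the discussion after Theorem~\ref{trace}) contradicts the definition of $\Tr$ unless $N_1=0$.

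So $\deg\mathcal N<0$, and the quotient $\Lie(B)\otimes\mO_C/\mathcal N$ has positive degree. Since it is a quotient of a trivial bundle, it is globally generated, and one must extract from it a subsheaf of positive slope in $\Lie(\mX/C)$. Getting positivity all the way up to $\mX$ requires controlling the remaining inseparable isogeny $(B\times C)/N_1\to\mX$: the map on Lie algebras is generically injective modulo its kernel. I expect this to be the main obstacle. I would handle it by choosing $N_1$ as the full Frobenius-kernel part and arguing via the Verschiebung factorization: $\mX[F]$ receives a map whose image contains a nonconstant quotient of $B[F]\times C$. Its Lie algebra then yields a subsheaf of $\Lie(\mX/C)$ of positive degree, giving $\mu_{\max}(\Lie(\mX/C))>0$.

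This contradiction shows $\widetilde{\tau}$ is an isomorphism, so $\mX\cong B\times C$ is constant, contradicting that $\varphi$ is nontrivial. The non-ordinarity hypothesis on $x$ simply guarantees $I(x)$ is positive-dimensional, and it plays no further role. As a byproduct, the same argument shows $\mu_{\min}(\mE)<0$, so $\mE$ is not nef, yielding the second half of Theorem~\ref{Main theorem}.
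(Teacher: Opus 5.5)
Your argument is correct in substance and follows a genuinely different route from the paper. However, the step you flag as ``the main obstacle'' is not an obstacle, and the Verschiebung detour should be dropped.

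Since $\Lie$ is left exact, $\Ker(\Lie(\widetilde\tau))=\Lie(\Ker\widetilde\tau)=\Lie(N)$. So the image of $\Lie(\widetilde\tau)$ in $\Lie(\mX/C)$ is already $\cong(\Lie(B)\otimes\mO_C)/\mathcal N$, and nothing about $(B\times C)/N_1\to\mX$ needs controlling.

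It is cleanest to set $\mathcal N=\Ker(\Lie(\widetilde\tau))$ from the start. This $\mathcal N$ has the following properties:
\begin{enumerate}
\item it is a subbundle of $\mO_C^{\oplus g}$, because the quotient embeds in the vector bundle $\Lie(\mX/C)$;
\item it is a restricted subalgebra by Lemma~\ref{lemma 3};
\item it is nonzero, because $N_K=\Ker\tau$ is connected and nontrivial;
\item via Theorem~\ref{restricted lie algebra equivalence} it defines a flat height-one subgroup $N'\subseteq\Ker\widetilde\tau$.
\end{enumerate}
Use $N'$ rather than $N\cap(B[F]\times C)$, which need not be flat over $C$: its fibres jump where the image of $\Lie(\widetilde\tau)$ is not saturated.

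Then there are two cases. If $\deg\mathcal N=0$, Lemma~\ref{lemma 2} and properness of $C$ make $\mathcal N$ a constant subspace. So $N'=H\times C$ is constant, and the universal property of the trace forces $H=0$, which is impossible; this case also covers $\Lie(\widetilde\tau)=0$. If $\deg\mathcal N<0$, then $\Image\Lie(\widetilde\tau)\cong\mO_C^{\oplus g}/\mathcal N$ is a subsheaf of positive slope, contradicting $\mu_{\max}(\Lie(\mX/C))\le 0$. No induction on $\deg\widetilde\tau$ is needed.

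The paper's proof of Theorem~\ref{maintheorem 2} does not use the trace. It takes an arbitrary local-local isogeny $\phi:B_0\times C\to\mX$ and peels constant subgroups off its kernel:
\begin{enumerate}
\item if $\Lie(\phi)=0$, it factors $\phi$ through Frobenius (Lemma~\ref{main lemma 1});
\item if $\Lie(\phi)\neq 0$, the bound $\mu_{\max}\le 0$ forces $\Ker\Lie(\phi)$ to have slope $0$. Hence it is a constant subalgebra, i.e.\ a constant subgroup $H\times C\subseteq\Ker\phi$, nonzero since $\deg\Lie(\mX/C)<0$, and the paper divides by it.
\end{enumerate}
Finiteness of $\deg\phi$ ends the descent.

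Your use of the trace collapses this descent into one application of the universal property: a maximal constant source has no constant subgroup in its kernel. This merges the two cases and needs neither Lemma~\ref{main lemma 1} nor the sign of $\deg\Lie(\mX/C)$. It is essentially the argument the paper sketches in the introduction and runs for Proposition~\ref{nonnef}. In exchange, the paper's version avoids Chow's trace and exhibits the constant height-one group schemes that motivate Question~\ref{conj 2}.

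A minor point: non-ordinarity of $x$ does not guarantee $\dim I(x)>0$ (e.g.\ $g=1$). It only excludes a case in which no nonconstant $\varphi$ exists, so this does not affect your argument.
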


Our strategy is to show the following result about the positivity of the Hodge bundle, which  implies the non-liftability of the family by Theorem~\ref{higgs semistable}.

\begin{theorem}\label{maintheorem 2}
   Let $x=[(A, \lambda)]\in \mA_{g}(k)$ be a non-ordinary principally polarized abelian variety over $k$ and let $I(x) $ denote the isogeny leaf through $x$. Let $C$ be a proper and smooth curve over $k$. If $\varphi: C\rightarrow I(x)$ is a nontrivial map, then $\mu_{\max}(\Lie(\mX/C))>0$. 
\end{theorem}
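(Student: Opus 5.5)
Replacing $C$ by a finite \'etale cover changes neither the hypotheses nor the conclusion. Indeed, if $\pi:D\to C$ is finite \'etale, then $\Lie(\mX_D/D)=\pi^*\Lie(\mX/C)$, and $\mu_{\max}$ scales by $\deg\pi$ under pullback along a finite separable map: the Harder--Narasimhan filtration pulls back to the Harder--Narasimhan filtration, because semistability is preserved under separable pullback on curves. So by Corollary~\ref{cor1} and Remark~\ref{remark 1} we may assume there is a local-local isogeny $\phi: B\times_k C\to \mX$ over $C$, where $B=\Tr_{K(C)/k}\mX_{K(C)}$ and $\phi$ is the trace map extended via N\'eron models. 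Since $\varphi$ is nontrivial, $\mX/C$ is not isotrivial, and $\deg \Lie(\mX/C)<0$ by Section~\ref{section hodge bundle and lie algebra}.

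We argue by contradiction. Suppose $\mu_{\max}(\Lie(\mX/C))\le 0$, and let $N=\Ker(\phi)\subset B\times C$. This is a finite flat local-local subgroup scheme of the constant abelian scheme. Factor $\phi$ through the relative Frobenius $F^n: B\times C\to B^{(p^n)}\times C$ for $n$ large, so that $N\subset (B\times C)[F^n]$. Then peel off one Frobenius layer at a time using $\mX[F]$. Concretely, consider the first step $N_1=N\cap (B\times C)[F]$. This is a subgroup scheme of the constant group $B[F]\times C$, and height-one subgroup schemes of $B[F]\times C$ correspond to $p$-Lie subalgebras, i.e.\ subbundles $\mathcal{L}\subset \Lie(B)\otimes \mO_C$ stable under the $p$-power map. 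The image of $\Lie(B)\otimes\mO_C\to\Lie(\mX/C)$ induced by $\phi$ has kernel exactly $\Lie(N_1)$. Thus the quotient $Q=(\Lie(B)\otimes\mO_C)/\Lie(N_1)$ maps generically injectively into $\Lie(\mX/C)$. Since $Q$ is a quotient of a trivial bundle, it has $\deg Q\ge 0$, and in fact every quotient of $Q$ has nonnegative degree. Hence the image $\mathcal{Q}\subset\Lie(\mX/C)$ has $\mu_{\min}(\mathcal{Q})\ge 0$. The assumption $\mu_{\max}\le 0$ then forces $\deg Q=0$ and $Q\cong\mathcal{Q}$ to be saturated of degree zero. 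A degree-zero quotient of a trivial bundle that is a quotient of trivial must be trivial, so $\Lie(N_1)\subset \Lie(B)\otimes\mO_C$ is a trivial subbundle, i.e.\ constant $=\mathfrak{n}\otimes\mO_C$. This makes $N_1=H_0\times C$ a constant subgroup.

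Quotienting by it replaces $B$ by the constant abelian variety $B/H_0$ and $\phi$ by an isogeny of strictly smaller degree, still with target $\mX$. Induction on $\deg\phi$ then shows that $\phi$ is an isomorphism after all these constant quotients. Hence $\mX\cong B'\times C$ is constant, contradicting non-isotriviality (equivalently, $\deg\Lie(\mX/C)=0$ would contradict $\deg f_*\Omega^1>0$). Therefore $\mu_{\max}(\Lie(\mX/C))>0$. Non-ordinarity of $x$ is only needed to ensure that nontrivial maps into $I(x)$ exist; an ordinary isogeny leaf is a point.

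The main obstacle is the step where $\deg Q=0$ is required to force constancy of $N_1$. One needs that the image subsheaf of $\Lie(B)\otimes\mO_C$ in $\Lie(\mX/C)$ really has degree $\ge\deg Q$ and sits inside a bundle with $\mu_{\max}\le0$, giving equality and saturation. One also needs that a subbundle of a trivial bundle with degree-zero trivially-generated quotient is a constant subspace, which holds since $H^0$ of the quotient surjects. Finally, $p$-closedness of $\mathfrak{n}$ must be checked, so that it defines a constant subgroup $H_0\subset B[F]$. I would handle this by noting that the $p$-operation on $\Lie(B)\otimes\mO_C$ is the constant semilinear map, and a constant subspace stable generically is stable.
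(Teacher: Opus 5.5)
Your proposal is correct and is essentially the paper's argument: pass to an étale cover to get a local-local isogeny $\phi: B\times_k C\to\mX$, then use $\mu_{\max}(\Lie(\mX/C))\le 0$ to force $\Ker\Lie(\phi)$ to be a constant $p$-subalgebra $\mathfrak{n}\otimes\mO_C$. That gives a constant subgroup $H_0\times C\subseteq\Ker\phi$, and dividing it out lowers $\deg\phi$ until you contradict $\deg\Lie(\mX/C)<0$. The differences are cosmetic. You get constancy from ``a globally generated bundle of degree $0$ is trivial'' instead of from the Harder--Narasimhan piece $\mathbb{L}$ and the abelian category of slope-$0$ semistable bundles. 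You also absorb the paper's Case I ($\Lie(\phi)=0$, peel off a Frobenius) as the special case $\mathfrak{n}=\Lie(B)$.
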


\begin{proof}[Proof of \ref{maintheorem}]
  If $\varphi$ can be lifted to $W_2(k)$, Theorem~\ref{higgs semistable} implies that $(\mE\oplus\mE^\vee, \theta)$ is Higgs semistable of slope zero, where $\mE=f_*\Omega_{\mX/C}^1$. In particular, we have $\mu_{\max}(\Lie(\mX/C))\leq 0$, which contradicts Theorem~\ref{maintheorem 2}.
\end{proof}

It remains to prove \ref{maintheorem 2}.
Let $\varphi: C\rightarrow I(x)$ be a nontrivial map from a smooth proper irreducible curve $C$ to some isogeny leaf $I(x)$. By the discussion in Remark~\ref{remark 1}, there exists an \'etale cover $\pi: D\rightarrow C$ and a polarized abelian variety $(B_0, \mu)$ over $k$, such that there exists an isogeny of abelian $D$-schemes  $\phi: (B_0, \mu)\times_{\spec(k)} D\rightarrow \mX\times_CD$  of local-local type, where $f:\mX\rightarrow C$ is the induced abelian scheme over $C$.

Since $\pi$ is \'etale, one has a natural isomorphism as locally free $\mO_D$-modules\[\Lie(\mX\times_CD/D)\overset{\sim}{\longrightarrow}\pi^*\Lie(\mX/C).\]
And the Harder–Narasimhan filtration of $\Lie(\mX/C)$\[0=\mF_0\subset \mF_1\subset\cdots\subset\mF_n=\Lie(\mX/C)\]
gives rise to the Harder–Narasimhan filtration of $\Lie(\mX\times_CD/D)$ \[0=\pi^*\mF_0\subset \pi^*\mF_1\subset\cdots\subset \pi^*\mF_n=\Lie(\mX\times_CD/D).\]
One can find the details in Appendix~\ref{appendix}. In particular, $\mu_{\max}(\Lie(\mX/C))>0$ if and only if $\mu_{\max}(\Lie(\mX\times_CD/D))>0$. So, without loss of generality, we can replace $C$ by $D$ and assume that $\mX/C$ admits an isogeny from a constant abelian scheme over $C$.

The isogeny
$\phi$ then induces a map between Lie algebras\[\Lie(\phi): \Lie(B_0\times C/C)\rightarrow \Lie(\mX/C).\]
Notice that $\Lie(B_0\times C/C)\cong \Lie(B_0/k)\otimes_k\mO_C\cong \mO_C^{\oplus g}$ is a trivial vector bundle, hence semistable of slope 0.

\subsection{Case I: $\Lie(\phi) = 0$ }\label{case 1}

\begin{definition}[Restricted Lie algebras]
  Let $R$ be a commutative ring of characteristic $p>0$. A restricted Lie algebra over $R$ is a Lie algebra $(L, [ -, - ])$ consisting a finite locally free $R$-module $L$ and a Lie bracket $[- , -]: L\times L\rightarrow L$ together with a $p$-mapping $(-)^{[p]}: L\rightarrow L, x\mapsto x^{[p]}$ such that:

  1. $\ad(x^{[p]})=(\ad x)^p$, where $\ad$ is the adjoint representation of $L$ on itself defined by $\ad(x)(y)=[x, y]$;

  2. $(tx)^{[p]}=t^p x^{[p]}$ for any $t\in R$ and $x\in L$;

  3. $(x+y)^{[p]}=x^{[p]}+y^{[p]}+\sum_{i=1}^{p-1}s_i(x, y)$ for any $x, y\in L$, where $s_i(x, y)$ is $\frac{1}{i}$ times the coefficient of $t^{i-1}$ in the formal expression $(\ad(tx+y))^{p-1}(x)$.

\end{definition}

Generally, let $S$ be a scheme of characteristic $p>0$. A sheaf of restricted Lie algebras over $S$ is a sheaf of $\mO_S$-modules $\mathcal{L}$ together with a Lie bracket $[-, -]: \mathcal{L}\otimes_{\mO_S}\mathcal{L}\rightarrow \mathcal{L}$ and a $p$-th power map $(-)^{[p]}: \mathcal{L}\rightarrow \mathcal{L}$ such that for some affine covering $S=\cup_{i\in I}\spec(R_i)$, $(\mathcal{L}(\spec(R_i)), [- , -], (-)^{[p]})$ is a restricted Lie algebra over $R_i$.

Let $G/S$ be a commutative group scheme over a base $S$ of characteristic $p>0$. $\Lie(G/S)$ then gives a sheaf of restricted Lie algebras over $S$ (see \cite{SGAIII_VII_A}, Expos\'e VIIA, Chapter 6). 
And since $G$ is commutative,  the Lie bracket is trivial.

A $p$-mapping on $\Lie(G/S)$ is equivalent to an $\mO_S$-linear morphism $$\phi: F_S^*\Lie(G/S)\rightarrow \Lie(G/S),$$ defined by $\phi(F_S^*x)=x^{[p]}$. This is well-defined, as $F_S^*:\Lie(G/S)\rightarrow F_S^*\Lie(G/S)$ is $p$-linear.
As a result, the category of sheaves of commutative restricted Lie algebras over $S$ is equivalent to the category of pairs $(\mF, \phi)$ consisting of a locally free $\mO_S$-modules and an $\mO_S$-linear morphism $\phi: F_S^*\mF\rightarrow\mF$.

On the other hand, let $F=F_{G/S}: G\rightarrow G^{(p)}$ be the relative Frobenius morphism. Then the classical infinitesimal study on  group schemes shows that $\Lie(G[F]/S)\cong \Lie(G/S)$, where $G[F]=\Ker(F)$. This motivates the following equivalence:

\begin{theorem}[\cite{SGAIII_VII_A}, Expos\'e VIIA, Chapter 7]\label{restricted lie algebra equivalence}
  Let $S$ be a scheme of characteristic $p>0$, then there is an equivalence between the category of finite flat group schemes of height at most one over $S$ (i.e., $F_{G/S}: G\rightarrow G^{(p)}$ is trivial) and the category of sheaves of commutative restricted Lie algebras over $S$ given by $G\rightarrow \Lie(G/S)$. 
\end{theorem}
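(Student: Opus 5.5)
The statement to prove is the SGA3 equivalence between finite flat group schemes of height at most one and sheaves of commutative restricted Lie algebras. I would prove it by building an explicit quasi-inverse via restricted enveloping algebras and checking everything Zariski-locally. Both sides satisfy Zariski descent (finite flat group schemes are affine over $S$, and restricted Lie algebras are sheaves), so it suffices to treat $S=\spec(R)$ affine with $L$ free of rank $n$. By the discussion preceding the statement, a commutative restricted Lie algebra is the same as a pair $(\mF,\phi)$ with $\phi: F_S^*\mF\to\mF$ linear.

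The functor $G\mapsto\Lie(G/S)$ is already given, with the $p$-map coming from SGA3 VIIA. For the quasi-inverse I take
\[u^{[p]}(L)=\mathrm{Sym}_R(L)/(x^{p}-x^{[p]} : x\in L),\]
the restricted enveloping algebra, which is commutative because the bracket vanishes. The relation $x^p=x^{[p]}$ is compatible with addition and scalars, since $(x+y)^p=x^p+y^p$ in characteristic $p$ and the $s_i$ vanish for a trivial bracket. So I only need to impose it on a basis $x_1,\dots,x_n$. This gives the restricted PBW statement: $u^{[p]}(L)$ is free with basis the monomials $x^a$, $0\le a_i<p$, of rank $p^n$. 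It is a cocommutative Hopf algebra with each $x\in L$ primitive. I define $G(L)=\spec(u^{[p]}(L)^\vee)$, the Cartier dual of the constant-rank Hopf algebra.

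Next I check that $G(L)$ is finite flat of height at most one with $\Lie(G(L))\cong L$ as restricted Lie algebras. The augmentation ideal $I$ of $A=u^{[p]}(L)^\vee$ has $I/I^2\cong L^\vee$, since the primitives of $u^{[p]}(L)$ are exactly $L$ by a local PBW computation. Height one amounts to $f^p=0$ for every $f\in I$. I would verify this by dualizing: the Frobenius on $A$ is dual to the Verschiebung on $u^{[p]}(L)$, and that Verschiebung kills the degree-$<p$ monomials appropriately. The $p$-map on $\Lie$ is recovered because derivations at the identity compose to $p$-th powers, matching $x^p=x^{[p]}$ in the enveloping algebra.

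Conversely, for $G$ of height one I check that the natural Hopf map $u^{[p]}(\Lie G)\to\mO(G)^\vee$, induced by primitives mapping to invariant derivations, is an isomorphism. Height one lets me locally choose coordinates $f_1,\dots,f_n$ of $I$, giving $\mO(G)\cong R[t_1,\dots,t_n]/(t_i^p)$. Both sides are then free of rank $p^n$, and the map is surjective mod the maximal ideals by the field case. Nakayama plus equal rank then gives an isomorphism. Full faithfulness follows because homomorphisms $G\to H$ correspond to Hopf maps between duals, and these correspond to restricted Lie maps by the universal property of $u^{[p]}$.

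I expect the main obstacle to be the structure theorem $\mO(G)\cong R[t_i]/(t_i^p)$ locally for height-one $G$ over a general base. I would first prove it over a field, where it is the classical result that a height-one connected group has truncated polynomial coordinate ring. I would then lift it by choosing local generators of $\omega_G=I/I^2$ and using flatness plus fibrewise dimension counts to conclude the surjection $R[t_i]/(t_i^p)\to\mO(G)$ is an isomorphism.
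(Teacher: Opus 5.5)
Your proposal is correct. The paper gives no argument of its own beyond citing SGA3 VIIA, and your route is essentially the standard proof given there, read (as you implicitly do) for commutative $G$: the restricted enveloping algebra $u^{[p]}(L)$ with its restricted PBW basis, the quasi-inverse $L\mapsto\spec(u^{[p]}(L)^\vee)$, the local structure theorem $\mO(G)\cong\mO_S[T_1,\dots,T_n]/(T_1^p,\dots,T_n^p)$ for height-one $G$, and Nakayama plus a rank count for $u^{[p]}(\Lie G)\cong\mO(G)^\vee$.

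The one step you leave loose is height one for $G(L)$. It is cleanest as follows: the Verschiebung $V^*\colon u^{[p]}(L)\to u^{[p]}(L)^{(p)}$ is an algebra map that sends every primitive $x\in L$ to $0$, because it kills symmetrized non-constant tensors. Hence it equals the counit, so $F_{G(L)}$, being Cartier dual to this Verschiebung, is trivial.
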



\begin{lemma}\label{main lemma 1}
  Let $f:X\rightarrow Y$ be an isogeny of abelian schemes over a scheme $S$ with characteristic $p>0$. Then $\Lie(f): \Lie(X/S)\rightarrow \Lie(Y/S)$ is trivial if and only if $f=g\circ F_{X/S}$ with $F_{X/S}$ the relative Frobenius morphism.
\end{lemma}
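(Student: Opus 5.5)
We will prove Lemma~\ref{main lemma 1} by translating the condition $\Lie(f)=0$ into a statement about the Frobenius kernel. We then use that a homomorphism killing $X[F]$ factors through the quotient $X\to X/X[F]\cong X^{(p)}$.

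The direction ``$\Leftarrow$'' is routine. The relative Frobenius $F_{X/S}:X\to X^{(p)}$ induces the zero map on tangent spaces, because it is given by $p$-th powers on local coordinates. Hence $d(F_{X/S})=0$ on $\omega_{X^{(p)}/S}$, so $\Lie(F_{X/S})=0$. Functoriality of $\Lie$ then gives $\Lie(f)=\Lie(g)\circ\Lie(F_{X/S})=0$.

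For ``$\Rightarrow$'', assume $\Lie(f)=0$ and argue in three steps.

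\textbf{Step 1.} Restrict $f$ to the Frobenius kernel to get $f|_{X[F]}:X[F]\to Y$. Since $F_{Y/S}\circ f = f^{(p)}\circ F_{X/S}$, this map lands in $Y[F]$. Both $X[F]$ and $Y[F]$ are finite flat of height at most one. By the remark before Theorem~\ref{restricted lie algebra equivalence}, $\Lie(X[F]/S)=\Lie(X/S)$ and $\Lie(Y[F]/S)=\Lie(Y/S)$. Under these identifications, $\Lie(f|_{X[F]})$ is exactly $\Lie(f)$, which is $0$.

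\textbf{Step 2.} Apply the equivalence of categories in Theorem~\ref{restricted lie algebra equivalence}. A morphism of height-one group schemes is determined by its morphism of restricted Lie algebras. The zero map of restricted Lie algebras corresponds to the trivial homomorphism, so $f|_{X[F]}=0$. Equivalently, $X[F]\subseteq \Ker(f)$ as closed subgroup schemes.

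\textbf{Step 3.} $F_{X/S}:X\to X^{(p)}$ is a finite flat epimorphism of fppf sheaves with kernel $X[F]$, so $X^{(p)}\cong X/X[F]$. Since $f$ kills $X[F]$, the universal property of the fppf quotient yields a unique homomorphism $g:X^{(p)}\to Y$ with $f=g\circ F_{X/S}$. Finally, $g$ is an isogeny: it is surjective because $f$ is, and it has finite flat kernel because $\Ker(g)=F_{X/S}(\Ker f)\cong \Ker(f)/X[F]$. Fibrewise $\deg g=\deg f/p^{g}$.

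The main obstacle is Step 2, namely justifying that the vanishing of $\Lie(f)$ forces $f|_{X[F]}=0$ scheme-theoretically over an arbitrary base $S$, not just fibrewise. This rests on full faithfulness of the SGA3 equivalence for finite flat height-one groups over $S$. That requires $X[F]$ and $Y[F]$ to be finite locally free, which holds since $X$ and $Y$ are smooth. If one prefers a direct argument, use that $X[F]$ is the first infinitesimal neighbourhood-type subscheme $\underline{\spec}(\mathrm{Sym}\,\omega/(\text{$p$-th powers}))$. A homomorphism out of it is then determined by the induced map on $\omega$, i.e.\ by $\Lie(f)^\vee$.
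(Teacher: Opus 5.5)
Your proposal is correct and follows the paper's proof. You restrict $f$ to $X[F]\to Y[F]$, use faithfulness of the SGA3 equivalence between height-one groups and restricted Lie algebras to get $X[F]\subseteq\Ker(f)$, and factor through $X/X[F]\cong X^{(p)}$; you also spell out that factorization and the fact that $g$ is again an isogeny, which the paper leaves implicit.

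I would drop your closing ``direct argument''. Describing the coordinate ring of $X[F]$ (only non-canonically a truncated $\mathrm{Sym}\,\omega$) does not by itself show that a homomorphism is determined by its map on $\omega$. That uses the group structure, namely the duality between $\mO(X[F])$ and the restricted enveloping algebra of $\Lie(X/S)$, which is exactly the SGA3 input you already invoked.
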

\begin{proof}
  One direction is clear. Conversely, it suffices to show $\Ker(F)\subseteq \Ker(f)$.



The isogeny $f: X\rightarrow Y$ restricts to a morphism of finite flat group schemes over $S$ of height at most one $f: X[F]\rightarrow Y[F]$. As$$\Hom_S(X[F], Y[F])\rightarrow \Hom_{\mO_S}(\Lie(X[F]/S), \Lie(Y[F]/S))\cong \Hom_{\mO_S}(\Lie(X/S), \Lie(Y/S)),$$given by $$f\mapsto \Lie(f)$$ is an isomorphism by Theorem~\ref{restricted lie algebra equivalence}.
We must have $f:X[F]\rightarrow Y[F]$ is trivial by the assumption that $\Lie(f)=0$. As a result, we have $\Ker(F)\subseteq \Ker(f)$.
\end{proof}

By Lemma~\ref{main lemma 1} and the finiteness of $\phi$, one must have $\phi=\phi'\circ F^m_{B_0\times C/C}$ for some $m\geq 0$ with $$d\phi': \Lie((B_0\times_k C)^{(p^m)}/C)\rightarrow\Lie(\mX/C)$$  nontrivial. Moreover, as the  Frobenius is stable under base change, we have $$(B_0\times_{\spec(k)}C)^{(p^m/C)}\cong B_0^{(p^m/k)}\times_{\spec(k)}C.$$ So, we still have an isogeny  $\phi'$ with source a constant group scheme.

\begin{remark}
  Let us consider the Lie algebra of Moret--Bailly family $f: \mX\rightarrow \mathbb{P}^1$ and give a coarse estimation. By the classification of vector bundles over $\mathbb{P}^1$, $\Lie(\mX/\mathbb{P}^1)\cong \mO(n)\oplus \mO(m)$ for some $n, m\in \Z$. As $\Lie(\mX/\mathbb{P}^1)$ has slope strictly less than zero, we can assume $n<0$ and $n+m<0$. We would like to show $m\geq 1$.
  
  Recall that $\mX$ is constructed to be the quotient of $E_0\times E_0\times \mathbb{P}^1$ by the group scheme $$H\subset \alpha_p\times \alpha_p\times \mathbb{P}^1\cong \underline{\spec}_{\mO_{\mathbb{P}^1}}(\mO_{\mathbb{P}^1}[\alpha, \beta]/(\alpha^p, \beta^p))$$ given by the closed locus $U\alpha-V\beta$, with $U, V$ are the homogeneous coordinates of $\mO_{\mathbb{P}^1}$. 

As $H$ has Lie algebra rank 1,  the isogeny $\phi: E_0\times E_0\times \mathbb{P}^1\rightarrow \mX$ induces a nontrivial Lie algebra morphism and hence $m\geq 0$. Moreover, as $H$ is non-constant, each factor of $$\Lie(\phi): \Lie(E_0)\otimes \mO\rightarrow \Lie(\mX/C)$$ must be nonzero. If $m=0$, we then have a surjection \[\Lie(\phi): \mO\oplus \mO\twoheadrightarrow \mO,\] with isomorphism $\gamma_i$ on each factor $\mO\overset{\sim}{\longrightarrow}\mO$ and hence $\gamma_i\in \Gamma(\mathbb{P}^1, \mO)^*=k^*, i=1, 2$. In particular, the kernel of $\Lie(\phi)$ is given by a $k$-line $(u, \gamma u)\in \mO\oplus\mO$, with $\gamma=\gamma_2^{-1}\cdot \gamma_1$. So, the group scheme corresponds to $\Ker(\Lie(\phi))$, which is  $H$, should be given by a ``$k$-line'' $V(\alpha-\gamma\beta)\subset\alpha_p\times\alpha_p\times \mathbb{P}^1$. This then gives a contradiction with the non-constancy of $H$.

This computation provides intuition for the reduction step below.
\end{remark}

\subsection{Case II: $\Lie(\phi)\neq 0$ }\label{case 2}

In this section, we   assume $\Lie(\phi)\neq 0$ and $$\mu_{\max}(\Lie(\mX/C))\leq 0.$$In other words, the Harder-Narasimhan filtration of $\Lie(\mX/C)$ is of the form $$0=\mF_0\subset \mF_1\subset \cdots\subset \mF_n=\Lie(\mX/C),$$
with $\mF_1$ is semistable of slope $\leq 0$. We  write $\mbL\coloneq  \mF_1$, and hence $\Lie(\mX/C)/\mbL$ is a vector bundle that is strictly negative, i.e., $\mu_{\max}(\mE/\mbL)<0$.

We note that $\Lie(\phi)\neq 0$ would imply $\mbL$ is semistable of slope 0. Indeed, the composition of maps\[\mO_C^{\oplus g}\overset{\Lie(\phi)}{\longrightarrow}\Lie(\mX/C)\twoheadrightarrow \Lie(\mX/C)/\mbL\] has to be  trivial by the negativity of $\Lie(\mX/C)/\mbL$. 

In particular,  $\Lie(\phi)$ can  be factored as\[\mO_C^{\oplus g}\overset{\Lie(\phi)}{\longrightarrow}{\mbL}\hookrightarrow \Lie(\mX/C),\]
where the first arrow is a nontrivial map between semistable vector bundles of slope 0.

By Theorem~\ref{higgs semistable abelian category}, $\Ker(\mO_C^{\oplus g}\rightarrow \Lie(\mX/C))=\Ker(\mO_C^{\oplus g}\rightarrow \mbL)$ is a subbundle of slope 0 of $\mO_C^{\oplus g}$.

\begin{lemma}\label{lemma 2}
Let $\mE\subseteq \mO_C^{\oplus n}$ be a subbundle of slope 0, then $\mE$ is also free, of the form $\mE\cong \mO_C^{\oplus r}$ for some $r$. In particular, any short exact sequence of degree 0 vector bundles\[0\rightarrow\mE\rightarrow \mO_C^{\oplus  n}\rightarrow \mF\rightarrow 0\]splits.
\end{lemma}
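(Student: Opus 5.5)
The plan is to show $\mE$ is generated by global sections and that its global sections are as large as its rank; then $\mE$ is trivial. Let $C$ be a smooth proper connected curve over $k=\bar k$ and let $\mE\subseteq \mO_C^{\oplus n}$ be a subbundle (so the quotient $\mF=\mO_C^{\oplus n}/\mE$ is locally free) with $\deg\mE=0$, of rank $r$.

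\emph{Step 1: $\mE$ is a quotient of a trivial bundle.} Dualize the inclusion. Since $\mE$ is a subbundle, $\mE^\vee$ is a quotient of $(\mO_C^{\oplus n})^\vee\cong\mO_C^{\oplus n}$. Hence $\mE^\vee$ is globally generated of degree $0$. Next take a general choice of $r$ global sections of $\mE^\vee$ among the images of the standard basis vectors. Since $k$ is infinite, we may take general linear combinations, which gives a map $\mO_C^{\oplus r}\to\mE^\vee$. Its determinant is a nonzero section of $\det\mE^\vee$, a line bundle of degree $0$. A nonzero section of a degree-$0$ line bundle on a proper curve vanishes nowhere, so $\det\mE^\vee\cong\mO_C$ and the map $\mO_C^{\oplus r}\to\mE^\vee$ is an isomorphism.

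This step requires the map $\mO_C^{\oplus r}\to \mE^\vee$ to be generically surjective for a general choice. That holds because $\mE^\vee$ is globally generated: at the generic point, the images of the $n$ basis sections span the fibre, so $r$ general combinations span it. This is the only real point to check, and I expect it to be the main (mild) obstacle. Alternatively, one can use that $\mE^\vee$ is semistable of slope $0$, being a slope-$0$ quotient of the semistable trivial bundle. Then any section $\mO_C\to\mE^\vee$ with nonzero image has saturation of degree $\ge 0$ and hence $=0$, so it is a subbundle isomorphic to $\mO_C$. Inducting on rank, one splits off trivial pieces.

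Thus $\mE^\vee\cong\mO_C^{\oplus r}$, hence $\mE\cong\mO_C^{\oplus r}$.

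\emph{Step 2: splitting.} Given $0\to\mE\to\mO_C^{\oplus n}\to\mF\to0$ with all bundles of degree $0$, Step 1 gives $\mE\cong\mO_C^{\oplus r}$. The inclusion $\mO_C^{\oplus r}\to\mO_C^{\oplus n}$ is given by an $n\times r$ matrix of global functions, that is, a constant matrix in $k$ since $H^0(C,\mO_C)=k$. It is injective on every fibre, since $\mE$ is a subbundle, so the matrix has rank $r$. Choose a constant complementary subspace $W\subseteq k^n$. Then $W\otimes\mO_C$ maps isomorphically onto $\mF$, and the sequence splits. This also shows $\mF\cong\mO_C^{\oplus(n-r)}$.
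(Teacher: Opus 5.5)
Your proof is correct, but it follows a different route from the paper.

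\textbf{The paper's route.} The paper argues formally. By Theorem~\ref{higgs semistable abelian category}, semistable bundles of slope $0$ form an abelian category in which $\mO_C$ is simple. $\mE$ lies in this category, since its subsheaves are subsheaves of $\mO_C^{\oplus n}$. So $\mE\hookrightarrow\mO_C^{\oplus n}$ is a subobject of a finite sum of simple objects, hence a direct summand isomorphic to $\mO_C^{\oplus r}$. This gives both claims at once.

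\textbf{Your route.} You dualize to get a globally generated degree-$0$ bundle $\mE^\vee$. You then show $\mO_C^{\oplus r}\to\mE^\vee$ is an isomorphism via its determinant, since a nonzero section of a degree-$0$ line bundle vanishes nowhere. Finally you split the sequence by hand, using that the inclusion is a constant matrix because $H^0(C,\mO_C)=k$.

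\textbf{Comparison.} Your version is elementary, independent of the appendix, and exhibits $\mF\cong\mO_C^{\oplus(n-r)}$ explicitly. The paper's is shorter once the abelian-category result is in place.

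\textbf{Two small points.}
\begin{enumerate}
\item No \emph{general choice} is needed. Some $r$ of the images of the basis vectors already form a basis of the generic fibre, so whether $k$ is infinite is irrelevant.
\item Your alternative semistability sketch is incomplete as stated. From $0\to\mO_C\to\mE^\vee\to Q\to 0$ with $Q$ trivial, slope considerations alone do not split the sequence, because nonsplit self-extensions of $\mO_C$ exist when $g(C)\ge 1$. You would have to use global generation again to lift a basis of $H^0(C,Q)$. The determinant argument avoids this entirely.
\end{enumerate}
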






\begin{proof}
  By Theorem~\ref{higgs semistable abelian category},  the category of semistable vector bundles of fixed slope $\mu$ is an abelian category. Consider the abelian category of semistable vector bundles of slope 0, it has a simple object $\mO_C$. And the lemma follows from the fact in category theory that if $\mE$ is a finite direct sum of simple objects $\mF_i$ in some abelian category, any subobject of $\mE$ is a direct summand and isomorphic to a direct sum of some of the simple objects $\mF_i$.
\end{proof}

\begin{lemma}\label{lemma 3}
  Let $f: V\rightarrow W$ be a map between commutative  restricted Lie algebras over $S$, then the kernel (resp. image) $\Ker(f)$ (resp. $\Image(f)$) has a unique structure of commutative restricted Lie algebras over $S$. 
\end{lemma}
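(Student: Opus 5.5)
The plan is to work directly with the $p$-mapping rather than with the pairs $(\mF,\phi)$, because this avoids any exactness question for $F_S^*$. Since $V$ and $W$ are commutative, their Lie brackets vanish. All the universal expressions $s_i(x,y)$ in axiom 3 are iterated brackets, so they vanish too. The $p$-mappings $x\mapsto x^{[p]}$ on $V$ and $W$ are therefore additive and $p$-semilinear: $(tx)^{[p]}=t^px^{[p]}$. A morphism $f$ of restricted Lie algebras is an $\mO_S$-linear map with $f(x^{[p]})=f(x)^{[p]}$. Equivalently, under the dictionary recalled before Theorem~\ref{restricted lie algebra equivalence}, $f\circ\phi_V=\phi_W\circ F_S^*f$.

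\textbf{Kernel.} For a local section $x$ of $\Ker(f)$ we have $f(x^{[p]})=f(x)^{[p]}=0$, so the $p$-mapping of $V$ preserves $\Ker(f)$. We give $\Ker(f)$ the zero bracket and the restricted $p$-mapping. Axioms 1--3 hold because they hold in $V$ and all brackets vanish. This is the unique structure for which the inclusion $\Ker(f)\hookrightarrow V$ is a morphism: a morphism must satisfy $\iota(x^{[p]})=\iota(x)^{[p]}$, and $\iota$ is injective, so $x^{[p]}$ is forced.

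\textbf{Image.} For a local section $y=f(x)$ of $\Image(f)$, we have $y^{[p]}=f(x)^{[p]}=f(x^{[p]})\in\Image(f)$. So the $p$-mapping of $W$ preserves $\Image(f)$, and we restrict it. This is well defined because it is the restriction of $W$'s own $p$-mapping, so no choice of preimage enters. It is the unique structure making $\Image(f)\hookrightarrow W$ a morphism, by the same injectivity argument. It is also the unique one making the surjection $V\twoheadrightarrow\Image(f)$ a morphism: there $f(x)^{[p]}$ must equal $f(x^{[p]})$, and the surjection is an epimorphism of sheaves. The constructions are local on an affine cover, so they glue to sheaves of restricted Lie algebras.

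\textbf{Main obstacle: local freeness.} The definition asks the underlying module to be finite locally free, and in general kernels and images need not be. In the situation where the lemma is used, $S=C$ is a smooth curve. There $\Ker(f)$ and $\Image(f)$ are subsheaves of locally free sheaves, hence torsion-free. Being coherent and torsion-free over a Dedekind scheme, they are locally free. In the application the kernel is even a subbundle, by Theorem~\ref{higgs semistable abelian category} and Lemma~\ref{lemma 2}. I would state this hypothesis explicitly, or else interpret the lemma for sheaves of restricted Lie algebras whose underlying modules are merely coherent. I expect this caveat to be the only delicate point; the compatibility of the $p$-maps themselves is formal.
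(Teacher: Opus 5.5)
Your argument is correct. It rests on the same observation as the paper's proof: compatibility of the $p$-structures with $f$ forces $\Ker(f)$ and $\Image(f)$ to be stable. What differs is the formalism.

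The paper works with the linearized structure map $F_C^*V\to V$. It invokes flatness of the absolute Frobenius of the smooth curve $C$, so that $F_C^*\Ker(f)$ sits inside $F_C^*V$ and the commutative square extends to the kernel (and similarly to the image). You work instead with the additive $p$-semilinear map $x\mapsto x^{[p]}$, which carries the same information by the adjunction $\Hom_{\mO_S}(F_S^*M,M)\cong\Hom_{\mO_S}(M,F_{S*}M)$. Stability of kernel and image then becomes an elementwise check. It needs no exactness of $F_S^*$ and works over any base of characteristic $p$. Flatness is in fact dispensable even in the paper's formalism: the composite $F_C^*\Ker(f)\to F_C^*V\to V$ lands in $\Ker(f)$ by commutativity of the square alone.

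Your version also makes explicit two points the paper leaves implicit:
\begin{itemize}
\item What ``unique'' means: the structure is forced once the inclusion, or the surjection $V\twoheadrightarrow\Image(f)$, is required to be a morphism.
\item The requirement that the underlying module be finite locally free. This genuinely fails over a general $S$. You are right that this, rather than flatness of Frobenius, is where the curve hypothesis is really needed. It enters through the appendix fact that coherent torsion-free sheaves on a smooth curve are locally free.
\end{itemize}
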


\begin{proof}

To give a morphism of commutative restricted Lie algebras $f: V\rightarrow W$ is the same to give a commutative diagram

\[\begin{tikzcd}
	{F_C^*V} & {F_C^*W} \\
	V & W
	\arrow["{F_C^*f}", from=1-1, to=1-2]
	\arrow[from=1-1, to=2-1]
	\arrow[from=1-2, to=2-2]
	\arrow["f", from=2-1, to=2-2]
\end{tikzcd}.\]
Since $C$ is smooth over $k$, the absolute Frobenius $F_C$
is flat, and the commutative diagram naturally extends to a commutative diagram

\[\begin{tikzcd}
	{F_C^*\Ker(f)} & {F_C^*V} & {F_C^*W} \\
	{\Ker(f)} & V & W
	\arrow[hook, from=1-1, to=1-2]
	\arrow[from=1-1, to=2-1]
	\arrow["{F_C^*f}", from=1-2, to=1-3]
	\arrow[from=1-2, to=2-2]
	\arrow[from=1-3, to=2-3]
	\arrow[hook, from=2-1, to=2-2]
	\arrow["f", from=2-2, to=2-3]
\end{tikzcd}.\]
And this shows that $\Ker(f)$ has a natural structure of commutative restricted Lie algebra. Similar for the image.

\end{proof}

We remark that though the category of restricted Lie algebras has images, it might not corresponds to a group subscheme. The reason is simple: closed group subscheme $H\subseteq G$ always imply a cokernel $G/H$ represented by a finite flat group scheme, but the cokernel of a subsheaf $\Image(f)\subseteq W$ is not always 
torsion-free. 

However, Lemma~\ref{lemma 2} implies that the kernel of $\Lie(\phi)$ is then a sub-bundle of $\mO_C^{\oplus g}$, i.e., it has cokernel torsion-free. It also has a natural structure of restricted Lie algebras by Lemma~\ref{lemma 3}. Similarly, as $\Lie(\phi): \mO_{C}^{\oplus g}\rightarrow \Lie(\mX/C)$ eventually has torsion-free cokernel, the image of $\Lie(\phi)$ would correspond to group subscheme of $\mX[F]$.
We are now going to show such Lie algebras are actually constant.

\begin{lemma}\label{lemma 4}
  Let $\mV$ be a trivial restricted Lie algebra, i.e., $\mV\cong \mO_C^{\oplus n}$.
  Then, it corresponds to a constant finite flat group scheme  $\mG$ of height one  over $C$, i.e., $\mG\cong G\times_kC$ for some finite flat group scheme $G$ over $k$; 

\end{lemma}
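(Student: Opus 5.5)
The plan is to show that the restricted structure on $\mV$ is already defined over $k$. Then we use the equivalence of Theorem~\ref{restricted lie algebra equivalence} twice: once over $\spec(k)$ and once over $C$. By the discussion preceding that theorem, a commutative restricted Lie algebra structure on $\mV$ is the same as an $\mO_C$-linear map $\Phi: F_C^*\mV\rightarrow \mV$, given by $\Phi(F_C^*x)=x^{[p]}$. Fix a trivialization $\mV\cong\mO_C^{\oplus n}$ with global basis $e_1,\dots,e_n$. Then $F_C^*\mV$ is also free, with basis $F_C^*e_1,\dots,F_C^*e_n$. So $\Phi$ is given by an $n\times n$ matrix $(a_{ij})$, where
\[e_i^{[p]}=\Phi(F_C^*e_i)=\sum_j a_{ij}e_j,\qquad a_{ij}\in\Gamma(C,\mO_C).\]

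The key input is that $C$ is a proper, connected, reduced curve over the algebraically closed field $k$, so $\Gamma(C,\mO_C)=k$ and all $a_{ij}$ are constants. Set $V_0\coloneq\bigoplus_i k e_i$, and give it the $p$-map determined by $e_i^{[p]}=\sum_j a_{ij}e_j$, extended $p$-semilinearly. Concretely, this is the $k$-linear map $\Phi_0: F_k^*V_0\rightarrow V_0$ with matrix $(a_{ij})$. Then $(V_0,\Phi_0)$ is a commutative restricted Lie algebra over $k$, and by construction $(V_0,\Phi_0)\otimes_k\mO_C\cong(\mV,\Phi)$. Here we use that $F_C^*(V_0\otimes_k\mO_C)\cong (F_k^*V_0)\otimes_k\mO_C$ compatibly, since $F_C$ lies over $F_k$. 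Applying Theorem~\ref{restricted lie algebra equivalence} over $\spec(k)$ gives a finite group scheme $G$ of height one over $k$ with $\Lie(G/k)\cong(V_0,\Phi_0)$.

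It remains to identify $\Lie(G\times_kC/C)$ with $(V_0,\Phi_0)\otimes_k\mO_C$ as restricted Lie algebras. We then apply the equivalence over $C$: both $\mG$ and $G\times_kC$ are height-one finite flat group schemes over $C$ with isomorphic restricted Lie algebras, hence $\mG\cong G\times_kC$. The identification needs two facts. First, formation of the Lie algebra commutes with base change, which holds since $G$ is finite flat and $\Lie$ is computed from the co-normal module $e^*\Omega^1_{G/k}$. Second, the $p$-power map is compatible with base change, since it is functorially defined by $D\mapsto D^p$ on invariant derivations, i.e. via $\Lie(G)(R)=\ker(G(R[\epsilon])\rightarrow G(R))$ functorially in $R$.

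I expect this base-change compatibility of the $p$-map, together with keeping the Frobenius twists straight, to be the only point needing care. I would handle it by citing SGA3 Expos\'e VIIA, where the $p$-structure is defined functorially in the base. The constancy of global functions on $C$ is the substantive step, and it is immediate from properness and connectedness of $C$.
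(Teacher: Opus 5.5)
Your proposal is correct and follows essentially the paper's argument. The paper likewise reduces to showing that the $p$-map $F_C^*\mV\to\mV$ descends to $k$, via $\Hom_{\mO_C}(F_C^*\mV,\mV)\cong\Hom_k(F_k^*\V,\V)$ with $\V=\Gamma(C,\mV)$; your observation that the matrix entries lie in $\Gamma(C,\mO_C)=k$ is the coordinate version of this, and both then apply the SGA3 equivalence over $k$ and over $C$. Your explicit check that $\Lie$ and the $p$-map are compatible with base change fills in a step the paper uses without comment.
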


\begin{proof}
It suffices to show $\V\coloneq \Gamma(C, \mV)$ is a restricted Lie algebra over $k$ with identification $\mV\otimes \mO_C\cong \V$ as restricted Lie algebras. Indeed, let $G_0$ to be the correspondent group scheme of $\V$. Then the conclusion follows by the identification $\Lie(G_0\times C/C)\cong \Lie(G_0/k)\otimes_k \mO_C\cong \V\otimes \mO_C\cong\mV$.

To show the identification of $\V\otimes \mO_C\cong \mV$ as restricted Lie algebras, it suffices to show the $p$-structure of $\mV$ implies a $p$-structure of $\V$. To do this, one uses again that the $p$-structure of $\mV$ is nothing but an $\mO_C$-linear morphism $F_C^*\mV\rightarrow \mV$. As \[\Hom_{\mO_C}(F_C^*\mV, \mV)\cong \Gamma(C, \mV\otimes F_C^*\mV^\vee)\cong \Gamma(C, \V\otimes_k F_k^*\V^\vee\otimes_k\mO_C)\cong \V\otimes_k F_k^*\V^\vee\cong\Hom_k(F_k^*\V, \V),\]
the Lemma follows.
\end{proof}

In particular, Lemma~\ref{lemma 4} together with the discussions before imply that the kernel $\Ker(\Lie(\phi))$ of $\Lie(\phi)$ corresponds to a constant group scheme $K_0\times C$. And it is nontrivial if $\mbL\neq \Lie(\mX/C)$.

We can now prove of Theorem~\ref{maintheorem 2}.

\begin{proof}[Proof of Theorem~\ref{maintheorem 2}]
  Let $\phi: B_0\times C\rightarrow \mX$ be the induced isogeny from a constant abelian scheme as before. And let  $\Lie(\phi): \Lie(B_0\times C/C)\rightarrow \Lie(\mX/C)$ be the induced map between restricted Lie algebras over $C$ with $\Lie(B_0\times C/C)\cong \mO_C^{\oplus g}$. Suppose that $\mu_{\max}(\Lie(\mX/C))\leq 0$.

  If $\Lie(\phi)=0$, then the discussion in Case I~\ref{case 1} shows that one can has a factorization $\phi=\phi'\circ \Frob^n_{B\times_k C/C}$ with $\phi'$ an isogeny from a constant abelian scheme such that $\Lie(\phi')\neq 0$.

  If $\Lie(\phi')\neq 0$, then the discussion in Case II~\ref{case 2} shows that $\Ker(\Lie(\phi))$
gives a constant group scheme $H\times_{\spec(k)} C\subseteq B_0\times_{\spec(k)} C$. $H$ is nontrivial as $\Lie(\mX/C)$ has degree strictly less than zero. On the other hand, $H\times_{\spec(k)} C=\Ker((B_0\times C)[F]\rightarrow \mX[F])=\Ker(\phi)[F]\subseteq \Ker(\phi)$. Consider the quotient morphism $\pi:B_0\times C\twoheadrightarrow B_0\times C/H\times C\cong (B_0/H)\times C$, $\phi$ would factors as  $\phi=\phi'\circ \varphi$, where $\pi$ is nontrivial and  $\phi'$ is still an isogeny starting from a constant group scheme.

Since there is no terminate to the above two reduction steps, we draw a contradiction as $\phi$ has finite kernel.
\end{proof}

\begin{corollary}
  Let $\phi:C\rightarrow I(x)$ be a nontrivial map from a smooth proper curve to some isogeny leaf, then the Hodge bundle $f_*\Omega_{\mX/C}^1$ would never be nef.
\end{corollary}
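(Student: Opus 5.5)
The plan is to deduce the corollary directly from Theorem~\ref{maintheorem 2}, using duality between the Hodge bundle and the Lie algebra, together with the fact that nefness can only be harder to achieve than $\mu_{\min}\geq 0$ in characteristic $p$.

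First, I would identify $\Lie(\mX/C)$ with $\mE^\vee$, where $\mE=f_*\Omega^1_{\mX/C}$, as in Section~\ref{section hodge bundle and lie algebra}. Dualizing the Harder--Narasimhan filtration of $\mE$ reverses it. It gives the Harder--Narasimhan filtration of $\mE^\vee$, with slopes negated, so
\[\mu_{\min}(\mE)=-\mu_{\max}(\Lie(\mX/C)).\]
Theorem~\ref{maintheorem 2} applies to the given nontrivial map $C\rightarrow I(x)$ and yields $\mu_{\max}(\Lie(\mX/C))>0$. Hence $\mu_{\min}(\mE)<0$.

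Second, I would pass to Frobenius-stable slopes. The paper records the inequality $\bar{\mu}_{\min}(\mE)\leq\mu_{\min}(\mE)$, since $\mu_{\min}((F_C^n)^*\mE)\leq p^n\mu_{\min}(\mE)$. Pulling back the minimal destabilizing quotient $\mE\twoheadrightarrow Q$ by $F_C^n$ gives a quotient of $(F_C^n)^*\mE$ of slope $p^n\mu(Q)$, which proves this. Therefore $\bar{\mu}_{\min}(\mE)<0$. By Barton's criterion, $\mE$ is nef if and only if $\bar{\mu}_{\min}(\mE)\geq 0$, so $\mE$ is not nef.

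Alternatively, one can avoid Barton's criterion. The quotient $\mE\twoheadrightarrow Q$ with $\deg Q<0$ induces an embedding $\mathbb{P}(Q)\hookrightarrow\mathbb{P}(\mE)$ on which $\mO(1)$ restricts to $\mO_{\mathbb{P}(Q)}(1)$. A quotient of a nef bundle is nef, and a nef bundle has nonnegative degree, so $\mE$ cannot be nef.

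The only real content is Theorem~\ref{maintheorem 2}, which is assumed. The main point to be careful about is the hypothesis there that $x$ is non-ordinary. This is automatic here: a nontrivial map into an isogeny leaf forces positive dimension. In the ordinary locus, however, isogeny leaves are points, since a local-local isogeny of an ordinary abelian variety is trivial. So a nontrivial $\varphi$ already implies that $x$ is non-ordinary, and no further work is needed beyond the slope bookkeeping above.
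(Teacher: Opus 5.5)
Your proposal is correct and follows the paper's proof: Theorem~\ref{maintheorem 2} gives $\mu_{\max}(\Lie(\mX/C))>0$, dualizing gives $\mu_{\min}(f_*\Omega^1_{\mX/C})<0$, and then $\bar{\mu}_{\min}\leq\mu_{\min}$ together with Barton's criterion rules out nefness. Your check that a nonconstant map into $I(x)$ forces $x$ to be non-ordinary, so that Theorem~\ref{maintheorem 2} applies, supplies a hypothesis the paper leaves implicit; your alternative argument via a negative-degree quotient is also valid.
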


\begin{proof}
It follows from Theorem~\ref{maintheorem 2} that $\mu_{\max}(\Lie(\mX/C))>0$. It implies that $\mu_{\min}(f_*\Omega_{\mX/C}^1)<0$, and the corollary follows from the inequality 
  $\bar{\mu}_{\min}(f_*\Omega_{\mX/C}^1)\leq \mu_{\min}(f_*\Omega_{\mX/C}^1)$.
\end{proof}

\begin{remark}
  We remark that the above corollary together with Theorem~\ref{trivial l-adic monodromy} shows that the case (1) of Theorem 1.3 in \cite{Yuan_2021} will only appear when $A/S$ is constant.
\end{remark}



Finally, we can give an estimate of the $l$-adic monodromy group assuming $W_2(k)$-lift.

\begin{proposition}\label{3}
  Let $\varphi: C\rightarrow \mathcal{A}_g$ be a nontrivial map from a smooth proper irreducible curve $C$. Suppose $\varphi$ can be lifted to $W_2(k)$, then the connected algebraic $l$-adic monodromy group $G_l(C)^\circ$ is a connected non-commutative reductive group. In particular, the semisimple group $G_l(C)^{\circ\ad}$ is nontrivial.

 \end{proposition}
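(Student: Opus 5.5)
We argue by contradiction. The reductivity of $G_l(C)$ was already noted: $C$ is normal, so $G_l(C)$ is the monodromy group of the generic Galois representation, which is semisimple by Tate's isogeny theorem. Hence $G_l(C)$ is reductive, and so is its neutral component $G_l(C)^\circ$. Connectedness holds by definition. The content of the proposition is therefore non-commutativity, equivalently $G_l(C)^{\circ\ad}\neq 1$. So assume $\varphi$ lifts to $W_2(k)$ and that $G_l(C)^\circ$ is commutative, i.e.\ a torus.

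\textbf{Step 1 (reduce to commutative image).} Replace $C$ by the finite \'etale cover $C'\to C$ corresponding to the open subgroup $\rho_l^{-1}(G_l(C)^\circ(\Q_l))$ of $\etpi(C,\bar x)$. On $C'$ the image of $\rho_l$ lies in the commutative group $G_l(C)^\circ(\Q_l)$, so it is commutative. By Remark~\ref{remark 1}, the $W_2(k)$-lift of $\mX/C$ induces a $W_2(k)$-lift of $\mX\times_C C'\to C'$. Moreover $\varphi|_{C'}$ is still nontrivial.

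\textbf{Step 2 (apply Oort).} Theorem~\ref{oort} requires only that the image be commutative. It gives a further unramified cover $C''\to C'$ such that $\mX\times_C C''$ is isogenous to a constant abelian scheme via a local-local isogeny. Exactly as in the proof of Theorem~\ref{trivial l-adic monodromy}, the polarization descends to the constant family, so the image of $C''$ is an $H_\alpha$-scheme. By Theorem~\ref{subvar in isogeny leaf}, $C''\to\mA_g$ factors through an isogeny leaf $I(x)$. Equivalently, $\rho_l$ restricted to $C''$ has finite image, so $G_l(C)^\circ$ is in fact trivial, not merely a torus. Nontriviality of the map survives the finite cover. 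The lift again passes to $C''$ by Remark~\ref{remark 1}, since $C''\to C$ is \'etale.

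\textbf{Step 3 (contradiction).} We now have a nontrivial map from the smooth proper curve $C''$ to an isogeny leaf, and this map admits a $W_2(k)$-lift. Theorem~\ref{maintheorem} rules this out, via Theorem~\ref{maintheorem 2} and Theorem~\ref{higgs semistable}.

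\textbf{Main obstacle.} Theorem~\ref{maintheorem} carries the hypothesis that $x$ is non-ordinary, which must be checked. If the leaf were ordinary, the family would be isogenous via a local-local isogeny to a constant ordinary abelian variety. But the $p$-divisible group of an ordinary abelian variety has no local-local part, so the isogeny is an isomorphism and the family is constant, contradicting nontriviality of $\varphi$. Alternatively one can bypass the hypothesis: the proof of Theorem~\ref{maintheorem 2} uses only the existence of the isogeny from a constant scheme together with nonconstancy, and never uses non-ordinarity. Either route closes the argument.
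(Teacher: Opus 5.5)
Your proof is correct and follows the paper's own argument: pass to a finite \'etale cover on which the image of $\rho_l$ lies in the commutative group $G_l(C)^\circ(\Q_l)$, apply Theorem~\ref{oort} to place the curve in an isogeny leaf as in the proof of Theorem~\ref{trivial l-adic monodromy}, and contradict Theorem~\ref{maintheorem}. You also check two points the paper leaves implicit: the $W_2(k)$-lift passes to the \'etale covers by Remark~\ref{remark 1}, and the non-ordinary hypothesis of Theorem~\ref{maintheorem} is harmless, since a local-local isogeny out of an ordinary abelian variety is an isomorphism, which would force the family to be constant.
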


\begin{proof}
  Since $C$ is smooth, 
  its algebraic $l$-adic monodromy group $G_l(C)$ is the same as the algebraic $l$-adic monodromy group of its generic point. The latter group is reductive by Tate's isogeny theorem, as the Galois representation on the $l$-adic Tate module acted  by $\Gal(K(C)^s/K(C))$ is semisimple.

   Suppose $G_l(C)^\circ$ is commutative. After replacing  an \'etale cover of $C$ if necessary, we could assume $G_l(C)$ is connected. Indeed, let $$\rho_l: \etpi(C, \bar{x})\rightarrow \GL_{2g}(\Z_l)$$ be the $l$-adic monodromy representation. Then $\rho_l(\etpi(C, \bar{x}))\cap G_l(C)^\circ\leq \rho_l(\etpi(C, \bar{x}))$ is a closed subgroup of finite index. Taking the corresponding finite \'etale cover of $C$, we could assume $\rho_l(\etpi(C, \bar{x}))\subset G_l(C)^\circ$ and hence $G_l(C)$ is connected.

In particular, $G_l(C)$ is commutative. By Theorem~\ref{oort} this implies that $\varphi: C\rightarrow \mA_g$ factors through some isogeny leaf as in the proof of Theorem~\ref{trivial l-adic monodromy}. This then contradicts Theorem~\ref{maintheorem}.
\end{proof}

\subsection{Applications to supersingular K3 surfaces}In this section, we deduce a non-liftability result on supersingular K3 surfaces by using the Kuga-Satake construction. At first, we use the Zarkhin's trick to deduce Theorem~\ref{Main theorem}
from the principally polarized case.

\begin{theorem}[Theorem~\ref{Main theorem}]\label{forget polarization}
 Let $C$ be a smooth proper irreducible curve with
  $f: \mX\rightarrow C$ a family of  (not necessarily principally polarized) abelian varieties of dimension $g$ with small $l$-adic local system. Then $f$ cannot lift to $W_2(k)$.
\end{theorem}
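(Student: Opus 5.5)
We reduce to the principally polarized case already handled by Theorem~\ref{maintheorem}, using Zarhin's trick. Suppose, for contradiction, that $f:\mX\rightarrow C$ lifts to an abelian scheme $\widetilde{f}:\widetilde{\mX}\rightarrow\widetilde{C}$ over $W_2(k)$.

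\textbf{Step 1: obtain a polarization and lift it.} Over the smooth proper curve $C$ (normal base), the abelian scheme $\mX/C$ is projective, so it admits a polarization $\lambda:\mX\rightarrow\mX^\vee$, possibly after passing to a finite \'etale cover. Such a cover does not affect the hypotheses: liftability is preserved by Remark~\ref{remark 1}, and the small $l$-adic local system is preserved because the monodromy image only shrinks. We then want $\lambda$, or some multiple of it, to lift to $\widetilde{\mX}$. The obstruction to deforming a line bundle $L$ with $\lambda=\varphi_L$ lies in $H^2(\mX,\mO_{\mX})$, which need not vanish. This is the step we expect to be the main obstacle. The first thing to try is to avoid lifting a polarization altogether and apply Zarhin's trick directly: the abelian scheme $Y:=(\mX\times_C\mX^\vee)^4$ carries a principal polarization built from $\lambda$ together with an endomorphism $\alpha$ satisfying $\alpha^t\alpha=\deg$-related scalar. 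The abelian scheme $Y$ certainly lifts, since it is $(\widetilde{\mX}\times\widetilde{\mX}^\vee)^4$, with $\widetilde{\mX}^\vee$ the dual abelian scheme of $\widetilde{\mX}$.

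\textbf{Step 2: avoid lifting the polarization.} For the later argument we actually only need the Higgs semistability of $\Gr\mbH^1_{dR}(Y/C)$. By Theorem~\ref{lan sheng zuo}, this requires only a $W_2$-lift of the smooth proper morphism $Y\rightarrow C$ (which makes $\mbH^1_{dR}$ a strict $p$-torsion Fontaine module); no polarization is needed. Accordingly, we re-run the argument of Theorem~\ref{maintheorem} for $Y$ with its principal polarization in characteristic $p$ and with this $W_2$-lift of the underlying family. The conclusion is $\mu_{\max}(\Lie(Y/C))\leq 0$.

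\textbf{Step 3: transfer smallness and reach a contradiction.} The $l$-adic local system of $\mX^\vee$ is the dual twist of that of $\mX$, so $Y$ also has finite monodromy. By Theorem~\ref{trivial l-adic monodromy}, the classifying map $C\rightarrow\mA_{8g}$ of $(Y,\text{principal polarization})$ therefore factors through an isogeny leaf. We must check that this map is nontrivial, i.e.\ that $Y$ is not isotrivial. If $Y$ were isotrivial, then $\mX$, being a direct factor of $Y$, would be isotrivial; in that case $\mX$ is constant after an \'etale cover, and the statement is to be read for non-constant families as in Question~\ref{problem 1}. We must also ensure that the point $x$ is non-ordinary. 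If the family were ordinary, its local-local isogeny from a constant abelian scheme would be trivial, forcing constancy. Theorem~\ref{maintheorem 2} then gives $\mu_{\max}(\Lie(Y/C))>0$, contradicting Step 2.
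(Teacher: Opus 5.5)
Your proposal is correct and is essentially the paper's proof: Zarhin's trick on $\mX^4\times_C(\mX^\vee)^4$, smallness of the $l$-adic local system of $\mX^\vee$ via $T_l\mX^\vee\cong (T_l\mX)^\vee(1)$ with trivial cyclotomic twist, liftability of $Y$ from that of $\mX$, and then the principally polarized case. Two of your points fill in what the paper leaves implicit: only the underlying family $Y\rightarrow C$ needs to lift, since the characteristic-$p$ principal polarization already identifies $R^1f_*\mO_Y$ with $\Lie(Y/C)$, and non-isotriviality has to be tacitly assumed. Also, the obstacle you flag in Step 1 is not real: the obstruction map $\Pic(\mX)\rightarrow H^2(\mX,\mO_{\mX})$ is additive with $p$-torsion target, so $L^{\otimes p}$ always lifts.
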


\begin{proof}

If $\mX/C$ is principally polarized, then the theorem follows from Theorem~\ref{maintheorem} and Theorem~\ref{trivial l-adic monodromy}. So, we may assume $\mX/C$ is not principally polarized.

Denote $K\coloneq K(C)$ and $X\coloneq \mX_K$. By taking an \'etale cover if necessary, we suppose that the Galois action of $\Gal(K^s/K)$ on $T_lX$ is trivial. 

Moreover, it is known that if an abelian scheme $\mX/S$ over normal base is relatively projective (see \cite{Raynaud1970FaisceauxAS}, Theorem XI 1.4). So, by choosing a relative ample line bundle $\mathcal{L}$ of $\mX/C$, we  obtain a polarization $\varphi\coloneq\varphi_{\mX/C, \mathcal{L}}: \mX\rightarrow \mX^t$.
On the other hand, $T_lX^t\cong (T_lX)^\vee(1)$ as $\Z_l[\Gal(K^s/K)]$-modules. As $\Z_l(1)$ is the Galois modules associated to the cyclotomic representation\[
\Gal(K^s/K)\rightarrow \Aut(\mu_{l^n}(K^s))
\]for each $n$ and $K$ is a function field over $k=\overline{\F}_p$, $T_lX^t$ also has a trivial $\Gal(K^s/K)$-action. 
In particular, both $X[l^\infty](K^s)$ and $X^t[l^\infty](K^s)$ have trivial $\Gal(K^s/K)$-actions.

Apply this to Zarkhin's trick (see \cite{faltings_degeneration_1990}, Chapter 1, Lemma 5.5): $\mathcal{Z}\coloneq \mX^4\times (\mX^{t})^4$ is a  principally polarized abelian scheme of relative dimension $8g$ over $C$. If $\mX/C$ could lift to $W_2(k)$, so could $\mathcal{Z}/C$. However, the Galois action of $\Gal(K^s/K)$ on $\mathcal{Z}(K^s)\cong \mX(K^s)^4\times \mX^t(K^s)^4$ is again trivial.
This finishes the proof.
    
\end{proof}

\begin{corollary}\label{supersingular cannot lift}
     Let $C$ be a smooth proper irreducible curve with
  $f: \mX\rightarrow C$ a family of supersingular  abelian varieties of dimension $g$. Then $f$ cannot lift to $W_2(k)$.
\end{corollary}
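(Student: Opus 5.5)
The plan is to reduce the corollary to Theorem~\ref{forget polarization} (the general form of Theorem~\ref{Main theorem}). It suffices to show that a family of supersingular abelian varieties over a smooth proper curve has small $l$-adic local system, i.e.\ that the monodromy representation $\rho_l:\etpi(C,\bar{x})\rightarrow \GL_{2g}(\Z_l)$ has finite image. We may assume the family is non-isotrivial. In the isotrivial case the statement either falls outside the intended scope or is handled by the convention that the family is non-constant, as in Question~\ref{problem 1}.

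The first route is moduli-theoretic. Choose a relatively ample line bundle on $\mX/C$, which exists by \cite{Raynaud1970FaisceauxAS}. Then apply Zarhin's trick as in the proof of Theorem~\ref{forget polarization}, passing to $\mathcal{Z}=\mX^4\times(\mX^t)^4$. This is principally polarized, still fibrewise supersingular (a product of supersingular abelian varieties with their duals), and liftable to $W_2(k)$ whenever $\mX$ is. After adding a level-$n$ structure by an \'etale cover of $C$, which preserves liftability by Remark~\ref{remark 1}, we obtain a morphism $\varphi:C\rightarrow \mA_{8g}$ landing in the supersingular locus $W_\xi$.

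Since the central leaves in the supersingular stratum are zero-dimensional ($c(\xi)=0$), Theorem~\ref{almostproduct} forces every irreducible component of $W_\xi$ to be an isogeny leaf. The irreducible curve $\varphi(C)$ therefore lies in some isogeny leaf $I(x)$, with $x$ non-ordinary. Theorem~\ref{maintheorem} then says $\varphi$ cannot lift to $W_2(k)$. Equivalently, by Theorem~\ref{trivial l-adic monodromy}, $G_l(C)^\circ$ is trivial, so the local system is small, and Theorem~\ref{forget polarization} applies directly to $\mX$.

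The main obstacle is the justification that $\varphi(C)\subseteq W_\xi$ lies in a single isogeny leaf. Theorem~\ref{almostproduct} is stated for the open Newton stratum; here $W_\xi$ is closed, which is fine, but I should check that the finite surjection $T\times J\twoheadrightarrow W$ with $T$ a point really identifies each component with an isogeny leaf, rather than a finite union of them. If this becomes delicate, an alternative is to argue directly. Over a finite field of definition, Frobenius eigenvalues on $T_l$ of a supersingular fibre are $q^{1/2}$ times roots of unity, so the geometric monodromy, being a normal subgroup of the arithmetic one and pure of weight one with such eigenvalues, has finite image by Deligne's semisimplicity and Grothendieck's local monodromy arguments. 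Any curve over $k$ spreads out over a finitely generated base, so the argument transfers. I would present the moduli argument first and keep this one as a fallback.
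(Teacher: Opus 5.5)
Your proposal is correct and follows the paper's route, which cites only Theorem~\ref{forget polarization} and Theorem~\ref{almostproduct}. Since $c(\xi)=0$, Theorem~\ref{almostproduct} makes each component of the supersingular locus an isogeny leaf; this places your Zarhin-modified, level-structured family in one, which gives small $l$-adic monodromy, and Theorem~\ref{forget polarization} finishes. The obstacle you flag is already settled by the statement of Theorem~\ref{almostproduct}: when $T$ is a point, $\Phi(\{v\}\times J)=\Phi(T\times J)=W$, so each component is a single isogeny leaf (as the paper remarks right after that theorem), and the Deligne-weight fallback is unnecessary.
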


\begin{proof}
  This follows directly from Theorem~\ref{forget polarization} and Theorem~\ref{almostproduct}.
\end{proof}

Fix a degree $2d$ and let $\mM_{2d, K}$ be the moduli stack of primitively polarized K3 surfaces of degree $2d$ with a suitable level structure $K$. Over complex numbers, we have a period map\[p:\mM_{2d, K}\rightarrow \Sh_K(G, \mD)\]to some Shimura variety of a GSpin Shimura datum. As GSpin Shimura datums are of Hodge type, it gives an embedding\[\iota: \Sh_{K}(G, \mD)\hookrightarrow \Sh_{K'}(\GSp, \mH^{\pm})\]to a Shimura variety of Siegel type. Such an embdding is called Kuga-Satake construction. Taking the composite map\[\mM_{2d, K}\rightarrow \Sh_{K'}(\GSp, \mH^{\pm}),\]it extends to  canonical integral models (see \cite{Madapusi_Pera_2016} for instance) and hence admits modding $p$ for almost all $p$. We write $\KS(X)$ for  the abelian variety associated to a K3 surface $X$ via the Kuga-Satake correspondence. The Kuga-Satake construction also maps a supersingular K3 surface to a supersingular abelian variety.

\begin{definition}
  A K3 surface $X/k$ is supersingular if the slopes of the Frobenius action on the crystalline cohomology group $H^2_{cris}(X/W(k))$ are all equal to $1$.
\end{definition}


  

\begin{theorem}
  Let $f: \mX\rightarrow C$ be a family of degree-$d$-polarized supersingular K3 surfaces over a smooth proper curve. Then $f$ cannot lift to $W_2(k)$.
\end{theorem}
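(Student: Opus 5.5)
The plan is to reduce to Corollary~\ref{supersingular cannot lift} by passing through the Kuga--Satake construction. Suppose, for contradiction, that $f:\mX\rightarrow C$ lifts to $\widetilde{f}:\widetilde{\mX}\rightarrow\widetilde{C}$ over $W_2(k)$, with the polarization lifting as well.

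\textbf{Step 1: pass to moduli.} After an \'etale cover $D\rightarrow C$, which is harmless by the argument of Remark~\ref{remark 1}, we may assume the family carries the level structure $K$. This requires choosing $K$ so that the level structure lives on $H^2_{\acute et}$ and becomes rational after a finite \'etale cover. Then $f$ is classified by a morphism $C\rightarrow\mM_{2d,K,k}$, and the lift gives $\widetilde{C}\rightarrow\mM_{2d,K,W_2(k)}$. Since deformations of the level structure are unique along the \'etale-type level, the lift of $f$ extends.

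\textbf{Step 2: compose with Kuga--Satake.} Compose with the integral Kuga--Satake map $\mM_{2d,K}\rightarrow\mA_{g',d',n}$ on canonical integral models (\cite{Madapusi_Pera_2016}), valid for $p$ outside a finite set. This produces an abelian scheme $\KS(\mX)\rightarrow C$ together with a $W_2(k)$-lift $\KS(\widetilde{\mX})\rightarrow\widetilde{C}$, obtained by pulling back the universal abelian scheme along the composite $\widetilde{C}\rightarrow\mA_{g',d',n,W_2(k)}$.

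\textbf{Step 3: apply the corollary.} Every fibre $\KS(\mX_c)$ is supersingular, because the Kuga--Satake construction sends supersingular K3 surfaces to supersingular abelian varieties. Corollary~\ref{supersingular cannot lift}, or Theorem~\ref{forget polarization} for non-principal polarizations, then says $\KS(\mX)/C$ cannot lift, which is a contradiction. This needs $\KS(\mX)$ to be a genuine family over the smooth proper curve $C$, which it is by construction. The corollary does not require non-isotriviality, but Theorem~\ref{maintheorem} requires $\varphi$ nontrivial, so we must check this. If the classifying map $C\rightarrow\mA$ were constant, then because the Kuga--Satake map is quasi-finite (Torelli), $C\rightarrow\mM_{2d}$ would be constant. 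In that case the K3 family is isotrivial, and the statement should be read as concerning non-isotrivial families, as in Question~\ref{problem 1}.

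\textbf{Main obstacle.} The delicate step is Step 1–2: ensuring that a $W_2(k)$-lift of the K3 family actually induces a $W_2(k)$-point of the integral model compatible with the Kuga--Satake morphism. Concretely, the lift must land in $\mM_{2d,K}$ over $W_2(k)$, which needs the polarization to lift. I would handle this by first using that, for K3 surfaces, the obstruction to deforming a line bundle is in $H^2(\mO)$. Together with flatness of the relevant part of the moduli, it then suffices to replace the lift by one where the polarization lifts, or to assume it as part of the definition of lifting a polarized family. The restriction that $p$ be large enough for the integral Kuga--Satake map to exist would be recorded as a hypothesis.
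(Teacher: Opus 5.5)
Your proposal is correct and follows the paper's route. A $W_2(k)$-lift of the (polarized, level-structured) K3 family gives, by compatibility of the integral Kuga--Satake map with base change, a $W_2(k)$-lift of the supersingular abelian family $\KS(\mX)/C$, which contradicts Corollary~\ref{supersingular cannot lift}.

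The extra care you take is all tacit in the paper's one-line proof: the \'etale cover for the level structure, the restriction on $p$, and non-isotriviality via quasi-finiteness of the Kuga--Satake map. The same holds for lifting the polarization, with one caveat. Your first option, modifying an unpolarized lift so that the polarization lifts, is not justified. You should instead take ``lift'' to mean a lift of the polarized family by definition.
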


\begin{proof}
Consider the family of supersingular abelian varieties $\KS(\mX)/C$ given by the Kuga-Satake construction. Since the Kuga-Satake construction is compatible with base change and $\KS(\mX)/C$ does not admit a $W_2(k)$-lift by Corollary~\ref{supersingular cannot lift}, $f: \mX\rightarrow C$ cannot lift to $W_2(k)$ either.
\end{proof}

\section{Positivity of Hodge bundles and isotriviality}
Fix a smooth proper curve $C$ over a perfect $k$ of characteristic $p>0$ and let $\mX/C$ be an abelian scheme. Let $K=K(C)$ and $X=\mX_K$. In this section, we discuss an opposite direction analysis for a result of Yuan and raise some questions for our future interests.

\begin{theorem}[\cite{Yuan_2021}, Theorem 3.9]\label{yuan}
  Let $C$ be a projective and smooth curve over a finite field $k$ of characteristic $p>0$, and $K=K(C)$. Let $A$ be an abelian variety over $K$ with $p$-rank 0. Suppose $A$ has semistable reduction everywhere over $C$. Then the Hodge bundle of $A$ is not ample over $C$.

\end{theorem}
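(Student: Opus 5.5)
The plan is to study the Verschiebung isogeny and the Hasse--Witt map on the Lie algebra, and to turn the nilpotence forced by $p$-rank $0$ into a sub-object of the Hodge bundle of non-positive degree. This contradicts ampleness via Barton's criterion ($\bar{\mu}_{\min}>0$), recalled earlier in this section. I have not checked that the key step closes; it is flagged below.

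First I fix the integral model. Since $A$ has semistable reduction everywhere over $C$, the identity component $\mathcal{G}$ of the N\'eron model is a semi-abelian scheme over $C$. Its Lie algebra $\mathcal{L}=\Lie(\mathcal{G}/C)$ is a vector bundle of rank $g$ whose dual $\omega=e^*\Omega^1_{\mathcal{G}/C}$ is the Hodge bundle of $A$. Since $\mathcal{G}$ is a commutative group scheme over $C$, $\mathcal{L}$ is a sheaf of commutative restricted Lie algebras. As in Section~\ref{case 1}, its $p$-map is an $\mO_C$-linear morphism $\phi: F_C^*\mathcal{L}\rightarrow\mathcal{L}$, namely $\Lie$ of the Verschiebung $V:\mathcal{G}^{(p)}\rightarrow\mathcal{G}$. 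The $p$-rank of $A$ equals that of $A^t$, and the multiplicative rank of $A[F]$ at the generic point is the $p$-rank of $A^t$. Hence $p$-rank $0$ means $A[F]$ has no multiplicative part, i.e. $\phi$ is nilpotent on the generic fibre. A map of vector bundles that vanishes generically vanishes, so the iterate $\phi^{(g)}: (F_C^{g})^*\mathcal{L}\rightarrow\mathcal{L}$ is identically zero.

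Second, I compare degrees along Verschiebung. Write $\mathcal{G}_n=\mathcal{G}^{(p^n)}$, obtained by pulling back along the absolute Frobenius of $C$ ($k$ is finite, so twisting by $F_k$ is harmless after a finite base extension). Then $\deg\omega_{\mathcal{G}_n}=p^n\deg\omega$. The isogeny $V^n:\mathcal{G}_n\rightarrow\mathcal{G}$ induces $(V^n)^*:\omega\rightarrow (F_C^n)^*\omega$, whose cokernel is controlled by $\omega_{\ker V^n}$, a torsion-type co-Lie module of the kernel. Suppose $\omega$ is ample. Then $\bar{\mu}_{\min}(\omega)>0$ by Barton, so $\mu_{\min}((F_C^n)^*\omega)$ grows like $p^n\cdot$(positive). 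At the same time the vanishing $\phi^{(g)}=0$ says that $(V^g)^*$ factors through the proper quotient cut out by the image of the $p$-maps. More precisely, I would take the kernel $\mathcal{K}\subset\omega$ of the dual Hasse--Witt map $\omega\rightarrow F_C^*\omega$. By nilpotence this kernel is a nonzero saturated subsheaf, and it is the co-Lie algebra of the maximal $\alpha$-type piece of $\mathcal{G}[F]$; by Lemma~\ref{lemma 3}, kernels and images of such maps carry the restricted structure.

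Third, which I expect to be the main obstacle, I must show $\deg\mathcal{K}\le0$, or exhibit some quotient of $\omega$ of non-positive degree. My first attempt would use the height-change formula for the isogeny $\mathcal{G}\rightarrow\mathcal{G}/H$, where $H\subset\mathcal{G}[F]$ is the finite flat height-one subgroup with $\Lie(H)$ the saturation of $\Ker\phi$ (a subbundle, so it defines a subgroup by Theorem~\ref{restricted lie algebra equivalence}). Semistability keeps $\mathcal{G}/H$ semi-abelian. The formula
\[
\deg\omega_{\mathcal{G}/H}=\deg\omega_{\mathcal{G}}-\deg\omega_H
\]
with $\deg\omega_H=\deg(\Lie H)^\vee$ then relates $\mathcal{G}/H$ back to a Frobenius twist of something with the same $p$-rank. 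Iterating $g$ times and using that $\phi^{(g)}=0$ forces the composite to be $F^g$, I hope to get an identity of the form $p^g\deg\omega=\deg\omega+\sum(\text{degrees of quotients of }\omega\text{ pulled back by Frobenius})$. Ampleness makes the right side too small unless some intermediate quotient has degree $\le0$, and that gives the contradiction. Finiteness of $k$ should be needed exactly here, to identify the Frobenius twists with honest abelian varieties over the same $C$. If this bookkeeping fails, the fallback is to specialize the argument of Theorem~\ref{maintheorem 2}: build a constant height-one subgroup from a degree-zero piece, and run the ``no terminate'' reduction.
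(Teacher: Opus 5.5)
The paper gives no proof of this statement. It imports it from Yuan, and its only comment is the remark that the trace hypothesis is superfluous because $A(K)$ is finitely generated when $k$ is finite. That remark indicates that the cited argument rests on a global finiteness input. Your proposal therefore has to stand alone, and it does not. Step~1 is fine: $p$-rank $0$ makes the $p$-map $\phi$ nilpotent, so $\phi^{(g)}=0$. But Step~3 carries all the content, it is not established, and the tools you propose for it are flawed.

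First, the target ``$\deg\mathcal K\le 0$'' would not contradict ampleness. Ample bundles contain subsheaves of arbitrarily negative degree; only quotients (after finite or Frobenius pullback) are constrained. In the paper's own Moret--Bailly example, $\omega\cong\mO(p)\oplus\mO(-1)$ and $\mathcal K=\Ker(\omega\to F_C^*\omega)$ is the summand $\mO(p)$, of positive degree. The witness to non-ampleness is the quotient $\omega/\mathcal K\cong\mO(-1)$, which is the co-Lie algebra of the $\alpha$-subgroup. Co-Lie algebras of subgroups are quotients of $\omega$, not subsheaves, so identifying $\mathcal K$ with one is also off.

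Second, your height-change formula fails for infinitesimal $H$. The map $\Lie(\mathcal G)\to\Lie(\mathcal G/H)$ kills $\Lie(H)$, so $\omega_{\mathcal G/H}\to\omega_{\mathcal G}$ is not injective. For $H\subset\mathcal G[F]$ of height one the correct relation is
\[
0\rightarrow \Lie(\mathcal G)/\Lie(H)\rightarrow \Lie(\mathcal G/H)\rightarrow F_C^*\Lie(H)\rightarrow 0,
\]
so $\deg\omega_{\mathcal G/H}=\deg\omega_{\mathcal G}+(p-1)\deg\omega_H$. For $H=\mathcal G[F]$ this gives $p\deg\omega$, whereas your formula gives $0$. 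It also reproduces $\Lie(\mX/\mathbb P^1)\cong\mO(1)\oplus\mO(-p)$ for Moret--Bailly.

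Even with the correct formula, the bookkeeping cannot produce a contradiction. Along any chain of height-one quotients ending at a Frobenius twist $(F_C^{n})^*\mathcal G$, the increments add up to $(p^n-1)\deg\omega$. This holds for every semi-abelian $\mathcal G$, whatever its $p$-rank. Ampleness only makes the first increment $-(p-1)\deg\Lie(H)$ positive and says nothing about the intermediate $\mathcal G_i$, so nothing on the right side is ``too small''. Nor do $g$ successive $\alpha$-quotients compose to $F^g$. For a supersingular surface with $a$-number $1$ the two steps have degrees $p$ and $p^2$, so the composite has degree $p^3\neq p^4=\deg F^2$.

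So the nilpotence $\phi^{(g)}=0$ is never turned into an inequality on degrees, and that conversion is the missing idea. It needs a global input. Finiteness of $k$ enters there (through finite generation of $A(K)$, as the remark after the statement indicates). It does not enter, as you suggest, to make Frobenius twists honest abelian schemes: $F_C^*\mathcal G$ is always a semi-abelian scheme over $C$.

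Your fallback does not supply the missing input. The argument of Theorem~\ref{maintheorem 2} starts from an isogeny $B_0\times C\to\mX$ out of a constant abelian scheme. When $\Tr_{K/k}A=0$, which is Yuan's original setting, there is no nonzero homomorphism from a constant abelian scheme at all, so that reduction has nothing to act on.
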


\begin{remark}
    The original assumption $\Tr_{K/k}A=0$  in \cite{Yuan_2021} Theorem 3.9 is not needed, as $A(K)$ is automatically finitely generated when $k$ is a finite field.
\end{remark}

\begin{remark}
    Proposition~\ref{3.7} shows that for $p\gg 0$, the Hodge bundle as in Theorem~\ref{yuan} will have $\mu_{\min}\leq 0$.
\end{remark}

\subsection{Isotriviality implies non-nefness}

Suppose that $\mX$ admits a constant abelian subscheme, i.e., there exists an abelian variety $B$ over $k$ and an injection $B\times_k C\hookrightarrow \mX$. Then Theorem~\ref{restricted lie algebra equivalence} yields an injection of restricted Lie algebras
\[
\Lie(B\times C/C)\hookrightarrow \Lie(\mX/C).
\]
Since $\Lie(B\times C/C)$ is a free vector bundle of slope $0$, it follows that $\mu_{\max}(\Lie(\mX/C))\geq 0$.

By taking the quotient by the maximal constant abelian subscheme, we may assume that $\mX$ admits no constant abelian subscheme. Suppose the trace morphism $\tau:(\Tr_{K/k}X)_K\rightarrow X$ is nontrivial. Then $\Ker(\tau)$ is a nontrivial finite flat group scheme over $C$ of local-local type by Theorem~\ref{trace}.

By the universal property of the trace, $\Lie(\tau)\neq 0$ by the same argument as in Section~4.1. 

If $\mu_{\max}(\Lie(\mX/C))=0$, the same argument in Section~4.2 shows that $\Lie(\tau)$ must be injective again by the universal property of the trace. In particular, one has an injection\[
\tau:(\text{Tr}_{K/k}X)[F]\times C\hookrightarrow \mX[F].
\] On the other hand, $\Ker(\tau)[F]\cong \Ker(\tau:(\text{Tr}_{K/k}X)[F]\times C\hookrightarrow \mX[F])$. Since $\Ker(\tau)$ is nontrivial of local-local type, $\Ker(\tau)[F]\neq 0$ by the same argument as in Lemma~\ref{local-local gpsch}. This gives a contradiction and hence $\mu_{\max}(\Lie(\mX/C))>0$.
We record this as follows.

\begin{proposition}\label{nonnef}
Suppose that the universal family $\mX/C$ has maximal variation, i.e., it does not admit a constant  abelian subscheme. If $\Tr_{K/k}X\neq 0$, then $f_*\Omega_{\mX/C}^1$ is not nef.
\end{proposition}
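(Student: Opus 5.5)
We argue by contradiction, assuming $f_*\Omega^1_{\mX/C}$ is nef. Then $\mu_{\min}(f_*\Omega^1_{\mX/C})\geq\bar{\mu}_{\min}(f_*\Omega^1_{\mX/C})\geq 0$, so by duality $\mu_{\max}(\Lie(\mX/C))\leq 0$. The aim is to show that a nonzero trace together with this bound forces a constant abelian subscheme, which contradicts maximal variation. Write $T=\Tr_{K/k}X\neq 0$. By the Néron model discussion after Theorem~\ref{trace}, the trace extends to a homomorphism $\widetilde{\tau}:T\times_k C\rightarrow \mX$. Its image is an abelian subscheme $\mX'$, and $\widetilde{\tau}:T\times C\rightarrow\mX'$ is an isogeny whose kernel is local-local (connected by Theorem~\ref{trace}(2), and with connected dual by (1)). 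Since $\mX$ has no constant abelian subscheme, $\Ker(\widetilde\tau)\neq 0$; otherwise $T\times C\hookrightarrow\mX$.

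Next we run the two reduction steps of Theorem~\ref{maintheorem 2}, but on the trace map rather than on an arbitrary isogeny. First, $\Lie(\widetilde\tau)\neq 0$. If it vanished, Lemma~\ref{main lemma 1} applied to $T\times C\rightarrow\mX'$ would factor $\widetilde\tau$ through $F_{T\times C/C}$, giving a map $T^{(p)}\times C\rightarrow\mX$. That map is a factorization through a constant abelian scheme, so by the universal property it must factor back through $\widetilde\tau$. This yields an endomorphism of $T$ whose composite with Frobenius is the identity, which is impossible because Frobenius is not an isomorphism when $T\neq0$. Second, since $\mu_{\max}(\Lie(\mX/C))\leq 0$, $\Lie(\widetilde\tau)$ lands in the slope-zero first Harder--Narasimhan piece $\mbL$. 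Its kernel is a slope-zero subbundle of $\mO_C^{\oplus\dim T}$. By Lemma~\ref{lemma 2}, Lemma~\ref{lemma 3} and Lemma~\ref{lemma 4}, this kernel is the Lie algebra of a constant height-one subgroup $H\times C\subseteq \Ker(\widetilde\tau)[F]$.

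We now claim $H=0$, and this is where the universal property of the trace is used. If $H\neq0$, then $\widetilde\tau$ factors through $(T/H)\times C$, and again the universal property makes $T\rightarrow T/H$ a split monomorphism up to the factorization. In that case $H=0$, so $\Lie(\widetilde\tau)$ is injective and $\widetilde\tau$ restricts to a monomorphism $T[F]\times C\hookrightarrow \mX[F]$. On the other hand, $\Ker(\widetilde\tau)$ is a nonzero local-local group scheme. By the argument of Lemma~\ref{local-local gpsch}, which produces $\alpha_p\subseteq\Ker(F)$ fibrewise, we get $\Ker(\widetilde\tau)[F]\neq 0$, and $\Ker(\widetilde\tau)[F]$ equals the kernel of $T[F]\times C\rightarrow\mX[F]$. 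This contradicts injectivity. Hence $\mu_{\max}(\Lie(\mX/C))>0$, so $\mu_{\min}(f_*\Omega^1)<0$ and therefore $\bar\mu_{\min}(f_*\Omega^1)<0$, i.e.\ the bundle is not nef.

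The main obstacle is the step using the universal property. Factorizations of $\widetilde\tau$ through $F$ or through a quotient by a constant subgroup must genuinely contradict finality of $(T,\tau)$. This requires checking that the factored map from $T^{(p)}$ or from $T/H$ lies in the same category of pairs, that the induced endomorphism of $T$ composed with the quotient map equals the identity, and that this forces $H=0$ by comparing degrees. A secondary point is that passing from $\mu_{\max}\le0$ to a constant restricted-Lie kernel needs the kernel to be a subbundle, not just a subsheaf; that is supplied by the abelian-category statement for slope-zero semistable bundles.
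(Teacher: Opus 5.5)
Your proposal is correct and follows essentially the same route as the paper. You assume $\mu_{\max}(\Lie(\mX/C))\le 0$ and use the universal property of the trace to rule out two things: a Frobenius factorization of $\widetilde\tau$ (so $\Lie(\widetilde\tau)\neq 0$), and a nonzero constant kernel of $\Lie(\widetilde\tau)$. This forces $T[F]\times C\hookrightarrow\mX[F]$, contradicting $\Ker(\widetilde\tau)[F]\neq 0$. You spell out the universal-property splittings ($h\circ F=\mathrm{id}$ and $h\circ q=\mathrm{id}$) more explicitly than the paper does. One point worth adding: passing from injectivity of $\Lie(\widetilde\tau)$ to a monomorphism of group schemes needs the image of $\Lie(\widetilde\tau)$ to be saturated in $\Lie(\mX/C)$, and the same abelian-category result supplies this.
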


\begin{corollary}
  Let $C$ be a  smooth proper curve over a finite field  $k$. Let $\mX/C$ be an abelian scheme generically of $p$-rank 0, then $f_*\Omega_{\mX/C}^1$ is  non-ample. Furthermore, if $\mX$ does not admit a constant abelian subscheme and $\Tr_{K/k}X\neq 0$, then $f_*\Omega_{\mX/C}^1$ is non-nef.
\end{corollary}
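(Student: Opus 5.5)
The corollary has two parts, and I will prove each separately.

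\emph{Part 1: non-ampleness for $p$-rank $0$.} I would invoke Theorem~\ref{yuan}. Since $\mX/C$ is an abelian scheme over the whole of $C$, the generic fibre $X=\mX_K$ has good reduction, and in particular semistable reduction, at every point of $C$. By hypothesis $X$ has $p$-rank $0$. The remark after Theorem~\ref{yuan} says the trace hypothesis is unnecessary over a finite field: there $X(K)$ is automatically finitely generated by Lang--N\'eron, since $\Tr_{K/k}X(k)$ is finite. So Theorem~\ref{yuan} applies directly and shows that the Hodge bundle of $X$ over $C$ is not ample.

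This Hodge bundle should be identified with $f_*\Omega^1_{\mX/C}$. By the N\'eron property, $\mX$ is the N\'eron model of $X$, so the Hodge bundle in Yuan's sense, namely $e^*\Omega^1$ of the N\'eron model, equals $f_*\Omega^1_{\mX/C}$.

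One caveat is that Theorem~\ref{yuan} is stated over a finite field $k$. Our $\mX/C$ is defined over that finite field, so no base change is needed. Ampleness is in any case insensitive to extending the base field, so the conclusion is unaffected either way.

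\emph{Part 2: non-nefness when there is no constant abelian subscheme and $\Tr_{K/k}X\neq 0$.} This is exactly Proposition~\ref{nonnef}. That proposition was established over a perfect field $k$, and a finite field is perfect. The one point to check is that its argument uses the regularity of $K/k$, needed for Theorem~\ref{trace}(2) to make $\Ker(\tau)$ connected and of local-local type. Regularity holds because $C$ is smooth and geometrically connected over the perfect field $k$.

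If the argument of Section~4 tacitly used $k=\bar k$, for instance in Lemma~\ref{lemma 2} or Lemma~\ref{lemma 4} where global sections of trivial bundles are identified with $k$-vector spaces, I would pass to $\bar k$. The trace is compatible with this base change by Theorem~\ref{trace}(4), since $K/k$ is regular, so the hypothesis $\Tr_{K/k}X\neq 0$ persists over $\bar k$.

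The main obstacle is that the hypothesis of having no constant abelian subscheme may fail to be stable under passing to $\bar k$. A constant subscheme defined over $\bar k$ descends to some finite extension $k'$, where it is a $k'/k$-form of a constant subscheme rather than a constant one. For this reason I would prefer to run the argument of Proposition~\ref{nonnef} directly over $k$. The alternative is to note that nefness, measured by $\bar\mu_{\min}$, is invariant under base field extension, and to apply the proposition over $k'$, replacing $\mX$ by its quotient by the maximal constant abelian subscheme exactly as in that section. Either route yields $\mu_{\max}(\Lie(\mX/C))>0$, hence $\bar\mu_{\min}(f_*\Omega^1_{\mX/C})\le\mu_{\min}(f_*\Omega^1_{\mX/C})<0$, which is non-nefness.
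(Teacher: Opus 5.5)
Your proposal is correct and follows the paper's proof. The first statement comes from Yuan's theorem (Theorem~\ref{yuan}) applied to the generic fibre, which has good and hence semistable reduction everywhere, and the second is a direct application of Proposition~\ref{nonnef}, which is stated over a perfect base field and so applies verbatim over the finite field $k$. Your contingency plan of passing to $\bar k$ is therefore unnecessary, and so is the alternative route via quotienting by a maximal constant abelian subscheme over $k'$, which would also need the trace hypothesis to survive that quotient; the direct route you say you prefer is exactly the paper's.
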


\begin{proof}

    The first statement follows from Theorem~\ref{yuan}, as  semistability is preserved by field extension (see \cite{Huybrechts_Lehn_2010} Corollary 1.3.8).
  


The last statement follows from Theorem~\ref{nonnef}.
\end{proof}

\subsection{Non-nefness implies isotriviality}In this section, we are trying to give the opposite direction of Proposition~\ref{nonnef}. Our intuition is that the non-nefness is a totally characteristic $p$ phenomenon. As the triviality of monodromy group is also a totally characteristic $p$ phenomenon, which can be fully described by isogeny leaves by Theorem~\ref{trivial l-adic monodromy for general projective variety}. We have the intuition that if the Hodge bundle of $\mX/C$ is non-nef, then it has a factor detected by the isogeny leaf. In other words, it should have a non-trivial $K'/k$-trace for any extension $K'/K$  that is unramified at all places of $C$ where $\mX$ has good reduction. 

We say that $B_0$ gives an isotrivial abelian subscheme of $\mX$ if for some finite \'etale morphism $C'\twoheadrightarrow C$, there is an injection $B_0\times_kC'\rightarrow \mX\times_CC'$. For example, if there exists a homomorphism $\phi: B_0\times C\rightarrow \mX$ with \'etale kernel, then $B_0$ gives an isotrivial abelian subscheme of $\mX$.

\begin{conjecture}\label{conj 1}
  Let $\mX/C$ be an abelian scheme and suppose $\mX$ does not admit an isotrivial abelian subscheme.
  If $\mu_{\min}(f_*\Omega_{\mX/C}^1)<0$, then for some finite \'etale cover $C'\twoheadrightarrow C$ the $K(C')/k$-trace morphism $$\tau_{K(C')/k}: (\Tr_{K(C')/k}\mX_{K(C')})_{K(C')}\rightarrow X_{K(C')}$$ is nonzero and cannot be descended to $k$.
\end{conjecture}

Here, we suppose $\mX/C$ does not admit an isotrivial abelian subscheme. As for the generally case, one can take successive quotients to obtain $\mX/C$ that has maximal variation. 
Indeed, let $\mX/C$ be an abelian scheme and suppose $X_0$ gives an isotrivial abelian subscheme of $\mX$. Let $C'\twoheadrightarrow C$ be a finite \'etale cover such that one obtains a short exact sequence of abelian schemes over $C'$\[
0\rightarrow X_0\times C'\rightarrow \mX_{C'}\rightarrow \mY\rightarrow 0.
\]

Let $\mX'$ denote  $\mX\times_CC'$.
Consider the commutative diagram with rows are short exact sequences of abelian schemes
\[\begin{tikzcd}
	0 & {X_0\times C'} & \mX' & \mY & 0 \\
	0 & {X_0^{(p)}\times C'} & {\mX'^{(p)}} & {\mY^{(p)}} & 0
	\arrow[from=1-1, to=1-2]
	\arrow[from=1-2, to=1-3]
	\arrow["F", from=1-2, to=2-2]
	\arrow[from=1-3, to=1-4]
	\arrow["F", from=1-3, to=2-3]
	\arrow[from=1-4, to=1-5]
	\arrow["F", from=1-4, to=2-4]
	\arrow[from=2-1, to=2-2]
	\arrow[from=2-2, to=2-3]
	\arrow[from=2-3, to=2-4]
	\arrow[from=2-4, to=2-5]
\end{tikzcd}\]

The snake lemma then gives\[
0\rightarrow X_0[F]\times C'\rightarrow \mX'[F]\rightarrow \mY[F]\rightarrow 0,
\]
and hence a short exact sequence of restricted Lie algebras\[
0\rightarrow \Lie(X_0/k)\otimes_k\mO_{C'}\rightarrow \Lie(\mX'/C')\rightarrow \Lie(\mY/C')\rightarrow 0.
\]

Suppose $\mu_{\min}(f_*\Omega^1_{\mX/C})<0$. So, $\mu_{\max}(\Lie(\mX'/C'))=\mu_{\max}(\Lie(\mX/C))>0$.
Let $$0=\mF_0\subset\mF_1\subset\cdots\subset\mF_n=\Lie(\mX'/C')$$
be the HN-filtration of $\Lie(\mX'/C')$. And let $i$ be the smallest number such that $\mu(\mF_{i+1}/\mF_i)<0$. Then one has \[
0\rightarrow \Lie(X_0/k)\otimes_k\mO_{C'}\rightarrow \mF_i\rightarrow Q\rightarrow 0
\]for some subsheaf $Q\subset \Lie(\mY/C')$. Let $\widetilde{Q}$ be the saturation of $Q$ in $\Lie(
\mY'/C'
)$.

As $\mu_{\max}(\Lie(\mX'/C'))>0$, $\deg(\mF_i)>0$. So, $\deg(\widetilde{Q})\geq \deg(Q)=\deg(\mF_i)>0$. It follows that $\mu_{\max}(\Lie(\mY/C'))\geq \mu(\widetilde{Q})>0$. By induction on dimensions and up to an \'etale cover of $C$, one obtain an abelian scheme that does not admit an isotrivial abelian subscheme.








\begin{remark}
    Suppose furthermore that  $\mX/C$ has generically $p$-rank 0 and does not admit an isotrivial abelian subscheme. Then together with Theorem~\ref{yuan} and Proposition~\ref{3.7}, Conjecture~\ref{conj 1} implies that if $\Tr_{K'/k}X$ is trivial for any extension $K'/K$  that is unramified at all places of $C$ where $\mX$ has good reduction , then $\mu_{\max}(\Lie(\mX/C))=0$ for $p\gg 0$. This looks surprising and we hope to gain a better understanding of this phenomenon.

\end{remark}

\begin{remark}
    Let $\mX/C$ be an abelian scheme that is generically ordinary and does not admit an isotrivial abelian subscheme. 
Suppose that $\mX/C$ has a non-trivial trace $\tau:\Tr_{K/k}X\times _{\spec(k)}C\rightarrow \mX$.  As $\tau$ has a finite kernel, it follows that $X_0\coloneq \Tr_{K/k}X$ is ordinary over $k$. As $\Ker(\tau)$ is a local-local subgroup of $X_0[p^N]\cong (\Z/p^N\Z)^{\oplus 2g}\oplus \mu_{p^N}^{\oplus 2g}$ for $N\gg 0$ by Theorem~\ref{trace}, it follows that $\tau$ is injective, which is in contradiction with our assumption to $\mX/C$. So, $\mX/C$ has a trivial trace and Conjecture~\ref{conj 1} implies that $\mu_{\min}(f_*\Omega_{\mX/C}^1)\geq 0$. A stronger version $\bar{\mu}_{\min}(f_*\Omega_{\mX/C}^1)\geq 0$ was shown by Rössler (\cite{Rossler2015}, Theorem 1.2). This gives another evidence for Conjecture~\ref{conj 1}.

\end{remark}

\subsection{Trivial subbundle implies isotriviality}Let $\mO_C^{\oplus n}$ be a restricted Lie subbundle of $\Lie(\mX/C)$, i.e., there exists a short exact sequence of restricted Lie algebras over $C$\[0\rightarrow\mO_C^{\oplus n}\rightarrow\Lie(\mX/C)\rightarrow\mF\rightarrow 0.\]
Equivalently, this implies there exists a group $k$-scheme $H$ of height one and dimension $n$, such that $H\times_kC$ is a subgroup scheme of $\mX[F]$. 

\begin{question}\label{conj 2}
 If $H\times_kC$ is a constant group subscheme of $\mX[F]$, then does $\mX$ admit an isotrivial abelian subscheme $B_0$ such that  $B_0[F]\cong H$.
\end{question}

This question is inspired in the proof of \ref{maintheorem 2}, where one already constructed such a constant group subscheme of $\mX[F]$ (see the remark after Lemma~\ref{lemma 3}). We used the argument on kernels to show such constant group subscheme has to be full. But it would also be interesting to use reduction on dimension $g$ to have a different proof, for which we raise Question~\ref{conj 2}.

\appendix

\section{Semistable Higgs bundles}\label{appendix}

We would like to include an appendix on the theory of semistable Higgs bundles. Our final goal is to show the category of semistable Higgs bundles of same the slope is an abelian category, for which the author was unabble to find a reference. This is a generalization of the well-known fact that the category of semistable Higgs bundles of same slope is an abelian category.

\begin{definition}[Higgs bundle]
  Let $X$ be a scheme over a field $k$. A Higgs bundle is defined to be a pair $(\mE, \theta)$, where $\mE$ is a locally free sheaf over $X$ and $\theta: \mE\rightarrow \mE\otimes_{\mathcal{O}_X}\Omega^1_X$ is an $\mO_X$-linear map satisfying  the integrality condition $\theta\wedge \theta=0$. We also call such $\theta$ a Higgs field.
\end{definition}

From now,  fix a smooth proper curve $C$ over a field $k$. Then the condition $\theta\wedge\theta$ will be automatic for any linear map $\theta: \mE\rightarrow\mE\otimes \Omega_C^1$, as $\Omega_C^2=0.$

Let $(\mE_1, \theta_1), (\mE_2, \theta_2)$ be two Higgs bundles on $C$, a morphism $f: (\mE_1, \theta_1)\rightarrow (\mE_2, \theta_2)$ is defined to be a $\mO_C$-linear morphism (of sheaves) $f:\mE_1\rightarrow \mE_2$ such that the following diagram commutes

\[\begin{tikzcd}
	{\mE_1} && {\mE_2} \\
	\\
	{\mE_1\otimes \Omega^1_C} && {\mE_2\otimes \Omega_C^1}
	\arrow["f", from=1-1, to=1-3]
	\arrow["{\theta_1}"', from=1-1, to=3-1]
	\arrow["{\theta_2}", from=1-3, to=3-3]
	\arrow["{f\otimes 1}"', from=3-1, to=3-3]
\end{tikzcd}\]

We now write $\text{Hig}_{C}$ for the category of Higgs bundles over $C/k$.

\begin{lemma}\label{torsion-free is locally free}
  A coherent $\mO_C$-module is locally free if and only if it is torsion-free.
\end{lemma}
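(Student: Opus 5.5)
The plan is to use the structure theorem for finitely generated modules over a principal ideal domain, after reducing to the local rings of $C$. Since $C$ is a smooth proper curve over $k$, it is in particular a regular noetherian scheme of dimension one. Hence for every closed point $x\in C$ the local ring $\mO_{C,x}$ is a discrete valuation ring. At the generic point the local ring is the function field $K(C)$. Let $\mF$ be a coherent $\mO_C$-module.

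One direction is immediate. If $\mF$ is locally free, then on an affine open $\spec(R)$ with $R$ an integral domain, $\mF$ restricts to a direct sum of copies of $R$ over a finer cover. A nonzero section killed by a nonzero element of $R$ cannot exist in a free module over a domain, so $\mF$ is torsion-free.

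For the converse, assume $\mF$ is torsion-free. The steps are as follows.

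\begin{enumerate}
\item For each point $x\in C$, the stalk $\mF_x$ is a finitely generated $\mO_{C,x}$-module. It is torsion-free, because torsion-freeness is a local condition: a torsion element of the stalk comes from a torsion section on some neighbourhood.
\item Over a discrete valuation ring, and likewise over a field, a finitely generated torsion-free module is free. This is the structure theorem over a PID: the torsion part vanishes, leaving $\mF_x\cong \mO_{C,x}^{\oplus r}$.
\item A coherent sheaf on a noetherian scheme whose stalk at $x$ is free is free on some open neighbourhood of $x$. Concretely, a basis of $\mF_x$ lifts to sections over a neighbourhood $U$, giving a map $\mO_U^{\oplus r}\rightarrow\mF|_U$. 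This map is an isomorphism at $x$. Its kernel and cokernel are coherent and vanish at $x$, so their supports are closed sets not containing $x$, and we may shrink $U$ to avoid them.
\item Covering $C$ by such neighbourhoods shows that $\mF$ is locally free.
\end{enumerate}

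I expect no step to be a real obstacle. The only points needing care are two bookkeeping details. First, "torsion-free" must be interpreted stalkwise, or equivalently on each affine open over the integral domain $\mO_C(U)$. Second, the passage from a free stalk to a free neighbourhood in step 3 uses the coherence of the kernel and cokernel. The essential input is simply that the local rings of a smooth curve are DVRs.
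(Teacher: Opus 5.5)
Your proposal is correct and is essentially the paper's approach: the paper's proof is only a citation to the Stacks Project (Tag 0CC4), using that $C$ is integral, smooth and of dimension one. You have written out the standard argument behind that cited lemma: the local rings are DVRs, finitely generated torsion-free modules over a DVR are free, and a free stalk of a coherent sheaf spreads to a free neighbourhood.
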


\begin{proof}
  Since $\dim C=1$ and $C$ is integral smooth, this lemma follows from \cite[\href{https://stacks.math.columbia.edu/tag/0CC4}{Tag 0CC4}]{stacks-project}.
\end{proof}

Let $f: (\mE_1, \theta_1)\rightarrow (\mE_2, \theta_2)$ be a morphism of Higgs bundles. Let $\Ker(f)$ be the kernel of the morphism of sheaves $f:\mE_1\rightarrow \mE_2$. $\Ker(f)$ is torsion-free as being a subsheaf of $\mE_1$, and hence is locally free by Lemma~\ref{torsion-free is locally free}. Moreover, since $\Omega_C^1$ is a  locally free sheaf, we see $\Ker(f\otimes 1)\cong \Ker(f)\otimes 1$. In particular, $\theta_1: \Ker(f)\rightarrow \Ker(f)\otimes \Omega_C$ is a well-defined morphism and $(\Ker(f), \theta_1)$ gives a Higgs bundle. 

Similarly, define $\Image(f)\coloneq \Image(f: \mE_1\rightarrow \mE_2)$. It is torsion-free sheaf as being a subsheaf of $\mE_2$. $\Image(f)$ is then a locally free sheaf by \ref{torsion-free is locally free} again. It is preserved by $\theta_2$ as $\Image(f\otimes 1)=\Image(f)\otimes 1$. In particular, $(\Image(f), \theta_2)$ is a Higgs bundle.

Let $(\mE, \theta)$ be a Higgs bundle. Define a Higgs subsheaf $(\mF, \theta)\subset (\mE, \theta)$ to be a pair consisting of a locally free subsheaf $\mF\subseteq \mE$ preserved by the Higgs field $\theta$.\\

It is known that the Picard group $\Pic(C)$ admits a surjective homomorphism $\deg: \Pic(C)\rightarrow\Z$. Let $\mE$ be a vector bundle (a locally free sheaf) over $C$. Define its degree to be $\deg(\mE)\coloneq \deg(\det(\mE))$ and the slope to be $$\mu(\mE)\coloneq \frac{\deg(\mE)}{\rank(E)}.$$

\begin{definition}Let $(\mE, \theta)$ be a Higgs bundle over $C$. 
  
  1. We say $(\mE, \theta)$ is semistable if for every sub-Higgs bundle $(\mF, \theta)\subseteq (\mE, \theta)$, i.e., $\mF\subseteq \mE$ is a locally free subsheaf preserving the Higgs field $\theta$. Then one has \[\mu(\mF)\leq \mu(\mE).\]

  2. We say $(\mE, \theta)$ is stable if for every sub-Higgs bundle $(\mF, \theta)\subseteq (\mE, \theta)$, i.e., $\mF\subseteq \mE$ is a locally free subsheaf preserving the Higgs field $\theta$. Then one has \[\mu(\mF)< \mu(\mE).\]

  3.  We say $(\mE, \theta)$ is polystable if $(\mE, \theta)$ is a direct sum of stable Higgs bundles of the same slope.
\end{definition}

In particular,  if $\mE$ is (semi)-stable in the classical sense, $(\mE, 0)$ is a (semi)-stable Higgs bundle.

We note that the testing objects for (semi)-stability are subsheaves of $\mE$. Some authors prefer to restrict on subbundles $\mF\subseteq \mE$, i.e., it also has torsion-free cokernel. We are claiming these two notions coincide for the curve case. One direction is obvious. Conversely, we need to show if $(\mE, \theta)$ is a Higgs bundle such that for any subbundle $\mF\subset \mE$ preserved by $\theta$, then $\mu(\mF)<\mu(\mE)$ (resp. $\mu(\mF)\leq\mu(\mE)$). Then, for any locally free subsheaf $\mF\subseteq \mE$ preserved by $\theta$, one still  has $\mu(\mF)<\mu(\mE)$ (resp. $\mu(\mF)\leq \mu(\mE)$). In order to do this, we want to recall the saturation of a (Higgs) bundle.

\begin{lemma}\label{saturation}
  Let $\mE$ be a vector bundle over $C$, and let $\mF$ be a coherent subsheaf of $\mE$. Then 
  
  1. $\mF$ is a vector bundle, and $\mF$ is contained in a subbundle $\widetilde{\mF}$ of $\mE$ with $\rank(\mF)=\rank(\widetilde{\mF})$ and $\deg(\mF)\leq \deg(\widetilde{\mF})$.

  2. If $(\mE, \theta)$ is a Higgs field with $\mF$ preserved by $\theta$, then $\widetilde{\mF}$ is also preserved by $\theta$.
\end{lemma}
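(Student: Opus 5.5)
We prove the two parts of Lemma~\ref{saturation} separately. Part 1 is local algebra on the curve; part 2 reduces to a torsion argument.

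For part 1, $\mF$ is a coherent subsheaf of the locally free sheaf $\mE$, hence torsion-free. By Lemma~\ref{torsion-free is locally free} it is therefore locally free, i.e.\ a vector bundle. To build the saturation, consider the quotient $\mE/\mF$ and its torsion subsheaf $T\subseteq \mE/\mF$. Set
\[\widetilde{\mF}\coloneq \Ker(\mE\to (\mE/\mF)/T),\]
the preimage of $T$ in $\mE$. Then $\mF\subseteq\widetilde{\mF}$, and $\widetilde{\mF}/\mF\cong T$ is a torsion sheaf. Since $\mE/\widetilde{\mF}\cong (\mE/\mF)/T$ is torsion-free, it is locally free by Lemma~\ref{torsion-free is locally free}, so $\widetilde{\mF}$ is a subbundle. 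The ranks agree because $T$ has rank zero. For the degrees, the sequence $0\to\mF\to\widetilde{\mF}\to T\to 0$ and additivity of degree on coherent sheaves over a smooth curve give
\[\deg\widetilde{\mF}=\deg\mF+\mathrm{length}(T)\geq \deg \mF,\]
using that $\deg T=h^0(T)\geq 0$ for a torsion sheaf on $C$.

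For part 2, we must show $\theta(\widetilde{\mF})\subseteq \widetilde{\mF}\otimes\Omega^1_C$. Consider the composite
\[\psi:\widetilde{\mF}\xrightarrow{\theta}\mE\otimes\Omega^1_C\to (\mE/\widetilde{\mF})\otimes\Omega^1_C.\]
Since $\theta(\mF)\subseteq \mF\otimes\Omega^1_C\subseteq\widetilde{\mF}\otimes\Omega^1_C$, the map $\psi$ vanishes on $\mF$ and so factors through $\widetilde{\mF}/\mF\cong T$, which is torsion. The target $(\mE/\widetilde{\mF})\otimes\Omega^1_C$ is locally free, because $\Omega^1_C$ is a line bundle, and hence torsion-free. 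Any map from a torsion sheaf to a torsion-free sheaf is zero, so $\psi=0$. Exactness of $-\otimes\Omega^1_C$, which holds because $\Omega^1_C$ is locally free, identifies $\Ker(\mE\otimes\Omega^1_C\to(\mE/\widetilde{\mF})\otimes\Omega^1_C)$ with $\widetilde{\mF}\otimes\Omega^1_C$. Therefore $\theta(\widetilde{\mF})\subseteq\widetilde{\mF}\otimes\Omega^1_C$, as required.

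The only mildly delicate point is the degree inequality in part 1, namely additivity of $\deg$ in short exact sequences involving torsion sheaves. We handle it via Riemann--Roch, $\deg=\chi-\rank\cdot\chi(\mO_C)$, which is additive in exact sequences, together with $\chi(T)=h^0(T)\geq0$ for $T$ supported in dimension zero.
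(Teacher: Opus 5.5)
Your proof is correct and follows the paper's argument essentially verbatim: the same saturation $\widetilde{\mF}$ as the preimage of the torsion of $\mE/\mF$, and the same torsion-to-torsion-free vanishing of $\widetilde{\mF}\to(\mE/\widetilde{\mF})\otimes\Omega^1_C$ for part 2. The only difference is the degree step, where you use additivity of $\chi$ and $h^0(T)\geq 0$ while the paper uses the injection of line bundles $\det\mF\hookrightarrow\det\widetilde{\mF}$; both work, though over a non-algebraically-closed $k$ you should write $h^0(T)$ rather than $\mathrm{length}(T)$.
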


\begin{proof}
  1.  Since $\mF$ is torsion-free as being a subsheaf of $\mE$, it is locally free by Lemma~\ref{torsion-free is locally free}. Let $\mathcal{T}$ be the torsion part of $\mE/\mF$ and take $\widetilde{\mF}$ to be the preimage of $\mathcal{T}$ via the quotient map $\mE\twoheadrightarrow \mE/\mF$. $\widetilde{\mF}$ is again a vector bundle with $\mE/\widetilde{\mF}$ being torsion-free by construction. By Lemma~\ref{torsion-free is locally free} again, $\widetilde{\mF}$ is then a subbundle of $\mE$ containing $\mF$, with $\widetilde{\mF}/\mF\cong \mathcal{T}$ is torsion. As $\mathcal{T}$ is torsion, it has rank 0. So, $\rank(\widetilde{\mF})=\rank(\mF)$. The embedding $\mF\hookrightarrow\widetilde{\mF}$ also gives an embedding $\det(\mF)\hookrightarrow \det(\widetilde{\mF})$, from where we have $\deg(\mF)\leq\deg(\widetilde{\mF})$.

  2. Consider the composition of maps\[\widetilde{\mF}\rightarrow \mE\otimes\Omega^1_C\twoheadrightarrow (\mE/\widetilde{\mF})\otimes\Omega^1_C.\]

  It obviously vanishes $\mF$, and we hence have a $\mO_C$-linear map\[\mathcal{T}\rightarrow (\mE/\widetilde{\mF})\otimes\Omega^1_C.\]

  By construction, $\mathcal{T}$ is torsion and $(\mE/\widetilde{\mF})\otimes\Omega^1_C$ is torsion-free, so such a map must be trivial. As a result, the $\mO_C$-linear map $\widetilde{\mF}\rightarrow (\mE/\widetilde{\mF})\otimes\Omega^1_C$
is trivial, and $\theta: \widetilde{\mF}\rightarrow \widetilde{\mF}\otimes \Omega_C^1$ is well-defined.

\end{proof}

\begin{remark}\label{remark on testing obj on higgs}
    In particular, let $(\mE, \theta)$ be a Higgs bundle such that for any subbundle $\mF\subset \mE$ preserved by $\theta$, then $\mu(\mF)<\mu(\mE)$ (resp. $\mu(\mF)\leq\mu(\mE)$). Let $\mF\subseteq \mE$ be a subsheaf preserved by $\theta$. There then exists a subbundle $\widetilde{\mF}$ of $\mE$ with the Higgs field $\theta$ by Lemma~\ref{saturation}. Moreover, $\mF\subseteq\widetilde{\mF}$ by construction and $\mu(\widetilde{\mF})<\mu(\mE)$ (resp. $\mu(\widetilde{\mF})\leq \mu(\mE)$) by the stability (resp. semi-stability) assumption on $(\mE, \theta)$. So, we could draw the conclusion that the testing objects could just be taken to be subbundles.

\end{remark}

\begin{lemma}\label{slope inequality}
  Let $\mE, \mF, \mG$ be vector bundles over $C$. Assume there exists a short exact sequence\[0\rightarrow\mE\rightarrow\mF\rightarrow\mG\rightarrow 0.\]

  1. We have \[\rank(\mF)=\rank(\mE)+\rank(\mG), \deg(\mF)=\deg(\mE)+\deg(\mG).\]

  2. If $\mE, \mF, \mG$ are all nonzero, then we have \[\min\{\mu(\mE), \mu(\mG)\}\leq\mu(\mF)\leq\max\{\mu(\mE), \mu(\mG)\},\]with either equality if and only if $\mu(\mE)=\mu(\mG)$.
\end{lemma}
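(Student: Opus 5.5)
Part 1 is additivity of rank and degree. Rank is additive because localizing the sequence at the generic point of $C$ gives an exact sequence of finite-dimensional vector spaces over $K(C)$. For degree, I would use the determinant: a short exact sequence of locally free sheaves gives a canonical isomorphism $\det(\mF)\cong\det(\mE)\otimes\det(\mG)$. To see this, choose an open affine cover on which the sequence splits (locally free sheaves over local rings, so $\mG$ is locally free and the sequence splits locally). On each open set, the wedge product $\wedge^{\rank\mE}\mE\otimes\wedge^{\rank\mG}\mG\to\wedge^{\rank\mF}\mF$, defined by lifting sections of $\mG$, is independent of the chosen lift. Hence the local isomorphisms glue. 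Since $\deg:\Pic(C)\to\Z$ is a homomorphism, $\deg(\mF)=\deg(\mE)+\deg(\mG)$.

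For part 2, I would write $r_1=\rank\mE$, $r_2=\rank\mG$ and $d_1,d_2$ for the degrees; all ranks are positive since the bundles are nonzero. By part 1,
\[\mu(\mF)=\frac{d_1+d_2}{r_1+r_2}=\frac{r_1}{r_1+r_2}\mu(\mE)+\frac{r_2}{r_1+r_2}\mu(\mG),\]
which is a convex combination of $\mu(\mE)$ and $\mu(\mG)$ with both weights strictly positive. This immediately gives $\min\leq\mu(\mF)\leq\max$.

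For the equality clause, I would compute
\[\mu(\mF)-\mu(\mE)=\frac{r_2}{r_1+r_2}\bigl(\mu(\mG)-\mu(\mE)\bigr).\]
Since the weight is nonzero, $\mu(\mF)$ equals $\mu(\mE)$ exactly when $\mu(\mE)=\mu(\mG)$, and the symmetric computation handles $\mu(\mG)$. Thus either inequality is an equality if and only if $\mu(\mE)=\mu(\mG)$, in which case all three slopes coincide.

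The only step needing any care is the determinant isomorphism in part 1, specifically checking that the local wedge maps are independent of the lifts so that they glue. If needed, one can instead deduce additivity of degree from Riemann--Roch, $\chi(\mF)=\deg\mF+\rank\mF\,(1-g(C))$, together with additivity of $\chi$ along the long exact cohomology sequence, which avoids the gluing entirely.
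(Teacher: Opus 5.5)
Your proposal is correct and follows the same route as the paper. Both arguments use additivity of rank and degree (the paper just calls this easy), and then the formula $\mu(\mathcal{F})=(\deg\mathcal{E}+\deg\mathcal{G})/(\rank\mathcal{E}+\rank\mathcal{G})$. You supply the details the paper leaves as ``an easy calculation'': the determinant isomorphism $\det\mathcal{F}\cong\det\mathcal{E}\otimes\det\mathcal{G}$, and the convex-combination and difference formulas that give both the inequalities and the equality clause.
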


\begin{proof}
  The first claim is easy. For the second one, we note that \[\mu(\mF)=\frac{\deg(\mF)}{\rank(\mF)}=\frac{\deg(\mE)+\deg(\mG)}{\rank(\mE)+\rank(\mG)}.\]The second claim should follow from an easy calculation.
\end{proof}

\begin{theorem}\label{slope comparison}
  Let $(\mE_1, \theta_1), (\mE_2, \theta_2)$ be two semistable Higgs bundles. Then $$\Hom_{\text{Hig}_{C}}((\mE_1, \theta_1), (\mE_2, \theta_2))=0$$ if $\mu(\mE_2)<\mu(\mE_1)$.
\end{theorem}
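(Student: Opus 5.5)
Let $f:(\mE_1,\theta_1)\rightarrow(\mE_2,\theta_2)$ be a morphism of Higgs bundles and suppose $f\neq 0$. We will show $\mu(\mE_1)\leq\mu(\mE_2)$. This gives the statement by contraposition. The plan is to factor $f$ through its image and compare slopes using semistability on both sides. By the discussion preceding Lemma~\ref{saturation}, $\Ker(f)$ and $\Image(f)$ are locally free and preserved by $\theta_1$ and $\theta_2$ respectively. Hence they are Higgs subsheaves of $(\mE_1,\theta_1)$ and $(\mE_2,\theta_2)$, and we have a short exact sequence of vector bundles
\[0\rightarrow\Ker(f)\rightarrow\mE_1\rightarrow\Image(f)\rightarrow 0.\]
Here $\Image(f)$ is locally free by Lemma~\ref{torsion-free is locally free}, so this is an exact sequence of vector bundles in the sense of Lemma~\ref{slope inequality}.

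First we bound the slope of the image from below. If $\Ker(f)=0$, then $\mE_1\cong\Image(f)$ and $\mu(\Image(f))=\mu(\mE_1)$. Otherwise all three terms are nonzero, and semistability of $(\mE_1,\theta_1)$ applied to the Higgs subsheaf $\Ker(f)$ gives $\mu(\Ker(f))\leq\mu(\mE_1)$. Lemma~\ref{slope inequality}(2) says $\mu(\mE_1)$ lies between $\mu(\Ker(f))$ and $\mu(\Image(f))$. If $\mu(\Image(f))<\mu(\mE_1)$, then both $\mu(\Ker(f))$ and $\mu(\Image(f))$ would be at most $\mu(\mE_1)$, with at least one inequality strict. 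That forces $\mu(\mE_1)<\max\{\mu(\Ker(f)),\mu(\Image(f))\}$ unless the two slopes are equal, and in either case we get a contradiction. Hence $\mu(\Image(f))\geq\mu(\mE_1)$. A more direct way to see this is that $\deg$ and $\rank$ are additive, so $\deg(\Image(f))=\deg(\mE_1)-\deg(\Ker(f))\geq \mu(\mE_1)(\rank\mE_1-\rank\Ker(f))$.

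Second, we bound the slope of the image from above. Since $\Image(f)\subseteq\mE_2$ is a locally free subsheaf preserved by $\theta_2$, semistability of $(\mE_2,\theta_2)$ gives $\mu(\Image(f))\leq\mu(\mE_2)$. This step uses the subsheaf version of semistability, which is the definition; Remark~\ref{remark on testing obj on higgs} shows it agrees with the subbundle version. Combining the two bounds gives $\mu(\mE_1)\leq\mu(\Image(f))\leq\mu(\mE_2)$, which contradicts $\mu(\mE_2)<\mu(\mE_1)$. Therefore $f=0$.

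The only delicate point is the first step: we need $\Ker(f)$ to be $\theta_1$-stable and $\Image(f)$ to be a genuine vector bundle, so that the degree additivity of Lemma~\ref{slope inequality} applies. Both facts were already verified in the paragraph preceding Lemma~\ref{saturation}. Apart from that, the argument is the standard one for semistable bundles, carried through with Higgs-stable subobjects.
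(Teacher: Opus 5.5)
Your proof is correct and is essentially the paper's argument: both bound $\mu(\Image(f))$ below by $\mu(\mE_1)$, using semistability of $(\mE_1,\theta_1)$ on $\Ker(f)$ together with Lemma~\ref{slope inequality}, and above by $\mu(\mE_2)$, using semistability of $(\mE_2,\theta_2)$ on $\Image(f)$. The only difference is packaging: the paper argues by contradiction and rules out $\Ker(f)=0$ via semistability of $\mE_2$, whereas you argue directly and treat $\Ker(f)=0$ as the trivial case $\mE_1\cong\Image(f)$.
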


\begin{proof}
  Assume $\mu(\mE_2)<\mu(\mE_1)$ and $f:(\mE_1, \theta_1)\rightarrow (\mE_2, \theta_2)$ is a nonzero morphism. So, $(\Image(f), \theta_2)\subseteq (\mE_2, \theta_2)$ is a nonzero Higgs subsheaf and \[\mu(\Image(f))\leq \mu(\mE_2)<\mu(\mE_1)  \tag{$*$} \] by assumption.

On the other hand, we have a short exact sequence of Higgs bundles\[0\rightarrow (\Ker(f), \theta_1)\rightarrow (\mE_1, \theta_1)\rightarrow (\Image(f), \theta_2)\rightarrow 0.\]

$\Ker(f)$ cannot be zero, otherwise $(\mE_1, \theta)$ will be a Higgs subsheaf of $(\mE_2, \theta_2)$ and contradicts the semistability of $(\mE_2, \theta_2)$. By assumption, $\mu(\Ker(f))\leq \mu(\mE_1)$ and hence $$\mu(\mE_1)\leq \max\{\mu(\Ker(f)), \mu(\Image(f))\}=\mu(\Ker(f))$$ by $(*)$ and Lemma~\ref{slope inequality}. It follows that $\mu(\mE_1)=\mu(\Ker(f))=\mu(\Image(f))$ by Lemma~\ref{slope inequality}, which is in contradiction with $(*)$.

\end{proof}

\begin{corollary}
  Let $\mE_1, \mE_2$ be two semistable vector bundles over $C$. Then $\Hom_{\mO_C}(\mE_1, \mE_2)=0$ if $\mu(\mE_2)<\mu(\mE_1)$.
\end{corollary}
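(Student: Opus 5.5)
The plan is to deduce the corollary directly from Theorem~\ref{slope comparison}, applied with zero Higgs fields. Given semistable vector bundles $\mE_1, \mE_2$ over $C$, consider the Higgs bundles $(\mE_1, 0)$ and $(\mE_2, 0)$. The first step is to check that these are semistable Higgs bundles. When the Higgs field is zero, every locally free subsheaf $\mF\subseteq \mE_i$ is automatically preserved by $\theta = 0$. Hence the Higgs semistability condition for $(\mE_i, 0)$ is literally the classical condition $\mu(\mF)\leq \mu(\mE_i)$ for all locally free subsheaves. This is exactly the classical semistability of $\mE_i$, as already noted after the definition of (semi)stable Higgs bundles. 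If one uses subbundles rather than subsheaves in the classical definition, Remark~\ref{remark on testing obj on higgs} (via the saturation Lemma~\ref{saturation}) shows the two notions agree.

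The second step is to identify the Hom sets:
\[
\Hom_{\text{Hig}_C}((\mE_1,0),(\mE_2,0)) = \Hom_{\mO_C}(\mE_1,\mE_2).
\]
For $\theta_1=\theta_2=0$, the commutative square defining a morphism of Higgs bundles reads $0 = (f\otimes 1)\circ 0 = 0\circ f$, so it holds for every $\mO_C$-linear $f$.

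Finally, if $\mu(\mE_2)<\mu(\mE_1)$, Theorem~\ref{slope comparison} gives that this Hom set is $0$, so $\Hom_{\mO_C}(\mE_1,\mE_2)=0$.

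There is no real obstacle here. The only point requiring care is the first step: making sure the classical semistability and the Higgs semistability with zero field are tested against the same class of subobjects. Remark~\ref{remark on testing obj on higgs} settles this.
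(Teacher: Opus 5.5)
Your proposal is correct and matches the paper's argument. The paper states this corollary immediately after Theorem~\ref{slope comparison} with no separate proof, because it is the special case $\theta_1=\theta_2=0$: as the paper notes, a classically semistable $\mE$ gives a semistable Higgs bundle $(\mE,0)$, and every $\mO_C$-linear map is a Higgs morphism between bundles with zero Higgs field.
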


\begin{theorem}[Harder-Narasimhan filtration]
Let $\mE$ be a vector bundle on $C$, then it admits a unique filtration by subbundles\[0=\mE_0\subset\mE_1\subset\cdots\subset\mE_n=\mE
\]such that the successive quotients $\mV_{i}/\mV_{i-1}, 1\leq i\leq n$ are semistable vector bundles over $C$ with\[
\mu(\mE_1/\mE_0)>\cdots>\mu(\mE_n/\mE_{n-1}).
\]This filtration is called the Harder-Narasimhan filtration, which we sometimes also write as HN-filtration.
    
\end{theorem}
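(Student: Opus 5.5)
Existence is proved by induction on $\rank(\mE)$, taking as first step $\mE_1$ to be the maximal destabilizing subbundle. First I check that the set of slopes of nonzero subsheaves of $\mE$ is bounded above. I will fix an ample line bundle $\mO_C(1)$. Every nonzero subsheaf $\mF\subseteq\mE$ is locally free by Lemma~\ref{torsion-free is locally free}, and $\det\mF\hookrightarrow\wedge^{\rank\mF}\mE$. So $\deg\mF$ is at most the maximal degree of a line subbundle of the bundles $\wedge^r\mE$, $1\le r\le\rank\mE$. That degree is finite, since a line subbundle $L$ of degree $d$ gives $H^0(L^\vee\otimes\wedge^r\mE)\neq 0$, which fails for $d\gg0$. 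Slopes have bounded denominators, so the supremum $\mu_{\max}$ is attained. Among subsheaves of slope $\mu_{\max}$ I choose one of maximal rank, and by Lemma~\ref{saturation} I may replace it by its saturation, which is a subbundle of no smaller slope. This gives $\mE_1$, which is semistable by construction.

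Next I show that $\mE/\mE_1$ has $\mu_{\max}(\mE/\mE_1)<\mu(\mE_1)$. Suppose $\mathcal{G}\subseteq\mE/\mE_1$ is a nonzero subsheaf with $\mu(\mathcal{G})\ge\mu(\mE_1)$. Its preimage $\mathcal{G}'\subseteq\mE$ sits in $0\to\mE_1\to\mathcal{G}'\to\mathcal{G}\to0$, so Lemma~\ref{slope inequality} gives $\mu(\mathcal{G}')\ge\mu(\mE_1)$, while $\rank\mathcal{G}'>\rank\mE_1$. This contradicts the choice of $\mE_1$. I then apply the induction hypothesis to the quotient $\mE/\mE_1$, which is locally free of smaller rank because $\mE_1$ is saturated, and pull its HN filtration back to $\mE$. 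The successive quotients are then semistable, and their slopes strictly decrease because $\mu(\mE_2/\mE_1)=\mu_{\max}(\mE/\mE_1)<\mu(\mE_1)$.

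For uniqueness, take another such filtration $\mE'_\bullet$ and let $j$ be minimal with $\mE_1\subseteq\mE'_j$. The composite $\mE_1\to\mE'_j/\mE'_{j-1}$ is nonzero between semistable bundles. The corollary to Theorem~\ref{slope comparison} therefore gives $\mu(\mE_1)\le\mu(\mE'_j/\mE'_{j-1})\le\mu(\mE'_1)$. Symmetrically $\mu(\mE'_1)\le\mu(\mE_1)$, so the two slopes are equal and $j=1$, i.e.\ $\mE_1\subseteq\mE'_1$. Symmetrically $\mE'_1\subseteq\mE_1$, hence $\mE_1=\mE'_1$, and induction on the quotient finishes the proof.

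The main obstacle is the boundedness of slopes in the first step. Everything after it is formal from Lemma~\ref{slope inequality}, Lemma~\ref{saturation} and Theorem~\ref{slope comparison}.
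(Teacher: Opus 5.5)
The paper gives no proof of this theorem. It is stated in the appendix as a standard fact and used only to define $\mu_{\max}$ and $\mu_{\min}$.

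Your argument is the standard one, and it is correct. It relies only on Lemma~\ref{torsion-free is locally free}, Lemma~\ref{saturation}, Lemma~\ref{slope inequality} and the corollary of Theorem~\ref{slope comparison}. None of these uses the Harder--Narasimhan filtration, so there is no circularity.

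The closest thing in the paper is Lemma~\ref{maximal destabilizing subsheaf}, which is the Higgs analogue of your first step. There the paper takes an inclusion-maximal subsheaf of maximal slope and proves it is unique via the map $\mF_{\max}\oplus\mF'\to\mE$. You instead take one of maximal rank, which is enough for existence, and then get uniqueness of the whole filtration from the slope comparison.

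Your handling of boundedness is better than the paper's. Lemma~\ref{maximal destabilizing subsheaf} justifies it by ``$\mF\subseteq\mE$ implies $\deg\mF\le\deg\mE$''. That fails for subsheaves of smaller rank, e.g.\ $\mO(1)\subset\mO(1)\oplus\mO(-5)$ on $\mathbb{P}^1$. Your reduction via $\det\mF\hookrightarrow\wedge^{r}\mE$ is the correct fix.

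The one step you should spell out is the uniformity in that reduction. You need a single bound on $\deg L$ valid for all line subsheaves $L\subseteq\wedge^r\mE$ at once. When $\wedge^r\mE$ has higher rank, the vanishing of $H^0(L^\vee\otimes\wedge^r\mE)$ for $\deg L\gg0$ does not follow from degrees alone. This is presumably what your ample $\mO_C(1)$ is for:
\begin{itemize}
\item $(\wedge^r\mE)^\vee(n)$ is globally generated for $n\gg0$.
\item Dualizing a surjection $\mO_C(-n)^{\oplus N}\twoheadrightarrow(\wedge^r\mE)^\vee$ gives $\wedge^r\mE\hookrightarrow\mO_C(n)^{\oplus N}$.
\item Hence every line subsheaf of $\wedge^r\mE$ has degree at most $n\deg\mO_C(1)$.
\end{itemize}
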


Define $\mu_{\max}(\mE)=\mu(\mE_1)$ and $\mu_{\min}(\mE)=\mu(\mE/\mE_{n-1})$. Then one can has a generalization of Theorem~\ref{slope comparison} for vector bundles.

\begin{theorem}\label{slope comparison 2}
    Let $\mE, \mF$ be two vector bundles over $C$ with $\mu_{\min}(\mE)>\mu_{\max}(\mF)$. Then $$\Hom_{\mO_C}(\mE, \mF)=0.$$
\end{theorem}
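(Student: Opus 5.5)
Let $0=\mE_0\subset\mE_1\subset\cdots\subset\mE_n=\mE$ and $0=\mF_0\subset\mF_1\subset\cdots\subset\mF_m=\mF$ be the Harder--Narasimhan filtrations. The plan is to reduce to the semistable case, the Corollary to Theorem~\ref{slope comparison}, by a double dévissage, using that $\Hom$ is left exact in each variable.

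First reduce in the target. Given $h:\mE\rightarrow\mF$, I will show by descending induction on $j$ that $h$ factors through $\mF_{j}$. Suppose $h$ lands in $\mF_{j}$ with $j\geq 1$, and compose with the projection $\mF_j\twoheadrightarrow\mF_j/\mF_{j-1}$. The quotient $\mF_j/\mF_{j-1}$ is semistable of slope
\[
\mu(\mF_j/\mF_{j-1})\leq\mu(\mF_1)=\mu_{\max}(\mF)<\mu_{\min}(\mE).
\]
It therefore suffices to know $\Hom_{\mO_C}(\mE,\mG)=0$ for every semistable $\mG$ with $\mu(\mG)<\mu_{\min}(\mE)$. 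Granting this, the composite vanishes, so $h$ lands in $\mF_{j-1}$. Repeating down to $j=0$ gives $h=0$.

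Next reduce in the source, to prove the claim $\Hom_{\mO_C}(\mE,\mG)=0$. Let $g:\mE\rightarrow\mG$. Use induction on $i$ to show that $g$ vanishes on $\mE_i$. If $g|_{\mE_{i-1}}=0$, then $g|_{\mE_i}$ factors through $\mE_i/\mE_{i-1}$. That quotient is semistable with
\[
\mu(\mE_i/\mE_{i-1})\geq\mu_{\min}(\mE)>\mu(\mG),
\]
so the Corollary to Theorem~\ref{slope comparison} kills the induced map. Hence $g|_{\mE_i}=0$, and at $i=n$ we get $g=0$.

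The only points needing care are formal. The maps here are morphisms of coherent sheaves, and the Corollary is stated for $\Hom_{\mO_C}$ between semistable bundles, so it applies directly. The HN quotients are genuine vector bundles, by Lemma~\ref{torsion-free is locally free}, since the filtration is by subbundles. There is also the slope monotonicity: $\mu(\mF_j/\mF_{j-1})\leq\mu(\mF_1)$ and $\mu(\mE_i/\mE_{i-1})\geq\mu(\mE_n/\mE_{n-1})$ follow from the strict decrease of slopes in the HN filtration. I expect no real obstacle. The main thing to check is that the two inductions are arranged so that each step meets exactly one semistable piece on each side, which the two-step scheme above ensures.
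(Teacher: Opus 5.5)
Your proof is correct and follows the same route as the paper: reduce along the Harder--Narasimhan filtration of $\mF$ to a single semistable quotient $\mF_i/\mF_{i-1}$, then along that of $\mE$ to a single piece $\mE_j/\mE_{j-1}$, and conclude by the semistable case (the Corollary to Theorem~\ref{slope comparison}); the paper phrases this as choosing minimal indices $i,j$ for a putative nonzero map rather than as two inductions. Your non-strict bounds $\mu(\mF_j/\mF_{j-1})\leq\mu_{\max}(\mF)$ and $\mu(\mE_i/\mE_{i-1})\geq\mu_{\min}(\mE)$ are the right ones. The paper writes these as strict inequalities, which fail for the first piece of $\mF$ and the last piece of $\mE$, but this does not affect the argument.
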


\begin{proof}
Suppose $\mu_{\min}(\mE)>\mu_{\max}(\mF)$ and $\phi: \mE\rightarrow \mF$ is a nontrivial morphism. Let $$0=\mE_0\subset \mE_1\subset\cdots\subset\mE_n=\mE$$ be the HN-filtration of $\mE$ and $$0=\mF_0\subset \mF_1\subset\cdots\subset\mF_m=\mF$$ be the HN-filtration of $\mF$. Let $i\geq 1$ be the smallest number such that $\phi(\mE)\subset \mF_i$. Then $\phi$ gives a nontrivial morphism $\phi: \mE\rightarrow \mF_i\twoheadrightarrow \mF_i/\mF_{i-1}$, with the target is semistable of slope $\mu(\mF_i/\mF_{i-1})<\mu_{\max}(\mF)$. On the other hand, let $j\geq 1$ be the smallest number such that $\phi:\mE_j\rightarrow \mF_i/\mF_{i-1}$ is nontrivial. It then gives a nontrivial morphism $\phi:\mE_j/\mE_{j-1}\rightarrow \mF_i/\mF_{i-1}$, with $\mE_j/\mE_{j-1}$ semistable of slope $\mu(\mE_j/\mE_{j-1})>\mu_{\min}(\mE)>\mu_{\max}(\mF)>\mu(\mF_i/\mF_{i-1})$. Then we have a contradiction by Theorem~\ref{slope comparison}.
\end{proof}

\begin{remark}
    One should not expect Theorem~\ref{slope comparison 2} to work for Higgs bundles, as the HN filtration is not stable under the Higgs field in general.
\end{remark}

\begin{lemma}\label{dual of semistable}
    Let $(\mE, \theta)$ be a Higgs stable (resp. semistable) vector bundle. Then the Higgs bundle $(\mE^\vee, \theta')$ is still Higgs stable (resp. semistable), where $\theta'$ is given by taking the dual of $\theta$ and then tensoring with $\Omega_{C/k}$ on both sides.
\end{lemma}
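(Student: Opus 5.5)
The plan is to translate $\theta'$-invariant subbundles of $\mE^\vee$ into $\theta$-invariant quotient bundles of $\mE$, and then use the slope inequality of Lemma~\ref{slope inequality}. By Remark~\ref{remark on testing obj on higgs}, it suffices to test (semi)stability of $(\mE^\vee,\theta')$ on subbundles. So let $\mF\subset\mE^\vee$ be a proper nonzero subbundle preserved by $\theta'$, and set $\mathcal{Q}\coloneq\mE^\vee/\mF$. This quotient is locally free, since $\mF$ is saturated (Lemma~\ref{torsion-free is locally free}). Dualizing $0\rightarrow\mF\rightarrow\mE^\vee\rightarrow\mathcal{Q}\rightarrow 0$ gives a short exact sequence of vector bundles
\[0\rightarrow \mathcal{Q}^\vee\rightarrow \mE\rightarrow \mF^\vee\rightarrow 0,\]
in which $\mathcal{Q}^\vee\subset\mE$ is the annihilator of $\mF$: locally, the sections $x$ with $\langle \xi,x\rangle=0$ for all $\xi\in\mF$.

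The key step is to check that $\mathcal{Q}^\vee$ is preserved by $\theta$. Here $\theta'$ is characterized by the identity $\langle\theta'(\xi),x\rangle=\pm\langle\xi,\theta(x)\rangle$ in $\Omega^1_{C/k}$, where the sign depends on convention and does not matter. Take $x\in\mathcal{Q}^\vee$ and $\xi\in\mF$. Then $\theta'(\xi)\in\mF\otimes\Omega^1_{C/k}$, so $\langle\xi,\theta(x)\rangle=\pm\langle\theta'(\xi),x\rangle=0$. Hence $\theta(x)$ lies in $\mathcal{Q}^\vee\otimes\Omega^1_{C/k}$, the annihilator of $\mF$ twisted by $\Omega^1_{C/k}$. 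This twisted identification holds because $\Omega^1_{C/k}$ is a line bundle and the sequence above is locally split. Thus $(\mathcal{Q}^\vee,\theta)$ is a Higgs subbundle of $(\mE,\theta)$. This is the only place where the Higgs structure enters, and it is the step I expect to require the most care, mainly in fixing the definition of $\theta'$ so that the pairing identity holds.

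It remains to compare slopes. Since $\mF$ is proper and nonzero, both $\mathcal{Q}^\vee$ and $\mF^\vee$ are nonzero. Semistability gives $\mu(\mathcal{Q}^\vee)\le\mu(\mE)$. Combining this with Lemma~\ref{slope inequality} applied to the dualized sequence gives $\mu(\mF^\vee)\ge\mu(\mE)$: indeed, $\mu(\mE)$ lies between $\mu(\mathcal{Q}^\vee)$ and $\mu(\mF^\vee)$. In the stable case the inequality $\mu(\mathcal{Q}^\vee)<\mu(\mE)$ is strict, and the equality clause of Lemma~\ref{slope inequality} forces $\mu(\mF^\vee)>\mu(\mE)$. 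Finally, $\mu(\mF)=-\mu(\mF^\vee)$ and $\mu(\mE^\vee)=-\mu(\mE)$, since $\deg$ of a dual bundle is the negative of the degree and rank is unchanged. We conclude $\mu(\mF)\le\mu(\mE^\vee)$ in the semistable case, and $\mu(\mF)<\mu(\mE^\vee)$ in the stable case, which proves the lemma.
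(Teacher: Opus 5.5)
Your proposal is correct and is essentially the paper's argument: both dualize $0\to\mF\to\mE^\vee\to\mE^\vee/\mF\to 0$ to exhibit $(\mE^\vee/\mF)^\vee$ as a $\theta$-invariant subbundle of $\mE$, apply the (semi)stability of $(\mE,\theta)$, and finish with Lemma~\ref{slope inequality}. The differences are cosmetic. You check the $\theta$-invariance explicitly via the pairing, which the paper leaves implicit in ``taking dual and tensoring with $\Omega_C$ again''. You also apply the slope inequality to the dualized sequence rather than the original one, which sidesteps the paper's slip of writing $\mu(\mE^\vee/\mF)>\mu(\mE)$ instead of $\mu(\mE^\vee/\mF)>\mu(\mE^\vee)$.
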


\begin{proof}
     Let $(\mE, \theta)$ be a Higgs stable (resp. semistable) vector bundle over $C$. It suffices to show if $(\mF, \theta')\subseteq (\mE^\vee, \theta')$ is a subbundle, then $\mu(\mF)<\mu(\mE^\vee)$ (resp. $\mu(\mF)\leq \mu(\mE^\vee)$) by Remark~\ref{remark on testing obj on higgs}.

As $\mF\subseteq \mE^\vee$ and is a subbundle preserved by $\theta$, $(\mE^\vee/\mF, \theta')$ is also a Higgs bundle and one has a short exact sequence\[
0\rightarrow (\mF, \theta')\rightarrow (\mE^\vee, \theta')\rightarrow (\mE^\vee/\mF, \theta')\rightarrow 0.
\]
Taking dual and tensoring with $\Omega_C$ again, one has a short exact sequence\[
0\rightarrow ((\mE^\vee/\mF)^\vee, \theta)\rightarrow (\mE, \theta)\rightarrow (\mF^\vee, \theta)\rightarrow 0.
\]As $(\mE, \theta)$ is Higgs stable (resp. Higgs semistable), one has $\mu((\mE^\vee/\mF)^\vee)<\mu(\mE)$ (resp. $\mu((\mE^\vee/\mF)^\vee)\leq \mu(\mE)$). So, $\mu(\mE^\vee/\mF)>\mu(\mE)$ (resp. $\mu(\mE^\vee/\mF)\geq \mu(\mE)$).  Lemma~\ref{slope inequality} then implies $\mu(\mF)<\mu(\mE^\vee)$ (resp. $\mu(\mF)\leq \mu(\mE^\vee)$), which finishes the proof.

\end{proof}

\begin{lemma}[Destabilizing submodule]\label{maximal destabilizing subsheaf}
  Let $(\mE, \theta)$ be a Higgs bundle and suppose it is not Higgs semistable. Then there exists a Higgs subsheaf $(\mF_{\max}, \theta)\subset (\mE, \theta)$ such that for any Higgs subsheaf $(\mF, \theta)\subset (\mE, \theta)$, one has $\mu(\mF)\leq \mu(\mF_{\max})$. With equality only if $\mF_{\max}=\mF$. This is called the maximal destabilizing subsheaf, and is unique up to isomorphism.
\end{lemma}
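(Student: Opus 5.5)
Since every Higgs subsheaf is dominated by its saturation (Lemma~\ref{saturation}), I would work only with $\theta$-invariant subbundles and rely on the boundedness of their degrees. For any subsheaf $\mF\subseteq\mE$, the inclusion $\det\mF\hookrightarrow\wedge^{r}\mE$ with $r=\rank(\mF)$ gives $\deg(\mF)\le\mu_{\max}(\wedge^{r}\mE)$. The set of slopes $\mu(\mF)$, as $\mF$ ranges over nonzero Higgs subsheaves, is therefore bounded above. It is also discrete, since it is contained in $\frac{1}{\rank(\mE)!}\Z$. Hence the maximum $\mu_0$ is attained. Because $(\mE,\theta)$ is not semistable, we have $\mu_0>\mu(\mE)$.

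Among all Higgs subsheaves of slope $\mu_0$, I would choose $\mF_{\max}$ of maximal rank, and then of maximal degree within that rank; this is possible since the degree is bounded. Replacing $\mF_{\max}$ by its saturation keeps it $\theta$-invariant (Lemma~\ref{saturation}) and cannot decrease its slope, so by maximality of $\mu_0$ it is already saturated. This gives a Higgs subbundle of slope $\mu_0$.

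Next I would show that every Higgs subsheaf $\mF$ with $\mu(\mF)=\mu_0$ is contained in $\mF_{\max}$. Consider the sum $\mF+\mF_{\max}$ and the exact sequence
\[0\rightarrow \mF\cap\mF_{\max}\rightarrow \mF\oplus\mF_{\max}\rightarrow \mF+\mF_{\max}\rightarrow 0.\]
All three terms are $\theta$-invariant, and they are locally free because they are subsheaves of $\mE$ or $\mE\oplus\mE$ (Lemma~\ref{torsion-free is locally free}). Maximality of $\mu_0$ gives $\mu(\mF\cap\mF_{\max})\le\mu_0$ whenever the intersection is nonzero, and $\mu(\mF\oplus\mF_{\max})=\mu_0$. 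Lemma~\ref{slope inequality} then yields $\mu(\mF+\mF_{\max})\ge\mu_0$, hence equality. The rank maximality of $\mF_{\max}$ forces $\rank(\mF+\mF_{\max})=\rank(\mF_{\max})$. Since $\mF_{\max}$ is saturated, $\mF+\mF_{\max}\subseteq\mF_{\max}$, that is, $\mF\subseteq\mF_{\max}$. The case $\mF\cap\mF_{\max}=0$ is handled directly: there the sum is a direct sum of slope $\mu_0$ and strictly larger rank, which is a contradiction.

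From this containment, equality in $\mu(\mF)\le\mu(\mF_{\max})$ together with $\rank(\mF)=\rank(\mF_{\max})$ and saturation forces $\mF=\mF_{\max}$. Uniqueness follows: any two such maximal objects contain each other. The main obstacle I expect is the equality clause as stated. Strictly, a subsheaf of slope $\mu_0$ of smaller rank inside $\mF_{\max}$ can exist. So I would read the statement as "equality (of slope) and maximal rank imply $\mF=\mF_{\max}$", or equivalently as "any subsheaf of maximal slope is contained in $\mF_{\max}$". I would prove that form, which is what the subsequent argument needs.
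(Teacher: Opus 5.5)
Your proof is correct and follows essentially the paper's route. Both arguments use the sequence $0\to\mF\cap\mF_{\max}\to\mF\oplus\mF_{\max}\to\mF+\mF_{\max}\to0$ together with Lemma~\ref{slope inequality} to force the sum to have maximal slope. The only difference is that you choose $\mF_{\max}$ of maximal rank, so every maximal-slope subsheaf lands inside it, whereas the paper takes an inclusion-maximal element and then checks its uniqueness.

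Two of your points improve on the paper. Your bound $\deg\mF\le\mu_{\max}(\wedge^r\mE)$ repairs the paper's justification that the maximal slope is attained; its claim $\deg\mF\le\deg\mE$ fails, e.g.\ for $\mO(1)\subset\mO(1)\oplus\mO(-5)$ on $\mathbb{P}^1$. Your reading of the equality clause as ``equality implies $\mF\subseteq\mF_{\max}$'' is the correct one. It is also all that the paper's argument actually establishes and all that Lemma~\ref{etale and semistable} uses.
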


\begin{proof}
  Consider the set \[S=\{\mu(\mF); (\mF, \theta)\subset (\mE, \theta)\text{ is a Higgs subsheaf and }\mu(\mF)>\mu(\mE)\}.\]

  $S$ is not empty as $(\mE, \theta)$ is not Higgs semistable. $S$ is a discrete subset of $\Q$, as $\mF\subseteq\mE$ implies $\deg(\mF)\leq \deg(\mE)$. Let $\max(S)$ be the supremum of $S$ and define \[S'\coloneq \{(\mF, \theta); \mu(\mF)= \max(S)\}\]to be set of Higgs subsheaves reaching the maximum slope. Note that as the saturation $\widetilde{\mF}$ of $\mF$ in $\mE$ satisfies $\mu(\widetilde{\mF})\geq \mu(\mF)$, $S'$ acturally consists only of Higgs subbundles.
$S'$ also has a partial order given by $(\mF, \theta)\leq (\mF', \theta)$ if $\mF\subseteq \mF'$. Let $(\mF_{\max}, \theta)$ be a maximal element of $S'$. We now wish to prove this is unique. If not, let $(\mF', \theta)$ be another maximal element in $S'$. Consider the natural map of Higgs bundles $\phi: (\mF_{\max}\oplus \mF', \theta\oplus\theta)\rightarrow (\mE, \theta)$. It gives a short exact sequence\[0\rightarrow (\mF_{\max}\cap \mF', \theta)\rightarrow (\mF_{\max}\oplus \mF', \theta\oplus\theta)\rightarrow (\Image(\phi),\theta)\rightarrow 0.\]By construction, one has $\mu(\mF_{\max}\oplus\mF')=\mu_{\max}$ and $\mu(\Image(\phi)), \mu(\mF_{\max}\cap \mF')\leq \mu_{\max}.$ Lemma~\ref{slope inequality} then implies that $\mu(\mF_{\max}\cap \mF')=\mu(\Image(\phi))=\mu_{\max}$. On the other hand, $\Image(\phi)$ can be defined as the sheafification of the presheaf $U\mapsto \mF_{\max}(U)+\mF'(U)$. As the sheafification functor is exact, one has $\mF', \mF_{\max}\subseteq \Image(\phi)$. By the maximality of $\mF'$ and $\mF_{\max}$, one must have $\mF'=\Image(\phi)=\mF_{\max}$ and the lemma follows.

\end{proof}

\begin{lemma}\label{etale and semistable}
  Let $(\mE, \theta)$ be a Higgs bundle over $C$. Let $f: D\rightarrow C$ be an \'etale cover of degree $d$ between irreducible smooth proper curves over $k$. Then $(\mE, \theta)$ is Higgs stable (resp. Higgs semistable) if and only if $(f^*\mE,f^*\theta)$ is Higgs stable (resp. Higgs semistable).
\end{lemma}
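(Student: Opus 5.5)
The plan is to compare Higgs subsheaves upstairs and downstairs using two standard facts about the finite étale cover $f$. First, degrees multiply, so slopes scale uniformly: for any vector bundle $\mF$ on $C$ we have $\deg f^*\mF = d\deg\mF$, hence $\mu(f^*\mF)=d\,\mu(\mF)$. Second, $f^*\Omega^1_C\cong\Omega^1_D$ because $f$ is étale, so $f^*\theta$ is a genuine Higgs field on $f^*\mE$. The easy direction is that semistability (resp. stability) of $(f^*\mE,f^*\theta)$ implies that of $(\mE,\theta)$. Indeed, if $(\mF,\theta)\subset(\mE,\theta)$ is a Higgs subbundle, then $(f^*\mF,f^*\theta)$ is a Higgs subbundle of $f^*\mE$, since $f$ is flat and pullback preserves subbundles. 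The slope inequality then pulls back after dividing by $d$.

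For the converse, I would first reduce to the case where $f$ is Galois. Take the Galois closure $D'\to D\to C$, which is again finite étale between smooth proper irreducible curves. Suppose $(\mE,\theta)$ is semistable and we know $(f'^*\mE,\theta)$ is semistable on $D'$. Then the easy direction, applied to $D'\to D$, gives semistability on $D$. So assume $f$ is Galois with group $G$.

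Now suppose $(f^*\mE,f^*\theta)$ is not semistable. By Lemma~\ref{maximal destabilizing subsheaf} it has a unique maximal destabilizing Higgs subsheaf $\mF_{\max}$, which is a subbundle by the saturation argument. For each $\sigma\in G$, the pullback $\sigma^*\mF_{\max}$ is again a Higgs subsheaf of $\sigma^*f^*\mE=f^*\mE$ with the same slope, because $f^*\theta$ is $G$-equivariant. Uniqueness therefore forces $\sigma^*\mF_{\max}=\mF_{\max}$. Galois descent for quasi-coherent sheaves along the étale cover then shows that $\mF_{\max}=f^*\mF$ for a subbundle $\mF\subset\mE$. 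Since $\theta$ preserves $\mF$ after faithfully flat pullback, it preserves $\mF$ itself. Finally $\mu(\mF)=\mu(\mF_{\max})/d>\mu(f^*\mE)/d=\mu(\mE)$, which contradicts the semistability of $(\mE,\theta)$.

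The stable case is where I expect the main obstacle. The maximal destabilizing subsheaf no longer exists, and stability is genuinely not preserved: a stable bundle can pull back to a polystable non-stable one, for instance a pushforward of a line bundle along a cyclic cover. So for stability I would prove only the direction ``$f^*$ stable $\Rightarrow$ stable'' as above, together with the statement that $f^*$ of a stable bundle is semistable, indeed polystable. To get polystability, take the socle of $f^*\mE$ in the abelian category of semistable Higgs bundles of slope $\mu(f^*\mE)$. The socle is canonical, hence $G$-invariant, so it descends; its descent is a Higgs subbundle of $\mE$ of slope equal to $\mu(\mE)$, and stability forces it to be all of $\mE$. If the statement really requires full equivalence for stability, I would flag that it should be weakened to geometric stability or polystability. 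For the paper's use, namely the compatibility of HN filtrations with $\pi^*$ and the semistable case, only the semistable equivalence is needed.
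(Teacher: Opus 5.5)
Your semistable argument is the paper's own proof. You pull back Higgs subbundles for the \emph{if} direction. For the \emph{only if} direction you pass to a Galois closure, where the unique maximal destabilizing subsheaf of $(f^*\mE,f^*\theta)$ is $G$-stable and descends to a destabilizing Higgs subbundle of $(\mE,\theta)$.

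Your doubt about stability is justified, and it identifies a real gap in the paper. The paper runs the same argument ``for stable (resp.\ semistable)''. But a maximal destabilizing subsheaf exists only when $f^*\mE$ is \emph{not semistable}. When $\mE$ is stable and $f^*\mE$ is strictly semistable, the subsheaves of slope $\mu(f^*\mE)$ are not unique, nothing canonical descends, and the proof says nothing.

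The claim is in fact false in that case. Let $D$ be an elliptic curve, $a\in D(k)$ a point of order $2$ (e.g.\ $p\neq 2$), $f\colon D\to C=D/\langle a\rangle$ the quotient, $L$ a line bundle of degree $1$ on $D$, and $\theta=0$. Then $f_*L$ has rank $2$ and degree $1$. It is semistable because $f^*f_*L\cong L\oplus t_a^*L$ is, hence stable since $\gcd(2,1)=1$. But $f^*f_*L$ is not stable. So the correct lemma is the semistable equivalence plus ``$f^*\mE$ stable $\Rightarrow$ $\mE$ stable''. As you note, only the semistable case is used later, namely for compatibility of Harder--Narasimhan filtrations with $\pi^*$.

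One correction to your aside concerns the socle argument. It proves ``$\mE$ stable $\Rightarrow$ $f^*\mE$ polystable'' only for Galois $f$. You cannot reduce to the Galois case as you did for semistability, because polystability does not descend along \'etale covers in characteristic $p$. For instance, a non-split extension of $\mO$ by $\mO$ given by an Artin--Schreier class splits on the corresponding cover.

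The claim genuinely fails for non-Galois covers. Take $p=2$ and a connected \'etale $S_3$-cover $D'\to C$; these exist, e.g., for ordinary $C$ of genus $\geq 2$. Let $D=D'/\langle\tau\rangle$ for a transposition $\tau$, and $\mE=D'\times^{S_3}V$ with $V$ the $2$-dimensional irreducible representation. Pulling back to $D'$, where both $\mE$ and $f^*\mE$ become $V\otimes\mO_{D'}$, shows the following: degree-$0$ subbundles of $\mE$ (resp.\ $f^*\mE$) correspond to $S_3$-stable (resp.\ $\tau$-stable) subspaces of $V$. Hence $\mE$ is stable of degree $0$. However, $V|_{\langle\tau\rangle}\cong k[\Z/2]$ is a non-split self-extension of the trivial module. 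So $f^*\mE$ is a non-split extension of $\mO_D$ by $\mO_D$: semistable, but not polystable. You should either keep the Galois hypothesis there or claim only semistability.
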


\begin{proof}
  Since $f$ is \'etale, one has $f^*\Omega_C\cong \Omega_D$ and hence $(f^*\mE,f^*\theta)$ is well-defined. Suppose $(f^*\mE, f^*\theta)$ is Higgs stable (resp. Higgs semistable). Let $(\mF, \theta)\subset (\mE, \theta)$ be a Higgs subbundle. Then it gives $(f^*\mF, f^*\theta)\subset (f^*\mE, f^*\theta)$ as a Higgs subbundle. So, one has $\mu(f^*\mF)< \mu(f^*\mE)$ (resp. $\mu(f^*\mF)\leq \mu(f^*\mE)$). Since for any vector bundle $\mV$ over $C$, $\rank(f^*\mV)=\rank(\mV)$ and $\deg(f^*\mV)=d\cdot\deg(\mV)$, the ``if'' direction follows.

  Conversely, let $C'\rightarrow C$ be a Galois cover such that $C'\rightarrow D$ is also an \'etale cover. Then it suffices to show the ``only if'' direction when $D\rightarrow C $ is a Galois cover, as we already showed the ``if'' direction. Let $G=\Aut(D/C)$. And let $(\mF', \theta')\subset (f^*\mE, f^*\theta)$
be the maximal destabilizing subsheaf. Since the maximal destabilizing subsheaf is unique by Lemma~\ref{maximal destabilizing subsheaf}, $(\mF', \theta')$ is stable under the action of $G$ given by the descent datum of $(f^*\mE, f^*\theta)$. By \'etale descent, there exists a subsheaf $\mF\subset \mE$ over $C$ such that $f^*\mF\cong \mF'$. Moreover, $\theta': \mF'\rightarrow \mF'\otimes f^*\Omega_C$ can also be descended to $\theta: \mF\rightarrow \mF\otimes \Omega_C$ as it is preserved by the descent diagram given by $f^*\theta$. Finally, as $(\mE, \theta)$
is Higgs stable (Higgs semistable), it follows that $\mu(\mF)<\mu(\mE)$ (resp. $\mu(\mF)\leq \mu(\mE)$). The lemma then follows from the calculation that $\mu(f^*\mE)=d\cdot \mu(\mE), \mu(f^*\mF)=d\cdot \mu(\mF)$.

\end{proof}

\begin{lemma}
    Let $\mE$ be a vector bundle over $C$ and let $f: D\rightarrow C$ be an \'etale cover between smooth proper curves. Then,

    1. The HN-filtration of $\mE^\vee$ satisfies $\mu_{\min}(\mE^\vee)=-\mu_{\max}(\mE)$ and $\mu_{\max}(\mE^\vee)=-\mu_{\min}(\mE)$.

    2. If the HN-filtration of $\mE$ is \[
    0=\mE_0\subset\mE_1\subset\cdots\subset\mE_n=\mE,
    \]
    Then the  HN-filtration of $f^*\mE$ is given by \[
    0=f^*\mE_0\subset f^*\mE_1\subset\cdots\subset f^*\mE_n=f^*\mE.
    \]
\end{lemma}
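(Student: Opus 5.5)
For part 1, I would dualize the Harder--Narasimhan filtration directly. Let $0=\mE_0\subset\cdots\subset\mE_n=\mE$ be the HN-filtration of $\mE$. Since each $\mE_i$ is a subbundle (torsion-free cokernel, hence locally free by Lemma~\ref{torsion-free is locally free}), the annihilators $\mE_i^\perp\coloneq(\mE/\mE_i)^\vee\subset\mE^\vee$ are subbundles. This gives a filtration
\[0=\mE_n^\perp\subset\mE_{n-1}^\perp\subset\cdots\subset\mE_0^\perp=\mE^\vee,\]
whose successive quotients are $\mE_{i-1}^\perp/\mE_i^\perp\cong(\mE_i/\mE_{i-1})^\vee$. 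Applying Lemma~\ref{dual of semistable} with $\theta=0$, each dual graded piece is semistable, and its slope is $-\mu(\mE_i/\mE_{i-1})$. Reindexed by $j=n-i$, these slopes are strictly decreasing. By uniqueness of the HN-filtration this is therefore the HN-filtration of $\mE^\vee$. Reading off the first and last graded pieces gives $\mu_{\max}(\mE^\vee)=-\mu_{\min}(\mE)$ and $\mu_{\min}(\mE^\vee)=-\mu_{\max}(\mE)$.

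For part 2, I would first note that $f$ is flat, so $f^*$ is exact. Hence $f^*\mE_i\subset f^*\mE$ is a filtration by subbundles with graded pieces $f^*(\mE_i/\mE_{i-1})$. Each of these is semistable by Lemma~\ref{etale and semistable} applied with zero Higgs field; the Higgs field is compatible because $f^*\Omega_C\cong\Omega_D$ and zero pulls back to zero. Since $\deg f^*\mV=d\deg\mV$ and ranks are preserved, the slopes are multiplied by $d>0$, so the strict decrease is preserved. Uniqueness of the HN-filtration on $D$ then identifies this filtration with the HN-filtration of $f^*\mE$.

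The main point requiring care is the semistability of the graded pieces in both parts, since everything else is formal. In part 1 the subtle points are that $(\mE/\mE_i)^\vee\to\mE^\vee$ is injective with locally free cokernel $\mE_i^\vee$, and that Lemma~\ref{dual of semistable} applies with $\theta=0$. In part 2 it is the ``only if'' direction of Lemma~\ref{etale and semistable}, proved there by passing to a Galois closure and using \'etale descent of the maximal destabilizing subsheaf. I would simply invoke that lemma rather than reprove it.
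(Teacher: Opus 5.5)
Your proposal is correct and matches the paper's proof essentially step for step. For part~1, the paper also dualizes the HN-filtration via the subbundles $(\mE/\mE_i)^\vee\subset\mE^\vee$, identifies the graded pieces with $(\mE_{i+1}/\mE_i)^\vee$, and uses Lemma~\ref{dual of semistable} together with the reversed slope ordering. For part~2, it deduces the result from exactness of $f^*$, Lemma~\ref{etale and semistable}, and $\mu(f^*\mV)=\deg(f)\cdot\mu(\mV)$. You are only slightly more explicit than the paper in invoking uniqueness of the HN-filtration to conclude, a step the paper leaves implicit.
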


\begin{proof}
1. Let $0=\mE_0\subset\mE_1\subset\cdots\subset\mE_n=\mE$ be the HN-filtration of $\mE$. For each $0\leq i\leq n$, consider the short exact sequence of vector bundles\[0\rightarrow \mE_i\rightarrow\mE\rightarrow Q_i\rightarrow 0,
\]with $Q_i\cong \mE/\mE_i$. Taking the dual and we have\[
0\rightarrow Q_i^\vee\rightarrow \mE^\vee\rightarrow\mE^\vee_i\rightarrow 0.
\] Consider the filtration of $\mE^\vee$\[0=Q_n^\vee \subset Q_{n-1}^\vee\subset\cdots\subset Q_0^\vee=\mE^\vee.\] Then $Q_{i}^\vee/Q_{i+1}^\vee\cong (\mE_{i+1}/\mE_i)^\vee$ is semistable of slope $-\mu(\mE_{i+1}/\mE_i)$ by Lemma~\ref{dual of semistable}. The statement then follows by the inequality\[\mu(Q_i^\vee/Q_{i+1}^\vee)=-\mu(\mE_{i+1}/\mE_i)<-\mu(\mE_i/\mE_{i-1})=\mu(Q_{i-1}^\vee/Q_i^\vee).\]

2. This is obvious by Lemma~\ref{etale and semistable}, as $f^*$ is exact and $\mu(f^*\mE)=\deg(f)\cdot \mu(\mE)$ for any vector bundle $\mE$ over $C$.

\end{proof}

\begin{theorem}\label{higgs semistable abelian category}
  Let $f:(\mE_1, \theta_1)\rightarrow (\mE_2, \theta_2)$ be a morphism of  Higgs semistable bundles of the same slope $\mu$. Then one has
  
  1. The kernel, image and cokernel of $f$ will all be either trivial or Higgs semistable bundles of slope $\mu$.

  2. Let $(\mE_1, \theta_1), (\mE_3, \theta_3)$ be  Higgs semistable bundles of the same slope $\mu$ and suppose we have a short exact sequence of Higgs bundles \[0\rightarrow (\mE_1, \theta_1)\rightarrow (\mE_2, \theta_2)\rightarrow (\mE_3, \theta_3)\rightarrow 0,\]then $(\mE_2, \theta_2)$ is Higgs semistable of slope $\mu$.
\end{theorem}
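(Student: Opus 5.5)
We will prove both parts by working with slopes and Lemma~\ref{slope inequality}. The tools are Theorem~\ref{slope comparison}, the saturation Lemma~\ref{saturation}, and the fact (shown earlier in the appendix) that kernels and images of Higgs morphisms are Higgs subsheaves. Throughout, write $\mu=\mu(\mE_1)=\mu(\mE_2)$.

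For part 1, assume $f\neq 0$ (otherwise everything is trivial or equal to $\mE_1,\mE_2$). First we find the slope of $\Image(f)$. Since $(\Image(f),\theta_2)$ is a Higgs subsheaf of the semistable $(\mE_2,\theta_2)$, we have $\mu(\Image(f))\le \mu$. It is also a Higgs quotient of $(\mE_1,\theta_1)$ via
\[0\to \Ker(f)\to \mE_1\to \Image(f)\to 0.\]
If $\Ker(f)\neq 0$, then $\mu(\Ker(f))\le\mu$ by semistability of $\mE_1$. Lemma~\ref{slope inequality} then forces $\mu(\Image(f))\ge\mu$. If $\Ker(f)=0$, then $\Image(f)\cong\mE_1$. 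Either way $\mu(\Image(f))=\mu$, and when $\Ker(f)\neq 0$ also $\mu(\Ker(f))=\mu$.

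Next we show the kernel and image are semistable. Any Higgs subsheaf of $\Ker(f)$ or of $\Image(f)$ is a Higgs subsheaf of $\mE_1$ or of $\mE_2$ respectively, so its slope is at most $\mu$. Hence both are semistable of slope $\mu$.

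For the cokernel, first note that $\Image(f)$ has slope $\mu$ and lies in $\mE_2$. Let $\widetilde{\Image}$ be its saturation (Lemma~\ref{saturation}). This is a Higgs subbundle with $\deg\widetilde{\Image}\ge\deg\Image(f)$ and the same rank, so by semistability its slope is also $\mu$. Hence $\deg\widetilde{\Image}=\deg\Image(f)$, so the torsion $\widetilde{\Image}/\Image(f)$ has length zero. Thus $\Image(f)$ is saturated and $\Coker(f)=\mE_2/\Image(f)$ is locally free by Lemma~\ref{torsion-free is locally free}. It carries the induced Higgs field, and by Lemma~\ref{slope inequality} it has slope $\mu$ (if nonzero).

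For semistability of the cokernel, take a Higgs subsheaf $\mathcal{Q}\subset \Coker(f)$ and let $\mathcal{Q}'\subset\mE_2$ be its preimage. Then $\mathcal{Q}'$ is a Higgs subsheaf sitting in
\[0\to\Image(f)\to\mathcal{Q}'\to\mathcal{Q}\to 0.\]
We have $\mu(\mathcal{Q}')\le\mu$ and $\mu(\Image(f))=\mu$, so Lemma~\ref{slope inequality} gives $\mu(\mathcal{Q})\le\mu$.

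For part 2, by Lemma~\ref{slope inequality} the middle term has slope $\mu$. Let $\mF\subset\mE_2$ be a Higgs subsheaf, and set $\mF_1=\mF\cap\mE_1$ and $\mF_3$ the image of $\mF$ in $\mE_3$. Both are Higgs subsheaves, of $\mE_1$ and $\mE_3$ respectively, so each has slope at most $\mu$ when nonzero. Then $0\to\mF_1\to\mF\to\mF_3\to0$ and Lemma~\ref{slope inequality} give $\mu(\mF)\le\mu$.

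The main obstacle is the cokernel in part 1. It requires showing that $\Image(f)$ is saturated, since otherwise the cokernel has torsion and is not a bundle. The saturation-plus-degree argument above is how I would handle it.
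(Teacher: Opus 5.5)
Your proof is correct and follows essentially the same route as the paper. Both arguments use slope bookkeeping via Lemma~\ref{slope inequality}, saturation of $\Image(f)$ inside $\mE_2$ to show it is a subbundle (so $\Coker(f)$ is locally free), the preimage argument for semistability of the cokernel, and the intersection with $\mE_1$ for extensions. Your explicit handling of the degenerate cases $f=0$ and $\Ker(f)=0$, where Lemma~\ref{slope inequality} does not directly apply, is a small improvement: the paper leaves these cases implicit.
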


\begin{proof}
  
1. Consider the short exact sequence of Higgs bundles\[0\rightarrow(\Ker(f), \theta_1)\rightarrow (\mE_1, \theta_1)\rightarrow (\Image(f), \theta_1)\rightarrow 0.\]

Let $\widetilde{\Image(f)}$ be the saturation of $\Image(f)$ in $\mE_2$ and we have a Higgs bundle $(\widetilde{\Image(f)}, \theta_2)$ by Lemma~\ref{saturation}. Consider the short exact sequence\[0\rightarrow (\widetilde{\Image(f)}, \theta_2)\rightarrow (\mE_2, \theta_2)\rightarrow (Q, \theta_2)\rightarrow 0.\]

As $(\mE_1, \theta_1), (\mE_2, \theta_2)$ are both Higgs semistable of slope $\mu$, one has\[\mu(\Ker(f))\leq \mu(\mE_1)=\mu, \]\[\mu(\Image(f))\leq\mu(\widetilde{\Image(f)})\leq \mu(\mE_2)=\mu.\]

By Lemma~\ref{slope inequality}, one must have $\mu(\Ker(f))=\mu(\Image(f))=\mu(\widetilde{\Image(f)})=\mu$. In particular, one has $\Image(f)\cong \widetilde{\Image(f)}$ and hence $\Image(f)$ is a subbundle. It implies the cokernel $(\Coker(f), \theta_2)\cong (Q, \theta_2)$ is a Higgs bundle and Lemma~\ref{slope inequality} again implies that $\mu(Q)=\mu$. $(\Ker(f), \theta_1)$ and $(\Image(f), \theta)$ are Higgs semistable obviously. It remains to show $(\Coker(f), \theta_2)$ is Higgs
semistable. Let $(\mF, \theta_2)\subset (\Coker(f), \theta_2)$ be a Higgs subbundle. Let $\widetilde{\mF}$ be the preimage via the surjection $\mF$ of $\mE_2\twoheadrightarrow \Coker(f)$. $\widetilde{\mF}$ is then preserved by $\theta_2$. In particular, we have a short exact sequence\[0\rightarrow (\Image(f), \theta_2)\rightarrow (\widetilde{\mF}, \theta_2)\rightarrow (\mF, \theta_2)\rightarrow 0.\]

As $(\mE_2, \theta_2)$ is Higgs semistable of slope $\mu$, $\mu(\widetilde{\mF})\leq \mu$. And $\mu(\mF)\leq\mu$ follows from Lemma~\ref{slope inequality}. So, $(\Coker(f), \theta_2)$ is Higgs
semistable.

2. $\mu(\mE_2)=\mu$ follows from Lemma~\ref{slope inequality}. We wish to show $(\mE_2, \theta_2)$ is Higgs semistable. Let $(\mF, \theta)\subset (\mE_2, \theta_2)$ be a Higgs subbundle. Then $(\mF\cap\mE_1, \theta)$ gives a Higgs subsheaf of $(\mF, \theta)$. As there is an injection $\mF/\mF\cap\mE_1\hookrightarrow \mE_2/\mE_1\cong \mE_3$ with target torsion-free, one has $(\mF\cap\mE_1, \theta)$ is a Higgs subbundle of $(\mF, \theta)$. Consider the short exact sequence\[0\rightarrow (\mF\cap \mE_1, \theta)\rightarrow (\mF, \theta)\rightarrow (\mF/\mF\cap\mE_1, \theta_1)\rightarrow 0.\]

As $(\mE_1, \theta_1), (\mE_3, \theta_3)$ are Higgs semistable of slope $\mu$. One has $\mu(\mF)\leq \mu$ and $\mu(\mF/\mF\cap \mE_1)\leq \mu$. By Lemma~\ref{slope inequality}, $\mu(\mF)\leq \mu$ also and it follows that $(\mE_2, \theta_2)$ is Higgs semistable.

\end{proof}

\begin{corollary}
  The category of  Higgs semistable bundles of fixed slope $\mu$ is an abelian category.
\end{corollary}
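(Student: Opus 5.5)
This corollary is a formal consequence of Theorem~\ref{higgs semistable abelian category}. Let $\mathcal{C}_\mu$ denote the full subcategory of $\text{Hig}_C$ whose objects are the Higgs semistable bundles of slope $\mu$, together with the zero bundle. I will verify the abelian category axioms for $\mathcal{C}_\mu$ in three steps: additivity, existence of kernels and cokernels inside $\mathcal{C}_\mu$, and the coincidence of image and coimage.

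\textbf{Additivity.} Hom-sets in $\text{Hig}_C$ are $k$-vector spaces, and composition is bilinear, so $\mathcal{C}_\mu$ is preadditive. The zero bundle is a zero object. For finite biproducts, take $(\mE_1,\theta_1)$ and $(\mE_2,\theta_2)$ in $\mathcal{C}_\mu$. Their direct sum $(\mE_1\oplus\mE_2,\theta_1\oplus\theta_2)$ sits in the split short exact sequence
\[
0\rightarrow (\mE_1,\theta_1)\rightarrow (\mE_1\oplus\mE_2,\theta_1\oplus\theta_2)\rightarrow (\mE_2,\theta_2)\rightarrow 0.
\]
By part~2 of Theorem~\ref{higgs semistable abelian category}, the middle term is Higgs semistable of slope $\mu$, so it lies in $\mathcal{C}_\mu$.

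\textbf{Kernels and cokernels.} Let $f$ be a morphism in $\mathcal{C}_\mu$. By part~1 of Theorem~\ref{higgs semistable abelian category}, the sheaf kernel $\Ker(f)$ and the sheaf cokernel $\Coker(f)$, each with its induced Higgs field, are zero or semistable of slope $\mu$. Moreover, the proof of that theorem shows that $\Image(f)$ is saturated, so $\Coker(f)$ is locally free and is genuinely an object of $\mathcal{C}_\mu$. Their universal properties in $\mathcal{C}_\mu$ follow from those in the category of $\mO_C$-modules. The only extra check is compatibility with Higgs fields: a map $g$ of Higgs bundles with $f\circ g=0$ factors through $\Ker(f)$ as a sheaf map, and this factorization commutes with $\theta$ because $\Ker(f)\otimes\Omega_C\subset\mE_1\otimes\Omega_C$ is injective ($\Omega_C$ is flat). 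The dual argument, using surjectivity onto $\Coker(f)$, handles cokernels.

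\textbf{Image equals coimage; main obstacle.} I expect this step to need the most care. In $\mathcal{C}_\mu$ the coimage $\Coker(\Ker f\rightarrow\mE_1)$ is computed sheaf-theoretically, so it equals $\mE_1/\Ker(f)$, which is $\Image(f)$. The categorical image $\Ker(\mE_2\rightarrow\Coker f)$ is also the sheaf $\Image(f)$, precisely because $\Image(f)$ is saturated. Without saturation, the categorical image would be the saturation $\widetilde{\Image(f)}$ and the canonical map from coimage to image could fail to be an isomorphism. In our case the canonical map is the identity of $\Image(f)$, an isomorphism of sheaves commuting with the Higgs fields, hence an isomorphism in $\mathcal{C}_\mu$. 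Therefore $\mathcal{C}_\mu$ is abelian. As a byproduct, the inclusion $\mathcal{C}_\mu\hookrightarrow\mathrm{Mod}(\mO_C)$ is exact.
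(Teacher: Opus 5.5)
Your proposal is correct and follows the paper's approach. The paper states this corollary without proof as an immediate consequence of Theorem~\ref{higgs semistable abelian category}. You have written out the routine verification: biproducts via part 2, and kernels, cokernels and image $=$ coimage via part 1 together with the saturation of $\Image(f)$.
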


\begin{corollary}
  A Higgs  polystable bundle is Higgs semistable.
\end{corollary}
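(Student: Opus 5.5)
The plan is to reduce the statement to the extension-closure part of Theorem~\ref{higgs semistable abelian category}, using induction on the number of stable summands. Write the polystable Higgs bundle as
\[(\mE, \theta)\cong \bigoplus_{i=1}^{r}(\mE_i, \theta_i),\]
where each $(\mE_i, \theta_i)$ is Higgs stable of a common slope $\mu$.

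First, each summand is Higgs semistable of slope $\mu$. Indeed, a stable Higgs bundle is semistable: for every Higgs subsheaf $(\mF, \theta_i)\subseteq (\mE_i, \theta_i)$, stability gives the strict inequality $\mu(\mF)<\mu(\mE_i)$, which in particular implies $\mu(\mF)\leq\mu(\mE_i)$. This disposes of the case $r=1$.

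For the inductive step, assume that $(\mE', \theta')\coloneq \bigoplus_{i=1}^{r-1}(\mE_i, \theta_i)$ is Higgs semistable of slope $\mu$. The direct sum sits in a split short exact sequence of Higgs bundles
\[0\rightarrow (\mE', \theta')\rightarrow (\mE'\oplus\mE_r, \theta'\oplus\theta_r)\rightarrow (\mE_r, \theta_r)\rightarrow 0.\]
Both maps are compatible with the Higgs fields, since the Higgs field on the middle term is $\theta'\oplus\theta_r$. The two outer terms are Higgs semistable of the same slope $\mu$. Part 2 of Theorem~\ref{higgs semistable abelian category} then shows that the middle term is Higgs semistable of slope $\mu$. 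The statement that the middle term has slope $\mu$ also follows directly from Lemma~\ref{slope inequality}. Induction on $r$ completes the argument.

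There is no real obstacle here. The only point to check is that inclusion into, and projection out of, a direct sum are morphisms in $\text{Hig}_C$, so that the sequence above is a short exact sequence of Higgs bundles in the sense required by Theorem~\ref{higgs semistable abelian category}.
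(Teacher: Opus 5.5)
Your proof is correct and matches the paper's intended argument. The paper gives no written proof and states this corollary directly after Theorem~\ref{higgs semistable abelian category}; it follows exactly as you describe, by induction on the number of stable summands, applying the extension-closure statement (part~2) to the split short exact sequence.
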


\printbibliography

\end{document}